\documentclass[11pt]{amsart}
\usepackage{amsmath,amssymb,color,cite,verbatim,mathtools,enumitem,placeins,multirow}
\usepackage{etoolbox}
\apptocmd{\sloppy}{\hbadness 10000\relax}{}{}

\numberwithin{equation}{section}

\newtheorem{thm}[equation]{Theorem}
\newtheorem{prop}[equation]{Proposition}
\newtheorem{lemma}[equation]{Lemma}
\newtheorem{cor}[equation]{Corollary}

\theoremstyle{definition}
\newtheorem{rmk}[equation]{Remark}
\newtheorem{notation}[equation]{Notation}
\newtheorem{defn}[equation]{Definition}

\newcommand{\F}{\mathbb{F}}
\newcommand{\bP}{\mathbb{P}}

\newcommand{\Z}{\mathbb{Z}}

\DeclareMathOperator{\ord}{ord}
\DeclareMathOperator{\lcm}{lcm}

\DeclareMathOperator{\PSL}{PSL}

\DeclareMathOperator{\Tr}{Tr}

\newcommand{\mybar}[1]{#1\llap{$\overline{\phantom{\rm#1}}$}}
\newcommand{\abs}[1]{\lvert #1 \rvert}

\usepackage[colorlinks,pagebackref,pdftex, bookmarks=false]{hyperref}

\begin{document}

\title[Permutation quadrinomials]{Determination of a class of permutation quadrinomials}

\author{Zhiguo Ding}
\address{
  Hunan Institute of Traffic Engineering,
  Hengyang, Hunan 421001 China
}
\email{ding8191@qq.com}

\author{Michael E. Zieve}
\address{
  Department of Mathematics,
  University of Michigan,
  530 Church Street,
  Ann Arbor, MI 48109-1043 USA
}
\email{zieve@umich.edu}
\urladdr{http://www.math.lsa.umich.edu/$\sim$zieve/}

\date{\today}

\begin{abstract}
We determine all permutation polynomials over $\F_{q^2}$ of the form $X^r A(X^{q-1})$ where, for some $Q$ which 
is a power of the characteristic of $\F_q$, we have $r\equiv Q+1\pmod{q+1}$ and all terms of $A(X)$ have degrees 
in $\{0,1,Q,Q+1\}$. We then use this classification to resolve eight conjectures and open problems from the 
literature. Our proof makes a novel use of geometric techniques in a situation where they previously did not 
seem applicable, namely to understand the arithmetic of high-degree rational functions over small finite fields, 
despite the fact that in this situation the Weil bounds do not provide useful information.
\end{abstract}

\maketitle


\section{Introduction}

A polynomial $f(X)\in\F_q[X]$ is called a \emph{permutation polynomial} if the function $\alpha \mapsto f(\alpha)$ 
defines a bijection of $\F_q$. Permutation polynomials arise in various contexts in math and engineering. They are 
of particular interest when $f(X)$ has a simple algebraic form, in which case the interplay between the algebraic 
and combinatorial perspectives yields interesting results and challenges.

In the past several years, over $100$ papers have addressed permutation polynomials over $\F_{q^2}$ having the form 
$f(X) := X^r A(X^{q-1})$ where $r$ is a positive integer and $A(X)\in\F_{q^2}[X]$, following the initial paper \cite{ZR} 
which restated the permutation property of $f(X)$ in terms of whether an associated rational function $g(X)\in\F_{q^2}(X)$ 
permutes the set of $(q+1)$-th roots of unity, or equivalently whether an associated $h(X) \in \F_q(X)$ permutes 
$\bP^1(\F_q) := \F_q\cup\{\infty\}$. The main advantage of these restatements is that even the simplest choices 
for $g(X)$ or $h(X)$ correspond to interesting permutation polynomials $f(X)$ over $\F_{q^2}$. For instance, most 
of the permutation polynomials in the literature having the form $X^r A(X^{q-1})$ correspond to cases where $g(X)$ 
and $h(X)$ have degree at most $3$. Conversely, all possibilities for $g(X)$ or $h(X)$ having degree at most $4$ 
have been classified \cite{DZ1}; we will show elsewhere that this classification quickly implies all previous results 
classifying permutation polynomials of the form $X^r A(X^{q-1})$ where $q$ and the coefficients of $A(X)$ can vary but 
$r$ and the degrees of the terms of $A(X)$ are prescribed.

Recently several authors have produced permutation polynomials over $\F_{q^2}$ of the form $X^r A(X^{q-1})$ for which 
the corresponding permutation rational function $h(X)$ over $\F_q$ can have arbitrarily large degree. This led to 
a series of conjectures and open problems seeking classifications of all permutation polynomials for a series of 
choices of $r$ and the degrees of the terms of $A(X)$. In this paper we resolve all of these conjectures and open 
problems, by proving the following result:

\begin{thm}\label{fir}
Write $q:=p^k$ and $Q:=p^\ell$ where $p$ is prime and $k$ and $\ell$ are positive integers, and let $r$ be a positive 
integer such that $r\equiv Q+1\pmod{q+1}$. Write $A(X):=aX^{Q+1}+bX^Q+cX+d$ with $a,b,c,d\in\F_{q^2}$. Then the polynomial 
$X^r A(X^{q-1})=aX^{r+qQ+q-Q-1}+bX^{r+qQ-Q}+cX^{r+q-1}+dX^r$ permutes\/ $\F_{q^2}$ if and only if all of the following hold:
\renewcommand{\labelenumi}{\emph{(\arabic{enumi})}}
\begin{enumerate}
\item $\gcd(r,q-1)=1$;
\item $p=2$;
\item $e:=a^{q+1}+b^{q+1}+c^{q+1}+d^{q+1}$ is nonzero;
\item $(ab^q+cd^q)^Q = e^{Q-1} (ac^q+bd^q)$; and
\item writing $m$ for the largest integer of the form $2^i$ (with $i\ge 0$) which divides $\gcd(k,\ell)$, we have
\[
\Tr_{\F_q/\F_{2^m}}\Bigl(\frac{b^{q+1}+c^{q+1}}e\Bigr)=\frac{\lcm(k,\ell)}m.
\]
\end{enumerate}
\end{thm}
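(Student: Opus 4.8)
The plan is to follow the now-standard reduction and then extract the arithmetic of a single high-degree rational function by geometric means. First I would invoke the criterion underlying \cite{ZR}: since $r\equiv Q+1\pmod{q+1}$ and $\F_{q^2}^*=C_{q-1}\times\mu_{q+1}$ with $\mu_{q+1}$ the group of $(q+1)$-th roots of unity, the polynomial $f(X)=X^rA(X^{q-1})$ permutes $\F_{q^2}$ if and only if $\gcd(r,q-1)=1$ — which is exactly condition (1) — and the rational function
\[
h(X)=\frac{d^qX^{Q+1}+c^qX^Q+b^qX+a^q}{aX^{Q+1}+bX^Q+cX+d}
\]
permutes $\mu_{q+1}$. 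A direct computation using $x^q=x^{-1}$ for $x\in\mu_{q+1}$ shows that $h$ carries $\mu_{q+1}$ into itself, so the remaining task is purely about whether $h|_{\mu_{q+1}}$ is a bijection.

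Second I would transport the problem to $\bP^1(\F_q)$. Fixing a Möbius transformation $\rho\in\PGL_2(\F_{q^2})$ with $\rho(\bP^1(\F_q))=\mu_{q+1}$, the conjugate $\tilde h:=\rho^{-1}\circ h\circ\rho$ lies in $\F_q(X)$ — the compatibility of $h$ with the order-$2$ twist $x\mapsto x^q$ forces the coefficients into $\F_q$ — and $h$ permutes $\mu_{q+1}$ iff $\tilde h$ permutes $\bP^1(\F_q)$. The essential feature to exploit is that both numerator and denominator of $h$ are bilinear in $X$ and $X^Q$: writing the denominator as $(aX+b)X^Q+(cX+d)$ encodes $h$ through the matrix $M=\left(\begin{smallmatrix}a&b\\c&d\end{smallmatrix}\right)$ together with a $Q$-power Frobenius twist, with $e=a^{q+1}+b^{q+1}+c^{q+1}+d^{q+1}$ playing the role of a Hermitian-type invariant; condition (3), namely $e\neq0$, is precisely what I expect guarantees that $\tilde h$ has full degree $Q+1$ rather than collapsing.

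Third — and this is the geometric heart, where the Weil bounds are useless because $\deg\tilde h=Q+1$ far exceeds $q$ — I would study $\tilde h$ as a degree-$(Q+1)$ cover $\bP^1\to\bP^1$ and analyze its ramification. The point is that a self-bijection of the $(q+1)$-element set $\mu_{q+1}$ by so high-degree a map is extraordinarily rigid: the branch locus and the induced action of Frobenius on it must be compatible with bijectivity, and I expect this to force $\tilde h$ to be, up to pre- and post-composition with Möbius maps over $\F_q$, an additive (linearized) polynomial. Tracking which characteristics and which matrices $M$ permit such ramification should produce both the restriction $p=2$ (condition (2)) and the algebraic identity $(ab^q+cd^q)^Q=e^{Q-1}(ac^q+bd^q)$ (condition (4)), the latter being exactly the relation on $M$ that makes the Frobenius-twisted map degenerate into a genuine $\F_{2^m}$-linearized polynomial $L(X)$.

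Finally, once $\tilde h$ is reduced to such an $L$, the permutation property becomes the classical question of when a linearized polynomial over $\F_{2^m}$ permutes $\F_q$, governed by the nonvanishing of a Moore/Dickson determinant, equivalently by a trace condition. Translating the coefficients of $L$ back into $a,b,c,d$ should yield condition (5), with the integer $\lcm(k,\ell)/m=[\F_{2^{\lcm(k,\ell)}}:\F_{2^m}]$ arising as the size of the relevant Frobenius orbit, read modulo $2$ inside $\F_{2^m}$. I expect the main obstacle to be the third step: rigorously ruling out all non-additive ramification configurations in the high-degree, small-field regime, and thereby forcing both (2) and (4). The bookkeeping in the final step, while delicate, should then be routine given the additive normal form.
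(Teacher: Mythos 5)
Your first reduction is exactly the paper's (Lemmas~\ref{old} and \ref{rewrite} give condition (1) plus the bijectivity of $g(X)=B(X)/A(X)$ on $\mu_{q+1}$, and Lemma~\ref{gtoh} is your transport to $\bP^1(\F_q)$), but the heart of your plan goes wrong in two ways. First, your predicted normal form is incorrect: the rigid structural conclusion is \emph{multiplicative}, not additive. The paper proves (Theorem~\ref{geom}, Proposition~\ref{key}) that a bijective $g$ must be $\rho^{-1}\circ X^n\circ\sigma$ with $n\in\{Q-1,Q+1\}$ and degree-one $\rho,\sigma$ carrying $\mu_{q+1}$ to either $\mu_{q+1}$ or $\bP^1(\F_q)$ — a conjugated \emph{monomial}. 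It cannot degenerate into a linearized polynomial: $\deg g\in\{Q-1,Q+1\}$ is never a power of $p$. Consequently your final step collapses — there is no Moore determinant anywhere; the permutation criterion for the cyclic normal form is simply $\gcd(n,q\mp 1)=1$ (Lemma~\ref{cyclicperm}), which via Lemma~\ref{gcd} becomes a comparison of $\ord_2(k)$ and $\ord_2(\ell)$, and this is also where $p=2$ comes from: for odd $p$ both $Q\pm1$ and $q\pm1$ are even, so no monomial case is ever bijective on $\mu_{q+1}$. Condition (4) is likewise not the degeneration-to-additivity relation you posit; it expresses that the auxiliary quadratics $U$ and $V$ (defined by $U=(d^qX+c^q)A-(aX+b)B$ and $V^Q=AB'-A'B$, with $UV^Q=W(g)A^2$) are proportional, i.e.\ that the two distinguished points each have a unique $g$-preimage. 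Condition (5) then encodes, via trace identities (Lemmas~\ref{u|A}, \ref{quadscr}, \ref{traces}), which of the three monomial cases occurs — whether $U\mid A$ and whether $U$ has roots in $\mu_{q+1}$.

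Second, and more seriously, your step 3 contains no mechanism: "the branch locus and the induced action of Frobenius on it must be compatible with bijectivity" is precisely the statement the paper identifies as the hard, novel part, and it is not proved by any soft rigidity principle. The paper's route is (i) a purely algebraic classification of the ramification of the whole four-parameter family, with no bijectivity hypothesis, via the identities above (Theorem~\ref{geom}): either $g$ is a conjugated monomial or some point has ramification multiset $[1,Q]$; and (ii) an arithmetic exclusion of the $[1,Q]$ case (Theorem~\ref{lpp}), proved not by Frobenius-compatibility of the branch locus but by a Hermite-type power-sum computation: reducing to $g_1(X)=\epsilon_1X^s(\delta^qX^t-1)/(X^t-\delta)$ and showing $\sum_{\beta\in\mu_{q+1}}g_1(\beta)\ne 0$ unless $s\equiv 0\pmod{q+1}$. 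Without a concrete substitute for these two steps your proposal is an outline of the easy reduction plus a conjecture, and the conjecture (additive normal form, Moore-determinant criterion) is false as stated. A small additional slip: $e\ne 0$ is not equivalent to $\deg g=Q+1$ — in the genuinely occurring case $n=Q-1$ one has $e\ne 0$ while $\gcd(A,B)$ has degree $2$ and $\deg g=Q-1$.
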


Since the conditions on $a,b,c,d$ in Theorem~\ref{fir} are complicated, we now state an alternate version of the result 
which is more useful in some situations (for instance, if one wishes to count the number of permutation polynomials of 
this form, or to produce explicit examples). Here $\mu_{q+1}$ denotes the set of $(q+1)$-th roots of unity in $\F_{q^2}^*$, 
and if $n$ is a positive integer then we write $\ord_2(n)$ for the largest nonnegative integer $i$ such that $2^i\mid n$.

\begin{thm}\label{fir2}
Write $q:=p^k$ and $Q:=p^\ell$ where $p$ is prime and $k$ and $\ell$ are positive integers, and let $r$ be a positive 
integer such that $r\equiv Q+1\pmod{q+1}$. Write $A(X):=aX^{Q+1}+bX^Q+cX+d$ with $a,b,c,d\in\F_{q^2}$. Then $X^r A(X^{q-1})$ 
permutes\/ $\F_{q^2}$ if and only if $p=2$, $\gcd(r,q-1)=1$, and $A(X)=\delta A_0(\gamma X)$ for some $\gamma\in\mu_{q+1}$, 
some $\delta\in\F_{q^2}^*$, and some $A_0(X)\in\F_{q^2}[X]$ such that one of the following holds:
\renewcommand{\labelenumi}{\emph{(\arabic{enumi})}}
\begin{enumerate}
\item $\ord_2(k)\le\ord_2(\ell)$ and there exist $\alpha,\beta\in\F_{q^2}\setminus\F_q$ for which
\begin{align*}
A_0(X)&=(\alpha^{Q+1}+\beta)X^{Q+1}+(\alpha^{q+Q}+\beta)X^Q\\&\qquad+(\alpha^{qQ+1}+\beta)X+(\alpha^{qQ+q}+\beta);
\end{align*}
\item $\ord_2(k)\ne\ord_2(\ell)$ and either $A_0(X)=X^{Q+1}$ or there exist $\alpha,\beta\in\F_{q^2}\setminus\mu_{q+1}$ for which
\begin{align*}
A_0(X)&=(\alpha^{qQ+q}+\beta)X^{Q+1}+(\alpha^{qQ}+\alpha\beta)X^Q\\&\qquad+(\alpha^q+\alpha^Q\beta)X+(1+\alpha^{Q+1}\beta);
\end{align*}
\item $\ord_2(k)\ge\ord_2(\ell)$ and either $A_0(X)=X^Q$ or there exist $\alpha,\beta\in\F_{q^2}\setminus\mu_{q+1}$ for which
\begin{align*}
A_0(X)&=(\alpha^{qQ}+\alpha^q\beta)X^{Q+1}+(\alpha^{qQ+1}+\beta)X^Q\\&\qquad+(1+\alpha^{q+Q}\beta)X+(\alpha+\alpha^Q\beta).
\end{align*}
\end{enumerate}
\end{thm}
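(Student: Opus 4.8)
The plan is to deduce Theorem~\ref{fir2} from Theorem~\ref{fir}, which I take as given. The first step is to record the two substitutions that preserve the permutation property of $X^rA(X^{q-1})$. Replacing the variable $X$ by $\zeta X$ for $\zeta\in\F_{q^2}^*$ turns $A(X)$ into $A(\gamma X)$ with $\gamma:=\zeta^{q-1}\in\mu_{q+1}$ (up to the harmless scalar $\zeta^r$), and $\gamma$ ranges over all of $\mu_{q+1}$ as $\zeta$ varies; multiplying the output by $\delta\in\F_{q^2}^*$ turns $A(X)$ into $\delta A(X)$. Hence $A(X)\mapsto\delta A(\gamma X)$ preserves the permutation property, so it suffices, under the standing hypotheses $p=2$ and $\gcd(r,q-1)=1$, to show that the set of $(a,b,c,d)$ satisfying conditions~(3)--(5) of Theorem~\ref{fir} is exactly the union of the orbits, under this group of substitutions, of the three displayed families $A_0$.

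For the \emph{if} direction I would verify conditions~(3)--(5) directly for each family. In case~(1) the four coefficients of $A_0$ are the entries of the outer product of the column $(\alpha^Q,\alpha^{qQ})$ with the row $(\alpha,\alpha^q)$, shifted by $\beta$ in every entry: a ``rank-one-plus-constant'' array. In characteristic two the doubled norm terms collapse, giving $e=\Tr_{\F_{q^2}/\F_q}(\beta)\cdot\Tr_{\F_{q^2}/\F_q}(\alpha)^{Q+1}$, so condition~(3) is equivalent to $\alpha,\beta\notin\F_q$, exactly the stated restriction; condition~(4) then follows from the rank-one structure, and a short computation reduces $(b^{q+1}+c^{q+1})/e$ to $\Tr_{\F_{q^2}/\F_q}(\alpha^{Q+q})/\Tr_{\F_{q^2}/\F_q}(\alpha)^{Q+1}$, turning condition~(5) into a trace identity in $\alpha$ alone. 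The monomials $X^{Q+1}$ and $X^Q$, and the two-parameter families, in cases~(2) and~(3) are checked the same way.

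The \emph{only if} direction is the substance of the argument. Given $(a,b,c,d)$ satisfying~(3)--(5), I would rewrite condition~(4) as $(P/e)^Q=R/e$ with $P:=ab^q+cd^q$ and $R:=ac^q+bd^q$, use the scalings (under which $P/e\mapsto\gamma\,P/e$, $R/e\mapsto\gamma^Q R/e$, while $(b^{q+1}+c^{q+1})/e$ is invariant) to put the data in normal form, and then solve for a parameter $\alpha$ exhibiting the rank-one-plus-constant shape; the exclusions $\alpha\notin\F_q$ or $\alpha\notin\mu_{q+1}$ are forced by~(3). The split into three cases, and the governing dichotomy between $\ord_2(k)$ and $\ord_2(\ell)$, arises from the solvability of the relevant Frobenius-difference (norm-type) equation, e.g.\ whether $x\mapsto x^{Q-1}$ or $x\mapsto x^{q-Q}$ is surjective onto the required subgroup of $\F_{q^2}^*$; this surjectivity is governed by $\gcd$s of the exponents with $q\pm1$, which translate precisely into the comparison of the two $2$-adic valuations.

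The main obstacle is proving exhaustiveness in this last step: that for each $(k,\ell)$ \emph{every} solution of~(3)--(5) is captured by the family attached to the applicable case, the two degenerate monomial solutions included. This has two delicate ingredients. First, the trace condition~(5) must be converted from the equality $\Tr_{\F_q/\F_{2^m}}(\cdots)=\lcm(k,\ell)/m$ into a solvability and counting statement about the parametrizing variables, which requires pinning down the image of the trace map and matching cardinalities. Second, where two of the ranges overlap (when $\ord_2(k)=\ord_2(\ell)$, or at the boundary between the strict inequalities), one must check that the different parametrizations describe the same solution set; I expect this to follow from an explicit involution swapping cases~(1) and~(3) (interchanging the roles of $X$ and $X^Q$, equivalently of $Q$ and $q/Q$), with case~(2) serving as the common refinement.
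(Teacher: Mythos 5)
Your proposal has a genuine gap: the heart of the theorem is never proved. Your plan is to take Theorem~\ref{fir} as given and show that the solution set of its conditions (3)--(5) equals the union of the orbits of the three displayed families under $A(X)\mapsto\delta A_0(\gamma X)$. The reduction to orbits is fine, and your opening computation is correct (in case (1) one does get $e=(\beta+\beta^q)(\alpha+\alpha^q)^{Q+1}$, so condition (3) is exactly $\alpha,\beta\notin\F_q$). But from there both directions are asserted rather than proved. In the ``if'' direction, condition (4) is waved through (``follows from the rank-one structure'') and condition (5) is reduced to ``a trace identity in $\alpha$ alone'' --- namely $\Tr_{\F_q/\F_{2^m}}\bigl((\alpha^{q+Q}+\alpha^{qQ+1})/(\alpha+\alpha^q)^{Q+1}\bigr)=\lcm(k,\ell)/m$ --- which is precisely where the hypothesis $\ord_2(k)\le\ord_2(\ell)$ must enter and which is nontrivial (it is the kind of statement the paper needs its Lemmas on composed traces for); you do not prove it. In the ``only if'' direction, which is the actual content of the classification, you explicitly concede the argument (``the main obstacle is proving exhaustiveness\dots This has two delicate ingredients'') and supply only a description of what would need to be done: solving the Frobenius-type equation, pinning down the image of the trace map, and matching cardinalities. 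None of this is executed, so there is no proof. A further sign the plan is off track: your expected ``involution swapping cases (1) and (3)'' in the overlap $\ord_2(k)=\ord_2(\ell)$ does not exist and is not needed --- in that overlap the two families capture genuinely different permutations (those conjugate to $X^{Q+1}$ via maps $\mu_{q+1}\to\bP^1(\F_q)$ versus those conjugate to $X^{Q-1}$ via maps preserving $\mu_{q+1}$), and the theorem only asserts that each solution lies in \emph{some} applicable family.

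For comparison, the paper does not route Theorem~\ref{fir2} through Theorem~\ref{fir} at all (in the paper's logical order, Theorem~\ref{fir} comes later). It reduces via Lemmas~\ref{old} and \ref{rewrite} to the question of when $g(X)=X^{Q+1}A^{(q)}(1/X)/A(X)$ permutes $\mu_{q+1}$, and then invokes Proposition~\ref{mainpart}: the permutation property forces, by Theorem~\ref{geom} and Corollary~\ref{lpp2}, that $g=\rho^{-1}\circ X^{Q\pm1}\circ\sigma$ with $\rho,\sigma$ degree-one maps carrying $\mu_{q+1}$ to $\bP^1(\F_q)$ or to $\mu_{q+1}$ (Proposition~\ref{key}), and the three families then fall out as the denominators of these compositions once $\rho,\sigma$ are parametrized by Lemmas~\ref{mutomu} and \ref{mutoF}. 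That is, the parametrization is obtained geometrically, not by solving the coefficient equations of Theorem~\ref{fir}; your proposed direct algebraic equivalence between the two descriptions is a different and much harder road, and as written it stops exactly where the difficulty begins.
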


We now explain how these results differ from all previous results in the subject. As noted above, the results from 
\cite{ZR} reduce the proofs of Theorem~\ref{fir} and Theorem~\ref{fir2} to determining when an associated rational function 
$g(X) \in \F_{q^2}(X)$ permutes $\mu_{q+1}$, or equivalently an associated rational function $h(X) \in \F_q(X)$ permutes 
$\bP^1(\F_q)$. A standard approach to investigating permutation rational functions, dating back to \cite{BSD}, \cite{DL}, 
and \cite{Hayes}, argues that if $h(X)\in\F_q(X)$ permutes $\bP^1(\F_q)$ then the (possibly reducible) curve $h(X)=h(Y)$ has 
no non-diagonal $\F_q$-rational points, which by Weil's bounds implies that if $q$ is sufficiently large compared to $\deg(h)$ 
then the diagonal is the only geometrically irreducible component of $h(X)=h(Y)$ which is defined over $\F_q$. This is the key 
first step in many important papers in the subject, since it enables one to use techniques from algebraic geometry, 
Galois theory, and group theory; e.g., cf.\ \cite{DZ1,DZexc,FGS,GMS,GRZ,GTZ,GZ}. However, this approach is only useful when 
$q$ is large compared to $\deg(h)$, since otherwise a geometrically irreducible component of $h(X)=h(Y)$ defined over $\F_q$ 
can have no $\F_q$-rational points. Since our main results include cases where $q<\deg(h)$, we are forced to introduce 
a completely new approach which does not rely on Weil's bound.

As noted above, our proofs of Theorem~\ref{fir} and Theorem~\ref{fir2} begin by using simple results from \cite{Zlem} and 
\cite{ZR} to reduce to the problem of determining when an associated rational function $g(X)$ permutes $\mu_{q+1}$. Thus 
the bulk of the proofs of the above results consists of the proof of the following result, which is of independent interest.

\begin{thm}\label{main}
Write $q:=p^k$ and $Q:=p^\ell$ where $p$ is prime and $k$ and $\ell$ are positive integers. Pick $a,b,c,d\in\F_{q^2}$ which are 
not all zero, and write $A(X):=aX^{Q+1}+bX^Q+cX+d$ and $B(X):=d^q X^{Q+1}+c^q X^Q + b^q X + a^q$. Then the following are equivalent:
\renewcommand{\labelenumi}{\emph{(\arabic{enumi})}}
\begin{enumerate}
\item $A(X)$ has no roots in $\mu_{q+1}$ and $g(X):=B(X)/A(X)$ permutes $\mu_{q+1}$;
\item conditions \emph{(2)--(5)} of Theorem~\emph{\ref{fir}} all hold;
\item $p=2$ and $A(X)=\delta A_0(\gamma X)$ for some $\gamma\in\mu_{q+1}$, some $\delta\in\F_{q^2}^*$, and some $A_0(X)\in\F_{q^2}[X]$ 
such that one of the conditions \emph{(1)--(3)} of Theorem~\emph{\ref{fir2}} holds.
\end{enumerate}
\end{thm}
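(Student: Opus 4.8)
The plan is to work entirely with the action of $g(X)=B(X)/A(X)$ on $\mu_{q+1}$ and to exploit one identity that makes the problem tractable: since $x^q=x^{-1}$ for $x\in\mu_{q+1}$, a direct computation gives $B(x)=x^{Q+1}A(x)^q$ and $B(x)^q=x^{-(Q+1)}A(x)$, whence $g(x)^{q+1}=1$ whenever $A(x)\ne0$. Thus $g$ maps $\mu_{q+1}$ into itself and condition (1) is equivalent to $g$ being \emph{injective} on $\mu_{q+1}$. First I would record the Möbius transformation $\psi(X)=(\alpha X+\alpha^q)/(X+1)$ for $\alpha\in\F_{q^2}\setminus\F_q$, which carries $\mu_{q+1}$ bijectively onto $\bP^1(\F_q)$ (one checks $\psi(x)^q=\psi(x)$), so that the study of $g$ on $\mu_{q+1}$ becomes that of an associated $h\in\F_q(X)$ on $\bP^1(\F_q)$, exactly in the spirit of \cite{ZR}.

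The key structural observation I would establish next is that $A$ and $B$ are governed by the matrix $M=\left(\begin{smallmatrix}a&b\\c&d\end{smallmatrix}\right)$ through $A(X)=(X^Q,1)\,M\,(X,1)^{\mathsf T}$, and that substituting a Möbius transformation $X\mapsto N\cdot X$ replaces $M$ by $(N^{(Q)})^{\mathsf T}MN$ up to scalar, where $N^{(Q)}$ is the entrywise $Q$-th power. Restricting to those $N$ preserving $\mu_{q+1}$ — a group isomorphic to $\PGL_2(\F_q)$ — gives an action on the space of admissible $A$ that preserves the permutation property, together with the scalings $A\mapsto\delta A$ and $X\mapsto\gamma X$ in the statement. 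The plan is to classify $M$ up to this action into a short list of normal forms and to recognise the families in (3) as the orbit representatives: each family in case (1) factors as $A_0(X)=(\alpha X+\alpha^q)^{Q+1}+\beta(X+1)^{Q+1}$, with $B_0$ the same expression with $\beta$ replaced by $\beta^q$, so that $g=\sigma\circ(t\mapsto t^{Q+1})\circ\psi$ for explicit Möbius maps $\sigma,\psi$; the two further orbit types, with degenerate representatives $A_0=X^{Q+1}$ and $A_0=X^Q$, yield cases (2) and (3), where $g$ restricts to the power maps $x\mapsto x^{-(Q+1)}$ and $x\mapsto x^{1-Q}$ on $\mu_{q+1}$.

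With $g$ written as a power map conjugated by coordinate changes, the permutation property becomes a gcd condition: $t\mapsto t^{Q+1}$ permutes $\bP^1(\F_q)$ iff $\gcd(Q+1,q-1)=1$, while $x\mapsto x^{-(Q+1)}$ and $x\mapsto x^{1-Q}$ permute $\mu_{q+1}$ iff $\gcd(Q+1,q+1)=1$ and $\gcd(Q-1,q+1)=1$ respectively. Evaluating $\gcd(2^\ell\pm1,2^k\mp1)$ produces exactly the dichotomy among $\ord_2(k)$ and $\ord_2(\ell)$ separating the three cases, and forces $p=2$ (for odd $p$ the analogous power maps never permute, since each relevant $\gcd$ is even, ruling out every normal form). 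Matching the surviving orbits to the explicit coefficient conditions (2)--(5) of Theorem~\ref{fir}, where the trace identity records precisely which $\alpha,\beta$ occur, finishes the equivalence (2)$\Leftrightarrow$(3); the direction (3)$\Rightarrow$(1) is then the routine verification just described.

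I expect the genuine obstacle to be the direction (1)$\Rightarrow$(3): showing that \emph{every} permutation arises from these families, with no sporadic examples. This is where the Weil bound is useless, since $\deg(g)=Q+1$ may dwarf $q$. The plan is to replace point-counting by an exact analysis of the curve $B(X)A(Y)=B(Y)A(X)$: after removing the diagonal, I would decompose it over $\overline{\F_q}$ into explicit components — for the power-map models these are the lines $t-\zeta s=0$ with $\zeta$ ranging over $(Q+1)$-st or $(q+1)$-st roots of unity — and determine which components are defined over $\F_q$ by a Galois/root-of-unity calculation rather than by estimating their number of points. Injectivity on $\mu_{q+1}$ then forces the diagonal to be the only such component, yielding the gcd conditions exactly. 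The difficulty is to carry out this component analysis uniformly for an arbitrary admissible $M$ before it has been put into normal form, so that no permutation is overlooked.
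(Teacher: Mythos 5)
Your preliminary reductions are sound, and they do track the paper's easy steps: the observation that $B(x)=x^{Q+1}A(x)^q$ on $\mu_{q+1}$, the transfer to $\bP^1(\F_q)$ via $\psi(X)=(\alpha X+\alpha^q)/(X+1)$, the verification that the three families in (3) are exactly the M\"obius conjugates of $X^{Q+1}$ and $X^{Q-1}$ compatible with the constraint $B(X)=X^{Q+1}A^{(q)}(1/X)$, and the $\gcd(2^i\pm 1,2^j\pm 1)$ computation producing the $\ord_2(k)$ versus $\ord_2(\ell)$ trichotomy all appear in the paper (its Lemmas on $\mu_{q+1}$-preserving degree-one maps, Lemma on gcd's, and the bijectivity criterion for $\rho^{-1}\circ X^n\circ\sigma$, followed by the coefficient matching in Sections 5--6). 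Your twisted-congruence action $M\mapsto (N^{(Q)})^{\mathsf T}MN$ is a legitimate reformulation of the paper's substitution bookkeeping. But you correctly identify (1)$\Rightarrow$(3) as the genuine obstacle, and there your plan has a concrete gap.

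The failing step is the sentence ``injectivity on $\mu_{q+1}$ then forces the diagonal to be the only such component.'' That inference \emph{is} the point-counting step you set out to avoid: a non-diagonal component of $B(X)A(Y)=B(Y)A(X)$ defined over $\F_q$ (or stable under the relevant twisted Galois action) only contradicts injectivity if it actually contains a non-diagonal point with both coordinates in $\mu_{q+1}$, and producing such a point is exactly what the Weil bound does when $q$ is large compared to the degree. In the regime $q<Q+1$ covered by the theorem, such a component can simply be disjoint from $\mu_{q+1}\times\mu_{q+1}$, so nothing collapses the component list; moreover your explicit decomposition into lines exists only for the power-map models, i.e.\ \emph{after} the normal-form reduction that is the thing to be proved, and you offer no way to control the components for arbitrary $(a,b,c,d)$ --- a difficulty you flag but do not resolve. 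The same gap infects your treatment of odd $p$: ruling out the normal forms by parity of gcd's does not rule out permutations that you have not yet shown reduce to normal forms. The paper closes this hole with two tools absent from your proposal. First, the exact identities $U(X)=(d^qX+c^q)A(X)-(aX+b)B(X)$, $V(X)^Q=A(X)B'(X)-A'(X)B(X)$, and $U(X)V(X)^Q=W(g(X))A(X)^2$, combined with Riemann--Hurwitz, yield (in every characteristic) the dichotomy: either $g$ has ramification multiset $[1,Q]$ over some point, or $g=\rho\circ X^n\circ\sigma$ with $n\in\{Q-1,Q+1\}$ (Theorem~\ref{geom}). Second, the $[1,Q]$ case is eliminated by Theorem~\ref{lpp}, whose engine is a Hermite-type computation with no curve geometry at all: writing the putative permutation as $X^s(\alpha^qX^t-1)/(X^t-\alpha)$ on $\mu_{q+1}$, one sums over $\beta\in\mu_{q+1}$, expands the fraction as a geometric series, and observes that exactly one power sum $\sum_{\beta\in\mu_{q+1}}\beta^{s+t(i+1)}$ is nonzero, forcing $s\equiv 0\pmod{q+1}$. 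Unless you supply a substitute for both of these steps, your argument for (1)$\Rightarrow$(3) --- and hence for $p=2$ --- does not close.
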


Our proof of Theorem~\ref{main} proceeds in three steps. Loosely speaking, in step 1 we determine information about the geometry 
of $g(X)$ as a function $\bP^1\to\bP^1$; in step 2 we show that certain possibilities for this geometry prevent $g(X)$ from permuting 
$\mu_{q+1}$; and in step 3 we determine all permutations coming from the remaining geometric possibilities. More formally, the steps 
are as follows:
\begin{enumerate}
\item Show that if $A(X)$ has no roots in $\mu_{q+1}$ and $g(X)$ is nonconstant then either two points of $\bP^1(\mybar\F_q)$ 
each have a unique $g$-preimage or one point of $\bP^1(\mybar\F_q)$ has $g$-preimages with ramification indices $1$ and~$Q$.
\item Show that if some point of $\bP^1(\mybar\F_q)$ has $g$-preimages with ramification indices $1$ and $Q$ then $g(X)$ does 
not permute $\mu_{q+1}$ (in fact, we prove a more general result, cf.\ Theorem~\ref{lpp}).
\item Determine all choices of $A(X)$ which have no roots in $\mu_{q+1}$ in case $g(X)$ permutes $\mu_{q+1}$ and two points of 
$\bP^1(\mybar\F_q)$ each have a unique $g$-preimage.
\end{enumerate}
Each of these three steps presents new types of challenges. In the first step we go a long way towards determining the ramification 
of all members of each of infinitely many four-parameter families of coverings; \emph{a priori} it is not clear that this problem is 
feasible. The second step is entirely new, in that it uses purely geometric information in order to determine when $g(X)$ permutes 
$\mu_{q+1}$. We note that this type of conclusion can be proved when $q>4(Q+1)^4$ by using Galois theory and Weil's bound, but we prove 
the result without any assumption on the relative sizes of $q$ and $Q$, so that our proof cannot use Weil's bound and hence requires 
an entirely new approach. Finally, the third step combines geometric arguments with a series of elementary (but tricky) computations.

The hard part in our work is showing that $g(X)$ is not a permutation when item (2) or (3) of Theorem~\ref{main} does not hold. 
The converse implication is much easier, as it just involves computing the denominator of $\rho\circ X^{Q+1}\circ\sigma$ for certain 
degree-one $\rho,\sigma\in\F_{q^2}(X)$, and likewise computing the product of the denominator of $\rho\circ X^{Q-1}\circ\sigma$ with 
the product of the numerator and denominator of $\sigma(X)$ for certain degree-one $\rho,\sigma\in\F_{q^2}(X)$. This approach allows 
for a short proof that item (3) in Theorem~\ref{main} implies item (1) (cf.\ \cite{Zx}). One can also use it to show that (2) implies 
(1). But completely different ideas are needed in order to show that (1) implies (2) or (3). All previous results showing that (1) 
implies (2) or (3) in some special case only applied when $Q=2$, and are immediate consequences of the classification of degree-$3$ 
permutation rational functions (which is proved in a few lines in \cite[Thm.~1.3]{DZ1}, cf.\ Lemma~\ref{deg3perm}).

We note that the permutation condition in Theorems~\ref{fir} and \ref{fir2} immediately implies that $p=2$, since $\alpha$ and $-\alpha$ 
have the same image under $X^r A(X^{q-1})$. Thus, we could have assumed that $p=2$ in those results without significant loss. However, 
it is quite difficult to show that item (1) in Theorem~\ref{main} implies that $p=2$. Our perspective is that the most fundamental 
objects in our study are the functions $g(X)$ on $\mu_{q+1}$, so that it is natural to examine when such functions $g(X)$ permute 
$\mu_{q+1}$ in case $p$ is odd, even though that situation cannot yield permutations of $\F_{q^2}$.  This perspective is supported by the fact that the statements of Theorems~\ref{fir} and \ref{fir2} are complicated, but all the bijective functions $g(X)$ in Theorem~\ref{main} turn out to have the simple form $\rho\circ X^n\circ\sigma$ for some $n\in\{Q-1,Q+1\}$ and some degree-one $\rho,\sigma\in\F_{q^2}(X)$.

We will use the above results to resolve eight conjectures and open problems from the literature. Six of these describe all 
permutation polynomials among certain classes of polynomials: the conjecture in \cite[p.~4854]{LXZ}, the two open problems 
in \cite[Open problem]{ZKP} and \cite[Rem.~2]{ZKP}, the open problem \cite[Open problem 2]{ZKPT}, and the two conjectures in 
\cite[pp.~5 and 20]{ZLKPT}. The final two conjectures are \cite[Conj.~19]{LLHQ} and \cite[Rem.~2]{LHXZ}, which describe all 
permutations among a certain class of functions from $\F_q\times\F_q$ to itself that are defined by a pair of bivariate polynomials.  
Intriguingly, these last two conjectures arose in the context of boomerang attacks against butterfly structures in cryptography.  
In particular, our Corollary~\ref{pre5b}
determines all instances of the generalized closed butterfly map introduced in \cite{LTYW} which permute $\F_q\times\F_q$.  In light of \cite[Thm.~1]{LLHQ} and  \cite[Thm.~2(2)]{LHXZ2}, it follows that if such a map is a permutation then it has boomerang uniformity $4$, 
and also it is linearly equivalent to a Gold function over $\F_{q^2}$. We refer the interested reader to \cite{LLHQ,LHXZ2,LTYW} 
for the relevant definitions.

We note that some of the above conjectures follow easily from our results, while others require significant additional work. 
In particular, our proof of the conjecture from \cite[Rem.~2]{LHXZ} relies on a new polynomial identity of independent interest 
(Theorem~\ref{identity}). Unexpectedly, it turns out that this identity provides a new proof of a result of Cusick and M\"uller 
about image sizes of certain polynomials in $\F_q[X]$ which are \emph{not} permutations.

Our classification result immediately yields several further classification results of permutation polynomials $f(X)$ 
over $\F_{q^2}$, by composing $f(X)$ with permutation monomials and reducing mod $X^{q^2}-X$. Explicitly, we make the 
following definition:

\begin{defn}\label{me}
We say that $f,g\in\F_q[X]$ are \emph{multiplicatively equivalent} if $g(X)\equiv \beta f(\alpha X^n)\pmod{X^q-X}$ for some 
$\alpha, \beta \in \F_q^*$ and some positive integer $n$ such that $\gcd(n,q-1)=1$.
\end{defn}

Plainly this is an equivalence relation on $\F_q[X]$, and if $f,g\in\F_q[X]$ are multiplicatively equivalent then $f(X)$ 
permutes $\F_q$ if and only if $g(X)$ permutes $\F_q$. Moreover, if $f,g\in\F_q[X]$ are multiplicatively equivalent and 
$\deg(g)<q$ then $g(X)$ has at most as many terms as does $f(X)$.

\begin{rmk}
The above notion has been called ``quasi-multiplicative equivalence'' in some previous papers, and the term ``multiplicative equivalence'' 
has been used for each of two different notions. However, we cannot envision any situation in which either of the previous definitions 
of multiplicative equivalence would be preferable to the definition above, so we encourage subsequent authors to use the above definition.
\end{rmk}

In addition to the eight open problems mentioned above, our main results subsume $58$ previous results (in $32$ papers by $46$ authors),
%
%
once one replaces the polynomials in our main results by suitable multiplicatively equivalent polynomials; 
cf.~ Tables~\ref{tab2} and \ref{tab1}. We note that some of these previous results resolved three earlier 
conjectures from the literature.

\begin{table}[!htbp]
\caption{Previous results subsumed by Theorems~\ref{fir}--\ref{main}, I}
\label{tab2}
\renewcommand{\arraystretch}{1.1}
\begin{tabular}{|c|c|}
\hline
$Q=4$&$Q=8$\\ \hline\hline

\cite[Thm.~3.4 and 3.5 and Conj.~2]{GS} &
%

\cite[Thm.~4.9]{XCP}

%

\\ \hline

\cite[Thm.~3.7 and 3.8]{LQCL} & 
%

\\ \hline

\cite[Thm.~2.4 and 2.8]{LQLF} & 
%

\\ \hline

\cite[Thm.~3 and 6]{LHsevtri} & 
%

\\ \hline

\cite[Thm.~4.2 and 4.4]{WYDM} & 
%

\\ \hline

\cite[Thm.~4.3 and 4.4]{ZHF} & 
%

\\ \hline

\end{tabular}
\end{table}

\begin{table}[!htbp]
\caption{Previous results subsumed by Theorems~\ref{fir}--\ref{main}, II}
\label{tab1}
\renewcommand{\arraystretch}{1.1}
\begin{tabular}{|c|c|}
\hline
$Q=2^\ell$, $\ell$ arbitrary&$Q=2$\\ \hline\hline

\cite[Thm.~4.2 and 4.3]{BQ} &
%
%

\cite[Thm.~3.6]{Bartoli}
%

\\ \hline
\cite[Thm.~3.1]{GWSZL} &
%
%

\cite[Prop.~3.2 and Thm.~3.4]{BQ}
%

\\ \hline
\cite[Thm.~1 and Prop.~15]{LLHQ} & 
%
%
%

\cite[Thm.~3.2 and Cor.~3.7]{DQWYY}
%

\\ \hline
\cite[Thm.~1 and 2]{LHnewtri} & 
%
%

\cite[Thm.~5.9]{FHL2}
%

\\ \hline
\cite[Thm.~1 and 2]{LHXZ} & 
%
%

\cite[Thm.~2]{Houclasstri}
%

\\ \hline
\cite[Thm.~1]{LXZ} & 

\cite[Thm.~B]{Houdettri}
%

\\ \hline
\cite[Thm.~3.1]{WZZ} &
%

\cite[Thm.~B]{Houdettri2}
%

\\ \hline
\cite[Thm.~3.15 and Cor.~3.7--3.14]{WYDM} &
%

\cite[Thm.~1.1]{Houclasstri2}
%

\\ \hline 
\cite[Thm.~3.1 and 3.2]{ZKP} & 

\cite[Thm.~4.9]{LQC}
%

\\ \hline
\cite[Thm.~3.3]{ZKPT} &
%

\cite[Thm.~3.6]{LQCL}
%

\\ \hline
\cite[Thm.~1.1 and 1.3]{ZLKPT} &
%

\cite[Thm.~1.2]{LQLC}

\\ \hline
\cite[Thm.~1.1]{Zx}
%
%
& 

\cite[Thm.~1]{TLZ}

\\ \hline
& 

\cite[Thm.~1]{TZHnew}
%

\\ \hline
&

\cite[Thm.~1]{TZHclass}
%

\\ \hline
& 

\cite[Conj.~1]{TZHL}
%

\\ \hline
& 

\cite[Thm.~1 and Open problem]{TZLH}
%

\\ \hline
& 

\cite[Thm.~4.1 and 4.2]{ZHF}
%

\\ \hline

\end{tabular}
\end{table}


\FloatBarrier

This paper is organized as follows. In the next section we introduce notation and recall several background results 
we need for our proofs. In section~\ref{sec:geom} we determine geometric properties of the rational functions $g(X)$ 
under consideration. In section~\ref{sec:npp} we show that if $g(X)$ has certain geometric properties then $f(X)$ 
cannot permute $\F_{q^2}$. In section~\ref{sec:param} we combine the results of the previous two sections to prove 
Theorem~\ref{fir2}. In section~\ref{sec:Q} we translate the geometric properties from section~\ref{sec:geom} into 
conditions on the coefficients, and then in section~\ref{sec:main} we prove Theorem~\ref{fir} and Theorem~\ref{main}. 
In the final two sections we prove an identity of bivariate polynomials which implies the Cusick--M\"uller result 
about images of certain non-permutation polynomials \cite{CM}, and then use this identity and other ingredients to 
resolve eight conjectures and open problems from the literature.

\section{Preliminaries}

We use the following notation in this paper:

\begin{itemize}
\item $q$ is a prime power,
\item if $K$ is a field then $\mybar K$ is an algebraic closure of $K$,
\item if $n$ is a positive integer then $\mu_n$ is the set of $n$-th roots of unity in $\mybar\F_q$,
\item if $K$ is a field then $\bP^1(K):=K\cup\{\infty\}$ is the set of $K$-rational points on $\bP^1$,
\item if $s$ is a power of $2$ and $t$ is a power of $s$ then we define $\Tr_{\F_t/\F_s}(X)$ to be the polynomial 
$X+X^s+X^{s^2}+X^{s^3}+\dots+X^{t/s}$ ,
\item if $n$ is a positive integer then $\ord_2(n)$ denotes the largest integer $i\ge 0$ for which $2^i\mid n$.
\end{itemize}


\subsection{Self-conjugate reciprocal polynomials}

We now recall some simple results about self-conjugate reciprocal polynomials, which are defined as follows.

\begin{notation}
For $g(X)\in\F_{q^2}(X)$ we define $g^{(q)}(X)$ to be the rational function obtained from $g(X)$ 
by raising every coefficient to the $q$-th power. For any nonzero $A(X)\in\F_{q^2}[X]$ we define 
$\widehat A(X):=X^{\deg(A)} A^{(q)}(1/X)$. Explicitly, if $A(X)=\sum_{i=0}^n \alpha_i X^i$ with 
$\alpha_i\in\F_{q^2}$ and $\alpha_n\ne 0$ then $A^{(q)}(X)=\sum_{i=0}^n \alpha_i^q X^i$ and 
$\widehat A(X)=\sum_{i=0}^n \alpha_i^q X^{n-i}$.
\end{notation}

\begin{defn}\label{SCR}
We say that a nonzero $A(X)\in\F_{q^2}[X]$ is \emph{self-conjugate reciprocal} (or SCR for short) 
if $\widehat A(X)=\alpha A(X)$ for some $\alpha\in\F_{q^2}$.
\end{defn}

The next lemma is immediate from the definitions.

\begin{lemma}\label{SCRbasics} 
All of the following hold:
\begin{itemize}
\item If $A(X)\in\F_{q^2}[X]$ is SCR then $\widehat A(X)/A(X)\in\mu_{q+1}$.
\item For nonconstant $g_1,g_2\in\F_{q^2}(X)$ we have $(g_1\circ g_2)^{(q)}=g_1^{(q)}\circ g_2^{(q)}$.
\item If $A(X)\in\F_{q^2}[X]$ is nonzero and $\alpha\in\mybar\F_q^*$ then the multiplicity of $\alpha$ as a root 
of $A(X)$ equals the multiplicity of $\alpha^{-q}$ as a root of $\widehat A(X)$.
\item $A(X)\in\F_{q^2}[X]$ is SCR if and only if the multiset of roots of $A(X)$ is preserved by the function 
$\alpha\mapsto\alpha^{-q}$. 
\item Every degree-$1$ SCR polynomial has a root in $\mu_{q+1}$.
\item If $\alpha\in\F_{q^2}^*$ and $\beta\in\F_q$ then $\alpha X^2+\beta X+\alpha ^q$ is SCR.
\end{itemize}
\end{lemma}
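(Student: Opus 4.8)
Since the lemma is asserted to be immediate from the definitions, the plan is simply to verify the six bullets directly, arranging them so the later ones reuse earlier computations. Writing $A(X)=\sum_{i=0}^n\alpha_i X^i$ with $\alpha_n\ne0$, I would first record three elementary facts about the operation $A\mapsto\widehat A$. From the explicit formula $\widehat A(X)=\sum_{i=0}^n\alpha_i^q X^{n-i}$ one reads off that $\deg\widehat A=n-\min\{i:\alpha_i\ne0\}$; hence if $\widehat A=\alpha A$ (where $\alpha\ne0$ is forced, since $\widehat A\ne0$), comparing degrees gives $\alpha_0\ne0$, so \emph{an SCR polynomial has nonzero constant term and in particular no root at $0$}. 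A one-line substitution gives $\widehat{cA}=c^q\widehat A$ for any constant $c$, and, provided $\alpha_0\ne0$, applying the definition twice and using $\alpha_i^{q^2}=\alpha_i$ gives the involution $\widehat{\widehat A}=A$. The first bullet now follows: if $\widehat A=\alpha A$ then $A=\widehat{\widehat A}=\widehat{\alpha A}=\alpha^q\widehat A=\alpha^{q+1}A$, so $\alpha^{q+1}=1$.

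The third bullet is the computational heart, and I would prove it by factoring $A(X)=\alpha_n\prod_j(X-\beta_j)^{m_j}$ over $\mybar\F_q$ and tracking the hat through the substitution $X\mapsto1/X$. Since raising coefficients to the $q$-th power is Frobenius, $A^{(q)}(X)=\alpha_n^q\prod_j(X-\beta_j^q)^{m_j}$, whence $\widehat A(X)=X^n A^{(q)}(1/X)=\alpha_n^q\prod_j(1-\beta_j^q X)^{m_j}$ using $\sum_j m_j=n$. Each factor with $\beta_j\ne0$ contributes a root $\beta_j^{-q}$ of multiplicity $m_j$ to $\widehat A$, which is exactly the claim that the multiplicity of any $\alpha\in\mybar\F_q^*$ as a root of $A$ equals the multiplicity of $\alpha^{-q}$ as a root of $\widehat A$.

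The fourth and fifth bullets then combine the third with the nonzero-constant-term fact. Because an SCR polynomial has all roots nonzero, the map $\gamma\mapsto\gamma^{-q}$ is defined on the whole root multiset, and by the third bullet the roots of $\widehat A$ are precisely its image; since $\deg\widehat A=\deg A$, the relation $\widehat A=\alpha A$ holds for some constant $\alpha$ exactly when $A$ and $\widehat A$ share the same root multiset, i.e.\ when $\gamma\mapsto\gamma^{-q}$ preserves the roots of $A$ --- this is the fourth bullet. For the fifth, a degree-one SCR polynomial has a single nonzero root $\beta$, and invariance forces $\beta^{-q}=\beta$, i.e.\ $\beta\in\mu_{q+1}$. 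I expect this constant-term/degree bookkeeping to be the only point in the whole lemma requiring any care; everything else is a formal consequence of the definitions.

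The remaining two bullets are independent and short. The second holds because the coefficientwise $q$-power map is the Frobenius automorphism of $\F_{q^2}$, which fixes the prime field and therefore commutes with the field operations and with the polynomial evaluation used to form $g_1\circ g_2$; writing $g_1,g_2$ as ratios of polynomials, applying Frobenius to all coefficients distributes across the composition. The sixth is a direct check: for $\alpha\in\F_{q^2}^*$ and $\beta\in\F_q$ one computes $\widehat{(\alpha X^2+\beta X+\alpha^q)}=\alpha^{q^2}X^2+\beta^q X+\alpha^q=\alpha X^2+\beta X+\alpha^q$, using $\alpha^{q^2}=\alpha$ and $\beta^q=\beta$, so the polynomial equals its own hat and is SCR.
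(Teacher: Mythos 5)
Your proof is correct. The paper gives no argument at all --- it declares the lemma ``immediate from the definitions'' --- and your verification is precisely the direct check that claim leaves to the reader, with the one point requiring genuine care (that $\widehat A=\alpha A$ forces $\alpha_0\ne 0$ via degree comparison, so an SCR polynomial has no root at $0$, the involution $\widehat{\widehat A}=A$ is available for the first bullet, and the map $\gamma\mapsto\gamma^{-q}$ is defined on the entire root multiset in the fourth) handled correctly.
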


We will also use the following simple result.

\begin{lemma}\label{quadscr}
If $q$ is even and $A(X):=\alpha X^2+\beta X+\alpha^q$ with $\alpha\in\F_{q^2}$ and $\beta\in\F_q^*$, then 
the following are equivalent:
\renewcommand{\labelenumi}{\emph{(\arabic{enumi})}}
\begin{enumerate}
\item \label{quadscr1} $A(X)$ has at least one root in $\mu_{q+1}$;
\item \label{quadscr2} $A(X)$ has two distinct roots in $\mu_{q+1}$;
\item \label{quadscr3} $\Tr_{\F_q/\F_2}(\alpha^{q+1}/\beta^2)=1$.
\end{enumerate}
\end{lemma}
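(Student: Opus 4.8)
The plan is to prove Lemma~\ref{quadscr} by exploiting the SCR structure of $A(X)$ together with the standard theory of quadratics in characteristic $2$. Since $q$ is even, by the last bullet of Lemma~\ref{SCRbasics} the polynomial $A(X)=\alpha X^2+\beta X+\alpha^q$ is SCR, so its multiset of roots is preserved by the involution $\tau\colon z\mapsto z^{-q}$. First I would dispose of the degenerate cases: the hypothesis $\beta\ne 0$ forces $\alpha\ne 0$ (else $A(X)=\beta X$ has degree~$1$, and anyway $A$ would have the single root $0\notin\mu_{q+1}$, making all three statements false), so $A(X)$ is a genuine separable quadratic with two distinct nonzero roots $z_1,z_2$ in $\mybar\F_q$ satisfying $z_1 z_2=\alpha^q/\alpha=\alpha^{q-1}$ and $z_1+z_2=\beta/\alpha$.

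Next I would establish the equivalence of (\ref{quadscr1}) and (\ref{quadscr2}). The fixed points of $\tau$ are exactly the elements $z$ with $z^{q+1}=1$, i.e.\ $z\in\mu_{q+1}$. Since $\tau$ permutes $\{z_1,z_2\}$, either it fixes both roots or it swaps them. If it fixes even one root, that root lies in $\mu_{q+1}$, giving (\ref{quadscr1}); I would then argue that $\tau$ must in fact fix the other root as well, by showing that the swapping case is incompatible with $A$ being defined over $\F_{q^2}$ in a way that places a root in $\mu_{q+1}$. Concretely, if $\tau$ swaps the roots then neither lies in $\mu_{q+1}$, so (\ref{quadscr1}) fails; and if $\tau$ fixes one root then, because $z_1 z_2=\alpha^{q-1}$ with $z_1\in\mu_{q+1}$, a short computation on the product shows $z_2\in\mu_{q+1}$ too. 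This yields (\ref{quadscr1})$\Rightarrow$(\ref{quadscr2}), while (\ref{quadscr2})$\Rightarrow$(\ref{quadscr1}) is trivial.

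The substantive part is the equivalence with (\ref{quadscr3}). The idea is to normalize $A(X)$ into Artin--Schreier form: dividing by $\alpha$ and substituting $X=(\beta/\alpha)Y$ (legitimate since $\beta\ne 0$) converts the existence of a root of $A$ in $\F_{q^2}$ into the solvability of $Y^2+Y=\alpha^{q+1}/\beta^2$ over the appropriate field, whose solvability is governed by the absolute trace $\Tr_{\F_q/\F_2}$ of the right-hand side. The care needed here is that the roots a priori live in $\F_{q^2}$, and I want a condition involving $\Tr_{\F_q/\F_2}$ rather than $\Tr_{\F_{q^2}/\F_2}$; I would therefore use the value $c:=\alpha^{q+1}/\beta^2\in\F_q$ (note $\alpha^{q+1}\in\F_q$ and $\beta\in\F_q^*$, so $c$ indeed lies in $\F_q$) and argue that the two roots of $A$ lie in $\mu_{q+1}$ precisely when the Artin--Schreier equation $Y^2+Y=c$ has its solutions in $\F_q$, which by the additive Hilbert~90 / Artin--Schreier theory happens if and only if $\Tr_{\F_q/\F_2}(c)=0$. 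To match the stated condition $\Tr_{\F_q/\F_2}(\alpha^{q+1}/\beta^2)=1$, I would track the correspondence between ``root in $\mu_{q+1}$'' and the solution of the Artin--Schreier equation landing in $\F_{q^2}\setminus\F_q$ versus $\F_q$, using that for $z=z_1$ the quantity $z^{q+1}=1$ translates into a specific normalized trace value.

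I expect the main obstacle to be pinning down the exact bookkeeping in the last step: correctly identifying which trace value ($0$ or $1$) corresponds to the roots lying in $\mu_{q+1}$, and justifying the reduction from a condition over $\F_{q^2}$ to the trace $\Tr_{\F_q/\F_2}$ over $\F_q$. The cleanest route is probably to parametrize $\mu_{q+1}$ directly: write a putative root as $z$ with $z^{q+1}=1$, substitute into $\alpha z^2+\beta z+\alpha^q=0$, divide by $\beta z$ to get $\alpha z/\beta + 1 + \alpha^q/(\beta z)=0$, and observe that setting $u:=\alpha z/\beta$ gives $u+u^{q}=1$ after using $z^{q}=z^{-1}$ and $\alpha^{q+1}/\beta^2$-type simplifications, so that $u+u^q=1$ is exactly an $\F_q$-trace condition; tracking $u^{q+1}=\alpha^{q+1}/\beta^2=c$ then forces $\Tr_{\F_q/\F_2}(c)=1$ as the solvability criterion. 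I would verify this substitution carefully, since the interplay between the $\mu_{q+1}$-constraint $z^q=z^{-1}$ and the coefficients is where sign/exponent errors are most likely.
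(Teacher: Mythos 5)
Your final route (the ``cleanest route'' paragraph) is correct and is in essence the paper's own proof: the paper likewise sets $u:=(\alpha/\beta)\gamma$ for a root $\gamma$ of $A(X)$, uses the telescoping identity $u^q+u=\Tr_{\F_q/\F_2}(u^2+u)=\Tr_{\F_q/\F_2}(\alpha^{q+1}/\beta^2)$, and then uses $A(\gamma)=0$ together with $\beta^q=\beta$ to conclude that $\gamma^{q+1}=1$ holds exactly when that trace equals $1$. Two differences are worth noting. First, since the paper's computation applies to an \emph{arbitrary} root $\gamma$, the equivalence of (1) and (2) comes for free (if the trace is $1$ then every root lies in $\mu_{q+1}$, and the two roots are distinct because $A'(X)=\beta\ne 0$); your separate argument via the SCR involution $z\mapsto z^{-q}$, or via $z_1z_2=\alpha^{q-1}$ and $\alpha^{(q-1)(q+1)}=1$, is also valid but is an extra step the paper avoids. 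Second, the tentative claim in your middle paragraph --- that roots of $A(X)$ in $\mu_{q+1}$ correspond to Artin--Schreier solutions lying in $\F_q$, hence to $\Tr_{\F_q/\F_2}(c)=0$ --- is backwards: from $u^q+u=1$ one sees $u\notin\F_q$, so roots in $\mu_{q+1}$ correspond to solutions in $\F_{q^2}\setminus\F_q$ and to trace value $1$, exactly as your final parametrization shows. When writing this up, discard that middle paragraph and keep only the direct substitution argument, which closes the bookkeeping issue you correctly anticipated.
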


\begin{proof}
We may assume $\alpha\ne 0$, since otherwise the result is immediate. Each root $\gamma$ of $A(X)$ is a root of 
$(\alpha/\beta^2)A(X)=(\alpha/\beta)^2X^2+(\alpha/\beta)X+\alpha^{q+1}/\beta^2$, so that $(\alpha/\beta)\gamma$ 
is a root of $X^2+X+\alpha^{q+1}/\beta^2$. Writing $\epsilon:=\Tr_{\F_q/\F_2}\bigl(\frac{\alpha}\beta\gamma\bigr)$, 
it follows that
\[
\Bigl(\frac{\alpha}\beta\gamma\Bigr)^q+\frac{\alpha}\beta\gamma = \epsilon^2+\epsilon
= \Tr_{\F_q/\F_2}\Bigl(\frac{\alpha^2}{\beta^2}\gamma^2+\frac{\alpha}\beta\gamma\Bigr)
= \Tr_{\F_q/\F_2}\Bigl(\frac{\alpha^{q+1}}{\beta^2}\Bigr).
\]
Multiply by $\gamma$ to get
\begin{align*}
\Bigl(\frac{\alpha}\beta\Bigr)^q\gamma^{q+1} &= \frac{\alpha}\beta\gamma^2 + \gamma \Tr_{\F_q/\F_2}\Bigl(\frac{\alpha^{q+1}}{\beta^2}\Bigr) \\
&= \gamma+\frac{\alpha^q}\beta + \gamma \Tr_{\F_q/\F_2}\Bigl(\frac{\alpha^{q+1}}{\beta^2}\Bigr),
\end{align*}
so that $\gamma^{q+1}=1$ if and only if
\[
\Bigl(\frac{\alpha}\beta\Bigr)^q+\frac{\alpha^q}\beta = \gamma\biggl(1+\Tr_{\F_q/\F_2}\Bigl(\frac{\alpha^{q+1}}{\beta^2}\Bigr)\biggr).
\]
Since $\gamma\ne 0$ and the left side of this equation is zero, it follows that $\gamma^{q+1}=1$ if and only if
\eqref{quadscr3} holds. Here $\gamma$ is an arbitrary root of $A(X)$, so we have shown that \eqref{quadscr3} 
is equivalent to both \eqref{quadscr1} and \eqref{quadscr2}.
\end{proof}

\begin{rmk}
We will give a comprehensive treatment of SCR polynomials in a forthcoming paper.
\end{rmk}


\subsection{Rational functions}

We make the following conventions about rational functions. Let $K$ be a a field and let $g(X)=N(X)/D(X)$ where 
$N,D\in K[X]$ with $D(X)$ monic. Let $C(X)$ be the monic greatest common divisor of $N(X)$ and $D(X)$ in $K[X]$, 
and write $N(X)=C(X)N_0(X)$ and $D(X)=C(X)D_0(X)$ with $N_0,D_0\in K[X]$. We make no distinction between $g(X)$ 
and $g_0(X):=N_0(X)/D_0(X)$. Thus, we view $g(X)$ as defining a function $\bP^1(K)\to\bP^1(K)$ given by 
$\alpha\mapsto g_0(\alpha)$, so that in particular $g(X)$ is defined at elements $\alpha\in K$ even if 
$N(\alpha)=D(\alpha)=0$. We refer to $N_0(X)$ and $D_0(X)$ as the numerator and denominator of $g(X)$, 
respectively, and we define $\deg(g):=\max(\deg(N_0),\deg(D_0))$ if $g(X)\ne 0$. We say that a nonconstant 
$g(X)\in K(X)$ is \emph{separable} if the field extension $K(x)/K(g(x))$ is separable, where $x$ is transcendental 
over $K$; it is known that $g(X)$ is separable if and only if $g(X)\notin K(X^p)$ where $p$ is the characteristic 
of $K$ (e.g., cf.\ \cite[Lemma~2.2]{DZ1}).

\begin{defn}
We say that nonconstant $f,g\in\F_q(X)$ are \emph{linearly equivalent} if $g=\rho\circ f\circ\sigma$ for some 
degree-one $\rho,\sigma\in\F_q(X)$.
\end{defn}
\noindent
Note that if $f,g\in\F_q(X)$ are linearly equivalent then $f(X)$ permutes $\bP^1(\F_q)$ if and only if $g(X)$ 
permutes $\bP^1(\F_q)$.

For any field $K$ and any degree-one $\rho(X)\in K(X)$, if we write 
\[
\rho(X):=(\alpha X+\beta)/(\gamma X+\delta)
\]
with $\alpha,\beta,\gamma,\delta\in K$ then we define 
\[
\rho^{-1}(X):=(\delta X-\beta)/(-\gamma X+\alpha).
\] 
This definition does not change if we multiply all of $\alpha,\beta,\gamma,\delta$ by a common element of $K^*$, 
and we note that $\rho\circ\rho^{-1}=X=\rho^{-1}\circ\rho$. In particular, if $\rho\in\F_{q^2}(X)$ has degree one 
then $(\rho^{(q)})^{-1}=(\rho^{-1})^{(q)}$.

We recall two simple results about degree-one rational functions from \cite{ZR}:

\begin{lemma}\label{mutomu}
A degree-one $\rho(X)\in\F_{q^2}(X)$ permutes $\mu_{q+1}$ if and only if $\rho(X)=(\beta^q X+\alpha^q)/(\alpha X+\beta)$ 
for some $\alpha,\beta\in\F_{q^2}$ with $\alpha^{q+1}\ne \beta^{q+1}$.
\end{lemma}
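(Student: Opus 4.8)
The plan is to identify each degree-one $\rho(X)=(aX+b)/(cX+d)$ (with $ad-bc\neq 0$) with an element of $\PGL_2(\F_{q^2})$ acting on $\bP^1(\F_{q^2})$, and to recognize $\mu_{q+1}$ as the fixed-point set of the involution $\tau$ of $\bP^1(\F_{q^2})$ given on nonzero finite points by $\tau(X)=X^{-q}$ (so that $\tau$ interchanges $0$ and $\infty$). Indeed, for $\zeta\in\F_{q^2}^*$ we have $\tau(\zeta)=\zeta$ exactly when $\zeta^{q+1}=1$, while $\tau$ fixes neither $0$ nor $\infty$, so $\mathrm{Fix}(\tau)=\mu_{q+1}$; and $\tau$ has order $2$ since $X^{q^2}=X$ on $\F_{q^2}$. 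The crux is the identity that, for nonzero $X\in\F_{q^2}$, the conjugate $\tau\circ\rho\circ\tau$ equals the Möbius transformation $\rho^*(X):=(d^qX+c^q)/(b^qX+a^q)$: one substitutes $\rho(X^{-q})=(bX^q+a)/(dX^q+c)$, applies $\tau$, and uses $X^{q^2}=X$. Note $\rho^*$ again has degree one, since its determinant is $(ad-bc)^q\neq 0$.

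For the forward implication, suppose $\rho$ permutes $\mu_{q+1}$. For each $\zeta\in\mu_{q+1}=\mathrm{Fix}(\tau)$ we have $\tau(\zeta)=\zeta$ and $\rho(\zeta)\in\mu_{q+1}=\mathrm{Fix}(\tau)$, whence $\rho^*(\zeta)=\tau(\rho(\tau(\zeta)))=\tau(\rho(\zeta))=\rho(\zeta)$. Thus the degree-one functions $\rho$ and $\rho^*$ agree at all $q+1$ points of $\mu_{q+1}$; since $q\geq 2$ we have $q+1\geq 3$, and two Möbius transformations agreeing at three distinct points coincide, so $\rho=\rho^*$ in $\PGL_2(\F_{q^2})$. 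Concretely $(a,b,c,d)=\lambda\,(d^q,c^q,b^q,a^q)$ for some $\lambda\in\F_{q^2}^*$; combining $a=\lambda d^q$ with $d=\lambda a^q$ (and $a^{q^2}=a$) gives $a=\lambda^{q+1}a$, and similarly $b=\lambda^{q+1}b$, so since $\rho$ is nonconstant we get $\lambda\in\mu_{q+1}$. As $t\mapsto t^{q-1}$ maps $\F_{q^2}^*$ onto $\mu_{q+1}$, I would choose $t$ with $t^{q-1}=\lambda$ and set $\alpha:=tc$, $\beta:=td$; then $\beta^q=t^qd^q=t\lambda d^q=ta$ and $\alpha^q=t^qc^q=tb$, so the scaled coefficient vector $t(a,b,c,d)$ equals $(\beta^q,\alpha^q,\alpha,\beta)$, which is the asserted shape. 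Here $\alpha^{q+1}\neq\beta^{q+1}$ because the determinant $\beta^{q+1}-\alpha^{q+1}$ is the nonzero scalar $t^2(ad-bc)$.

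For the converse, if $\rho(X)=(\beta^qX+\alpha^q)/(\alpha X+\beta)$ with $\alpha^{q+1}\neq\beta^{q+1}$, then its determinant $\beta^{q+1}-\alpha^{q+1}$ is nonzero, so $\rho$ has degree one, and taking $(a,b,c,d)=(\beta^q,\alpha^q,\alpha,\beta)$ one reads off $(d^q,c^q,b^q,a^q)=(\beta^q,\alpha^q,\alpha,\beta)$ using $\alpha^{q^2}=\alpha$ and $\beta^{q^2}=\beta$; that is, $\rho^*=\rho$. Hence $\tau\circ\rho\circ\tau=\rho$ on $\F_{q^2}^*$, so for $\zeta\in\mu_{q+1}=\mathrm{Fix}(\tau)$ we get $\rho(\zeta)=\tau(\rho(\zeta))$, i.e.\ $\rho(\zeta)\in\mathrm{Fix}(\tau)=\mu_{q+1}$. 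Thus $\rho$ maps $\mu_{q+1}$ into itself, and being injective on this finite set it permutes it.

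I expect the only genuine obstacle to be the forward direction's passage from the combinatorial hypothesis (permuting the $q+1$ points) to the algebraic identity $\rho=\rho^*$; the involution $\tau$ is precisely the device that converts ``$\rho$ preserves $\mathrm{Fix}(\tau)$'' into ``$\rho$ and its conjugate $\rho^*$ agree on enough points to coincide,'' after which everything is bookkeeping. A more computational alternative would expand $\rho(\zeta)^{q+1}=1$ into $P(\zeta)=0$ for an explicit quadratic $P$, conclude $P\equiv 0$ since $q+1>\deg P$, and then untangle the coefficient relations $ab^q=cd^q$ and $a^{q+1}+b^{q+1}=c^{q+1}+d^{q+1}$; this works but forces a slightly delicate case split (the multiset coincidence $\{a^{q+1},b^{q+1}\}=\{c^{q+1},d^{q+1}\}$) that the $\tau$-conjugation argument sidesteps entirely.
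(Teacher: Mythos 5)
Your proof is correct, and every step checks out: the identity $\tau\circ\rho\circ\tau=\rho^*$ on $\F_{q^2}^*$ (using $X^{q^2}=X$), the fact that $\mathrm{Fix}(\tau)=\mu_{q+1}$ inside $\bP^1(\F_{q^2})$, the rigidity statement that two elements of $\PGL_2(\F_{q^2})$ agreeing at $q+1\ge 3$ points coincide, the normalization $\lambda\in\mu_{q+1}$ and its removal via surjectivity of $t\mapsto t^{q-1}$ onto $\mu_{q+1}$, and the determinant bookkeeping $\beta^{q+1}-\alpha^{q+1}=t^2(ad-bc)\ne 0$. There is, however, nothing in the paper to compare against: the authors state this lemma as a recalled result from \cite{ZR} and give no proof at all, so any complete argument you supply is by construction "a different route." Relative to the standard treatments of this fact in the literature — either the direct computation you sketch in your last paragraph (expanding $\rho(\zeta)^{q+1}=1$ into a polynomial identity and untangling coefficient relations, with its attendant case split), or a group-theoretic count (the maps of the stated form are a subgroup of $\PGL_2(\F_{q^2})$ conjugate to $\PGL_2(\F_q)$, and sharp $3$-transitivity bounds the setwise stabilizer of the $(q+1)$-point set $\mu_{q+1}$ by $(q+1)q(q-1)$, forcing equality) — your conjugation-by-$\tau$ argument is arguably the cleanest: it converts the combinatorial hypothesis directly into the algebraic identity $\rho=\rho^*$ in $\PGL_2(\F_{q^2})$, after which only linear-algebra bookkeeping remains, with no case analysis and no counting.
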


\begin{lemma}\label{mutoF}
A degree-one $\rho(X)\in\F_{q^2}(X)$ maps $\mu_{q+1}$ to\/ $\bP^1(\F_q)$ if and only if 
$\rho(X)=(\delta X+\gamma \delta^q)/(X+\gamma)$ for some $\gamma\in\mu_{q+1}$ and $\delta\in\F_{q^2}\setminus\F_q$.
\end{lemma}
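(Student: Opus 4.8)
The plan is to prove both implications directly, working with the explicit action of a degree-one map on $\bP^1$. The key observation is that the conjugation map $\alpha\mapsto\alpha^q$ acts on $\bP^1(\F_{q^2})$ with fixed set exactly $\bP^1(\F_q)$, while $\mu_{q+1}$ is the fiber of the norm map, i.e.\ the set of $\alpha$ with $\alpha^{q+1}=1$ (together with the observation that $\infty\notin\mu_{q+1}$). Thus "$\rho$ maps $\mu_{q+1}$ into $\bP^1(\F_q)$'' should translate into the condition that $\rho(\gamma)$ is fixed by conjugation for every $\gamma\in\mu_{q+1}$, i.e.\ $\rho(\gamma)=\rho(\gamma)^q$. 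The guiding principle is that the condition $\gamma^{q+1}=1$ means $\gamma^q=1/\gamma$, so one can replace $\gamma^q$ by $\gamma^{-1}$ throughout; this turns an apparently transcendental condition on all of $\mu_{q+1}$ into a polynomial identity in a single variable that must hold at the $q+1\ge 2$ points of $\mu_{q+1}$.

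For the easy direction, I would simply substitute $\rho(X)=(\delta X+\gamma\delta^q)/(X+\gamma)$ with $\gamma\in\mu_{q+1}$ and $\delta\in\F_{q^2}\setminus\F_q$, take an arbitrary $\eta\in\mu_{q+1}$, and verify by a short computation that $\rho(\eta)=\rho(\eta)^q$, using $\eta^q=1/\eta$ and $\gamma^q=1/\gamma$. One checks separately that $\rho$ has degree one, i.e.\ the numerator and denominator are not proportional, which fails precisely when $\delta\in\F_q$; since $\delta\notin\F_q$ this is fine, and one should also confirm the pole $-\gamma$ of $\rho$ does lie in $\mu_{q+1}$ so that the point mapped to $\infty\in\bP^1(\F_q)$ is accounted for correctly.

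For the converse, suppose the degree-one $\rho(X)=(\alpha X+\beta)/(\gamma X+\delta)$ maps $\mu_{q+1}$ into $\bP^1(\F_q)$. Writing the condition $\rho(\eta)=\rho(\eta)^q=\rho^{(q)}(\eta^q)=\rho^{(q)}(1/\eta)$ for each $\eta\in\mu_{q+1}$, I would clear denominators to obtain a polynomial $P(X)\in\F_{q^2}[X]$ of degree at most $2$ (after multiplying through by $X$ to clear the $1/\eta$) which vanishes at all $q+1$ points of $\mu_{q+1}$. Since $q+1>2$, the polynomial $P$ is identically zero, and comparing coefficients of $P$ forces the coefficient relations that pin down the shape of $\rho$. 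The main obstacle I anticipate is organizing this coefficient comparison cleanly: one must show that the four relations among $\alpha,\beta,\gamma,\delta$ force, after normalizing by a common scalar, the form $\alpha=\delta$, $\beta=\gamma\delta^q$, with denominator $X+\gamma$ for some $\gamma\in\mu_{q+1}$, and then deduce $\delta\notin\F_q$ from the requirement that $\rho$ genuinely have degree one. Handling the normalization (the projective ambiguity in $\alpha,\beta,\gamma,\delta$) and the degenerate sub-cases where some coefficient vanishes is the fiddly part; one likely splits into cases according to whether $\gamma=0$ (forcing $\rho$ to fix $\infty$, which must then be excluded since $\infty\notin\mu_{q+1}$ maps into $\bP^1(\F_q)$ only vacuously) versus $\gamma\ne 0$, where after scaling one may take $\gamma=1$ and solve for the remaining parameters.
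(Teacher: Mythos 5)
The paper itself contains no proof of this lemma: it is stated, together with Lemma~\ref{mutomu}, as a simple result recalled from \cite{ZR}, so there is no in-paper argument to compare against. Judged on its own terms, your plan is correct, and the coefficient comparison you defer does close. Writing $\rho(X)=(aX+b)/(cX+d)$, the relation $\rho(\eta)=\rho(\eta)^q=\rho^{(q)}(1/\eta)$, once denominators are cleared, becomes $(a\eta+b)(c^q+d^q\eta)=(a^q+b^q\eta)(c\eta+d)$; this holds at every $\eta\in\mu_{q+1}$ (including a pole of $\rho$ lying in $\mu_{q+1}$, where both sides vanish), hence identically, since $q+1>2$. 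The $\eta^2$-coefficient disposes of the case $c=0$ (it forces $a=0$, so $\rho$ would be constant); scaling to $c=1$, the constant coefficient gives $b=a^qd$, and the linear coefficient then reduces to $(a-a^q)(1-d^{q+1})=0$. The branch $a\in\F_q$ makes the numerator proportional to the denominator, contradicting $\deg\rho=1$; hence $d\in\mu_{q+1}$ and $\rho=(aX+a^qd)/(X+d)$ with $a\notin\F_q$, which is exactly the stated form with $\gamma=d$, $\delta=a$. One streamlining worth noting: since a degree-one map is injective on $\bP^1(\mybar\F_q)$ and $\abs{\mu_{q+1}}=\abs{\bP^1(\F_q)}=q+1$, the restriction of $\rho$ to $\mu_{q+1}$ is automatically onto $\bP^1(\F_q)$, so the (unique) pole of $\rho$ already lies in $\mu_{q+1}$; writing $\rho=(aX+b)/(X+\gamma)$ with $\gamma\in\mu_{q+1}$ from the outset turns your degree-$2$ identity into a degree-$1$ polynomial vanishing at $q\ge 2$ points, yielding $b=\gamma a^q$ in one line. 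Two trivial corrections: the comparison produces three coefficient relations, not four; and in the easy direction the pole $-\gamma$ lies in $\mu_{q+1}$ because $(-1)^{q+1}=1$ in every characteristic.
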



\subsection{Connection between permutations of $\F_{q^2}$, $\mu_{q+1}$, and $\bP^1(\F_q)$}

In this section we recall some known results relating permutations of different sets. We begin with a special case 
of a lemma from \cite{Zlem}.

\begin{lemma}\label{old}
Write $f(X):=X^r A(X^{q-1})$ where $r$ is a positive integer, $q$ is a prime power, and $A(X)\in\F_{q^2}[X]$. Then $f(X)$ 
permutes\/ $\F_{q^2}$ if and only if $\gcd(r,q-1)=1$ and $g_0(X):=X^r A(X)^{q-1}$ permutes $\mu_{q+1}$.
\end{lemma}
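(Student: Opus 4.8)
The plan is to use the multiplicative structure of $\F_{q^2}^*$ together with a commutative-diagram (AGW-type) argument. Since $\F_{q^2}$ is finite, $f(X)$ permutes $\F_{q^2}$ if and only if $f$ is injective on $\F_{q^2}$; and as $r\ge 1$ we have $f(0)=0$, so this is equivalent to $f$ being injective on $\F_{q^2}^*$ together with $0\notin f(\F_{q^2}^*)$. The central object is the map $\psi\colon\F_{q^2}^*\to\mu_{q+1}$ defined by $\psi(x):=x^{q-1}$, which is a surjective group homomorphism whose kernel is $\F_q^*$ (the nonzero elements fixed by $x\mapsto x^q$), so each fiber $C_u:=\psi^{-1}(u)$ is a coset of $\F_q^*$ and has exactly $q-1$ elements. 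The key computation is that $\psi$ intertwines $f$ with $g_0$: for $x\in\F_{q^2}^*$ we have $f(x)^{q-1}=x^{r(q-1)}A(x^{q-1})^{q-1}=g_0(x^{q-1})$, i.e.\ $\psi(f(x))=g_0(\psi(x))$ whenever $A(x^{q-1})\ne 0$. First I would observe that the hypothesis ``$g_0$ permutes $\mu_{q+1}$'' already forces $A$ to have no root in $\mu_{q+1}$: if $A(u)=0$ for some $u\in\mu_{q+1}$ then $g_0(u)=0\notin\mu_{q+1}$, so $g_0$ does not even map $\mu_{q+1}$ into itself. Conversely $0\in f(\F_{q^2}^*)$ exactly when $A$ has a root in $\mu_{q+1}$, so once that is excluded $f$ maps $\F_{q^2}^*$ into $\F_{q^2}^*$, the map $g_0$ is a genuine self-map of $\mu_{q+1}$ (as $A(u)^{q-1}\in\mu_{q+1}$ when $A(u)\ne0$), and the relation $\psi\circ f=g_0\circ\psi$ holds on all of $\F_{q^2}^*$.

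Assuming from now on that $A$ has no root in $\mu_{q+1}$, the intertwining relation shows that $f$ carries each fiber $C_u$ into the single fiber $C_{g_0(u)}$. On $C_u$ the function simplifies to $f(x)=A(u)\,x^r$, so for $x,x'\in C_u$ the equality $f(x)=f(x')$ is equivalent to $(x/x')^r=1$ with $x/x'\in\F_q^*$; since $\F_q^*$ is cyclic of order $q-1$, the map $t\mapsto t^r$ is injective on it precisely when $\gcd(r,q-1)=1$. Hence $f$ is injective on every fiber if and only if $\gcd(r,q-1)=1$, and in that case $f$ maps each $C_u$ bijectively onto $C_{g_0(u)}$, both having $q-1$ elements.

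It then remains to run the fiberwise equivalence in both directions. For the forward implication, if $f$ is injective on $\F_{q^2}^*$ then it is injective on each fiber (forcing $\gcd(r,q-1)=1$), and $g_0$ is injective on $\mu_{q+1}$: otherwise $g_0(u_1)=g_0(u_2)$ with $u_1\ne u_2$ would make $f$ map the disjoint fibers $C_{u_1}$ and $C_{u_2}$ each bijectively onto the same fiber $C_{g_0(u_1)}$, contradicting injectivity. For the reverse implication, if $\gcd(r,q-1)=1$ and $g_0$ permutes $\mu_{q+1}$, then $f(x)=f(x')$ gives $g_0(\psi(x))=g_0(\psi(x'))$, hence $\psi(x)=\psi(x')$ by injectivity of $g_0$, so $x,x'$ lie in a common fiber and then $x=x'$ by fiberwise injectivity; thus $f$ is injective on $\F_{q^2}^*$ and, since $0$ is omitted from the image, permutes $\F_{q^2}$. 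The main obstacle, and the only place needing genuine care, is the bookkeeping around the roots of $A$ on $\mu_{q+1}$: one must verify that the hypothesis ``$g_0$ permutes $\mu_{q+1}$'' exactly encodes both the absence of such roots and the bijectivity required, so that the ``no root'' condition can be silently absorbed into the statement and the commutative relation $\psi\circ f=g_0\circ\psi$ remains valid throughout.
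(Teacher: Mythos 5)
Your proof is correct and complete, including the one delicate point (that the root-free condition $A(u)\ne 0$ for $u\in\mu_{q+1}$ is forced both by ``$f$ permutes'' and by ``$g_0$ permutes $\mu_{q+1}$,'' so the intertwining $\psi\circ f=g_0\circ\psi$ is legitimate in both directions). The paper itself supplies no proof of Lemma~\ref{old} --- it recalls it as a special case of a lemma from \cite{Zlem} --- and your coset-fiber argument via the homomorphism $x\mapsto x^{q-1}$, with $f$ acting on each coset of $\F_q^*$ as $x\mapsto A(u)x^r$, is exactly the standard proof of that cited lemma, so the two approaches coincide.
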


The next lemma is immediate, and was introduced in \cite{ZR}.

\begin{lemma}\label{rewrite}
Write $g_0(X):=X^r A(X)^{q-1}$ where $r$ is an integer, $q$ is a prime power, and $A(X)\in\F_{q^2}[X]$ is nonzero. Then 
$g_0(X)$ maps $\mu_{q+1}$ into $\mu_{q+1}\cup\{0\}$, and if $A(X)$ has no roots in $\mu_{q+1}$ then $g_0(X)$ induces the 
same function on $\mu_{q+1}$ as does $g(X):=X^s A^{(q)}(1/X)/A(X)$, for any integer $s$ with $r\equiv s\pmod{q+1}$. In 
particular, $g_0(X)$ permutes $\mu_{q+1}$ if and only if $A(X)$ has no roots in $\mu_{q+1}$ and $g(X)$ permutes $\mu_{q+1}$.
\end{lemma}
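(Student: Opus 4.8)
The plan is to prove Lemma~\ref{rewrite} by a direct computation on $\mu_{q+1}$, exploiting the key relation $\alpha^q = \alpha^{-1}$ for $\alpha\in\mu_{q+1}$.

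The statement has three parts. First, I would check that $g_0(X)$ maps $\mu_{q+1}$ into $\mu_{q+1}\cup\{0\}$. For $\alpha\in\mu_{q+1}$ we have $g_0(\alpha)=\alpha^r A(\alpha)^{q-1}$; if $A(\alpha)=0$ this is $0$, and otherwise $g_0(\alpha)^{q+1}=\alpha^{r(q+1)}A(\alpha)^{(q-1)(q+1)}=\alpha^{r(q+1)}A(\alpha)^{q^2-1}$. Since $\alpha^{q+1}=1$ and $A(\alpha)\in\F_{q^2}^*$ forces $A(\alpha)^{q^2-1}=1$, we get $g_0(\alpha)^{q+1}=1$, so $g_0(\alpha)\in\mu_{q+1}$.

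Next, for the comparison of $g_0$ with $g(X):=X^s A^{(q)}(1/X)/A(X)$ on points of $\mu_{q+1}$ where $A$ has no root, I would write everything using $\alpha^q=\alpha^{-1}$. For $\alpha\in\mu_{q+1}$ with $A(\alpha)\ne 0$, note that $A(\alpha)^q = A^{(q)}(\alpha^q)=A^{(q)}(1/\alpha)$, using the second bullet of Lemma~\ref{SCRbasics} (or directly, since raising coefficients and the variable to the $q$-th power commutes with evaluation). Hence
\[
g_0(\alpha)=\alpha^r A(\alpha)^{q-1}=\alpha^r\,\frac{A(\alpha)^q}{A(\alpha)}=\alpha^r\,\frac{A^{(q)}(1/\alpha)}{A(\alpha)}.
\]
Since $r\equiv s\pmod{q+1}$ and $\alpha^{q+1}=1$, we have $\alpha^r=\alpha^s$, so this equals $\alpha^s A^{(q)}(1/\alpha)/A(\alpha)=g(\alpha)$. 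Thus $g_0$ and $g$ agree as functions on $\mu_{q+1}$. (One should note that the displayed identity already shows $A(\alpha)\ne 0$ is exactly the condition for $g$ to be defined and agree with $g_0$ at $\alpha$; where $A$ has a root in $\mu_{q+1}$, $g_0$ takes the value $0\notin\mu_{q+1}$.)

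Finally, for the ``in particular'' clause, I would argue that $g_0$ permutes $\mu_{q+1}$ if and only if $A$ has no roots in $\mu_{q+1}$ and $g$ permutes $\mu_{q+1}$. The main (though still easy) point is that if $A(\beta)=0$ for some $\beta\in\mu_{q+1}$ then $g_0(\beta)=0\notin\mu_{q+1}$, so $g_0$ fails to map $\mu_{q+1}$ into itself and cannot permute it; this forces the ``no roots'' hypothesis in both directions. Once $A$ has no roots in $\mu_{q+1}$, the previous paragraph shows $g_0$ and $g$ induce the same self-map of the finite set $\mu_{q+1}$, so one permutes $\mu_{q+1}$ exactly when the other does. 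The only subtlety to be careful about is the degenerate case where $A$ vanishes identically on $\mu_{q+1}$ versus having isolated roots, but since $A$ is a nonzero polynomial it has finitely many roots, so the argument via the value $0$ is clean. I do not expect any step to be a genuine obstacle; this lemma is stated as ``immediate,'' and the entire content is the single substitution $\alpha^q=\alpha^{-1}$ together with $\alpha^{r}=\alpha^s$.
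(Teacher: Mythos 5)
Your proof is correct, and it is exactly the ``immediate'' argument the paper has in mind: the paper states Lemma~\ref{rewrite} without proof (attributing it to \cite{ZR}), and its entire content is the computation you give --- $A(\alpha)^{q^2-1}=1$ when $A(\alpha)\ne 0$, the identity $A(\alpha)^q=A^{(q)}(\alpha^q)=A^{(q)}(1/\alpha)$ from $\alpha^q=\alpha^{-1}$, and $\alpha^r=\alpha^s$, with the root case handled by noting $g_0(\beta)=0\notin\mu_{q+1}$. Nothing is missing, and your handling of where $g$ is defined (denominator $A(\alpha)\ne 0$ and $\alpha\ne 0$, so no cancellation issues arise) is the right level of care.
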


We now translate the condition that $g(X)$ permutes $\mu_{q+1}$ to the condition that an associated rational function 
$h(X)$ permutes $\bP^1(\F_q)$, as was done in \cite{ZR}.

\begin{lemma}\label{gtoh}
Let $g(X)\in\F_{q^2}(X)$ be a nonconstant rational function having the form $g(X)=X^s A^{(q)}(1/X)/A(X)$ where 
$s$ is an integer, $q$ is a prime power, and $A(X)\in\F_{q^2}[X]$. Let $h(X):=\rho\circ g\circ\sigma^{-1}$ where 
$\rho,\sigma\in\F_{q^2}(X)$ are degree-one rational functions which map $\mu_{q+1}$ to\/ $\bP^1(\F_q)$. Then $h(X)$ 
is in\/ $\F_q(X)$, and $h(X)$ permutes\/ $\bP^1(\F_q)$ if and only if $g(X)$ permutes $\mu_{q+1}$.
\end{lemma}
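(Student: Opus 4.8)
The plan is to isolate a single symmetry relation that links $g$, $\rho$, and $\sigma$ through the involution $\iota(X):=1/X$, which is the decisive map because on $\mu_{q+1}$ the Frobenius $\gamma\mapsto\gamma^q$ coincides with $\gamma\mapsto\gamma^{-1}=\iota(\gamma)$. First I would record the key identity $g^{(q)}\circ\iota=\iota\circ g$. This is a one-line computation: since $(A^{(q)})^{(q)}=A$, we have $g^{(q)}(X)=X^s A(1/X)/A^{(q)}(X)$, and substituting $1/X$ and comparing with $1/g(X)$ gives the claim. Composing on the left with $\iota$ (and using $\iota\circ\iota=X$) this is equivalent to $g=\iota\circ g^{(q)}\circ\iota$.

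Next I would show that the hypothesis on $\rho$ and $\sigma$ forces $\rho^{(q)}=\rho\circ\iota$ and $\sigma^{(q)}=\sigma\circ\iota$. Indeed, for $\gamma\in\mu_{q+1}$ the point $\sigma(\gamma)$ lies in $\bP^1(\F_q)$, hence is fixed by Frobenius, so $\sigma(\gamma)=\sigma(\gamma)^q=\sigma^{(q)}(\gamma^q)=\sigma^{(q)}(\iota(\gamma))$. Thus the two degree-one rational functions $\sigma$ and $\sigma^{(q)}\circ\iota$ agree on the $q+1\ge 3$ points of $\mu_{q+1}$, and a degree-one function is determined by its values at three points, so they agree identically; the same argument applies to $\rho$. (Alternatively, one can read these relations directly off the explicit shape of $\rho$ and $\sigma$ furnished by Lemma~\ref{mutoF}.)

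That $h\in\F_q(X)$ then follows by a formal manipulation. Using $(\sigma^{-1})^{(q)}=(\sigma^{(q)})^{-1}=(\sigma\circ\iota)^{-1}=\iota\circ\sigma^{-1}$ together with the previous two steps,
\begin{align*}
h^{(q)}&=\rho^{(q)}\circ g^{(q)}\circ(\sigma^{-1})^{(q)}=(\rho\circ\iota)\circ g^{(q)}\circ(\iota\circ\sigma^{-1})\\
&=\rho\circ(\iota\circ g^{(q)}\circ\iota)\circ\sigma^{-1}=\rho\circ g\circ\sigma^{-1}=h,
\end{align*}
so every coefficient of $h$ is fixed by the $q$-power map and hence lies in $\F_q$.

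Finally I would settle the permutation equivalence. Since $\rho$ and $\sigma$ have degree one they are injective on $\bP^1(\mybar\F_q)$, and as each sends the $(q+1)$-element set $\mu_{q+1}$ into the $(q+1)$-element set $\bP^1(\F_q)$, each restricts to a bijection $\mu_{q+1}\to\bP^1(\F_q)$. Because $h\in\F_q(X)$ maps $\bP^1(\F_q)$ into itself, the relation $g=\rho^{-1}\circ h\circ\sigma$ shows that $g$ maps $\mu_{q+1}$ into $\mu_{q+1}$, and composing with these two bijections shows that $h|_{\bP^1(\F_q)}$ is a bijection exactly when $g|_{\mu_{q+1}}$ is. I do not expect a genuine obstacle in this lemma; the only points demanding care are the bookkeeping of inverses and composition order in the $h^{(q)}=h$ computation, and the (standard but essential) use of the three-point principle to upgrade the pointwise identities on $\mu_{q+1}$ to identities of rational functions.
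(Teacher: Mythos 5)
Your proof is correct and follows essentially the same route as the paper's: both establish $h^{(q)}=h$ from the relations $g^{(q)}=\iota\circ g\circ\iota$ (with $\iota(X)=1/X$) and $\rho^{(q)}=\rho\circ\iota$, $\sigma^{(q)}=\sigma\circ\iota$, and then deduce the permutation equivalence from $g=\rho^{-1}\circ h\circ\sigma$. The only (cosmetic) difference is that you derive the relations for $\rho$ and $\sigma$ by the three-point agreement principle on $\mu_{q+1}$, whereas the paper reads them off the explicit formula from Lemma~\ref{mutoF} --- an alternative you yourself note.
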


\begin{proof}
Lemma~\ref{mutoF} implies that $\rho(X)=(\delta X+\gamma\delta^q)/(X+\gamma)$ for some $\gamma\in\mu_{q+1}$ and 
$\delta\in\F_{q^2}\setminus\F_q$. Thus
\[
\rho^{(q)}(X)=\frac{\delta^qX+\gamma^q\delta}{X+\gamma^q}=\frac{\gamma\delta^q+\delta X^{-1}}{\gamma+X^{-1}}=\rho(X^{-1}),
\]
and likewise $\sigma^{(q)}(X)=\sigma(X^{-1})$.  
Since $g^{(q)}(X)=g(X^{-1})^{-1}$, it follows that
\begin{align*}
h^{(q)}&=\rho^{(q)}\circ g^{(q)}\circ (\sigma^{(q)})^{-1} \\
&=\bigl(\rho\circ X^{-1}\bigr)\circ \bigl(X^{-1}\circ g\circ X^{-1}\bigr) \circ \bigl(X^{-1}\circ \sigma^{-1}\bigr) \\
&=\rho\circ g\circ\sigma^{-1} \\
&= h(X),
\end{align*}
so that $h(X)\in\F_q(X)$. Since $h(X)$ maps $\bP^1(\F_q)$ into itself, plainly $h(X)$ permutes $\bP^1(\F_q)$ 
if and only if $g(X)=\rho^{-1}\circ h\circ\sigma$ permutes $\mu_{q+1}$.
\end{proof}


\subsection{Ramification}

We now introduce the notation and terminology we will use when discussing ramification.

As usual, for any nonconstant $g(X)\in\mybar\F_q(X)$ and any $\alpha\in\bP^1(\mybar\F_q)$, the \emph{ramification index} 
$e_g(\alpha)$ is the multiplicity of $\alpha$ as a $g$-preimage of $g(\alpha)$; explicitly, for any degree-one 
$\rho,\sigma\in\mybar\F_q(X)$ such that $\sigma(0)=\alpha$ and $\rho(g(\alpha))=0$, the positive integer $e_g(\alpha)$ 
is the degree of the lowest-degree term of the numerator of $\rho\circ g\circ\sigma$. For $\beta\in\bP^1(\mybar\F_q)$ 
we define the \emph{ramification multiset} of $g(X)$ over $\beta$ to be the multiset $E_g(\beta)$ consisting of the 
ramification indices $e_g(\alpha)$ with $\alpha\in g^{-1}(\beta)$. Thus $E_g(\beta)$ is a collection of positive 
integers whose sum is $\deg(g)$. We make the convention that if $g(X)$ is constant then $E_g(\beta)$ is the empty 
multiset. We say that $\alpha\in\bP^1(\mybar\F_q)$ is a \emph{ramification point} of $g(X)$ if $e_g(\alpha)>1$, 
and that $\beta\in\bP^1(\mybar\F_q)$ is a \emph{branch point} of $g(X)$ if $\beta=g(\alpha)$ for some ramification 
point $\alpha$ of $g(X)$. We will use the following consequence of the Riemann--Hurwitz genus formula for the map 
$g\colon\bP^1\to\bP^1$, or equivalently for the function field extension $\mybar\F_q(x)/\mybar\F_q(g(x))$ where 
$x$ is transcendental over $\mybar\F_q$; e.g., cf.\ \cite[Thm.~3.4.13 and Thm.~3.5.1]{St}:

\begin{lemma}\label{rh}
Let $g(X)\in \mybar\F_q(X)$ be a separable rational function of degree $n$. For any finite subset $\Gamma$ 
of\/ $\bP^1(\mybar\F_q)$ we have
\[
2n-2 \ge \sum_{\alpha\in \Gamma} \bigl(e_g(\alpha)-1\bigr).
\]
\end{lemma}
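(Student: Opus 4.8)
The plan is to derive the stated inequality directly from the Riemann--Hurwitz genus formula applied to the separable covering $g\colon\bP^1\to\bP^1$ of degree $n$. Since both the source and target curves are the projective line, each has genus $0$, so the genus formula for the extension $\mybar\F_q(x)/\mybar\F_q(g(x))$ reads
\[
-2=n\cdot(-2)+\deg(\mathfrak{d}),
\]
where $\mathfrak{d}$ is the different of this extension; equivalently $\deg(\mathfrak{d})=2n-2$. Writing $d_\alpha$ for the different exponent at a point $\alpha\in\bP^1(\mybar\F_q)$, we have $\deg(\mathfrak{d})=\sum_\alpha d_\alpha$, a finite sum since $d_\alpha=0$ at every unramified point.

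The key input is Dedekind's different theorem, which gives $d_\alpha\ge e_g(\alpha)-1$ for every $\alpha$, with equality exactly when the ramification at $\alpha$ is tame, i.e.\ when $p\nmid e_g(\alpha)$. I would stress that this lower bound holds in both the tame and the wild cases, so no hypothesis relating $p$ to the ramification indices is needed; it is precisely this one-sided bound that converts the Riemann--Hurwitz \emph{equality} into the desired \emph{inequality}. Combining it with the identity above yields
\[
2n-2=\sum_{\alpha\in\bP^1(\mybar\F_q)}d_\alpha\ge\sum_{\alpha\in\bP^1(\mybar\F_q)}\bigl(e_g(\alpha)-1\bigr).
\]

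Finally, for any finite subset $\Gamma\subseteq\bP^1(\mybar\F_q)$, each summand $e_g(\alpha)-1$ is nonnegative, so discarding the terms with $\alpha\notin\Gamma$ only decreases the right-hand side, giving
\[
2n-2\ge\sum_{\alpha\in\Gamma}\bigl(e_g(\alpha)-1\bigr),
\]
as claimed. I do not anticipate any serious obstacle: the argument is a standard repackaging of Riemann--Hurwitz, and the only points demanding care are the role of separability and of wild ramification. Separability is what makes the genus formula available in this form and is built into the hypothesis, while wild ramification is harmless because Dedekind's bound $d_\alpha\ge e_g(\alpha)-1$ survives it; this is exactly why the lemma holds with no assumption on the size of $q$ relative to $n$ and with no need to avoid ramification indices divisible by $p$.
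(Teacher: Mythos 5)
Your argument is correct and is exactly the derivation the paper intends: the paper states this lemma without proof, citing Stichtenoth's Hurwitz genus formula (Thm.~3.4.13) and Dedekind's different theorem (Thm.~3.5.1), which are precisely the two ingredients you combine. Your handling of the two delicate points (separability making the genus formula applicable, and the one-sided bound $d_\alpha\ge e_g(\alpha)-1$ surviving wild ramification) is also right.
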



\subsection{Low-degree permutation rational functions}

In this section we determine the ramification in separable permutation rational functions of degrees $3$ and $4$, 
which will be used in the proof of Theorem~\ref{lpp}. 

The classification of degree-$3$ permutation rational functions is proved in one page in \cite[Thm.~1.3]{DZ1}:

\begin{lemma}\label{deg3perm}
A separable degree-three $h(X)\in\F_q(X)$ permutes\/ $\bP^1(\F_q)$ if and only if one of the following holds:
\begin{itemize}
\item $q\equiv 2\pmod{3}$ and $h(X)$ is linearly equivalent to $X^3$;
\item $q\equiv 1\pmod{3}$ and $h(X)=\rho\circ X^3\circ\sigma^{-1}$ for some degree-one $\rho,\sigma\in\F_{q^2}(X)$ 
which map $\mu_{q+1}$ to\/ $\bP^1(\F_q)$;
\item $q\equiv 0\pmod{3}$ and $h(X)$ is linearly equivalent to $X^3-\alpha X$ for some nonsquare $\alpha\in\F_q^*$.
\end{itemize}
\end{lemma}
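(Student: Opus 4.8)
The plan is to prove both implications, treating the three residue classes of $q$ modulo $3$ separately while unifying them through the observation that each listed normal form is, geometrically, a cyclic (degree-$3$ Galois) cover $\bP^1\to\bP^1$: $X^3$ and its linear equivalents are tame Kummer covers, while $X^3-\alpha X$ in characteristic $3$ is the additive cover attached to the order-$3$ subgroup $\{0,\pm\sqrt\alpha\}$. The reverse (``if'') implication is a direct verification. For $q\equiv2\pmod3$ we have $\gcd(3,q-1)=1$, so $X^3$ permutes $\bP^1(\F_q)$; for $q\equiv1\pmod3$ we have $\gcd(3,q+1)=1$, so $X^3$ permutes $\mu_{q+1}$, whence $\rho\circ X^3\circ\sigma^{-1}$ permutes $\bP^1(\F_q)$ because $\sigma^{-1}$ and $\rho$ restrict to bijections between $\bP^1(\F_q)$ and $\mu_{q+1}$ by Lemma~\ref{mutoF}; for $q\equiv0\pmod3$ the map $X^3-\alpha X$ is $\F_3$-linear with trivial kernel precisely when $\alpha$ is a nonsquare, so it permutes $\F_q$ and fixes $\infty$. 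Since linear equivalence preserves the permutation property, this settles ``if.''

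For ``only if'' I would begin with a ramification filter. If $h$ permutes $\bP^1(\F_q)$ and $\beta\in\bP^1(\F_q)$, then $h^{-1}(\beta)$ is a Frobenius-stable set with exactly one $\F_q$-point; examining the three possible fiber shapes of a degree-$3$ map shows that a rational $\beta$ cannot have ramification type $\{2,1\}$ (the index-$2$ and the remaining index-$1$ preimage would both be Frobenius-fixed, giving two rational preimages) and cannot split into three rational or three conjugate points. Hence every rational branch point is totally ramified, and every unramified rational $\beta$ splits as one $\F_q$-point plus a conjugate pair. Feeding the total ramification into Riemann--Hurwitz (Lemma~\ref{rh}, with equality $2\cdot3-2=4$ in the tame case) then bounds the ramification and, in characteristic $3$, forces a single wildly ramified point. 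I would split on whether $h$ has a rational branch point.

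If $h$ has a rational branch point, the filter makes it totally ramified; moving it to $\infty$ by a degree-one map over $\F_q$ shows $h$ is linearly equivalent to a cubic \emph{polynomial}, reducing ``only if'' to classifying separable degree-$3$ permutation polynomials of $\F_q$. Writing $h(x)-h(y)=(x-y)\,q(x,y)$, injectivity forces the conic $q(x,y)=0$ to have no off-diagonal $\F_q$-point; after normalizing to $X^3+bX$ (characteristic $\ne3$) or $X^3+bX+c$ (characteristic $3$), a short count of points on $x^2+xy+y^2=\mathrm{const}$ rules out every cubic except $X^3$ in characteristic $\ne3$, while in characteristic $3$ separability plus the additive structure leaves only $X^3-\alpha X$ with $\alpha$ a nonsquare; these are normal forms~(1) and~(3). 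If instead $h$ has no rational branch point, then the characteristic-$3$ wild case cannot occur (its unique wild branch point is Frobenius-fixed, hence rational), so $p\ne3$ and all ramification is tame. The transitive geometric monodromy group $G\le S_3$ is then $C_3$ or $S_3$. When $G=C_3$ the cover is a Kummer cover with two totally ramified points; as neither the two ramification points nor the two branch points are rational, each is a Frobenius-conjugate pair, and matching this data identifies $h=\rho\circ X^3\circ\sigma^{-1}$ for degree-one $\rho,\sigma\in\F_{q^2}(X)$ carrying $\{0,\infty\}$ to these pairs. Imposing $h^{(q)}=h$ forces $\rho,\sigma$ into the shape of Lemma~\ref{mutoF} (the computation is exactly that in the proof of Lemma~\ref{gtoh}), so they map $\mu_{q+1}$ to $\bP^1(\F_q)$, and the permutation property becomes the assertion that $X^3$ permutes $\mu_{q+1}$, i.e.\ $q\equiv1\pmod3$; this is normal form~(2).

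The main obstacle is the remaining case $G=S_3$ with no rational branch point, which forces exactly four simple branch points forming Frobenius orbits; here the filter is not violated, and for large $q$ Weil's bound would immediately contradict injectivity, but I want an argument valid for every $q$. My plan is to use the quadratic resolvent (discriminant) cover $w\colon D\to\bP^1$ of degree $2$, which ramifies exactly over the four branch points and hence has genus $1$ by Riemann--Hurwitz. Since $G=S_3$, the factorization type of $h^{-1}(\beta)$ over a rational $\beta$ is read off from the Frobenius class, and the filter forces every such fiber to be of type $(1,2)$, i.e.\ the discriminant is a nonsquare at every rational $\beta$; equivalently $D$ has no $\F_q$-point over any rational base point. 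As $w$ ramifies only over the non-rational branch points, this makes $\#D(\F_q)$ essentially empty, contradicting the Hasse bound $\#D(\F_q)\ge q+1-2\sqrt q$ once $q\ge5$, with the finitely many fields $q\le4$ checked directly. Because the degree is fixed at $3$, invoking Hasse for this auxiliary genus-$1$ curve is harmless and does not conflict with the paper's need to avoid Weil's bound in high degree. Eliminating $G=S_3$ completes the trichotomy.
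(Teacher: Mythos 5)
You should first note that this paper does not actually prove Lemma~\ref{deg3perm}: it imports the statement from \cite[Thm.~1.3]{DZ1}, so the only meaningful comparison is with that cited one-page proof. Your proposal is essentially correct and is the same kind of argument: the ``filter'' (a rational point with fiber type $[1,2]$ would have two rational preimages, and a fiber of three conjugate points would have none), Riemann--Hurwitz via Lemma~\ref{rh}, reduction of the rational-branch-point case to cubic polynomials and the conic $h(x)-h(y)=(x-y)\,q(x,y)$, Frobenius descent of the Kummer case onto the $\mu_{q+1}$-forms of Lemma~\ref{mutoF}, and elimination of the $S_3$ case by producing a geometrically irreducible genus-$1$ curve over $\F_q$ with no $\F_q$-points. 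Your worry about invoking Hasse is unnecessary: the introduction explicitly lists \cite{DZ1} among the papers whose low-degree classifications rest on Weil-type bounds; the prohibition on such bounds concerns only the high-degree setting of Theorem~\ref{main}, where $q$ can be smaller than the degree.

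That said, you should repair four local slips, none of which needs a new idea. (i) In characteristic $3$ you cannot ``normalize to $X^3+bX+c$'': no linear equivalence kills the $X^2$-term of a cubic there (a cubic with nonzero $X^2$-coefficient has two branch points, while $X^3+bX+c$ has one). Instead, a nonzero $X^2$-coefficient forces a finite rational ramification point of index $2$, hence a rational branch point of type $[2,1]$, which your filter already forbids; alternatively, run your conic count on the unnormalized cubic. (ii) The inference ``the characteristic-$3$ wild case cannot occur, so $p\ne 3$'' is a non sequitur: what your argument excludes is wild ramification (a wildly ramified point in degree $3$ has different exponent at least $4$, so it is the unique branch point, hence rational), but a characteristic-$3$ cover with four tame $[2,1]$ branch points and monodromy $S_3$ survives to this stage and must be fed into your resolvent argument, which fortunately applies verbatim. (iii) Your claim that $G=S_3$ ``forces exactly four simple branch points'' needs the one-line justification that the Riemann--Hurwitz budget allows at most one branch point of type $[3]$ when $G=S_3$, and a unique such point would be Frobenius-fixed, hence rational, hence excluded. (iv) In characteristic $2$ the criterion ``fiber of type $(1,2)$ iff the discriminant is a nonsquare'' is vacuous, since every element of $\F_{2^k}$ is a square; you must phrase this step purely as ``Frobenius acts as a transposition on the fiber, hence without fixed points on the two points of $D$ above $\beta$,'' which is the characteristic-free Galois-theoretic formulation you also state. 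Similarly, ``imposing $h^{(q)}=h$ forces $\rho,\sigma$ into the shape of Lemma~\ref{mutoF}'' should read ``$\rho,\sigma$ can be so chosen'': they are determined only up to composition with maps fixing $\{0,\infty\}$, and one checks that the resulting middle constant lies in $\mu_{q+1}$ and can be absorbed into $\rho$.
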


The ramification multisets of the rational functions in Lemma~\ref{deg3perm} are well-known:

\begin{cor}\label{deg3ram}
Every degree-$3$ permutation rational function $h(X)\in\F_q(X)$ has ramification multiset $[3]$ over each of its branch points.
\end{cor}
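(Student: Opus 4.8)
The plan is to reduce the statement to a computation for the two explicit model functions appearing in Lemma~\ref{deg3perm}, using the fact that ramification multisets over branch points are unchanged by composition with degree-one rational functions. First I would record this invariance. If $h=\rho\circ f\circ\sigma$ with $\rho,\sigma$ of degree one, then since degree-one maps are unramified everywhere, multiplicativity of ramification indices under composition gives $e_h(\alpha)=e_f(\sigma(\alpha))$ for every $\alpha\in\bP^1(\mybar\F_q)$. As $\sigma$ restricts to a bijection $h^{-1}(\beta)\to f^{-1}(\rho^{-1}(\beta))$, it follows that $E_h(\beta)=E_f(\rho^{-1}(\beta))$, and $\rho$ carries the branch points of $f$ to those of $h$. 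Hence the collection of ramification multisets over branch points is a linear-equivalence invariant, computed over $\mybar\F_q$ and so insensitive to whether the degree-one maps are defined over $\F_q$ or over $\F_{q^2}$.

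With this in hand, Lemma~\ref{deg3perm} shows that every \emph{separable} degree-$3$ permutation rational function over $\F_q$ is, after composition with degree-one maps over $\mybar\F_q$, equal either to $X^3$ (when the characteristic is not $3$) or to $X^3-\alpha X$ (when the characteristic is $3$); in the middle case $q\equiv 1\pmod 3$ the maps $\rho,\sigma$ lie in $\F_{q^2}(X)$, but by the previous paragraph this is harmless. For $X^3$ in characteristic different from $3$, the map is separable and the only ramified points are $0$ and $\infty$, each totally ramified with index $3$, while over every other point the three cube roots are distinct; thus the branch points are $0$ and $\infty$ and the ramification multiset over each is $[3]$. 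For $X^3-\alpha X$ in characteristic $3$, the derivative equals the nonzero constant $-\alpha$, so there is no finite ramification, and the unique branch point is $\infty=h(\infty)$, over which $h$ has a single preimage of ramification index $3$, giving $[3]$ again.

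There is no serious obstacle here; the two points requiring care are the field-of-definition issue just mentioned and the possibility of an \emph{inseparable} degree-$3$ permutation, which is not covered by Lemma~\ref{deg3perm}. The latter can only occur in characteristic $3$, where an inseparable degree-$3$ rational function has the form $\mu\circ X^3$ for a degree-one $\mu$; such a map is purely inseparable and hence totally ramified over every point, with ramification multiset $[3]$ throughout, so the conclusion holds trivially. Combining the two separable model computations with the invariance of ramification multisets over branch points then yields the corollary.
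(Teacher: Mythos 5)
Your proof is correct and matches the paper's intended argument: the paper deduces Corollary~\ref{deg3ram} directly from the classification in Lemma~\ref{deg3perm}, remarking only that the ramification multisets of the model functions $X^3$ and $X^3-\alpha X$ are well-known, which is precisely the computation you carry out via the invariance of ramification multisets under composition with degree-one maps. Your explicit treatment of the inseparable case (where $h=\mu\circ X^3$ in characteristic $3$ is totally ramified everywhere) and of the harmlessness of $\rho,\sigma$ lying in $\F_{q^2}(X)$ when $q\equiv 1\pmod 3$ supplies details the paper leaves implicit, but the route is the same.
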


The classification of degree-$4$ permutation rational functions is more difficult, cf.\ \cite[Thm.~1.4]{DZ1}:

\begin{lemma}\label{deg4perm}
A separable degree-four $h(X)\in\F_q(X)$ permutes\/ $\bP^1(\F_q)$ if and only if one of the following holds:
\renewcommand{\labelenumi}{\emph{(\arabic{enumi})}}
\begin{enumerate}
\item $q$ is odd and $h(X)$ is linearly equivalent to \[\frac{X^4-2\alpha X^2-8\beta X+\alpha^2}{ X^3+\alpha X+\beta}\] 
for some $\alpha,\beta\in\F_q$ such that $X^3+\alpha X+\beta$ is irreducible in\/ $\F_q[X]$;
\item $q$ is even and $h(X)$ is linearly equivalent to $X^4+\alpha X^2+\beta X$ for some $\alpha,\beta\in\F_q$ such that 
$X^3+\alpha X+\beta$ has no roots in\/ $\F_q$;
\item $q\le 8$ and $h(X)$ is linearly equivalent to a rational function in Table~\emph{\ref{tab3}}.
\end{enumerate}
\end{lemma}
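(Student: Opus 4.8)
The plan is to control $h$ through its monodromy groups together with the Riemann--Hurwitz bound, separating the analysis into a ``large $q$'' regime governed by exceptionality and a ``small $q$'' regime handled by direct search.

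First I would set up the geometric and arithmetic monodromy groups: let $G$ be the Galois group of the Galois closure of $\mybar\F_q(x)/\mybar\F_q(h(x))$ and $A$ the corresponding group over $\F_q$, each realized as a transitive subgroup of $S_4$ acting on the four generic preimages, with $G\trianglelefteq A$ and $A/G$ cyclic (generated by Frobenius). The transitive subgroups of $S_4$ are $C_4$, $V_4$, $D_4$, $A_4$, and $S_4$, and since $h$ is separable of degree $4$, Lemma~\ref{rh} gives $\sum_\alpha(e_h(\alpha)-1)\le 6$, so over each branch point the ramification multiset is one of $[4],[3,1],[2,2],[2,1,1]$. Reading these partitions as cycle types of inertia generators and imposing that the generators multiply to the identity and generate $G$ leaves only finitely many admissible configurations for each candidate $G$; in the wild cases ($p=2$ at a point of index $2$ or $4$, and $p=3$ at a point of index $3$) the contribution to Lemma~\ref{rh} is strictly larger, and tracking this higher ramification is what ultimately forces the split of the statement by the parity of $q$.

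The heart of the argument is the large-$q$ regime. Because $h$ has degree $4$, the Galois closure has bounded genus, so Weil's bound shows that once $q$ exceeds an explicit constant a permutation $h$ must be \emph{exceptional}: the curve $h(X)=h(Y)$ has no geometrically irreducible $\F_q$-component off the diagonal, which in monodromy terms says that $A$ and $G$ share no orbit on the set of ordered pairs of distinct points. I would run through the five possibilities for $G$. The primitive groups $A_4$ and $S_4$ are $2$-transitive, so $G$ has a single orbit on ordered distinct pairs which is automatically an $A$-orbit, ruling them out. An imprimitive $G$ with imprimitive $A$ would force $h$ to decompose over $\F_q$ into two separable degree-$2$ maps, none of which are exceptional (a separable degree-$2$ cover is Galois with $A=G=C_2$ having a single common orbit), so this is impossible; and neither $C_4$ nor $D_4$ admits a primitive overgroup $A$ with $A/G$ cyclic, since neither is normal in $A_4$ or $S_4$. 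The only surviving case is $G=V_4$ (the regular representation, with three orbits on ordered distinct pairs) together with $A=A_4=\AGL_1(\F_4)$, Frobenius acting as the $3$-cycle that fuses the three $G$-orbits. This is the exact degree-$4$ analogue of the affine configuration underlying the $q\equiv 1\pmod 3$ case of Lemma~\ref{deg3perm} and Corollary~\ref{deg3ram}.

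It then remains to reconstruct $h$ from this single geometric possibility and to dispose of the small fields. For $G=V_4$ the cover is a genus-zero $V_4$-Galois cover of $\bP^1$, and the requirement that Frobenius cyclically permute the three intermediate degree-$2$ subcovers forces the three associated objects to be conjugate over $\F_q$, which is precisely how an irreducible cubic $X^3+\alpha X+\beta$ enters. Normalizing by linear equivalence and solving for the coefficients then yields the two normal forms: the tamely ramified rational function of item~(1) when $q$ is odd, and the wildly ramified polynomial $X^4+\alpha X^2+\beta X$ of item~(2) when $q$ is even, where the identity $\frac{f(X)-f(Y)}{X-Y}=(X+Y)^3+\alpha(X+Y)+\beta$ makes both the $V_4$ structure and the ``no root'' permutation criterion transparent. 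I expect the main obstacle to be the small-$q$ analysis: one must pin down a clean Weil threshold so that ``large $q$'' genuinely covers everything past $q=8$, and then, for each admissible ramification configuration, carry out a careful finite search over $\F_q$ for $q\le 8$ to extract exactly the sporadic permutations of Table~\ref{tab3}, verifying both that none are missed and that they are disjoint from families~(1) and~(2).
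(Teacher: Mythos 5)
Note first that the paper does not prove this lemma at all: it imports it wholesale from \cite[Thm.~1.4]{DZ1}, and the only thing proved locally is the ramification consequence (Corollary~\ref{deg4ram}), by inspecting the three cases. So your proposal can only be compared with the cited proof, and its skeleton --- permutation implies exceptional for large $q$ via Weil, monodromy analysis of the pair $(G,A)$, reconstruction of normal forms, finite search below the threshold --- is indeed the standard route for such classifications, and your group theory is correct: $A_4$ and $S_4$ are $2$-transitive, so a shared orbit on ordered distinct pairs kills exceptionality; an imprimitive arithmetic group $A$ forces a decomposition over $\F_q$ into two separable degree-$2$ maps (Lüroth), neither of which can permute $\bP^1(\F_q)$; $C_4$ and $D_4$ are not normal in $A_4$ or $S_4$; and $S_4/V_4\cong S_3$ is not cyclic, leaving exactly $G=V_4$, $A=A_4$ with Frobenius inducing the $3$-cycle. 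Your identifications of the two families are also right: in item (2) the difference quotient is $(X+Y)^3+\alpha(X+Y)+\beta$ and ``no roots'' is equivalent to irreducibility for a cubic, while item (1) is, up to linear equivalence, the duplication map on $X$-coordinates of $y^2=X^3+\alpha X+\beta$.

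The genuine gap is in your small-$q$ plan. Weil's bound yields ``permutation implies exceptional'' only once $q+1-2g\sqrt{q}$ exceeds the number of boundary points (diagonal intersections, singular points, points at infinity) on an off-diagonal component of $h(X)=h(Y)$; for degree $4$ such a component has genus at most $3$, so the argument only takes effect for $q$ roughly in the mid-fifties, not for $q\ge 9$. There is no ``clean Weil threshold'' at $9$: the fact that the sporadic examples stop at $q=8$ is a \emph{conclusion}, obtained by exhaustively checking every $q$ up to the actual threshold (a heavy but finite computation, manageable up to linear equivalence), not an input one may assume. As written, your proposal leaves the window $9\le q\lesssim 55$ uncertified, and a single non-exceptional permutation there would falsify the statement. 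Two smaller soft spots: in the wild cases $p\in\{2,3\}$ the inertia generators need not have cycle type equal to the ramification multiset and there is no product-one relation, so your enumeration of ``admissible configurations'' must be replaced by genuine higher-ramification bookkeeping (you flag this, but it is precisely where the even-$q$ normal form gets forced); and the reconstruction step needs an argument that every genus-zero $(V_4,A_4)$-cover with the required rationality is linearly equivalent over $\F_q$ to one of the two displayed forms, i.e., that there are no nontrivial twists --- your sketch asserts this rather than proves it.
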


\begin{table}[!htbp]
\caption{Sporadic degree-$4$ permutation rational functions over $\F_q$}
\label{tab3}
\renewcommand{\arraystretch}{2.5}
\begin{tabular}[t]{|c|c|c|}
\hline
$q$&$h(X)$&Conditions \\ \hline\hline

$8$&$\displaystyle{\frac{X^4+\alpha X^3+X}{X^2+X+1}}$&$\alpha^3+\alpha=1$ \\ \hline

$7$&$X^4+3X$&\\ \hline

\multirow{2}{*}{$5$}&$\displaystyle{\frac{X^4+X+1}{X^2+2}}$& \\
&$\displaystyle{\frac{X^4+X^3+1}{X^2+2}}$& \\ \hline

\multirow{3}{*}{$4$}&$\displaystyle{\frac{X^4+\omega X}{X^3+\omega^2}}$&\multirow{3}{*}{$\omega^2+\omega=1$}\\
&$\displaystyle{\frac{X^4+X^2+X}{X^3+\omega}}$&\\
&$\displaystyle{\frac{X^4+\omega X^2+X}{X^3+X+1}}$&\\
\hline
\end{tabular}
\qquad
\begin{tabular}[t]{|c|c|}
\hline
$q$&$h(X)$\\ \hline\hline
\multirow{3}{*}{$3$}&$X^4-X^2+X$\\
&$\displaystyle{\frac{X^4+X+1}{X^2+1}}$\\
&$\displaystyle{\frac{X^4+X^3+1}{X^2+1}}$\\ \hline

\multirow{2}{*}{$2$}&$X^4+X^3+X$\\
&$\displaystyle{\frac{X^4+X^3+X}{X^2+X+1}}$\\ \hline
\end{tabular}
\end{table}

\begin{cor}\label{deg4ram}
If $h(X)\in\F_q(X)$ is a separable degree-four permutation rational function and $\gamma\in\bP^1(\mybar\F_q)$ 
has exactly two $h$-preimages then $q$ is odd and $E_h(\gamma)=[2,2]$.
\end{cor}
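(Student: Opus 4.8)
The plan is to reduce to the classification in Lemma~\ref{deg4perm} and then read off the ramification case by case, exploiting that the ramification multiset over a point is unchanged under linear equivalence: composing with degree-one maps merely relabels points of $\bP^1(\mybar\F_q)$ and preserves every ramification index, hence preserves the multiset of ramification multisets over branch points. Since $\deg(h)=4$, a point $\gamma$ with exactly two $h$-preimages has $E_h(\gamma)$ equal to a partition of $4$ into exactly two positive parts, so $E_h(\gamma)\in\{[2,2],[1,3]\}$. Thus it suffices to show that the existence of such a $\gamma$ forces $h$ into case~(1) of Lemma~\ref{deg4perm} (where $q$ is odd) and that the possibility $[1,3]$ never occurs there.

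First I would dispose of cases~(2) and~(3) by showing they admit no point with exactly two preimages. In case~(2), separability forces $\beta\ne 0$ (otherwise $h=X^4+\alpha X^2\in\F_q(X^2)$ is inseparable), so $h'=\beta$ is a nonzero constant; hence $h$ has no affine ramification, its only branch point is $\infty$ with $E_h(\infty)=[4]$, and $h(X)-\gamma'$ is squarefree for every finite $\gamma'$, so every other fibre consists of four distinct points. Thus no point has exactly two preimages. In case~(3) there are only the finitely many functions listed in Table~\ref{tab3}; for each one computes the ramification directly and finds that every branch point has multiset $[1,1,2]$ or $[4]$, so again no point has exactly two preimages. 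Consequently, if $\gamma$ has exactly two $h$-preimages then $h$ is linearly equivalent to a function of type~(1), and in particular $q$ is odd.

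It remains to rule out $E_h(\gamma)=[1,3]$ in case~(1), i.e.\ to show that the type-(1) function has no point of ramification index~$3$. The cleanest route is to show that this function factors over $\mybar\F_q$ as a composition $\psi\circ\pi$ of two degree-two rational functions; then $e_h(x)=e_\pi(x)\,e_\psi(\pi(x))\in\{1,2,4\}$ for every $x\in\bP^1(\mybar\F_q)$, so index~$3$ never arises, $[1,3]$ is impossible, and the only remaining two-part partition is $E_h(\gamma)=[2,2]$. To produce the factorization I would exhibit the Möbius involution $\tau$ with $h\circ\tau=h$ and take $\pi$ to be the quotient by $\langle\tau\rangle$: concretely, the fibre of $h$ over $\infty$ is $\infty$ together with the three distinct (Galois-conjugate) roots of the irreducible cubic $X^3+\alpha X+\beta$, and one checks that this four-element set is a union of two $\tau$-orbits, which pins down $\tau$ and hence the degree-two factors.

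I expect this last step to be the main obstacle: cases~(2) and~(3) are elementary or finite, whereas establishing the degree-two-by-degree-two factorization in case~(1) (equivalently, the invariance under a Möbius involution, which exhibits these as essentially degree-$4$ Rédei functions) is the genuine technical point. An alternative is to compute the derivative numerator
\[
R(X)=N'D-ND'=X^6+5\alpha X^4+20\beta X^3-5\alpha^2X^2-4\alpha\beta X-(\alpha^3+8\beta^2),
\]
where $N$ and $D$ are the displayed numerator and denominator, and to verify that the irreducibility of $X^3+\alpha X+\beta$ forces $R$ to be squarefree, so that every affine ramification index equals $2$; however, this argument needs separate attention when $p=3$ (the only odd characteristic in which index~$3$ would be wild and hence invisible to a naive root-multiplicity count), so the uniform composition argument is preferable.
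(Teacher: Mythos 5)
Your reduction to Lemma~\ref{deg4perm} and your treatment of cases~(2) and~(3) are fine and match the paper's (the paper dismisses these as ``clear''/``routine''; your arguments are exactly the right ones). The problem is case~(1), which you yourself flag as ``the genuine technical point'': there the proposal asserts, but never proves, the existence of a M\"obius involution $\tau$ with $h\circ\tau=h$ (equivalently the factorization $h=\psi\circ\pi$ into degree-two maps). This is not a routine verification to defer: the existence of such an involution says precisely that $h$ is a Galois cover with group $\Z/2\times\Z/2$, which is essentially equivalent to the statement that all ramification multisets are $[2,2]$ --- i.e.\ to the conclusion you are trying to prove. Pairing up the fibre $\{\infty,\theta_1,\theta_2,\theta_3\}$ over $\infty$ pins down a \emph{candidate} $\tau$ (there are three candidates, one per pairing), but checking that any of them actually satisfies $h\circ\tau=h$ is an unperformed computation, and without it the argument is circular. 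Your fallback route has the same status: the formula for $R(X)=N'D-ND'$ is correct, but squarefreeness of $R$ given irreducibility of $X^3+\alpha X+\beta$ is again the entire content of the claim, and it is not established. (Incidentally, your worry about $p=3$ points the wrong way: wild ramification of index $3$ forces $R$ to vanish to order $\ge 3$ at that point, so squarefreeness of $R$ would rule out index $3$ in \emph{every} odd characteristic; the issue is not char~$3$, it is that squarefreeness is unproven.)

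For comparison, the paper closes case~(1) with a different and shorter device: one checks the polynomial identity that the numerator of $h^3+16\alpha h+64\beta$ is a square in $\F_q[X]$. This means that over each of the three distinct roots of $W^3+16\alpha W+64\beta$ (distinct because $4\alpha^3+27\beta^2\ne 0$), every ramification index in the fibre is even, so each such fibre is $[2,2]$ or $[4]$. Riemann--Hurwitz (Lemma~\ref{rh}) applied to the union of these three fibres gives $6\ge\sum(e-1)$, and since $[2,2]$ contributes $2$ while $[4]$ contributes $3$, all three fibres must be $[2,2]$ and there can be no further ramification anywhere; hence any point with exactly two preimages is one of these three and has multiset $[2,2]$. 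If you want to salvage your own approach, the cleanest completion is exactly this kind of explicit identity: either verify $h\circ\tau=h$ for the candidate $\tau(X)=\theta_1+(\theta_2-\theta_1)(\theta_3-\theta_1)/(X-\theta_1)$, or verify $\gcd(R,R')=1$ using $4\alpha^3+27\beta^2\ne0$. Either way, some concrete computation replacing the missing step is unavoidable.
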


\begin{proof}
It is clear that the hypotheses are never satisfied in case (2), and it is routine to verify that the hypotheses 
are never satisfied in case (3). In case (1) the conclusion was shown in the proof of \cite[Thm.~1.4]{DZ1}, and 
can be verified directly by showing that the numerator of $h^3+16\alpha h+64\beta$ is a square and then applying 
Riemann--Hurwitz (Lemma~\ref{rh}).
\end{proof}

\FloatBarrier


\section{Geometric properties of $g(X)$}
\label{sec:geom}

In this section we prove the following result providing properties of a certain class of functions 
$\bP^1(\mybar\F_q)\to\bP^1(\mybar\F_q)$ which are used in the proofs of our main results. We note that the key 
geometric conclusion in the result is \eqref{A}, and the purpose of \eqref{B} and \eqref{C} is to provide 
information we will use in later sections to restate the geometric conclusion in terms of the coefficients.

\begin{thm}\label{geom}
Let $q$ and $Q$ be powers of the same prime, and let $a,b,c,d$ be elements of\/ $\F_{q^2}$ which are not 
all zero. Write $g(X):=B(X)/A(X)$ where $A(X):=aX^{Q+1}+bX^Q+cX+d$ and $B(X):=d^q X^{Q+1}+c^q X^Q+b^q X+a^q$.
\renewcommand{\theenumi}{\Alph{enumi}}
\renewcommand{\labelenumi}{\emph{(\Alph{enumi})}}
\begin{enumerate}
\item \label{A} At least one of the following holds:
\renewcommand{\theenumii}{\arabic{enumii}}
\renewcommand{\labelenumii}{\emph{(\Alph{enumi}\arabic{enumii})}}
\begin{enumerate}
\item\label{A1} $g(X)$ has ramification multiset $[1,Q]$ over some point in\/ $\bP^1(\mybar\F_q)$;
\item\label{A2} $g(X)=\rho\circ X^n\circ\sigma$ for some degree-one $\rho,\sigma\in\mybar\F_q(X)$ and some 
$n\in\{Q-1,Q+1\}$;
\item\label{A3} $g(X)$ is constant;
\item\label{A4}
$A(X)$ has at least one root in $\mu_{q+1}$.
\end{enumerate}
\item\label{B} If $q$ is even then the following are equivalent:
\begin{enumerate}
\item\label{B1} condition \eqref{A2} holds but \eqref{A4} does not hold;
\item\label{B2} $e:=a^{q+1}+b^{q+1}+c^{q+1}+d^{q+1}$ is nonzero,
\[
(ab^q+cd^q)^Q=e^{Q-1}(ac^q+bd^q),
\]
and either $U(X)\nmid A(X)$ or $U(X)$ has no roots in $\mu_{q+1}$, where $U(X):=(ab^q+cd^q)X^2+eX+a^qb+c^qd.
$
\end{enumerate}
\item\label{C} Suppose $q$ is even and \eqref{B2} holds, and let $\Lambda$ be the union of the set of roots of 
$W(X):=(bc+ad)X^2+eX+(bc+ad)^q $ in\/ $\mybar\F_q$ and the set consisting of $(2-\deg(W))$ copies of $\infty$. 
Then
\begin{enumerate}
\item \label{C1} each element of $\Lambda$ has a unique $g$-preimage in\/ $\bP^1(\mybar\F_q)$;
\item \label{C2} each root of $U(X)$ in\/ $\mybar\F_q$ is the unique $g$-preimage of some element of $\Lambda$;
\item \label{C3} $\gcd(A(X),B(X))$ divides $U(X)$;
\item \label{C4} we have $\deg(g)=Q-1$ if and only if $A(X)$ is divisible by $U(X)$ and $\{b,c,d\}\ne\{0\}$.
\end{enumerate}
\end{enumerate}
\end{thm}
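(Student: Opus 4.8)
The plan is to extract everything from a single derivative computation together with the self‑conjugate reciprocal (SCR) structure of the resulting polynomials. Writing $A(X)=aX^{Q+1}+bX^Q+cX+d$ and $B(X)=\widehat A(X)$ (which agrees with the displayed $B$ when $a\neq 0$), I would first compute
\[
B'A-BA'=(bd^q-ac^q)(X^Q)^2+(b^{q+1}+d^{q+1}-a^{q+1}-c^{q+1})(X^Q)+(b^qd-a^qc),
\]
so that the numerator of $g'$ is $P(X^Q)$ for an explicit quadratic $P$. Two features drive the whole argument. First, since the constant term of $P$ is the $q$-th power of its leading coefficient and its middle coefficient lies in $\F_q$, Lemma~\ref{SCRbasics} shows $P$ is SCR; hence its roots form either a pair $\{y,y^{-q}\}$ or a pair inside $\mu_{q+1}$. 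Second, because $Q$ is a power of the characteristic, $X\mapsto X^Q$ is purely inseparable, so every root of $P(X^Q)$ has multiplicity $Q$ (or $2Q$ at a double root of $P$). Running the same idea on fibers, the Taylor expansion $A(\rho+T)=A(\rho)+A'(\rho)T+(a\rho+b)T^Q+aT^{Q+1}$, together with its analogue for $\widehat A-\beta A$ (again a four‑term polynomial of the same shape), shows that for every $\beta$ the polynomial cutting out $g^{-1}(\beta)$ has at most one multiple root, of multiplicity $Q$ or $Q+1$.

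For part (A) I would combine these facts with Riemann--Hurwitz (Lemma~\ref{rh}). After disposing of the cases where $g$ is constant, where $A$ has a root in $\mu_{q+1}$, and the degenerate case $P\equiv0$ (where the SCR relations force $A$ to be SCR, hence $g$ constant, or else $A$ to have a root in $\mu_{q+1}$), the fiber analysis forces each ramification multiset $E_g(\beta)$ to consist of $1$'s together with at most one entry equal to $Q$ or $Q+1$. If some fiber contains an entry $Q$ then, since $\deg g\le Q+1$, that fiber is $[1,Q]$ and we land in (A1). Otherwise every ramified fiber is totally ramified; a differential/Riemann--Hurwitz count (using that index $Q+1$ and index $Q-1$ are both tame, so each contributes exactly its index minus one to the different of total degree $2\deg g-2$) shows there are then exactly two totally ramified points, whence $g=\rho\circ X^n\circ\sigma$ with $n=\deg g$. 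Identifying $n$ with the multiplicity of the multiple root (when the ramified points are finite non‑poles) or with the degree drop caused by $a=0$ or a nontrivial $\gcd(A,B)$ (when they are poles or $\infty$) then pins $n\in\{Q-1,Q+1\}$, giving (A2).

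For parts (B) and (C) I would translate the geometry of the totally ramified case into the coefficients, working in characteristic $2$ so that the middle coefficient of $P$ equals $e$. Here $U(X)$ and $W(X)$ are again SCR quadratics with middle coefficient $e$, and the crux is that the two totally ramified points are exactly the roots of $U$ while their images are the roots of $W$ (together with the appropriate copies of $\infty$). Since the roots of $P(X^Q)$ are the ramification points, the requirement that they be the roots of $U$ is precisely the statement that the roots of $P$ are the $Q$-th powers of the roots of $U$; comparing the SCR quadratics $P$ and $U$ turns this into $e\neq 0$ together with the identity $(ab^q+cd^q)^Q=e^{Q-1}(ac^q+bd^q)$, while the clause ``$U\nmid A$ or $U$ has no root in $\mu_{q+1}$'' records exactly that (A4) fails. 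This yields (B). For (C), once $U$ and $W$ are identified as the ramification and branch loci, (C1) and (C2) are immediate, (C3) follows because any common root of $A$ and $B$ must be one of these totally ramified points, and (C4) amounts to observing that $\deg g$ drops to $Q-1$ exactly when both totally ramified points are poles, i.e.\ $U\mid A$, with $\{b,c,d\}\neq\{0\}$ excluding the degenerate monomial case where $U$ is only linear.

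The main obstacle I anticipate is the ramification bookkeeping in part (A): the clean identity ``different exponent $=\ord_\alpha(g')$'' only holds at finite non‑poles, so the behaviour at poles, at $\infty$, and at wildly ramified points of index $Q$ must be handled by hand, as must the degree drops (when $a=0$ or $\gcd(A,B)\neq1$) that produce the $n=Q-1$ branch. Obtaining a uniform treatment of all these degenerations, rather than a proliferation of sub‑cases, is the delicate part; the fiber/Taylor viewpoint, which reads off all fiber multiplicities directly from $\widehat A-\beta A$ without ever inverting the pole/non‑pole dichotomy, is what I would rely on to keep the argument uniform.
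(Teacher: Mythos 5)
Your strategy for part (A) is viable and genuinely differs from the paper's in one respect. Your opening computation is the paper's own key identity (the paper writes it as $V(X)^Q=A(X)B'(X)-A'(X)B(X)$ for an explicit quadratic $V$, which is your $P$), and your degenerate cases match the paper's; but where the paper then works globally from the pullback identity $U(X)\cdot V(X)^Q=W(g(X))\cdot A(X)^2$, with $U(X)=(d^qX+c^q)A(X)-(aX+b)B(X)$, you instead propose a fiber-by-fiber Taylor expansion of $B-\beta A$ (at most one multiple root per fiber, of multiplicity $Q$ or $Q+1$) combined with Riemann--Hurwitz. That local route can plausibly deliver the dichotomy (A1)/(A2), modulo the pole/infinity/degree-drop bookkeeping you flag, and it is a legitimate alternative to the paper's global argument for part (A).

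The genuine gap is in parts (B) and (C). Everything there rests on your assertion that ``the two totally ramified points are exactly the roots of $U$ while their images are the roots of $W$,'' but nothing in your setup connects the explicitly given quadratics $U$ and $W$ to the geometry of $g$. What your Wronskian argument actually shows is that the ramification points are the roots of $P(X^Q)$, i.e.\ of $V$; and for two separable quadratics in characteristic $2$ whose middle coefficients both equal $e$, the statement ``roots of $U$ $=$ roots of $V$'' is \emph{equivalent} to $U$ being proportional to $V$, which upon comparing leading coefficients is exactly the identity $(ab^q+cd^q)^Q=e^{Q-1}(ac^q+bd^q)$ you are trying to derive. So your proposed derivation of (B2) from (B1) is circular: the ``crux'' you assume is the identity itself. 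The same unproved identification underlies your justification of (C1), (C2), and (C3) (a common root of $A$ and $B$ need not, a priori, be a totally ramified point of the reduced $g$, and even if it is, you only know such points are roots of $V$, not of $U$). The paper supplies precisely the missing links: the identity $U=(d^qX+c^q)A-(aX+b)B$ gives $\gcd(A,B)\mid U$ and ties the roots of $U$ to the function $g$, and the identity $UV^Q=W(g)A^2$ shows the roots of $UV^Q$ are exactly the $g$-preimages of the roots of $W$. Some such identities --- or an equivalent direct computation characterizing the branch points $\beta$ for which $B-\beta A$ is a constant times $(X-\theta)^{Q+1}$, and matching the result against the coefficients of $U$ and $W$ --- are indispensable; without them, (B) and (C) do not follow from the Wronskian and fiber analysis alone.
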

\renewcommand{\theenumi}{\arabic{enumi}}
\renewcommand{\labelenumi}{(\arabic{enumi})}

\begin{proof}
Define
\begin{align*}
W(X)&:=(bc-ad)X^2 + (a^{q+1}-b^{q+1}-c^{q+1}+d^{q+1})X + (bc-ad)^q, \\
U(X)&:=(cd^q-ab^q)X^2+(-a^{q+1}-b^{q+1}+c^{q+1}+d^{q+1})X+c^qd-a^qb, \\
V(X)&:=(bd^q-ac^q)^{1/Q}X^2+(-a^{q+1}+b^{q+1}-c^{q+1}+d^{q+1})^{1/Q}X, \\
&\qquad + (b^qd-a^qc)^{1/Q}.
\end{align*}
Note that each of $W(X)$, $U(X)$, and $V(X)$ is either a constant times $X$ or a degree-$2$ SCR polynomial 
in $\F_{q^2}[X]$.

For any polynomial of the form $P(X):=\alpha X^2+\beta X+\gamma$ with $\alpha,\beta,\gamma\in\F_{q^2}$, define
\[
\Delta(P):=\beta^2-4\alpha\gamma.
\]
Thus if $\deg(P)=2$ then $\Delta(P)$ is the discriminant of $P(X)$. It is easy to check that
\begin{align}
\label{u} U(X) &= (d^q X+c^q)A(X)-(aX+b)B(X), \\
\label{vQ} V(X)^Q &= A(X)B'(X)-A'(X)B(X), \\
\label{Rg} U(X)\cdot V(X)^Q &= W(g(X))\cdot A(X)^2, \\
\label{Delta} \Delta(W)&=\Delta(U)=\Delta(V)^Q,
\end{align}
%
%
and
\begin{equation}
\begin{aligned}
&\text{if $q$ is even then \eqref{B2} holds if and only if $\Delta(V)\ne 0$, $U/V\in\F_{q^2}^*$,} \\
&\text{and either $U(X)\nmid A(X)$ or $U(X)$ has no roots in $\mu_{q+1}$.}
\end{aligned}
\end{equation}

Let $C(X)$ be the monic greatest common divisor of $A(X)$ and $B(X)$ in $\F_{q^2}[X]$. Then \eqref{u} 
and \eqref{vQ} imply that $C(X)$ divides $U(X)$ and $V(X)^Q$. Since $B(X)=X^{Q+1} A^{(q)}(1/X)$, we have 
$B(X^q)=X^{q(Q+1)} A(1/X)^q$, so the nonzero roots of $B(X)$ are the $(-q)$-th powers of the nonzero roots 
of $A(X)$, and moreover the multiplicity of any $\alpha\in\mybar\F_q^*$ as a root of $A(X)$ equals the 
multiplicity of $\alpha^{-q}$ as a root of $B(X)$. Since $A,B\in\F_{q^2}[X]$, the multiset of roots of 
each of $A(X)$ and $B(X)$ is preserved by the $q^2$-th power map. Thus the multiset of nonzero roots 
of $C(X)$ is preserved by the $(-q)$-th power map, so that $C(X)=X^t C_0(X)$ where $t\ge 0$ and 
$C_0(X)\in\F_{q^2}[X]$ is an SCR polynomial. Moreover, the multiset of roots of $A(X)$ in $\mu_{q+1}$ 
equals the multiset of roots of $B(X)$ in $\mu_{q+1}$, so if we write $A(X)=A_0(X)C(X)$ and 
$B(X)=B_0(X)C(X)$ with $A_0,B_0\in\F_{q^2}[X]$ then $A_0(X)$ and $B_0(X)$ have no roots in $\mu_{q+1}$.  
Here $A_0(X)$ and $B_0(X)$ are coprime, and \eqref{Rg} says
\begin{equation}\label{Rg2}
U(X)\cdot V(X)^Q = W\Bigl(\frac{B_0(X)}{A_0(X)}\Bigr)\cdot A_0(X)^2\cdot C(X)^2,
\end{equation}
where we note that $W_0(X):=W\bigl(B_0(X)/A_0(X)\bigr)\cdot A_0(X)^2$ is a polynomial. 

First suppose $g(X)$ is a constant $\lambda$, so that \eqref{A3} holds but \eqref{A2} and \eqref{B1} do not.  
By considering coefficients we find that $d^q=a\lambda$, $c^q=b\lambda$, and $\lambda^{q+1}=1$. It follows that 
if $q$ is even then $e:=a^{q+1}+b^{q+1}+c^{q+1}+d^{q+1}$ is zero. Thus \eqref{B} holds because neither \eqref{B1} 
nor \eqref{B2} does, and \eqref{C} is vacuously true.

Next suppose that $\deg(g)>0$ and at least one of $U(X)$, $V(X)$, and $W(X)$ is zero. Then \eqref{Rg} implies 
that $W(X)=0$ and either $U(X)=0$ or $V(X)=0$. It is straightforward to verify that in each case $A(X)$ has 
a root in $\mu_{q+1}$, and if $q$ is even then $e=0$, so that \eqref{A}, \eqref{B}, and \eqref{C} hold.
%
%

Henceforth assume that $g(X)$ is nonconstant and $U(X)$, $V(X)$, and $W(X)$ are all nonzero. Then each of $U(X)$, 
$V(X)$, and $W(X)$ has degree in $\{1,2\}$, so since $C(X)\mid U(X)$ we have $\deg(C)\le 2$. Now suppose that $A(X)$ 
has a root $\alpha$ in $\mu_{q+1}$, so that also $C(\alpha)=0$. In this case we need only show that if $q$ is even 
then \eqref{B2} does not hold. Suppose otherwise, so that $U(X)/V(X)$ is constant and each of $U(X)$, $V(X)$, 
and $W(X)$ is squarefree. Since $C(X)$ divides $U(X)$, we see that $C(X)$ is squarefree and $U(\alpha)=0$. Since 
$\alpha\ne 0$, it follows that $U(X)$ is a degree-$2$ SCR polynomial, so that $U(X)$ has a second root $\beta\ne\alpha$.  
Our hypothesis that $U(X)/V(X)$ is constant implies that $\beta$ has multiplicity $Q+1$ as a root of $UV^Q$. Since 
$\deg(C)>0$ we have $\deg(g)<Q+1$, so that \eqref{Rg2} implies that $C(\beta)=0$. Thus $U(X)\mid C(X)$ so that 
$U(X)\mid A(X)$, contradicting \eqref{B2}.

Henceforth we assume that $A(X)$ (and hence $C(X)$) has no roots in $\mu_{q+1}$. Since $C(X)$ is either a constant 
times $X$ or an SCR polynomial of degree at most $2$, it follows that $C(X)$ cannot have a root of multiplicity $2$.
Suppose for now that $C(0)=0$. Since $C(X)$ divides $U(X)$ and $V(X)^Q$, we must have $U(0)=V(0)=0$, so that both 
$U(X)$ and $V(X)$ are constants times $X$, whence $0\ne\Delta(U)=\Delta(W)$. Likewise $A(0)=B(0)=0$, so that $a=d=0$.  
Since $U(X)\ne 0$ we have $b^{q+1}\ne c^{q+1}$. Thus $g(X)=\rho\circ X^{Q-1}$ where $\rho(X):=(c^q X+b^q)/(bX+c)$. By 
\eqref{Rg}, if $\deg(W)=2$ then the only $g$-preimages of the two roots of $W(X)$ are $0$ and $\infty$, and if 
$\deg(W)=1$ then the only $g$-preimages of $0$ and $\infty$ are $0$ and $\infty$. Thus \eqref{A}, \eqref{B} and 
\eqref{C} hold in this case.

Henceforth assume that $C(0)\ne 0$, so that $C(X)$ is an SCR polynomial. Suppose for now that $\Delta(W)=0$, so that 
also $\Delta(U)=\Delta(V)=0$. Then each of $W(X)$, $U(X)$, and $V(X)$ is a degree-$2$ SCR polynomial with a unique 
root, so this root must be in $\mu_{q+1}$, and hence cannot be a root of $C(X)$. Since $C(X)\mid U(X)$, it follows 
that $C(X)=1$. Write $\alpha,\beta,\gamma$ for the unique roots of $U(X)$, $V(X)$, and $W(X)$, respectively. If 
$\alpha\ne\beta$ then \eqref{Rg} yields $E_g(\gamma) = [1,Q]$, which implies \eqref{A}, \eqref{B} and \eqref{C}. 
In the remaining case $\alpha=\beta$ we will obtain the contradiction $A(\beta)=0$. If $q$ is even then since 
$W(X)$ is not squarefree we have $a^{q+1}+b^{q+1}+c^{q+1}+d^{q+1}=0$, and from $U(\beta)=0$ and $V(\beta)^Q=0$ we obtain
\[
\beta^2=\frac{a^qb+c^qd}{ab^q+cd^q} \quad\text{ and }\quad
\beta^{2Q}=\frac{b^qd+a^qc}{bd^q+ac^q};
\]
it follows that
\[
A(\beta)^2=d^2+c^2\beta^2+\beta^{2Q}(b^2+a^2\beta^2)=0,
\]
yielding the desired contradiction.
%
%
If $q$ is odd then from $U(\beta)=0$ and $V(\beta)^Q=0$ we obtain
\[
\beta = \frac{a^{q+1}+b^{q+1}-c^{q+1}-d^{q+1}}{2(cd^q-ab^q)} \quad\text{ and }\quad
\beta^Q = \frac{a^{q+1}-b^{q+1}+c^{q+1}-d^{q+1}}{2(bd^q-ac^q)};
\]
these imply that
\[
A(\beta) = d+c\beta+\beta^Q(b+a\beta),
\]
which equals $a\cdot\Delta(W)$ divided by the leading coefficient of $4U(X)V(X)^Q$, and hence is zero.
%
%

Henceforth assume that $\Delta(W)\ne 0$, so that also $\Delta(U)\ne 0$ and $\Delta(V)\ne 0$. Thus all roots of each 
of $W(X)$, $U(X)$, and $V(X)$ have multiplicity $1$. Since $C(X)\mid U(X)$, it follows that all roots of $C(X)$ have 
multiplicity $1$. Suppose for now that $C(X)\ne 1$. Since $\deg(C)\ne 1$ and $\deg(C)\le 2$, it follows that $\deg(C)=2$.  
Here $C(X)$ has two distinct roots, each of which is a root of both $U(X)$ and $V(X)$, so we conclude that $U/C$ and 
$V/C$ are constant. By \eqref{Rg2}, it follows that $U(X)^{Q-1}/W_0(X)$ is constant. Note that $\deg(g)\le Q-1$ since 
$\deg(C)=2$, so that the sum of the elements of each $g$-ramification multiset is at most $Q-1$. If $\deg(W)=2$ 
then it follows that each root of $W(X)$ has a unique $g$-preimage; if $\deg(W)=1$ then each of $0$ and $\infty$ 
has a unique $g$-preimage. In either case the two preimages are the roots of $U(X)$, and we conclude that 
$g(X)=\rho\circ X^{Q-1}\circ\sigma$ for some degree-one $\rho,\sigma\in\mybar\F_q(X)$. Thus \eqref{A2} holds 
but \eqref{A4} does not hold, and both \eqref{B} and \eqref{C} hold.

The remaining possibility is $C(X)=1$. In this case $C(0)\ne 0$, so that $\{a,d\}\ne\{0\}$ and thus $\deg(g)=Q+1$. 
Let $\Lambda$ be the set of roots of $W(X)$ if $\deg(W)=2$, and $\Lambda:=\{0,\infty\}$ if $\deg(W)=1$. Then the 
multiset of $g$-preimages of elements of $\Lambda$ (counted with multiplicities) is the union of the multiset of 
roots of $UV^Q$ and the multiset consisting of $s := 2Q+2-\deg(UV^Q)$ copies of $\infty$. Writing $\Sigma$ for 
the union of the $g$-ramification multisets of the elements of $\Lambda$, it follows that $\Sigma=E_{UV^Q}(0)$ if 
$s=0$ and $\Sigma=E_{UV^Q}(0)\cup [s]$ if $s>0$. If $U(X)$ has a root $\alpha$ which is not a root of $V(X)$ then 
$\Sigma$ is either $[1,1,Q,Q]$ or $[1,Q,Q+1]$, so in either case $g(X)$ has ramification multiset $[1,Q]$ over 
some element of $\Lambda$, which implies \eqref{A}, \eqref{B} and \eqref{C}. Finally, assume that every root of 
$U(X)$ is a root of $V(X)$, so that $U(X)/V(X)$ is constant. Then $\Sigma=[Q+1,Q+1]$, so that each element of 
$\Lambda$ has a unique $g$-preimage, and conversely each root of $U(X)$ is the unique $g$-preimage of an element 
of $\bP^1(\mybar\F_q)$. It follows that $g=\rho\circ X^{Q+1}\circ\sigma$ for some degree-one $\rho,\sigma\in\mybar\F_q(X)$. 
Now assume in addition that $q$ is even. Then \eqref{B1} and \eqref{B2} hold, and since $\deg(g)=Q+1$ it remains 
only to show that if $U(X)\mid A(X)$ then $\{b,c,d\}=\{0\}$. So suppose that $U(X)\mid A(X)$. Then $U(X)$ cannot 
be SCR, since if it were then we would also have $U(X)\mid B(X)$, which is impossible since $C(X)=1$. Thus $U(0)=0$, 
so since $U(X)\mid A(X)$ we have $d=A(0)=0$. Since $C(X)=1$ we must have $B(0)\ne 0$, so that $a\ne 0$. Since $U(X)$ 
and $V(X)$ have no degree-$2$ terms, it follows that $b=c=0$, as desired. Thus \eqref{A}, \eqref{B} and \eqref{C} 
hold in this case, which completes the proof.
\end{proof}


\section{Near-polynomial permutations}
\label{sec:npp}

In this section we study permutation rational functions $h(X)$ over $\F_q$ which are ``nearly'' polynomials, in the 
sense that some $\gamma\in\bP^1(\mybar\F_q)$ has exactly two $h$-preimages. We determine all such permutation rational 
functions when the ramification indices of the points in $h^{-1}(\gamma)$ satisfy certain mild constraints. The main 
result is as follows.

\begin{thm}\label{lpp}
Let $h(X)\in\F_q(X)$ be a nonconstant rational function whose ramification multiset over some point 
$\gamma\in\bP^1(\mybar\F_q)$ is $[s,t]$, where $s$ and $t$ are positive integers and $\gcd(t,q+1)=1$. Then $h(X)$ 
permutes\/ $\bP^1(\F_q)$ if and only if $s \equiv 0 \pmod{q+1}$ and $\gamma \in \F_{q^2}\setminus\F_q$.
\end{thm}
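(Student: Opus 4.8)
The plan is to prove the two implications separately, transferring the problem from $\bP^1(\F_q)$ to $\mu_{q+1}$ once $\gamma\in\F_{q^2}\setminus\F_q$ is known, and to reserve the Weil-free geometric input for the single genuinely hard step. First I would record the easy half of necessity, namely that permuting forces $\gamma\notin\F_q$. Since $h\in\F_q(X)$ commutes with the $q$-power Frobenius $\phi$, if $\gamma\in\F_q$ then the fiber $h^{-1}(\gamma)=\{P,R\}$, with $e_h(P)=s$ and $e_h(R)=t$, is $\phi$-stable. If $s\ne t$ then $\phi$ fixes each of $P,R$, so both lie in $\bP^1(\F_q)$ and $h(P)=h(R)=\gamma$ contradicts injectivity; if $s=t$ then either both lie in $\bP^1(\F_q)$ (same contradiction) or $\phi$ swaps them, whence $\gamma$ has no $\F_q$-rational preimage and is omitted from the image, contradicting surjectivity. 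Next I would reduce to $h$ separable: writing $h=h_1\circ X^{p^i}$ with $h_1$ separable, the map $X^{p^i}$ permutes $\bP^1(\F_q)$, so $h$ permutes iff $h_1$ does, while the fiber data over $\gamma$ merely divides by $p^i$; since $\gcd(p,q+1)=1$, neither $\gcd(t,q+1)=1$ nor the target divisibility $q+1\mid s$ is affected.

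With $h$ separable I would bound the degree $d:=[\F_q(\gamma):\F_q]\ge 2$ by Riemann--Hurwitz (Lemma~\ref{rh}): the $d$ Frobenius-conjugate branch points of $\gamma$ each contribute $(s-1)+(t-1)=n-2$ to the different, where $n:=\deg h=s+t$, so $2n-2\ge d(n-2)$ and therefore $n\le 4$ or $d=2$. The cases $n\in\{2,3,4\}$ are exactly what Corollary~\ref{deg3ram} and Corollary~\ref{deg4ram} are built to eliminate: a separable degree-$3$ permutation has ramification $[3]$ over every branch point (so no two-point fiber), and a separable degree-$4$ permutation with a two-point fiber must have $q$ odd and fiber $[2,2]$ — which is incompatible with $\gcd(t,q+1)=1$; together with the observation that $s\equiv 0\pmod{q+1}$ forces $n\ge q+2$, these low-degree cases reduce to finitely many degenerate configurations to be checked directly. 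In the main case $d=2$ this yields $\gamma\in\F_{q^2}\setminus\F_q$, as claimed.

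It remains to pin down the divisibility, and here I would transfer to $\mu_{q+1}$. Using Lemma~\ref{mutoF} I take the degree-one $\lambda(X):=(X-\gamma)/(X-\gamma^q)\in\F_{q^2}(X)$, which maps $\bP^1(\F_q)$ bijectively onto $\mu_{q+1}$ and satisfies $\lambda^{(q)}=1/\lambda$; then $G:=\lambda\circ h\circ\lambda^{-1}\in\F_{q^2}(X)$ permutes $\mu_{q+1}$ iff $h$ permutes $\bP^1(\F_q)$ (cf.\ Lemma~\ref{gtoh}), and a short computation gives $G^{(q)}(X)=1/G(1/X)$, so $G$ commutes with the involution $\tau(X):=X^{-q}$. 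Because $\tau$ carries $G^{-1}(0)=\{\lambda(P),\lambda(R)\}$ (of ramification type $[s,t]$) to $G^{-1}(\infty)$, these two fibers account for all zeros and poles of $G$, forcing
\[
G(X)=c\,\frac{(X-p)^s(X-r)^t}{(X-p^{-q})^s(X-r^{-q})^t}\qquad\text{for some }c,\ p,r\notin\mu_{q+1}.
\]
On $\mu_{q+1}$ each factor $(X-p)/(X-p^{-q})$ is a constant times a degree-one permutation of $\mu_{q+1}$ (Lemma~\ref{mutomu}), so after normalizing $p$ the restriction of $G$ to $\mu_{q+1}$ is $x\mapsto c_0\,x^{s}M(x)^{t}$ with $c_0\in\mu_{q+1}$ and $M$ a nontrivial degree-one permutation of $\mu_{q+1}$. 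This already gives the "if" direction cleanly: when $q+1\mid s$ we have $x^{s}=1$ on $\mu_{q+1}$, so $G$ restricts to $c_0\,M(x)^{t}$, a composition of the three $\mu_{q+1}$-permutations $x\mapsto x^t$ (bijective since $\gcd(t,q+1)=1$), $M$, and multiplication by $c_0$.

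The hard remaining part is the converse: if $q+1\nmid s$ then $x\mapsto c_0\,x^{s}M(x)^{t}$ must fail to be injective on $\mu_{q+1}$. This is precisely the regime where the Weil bound is useless, since $\deg G=s+t$ can greatly exceed $q$. My plan is a descent: using $x^{q+1}=1$ on $\mu_{q+1}$ together with the invertibility of $x\mapsto x^{t}$ one can replace the pair $(s,t)$ by a simpler pair while keeping both the permutation property and a genuine two-point fiber of the same shape, iterating until the degree drops to $3$ or $4$, where Corollaries~\ref{deg3ram}--\ref{deg4ram} deliver the contradiction. I expect the construction of this descent — arranging that each reduction preserves a two-point fiber of the controlled ramification type while strictly decreasing a suitable complexity measure, and handling the degenerate branch $s=t$ (where $P,R$ are Frobenius-conjugate) — to be the main obstacle, and to be the point at which the novel geometric (rather than Weil-based) argument of the paper is indispensable.
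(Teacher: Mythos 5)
Your opening moves are sound and in fact follow the paper's own route: the Frobenius argument showing a permutation forces $\gamma\notin\bP^1(\F_q)$, the reduction to separable $h$, Riemann--Hurwitz plus Corollaries~\ref{deg3ram} and \ref{deg4ram} to kill $[\F_q(\gamma):\F_q]\ge 3$, and the transfer to $\mu_{q+1}$ giving (in the case $P,R\in\F_{q^2}$) the restriction $x\mapsto c_0x^sM(x)^t$, from which the ``if'' direction follows. But the proposal has a genuine gap exactly where you flag one: you never prove that bijectivity of $x\mapsto c_0x^sM(x)^t$ on $\mu_{q+1}$ forces $q+1\mid s$, and the descent you sketch cannot work as described. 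Reducing exponents modulo $q+1$ (replacing $s$ by $s\bmod(q+1)$, and, since $M(x)\in\mu_{q+1}$ for $x\in\mu_{q+1}$, also $t$ by $t\bmod(q+1)$) can be done exactly once: it lowers the degree only to at most $2q+1$, after which there is nothing left to reduce, so the iteration never reaches degree $3$ or $4$. Since the theorem must hold when $q$ is small compared with $\deg h$, no low-degree classification (and no Weil-type bound) can close this. What the paper actually does here is Lemma~\ref{coprime}, an elementary power-sum (Hermite-type) computation: if $g(X)=X^s(\alpha^qX^t-1)/(X^t-\alpha)$ permutes $\mu_{q+1}$, then $\sum_{\beta\in\mu_{q+1}}g(\beta)=\sum_{\beta\in\mu_{q+1}}\beta=0$; expanding $g(\beta)=-\sum_{i=0}^{q-1}\alpha^{q-1-i}\beta^{s+t(i+1)}$ and using that $\sum_{\beta\in\mu_{q+1}}\beta^j\ne 0$ precisely when $(q+1)\mid j$, the hypothesis $\gcd(t,q+1)=1$ leaves exactly one surviving inner sum, and the vanishing of the total forces $s\equiv -t(q+1)\equiv 0\pmod{q+1}$. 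This single identity is the missing ``novel'' step; no descent is needed.

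There is a second gap in the branch you call degenerate: $s=t$ with the two preimages Frobenius-conjugate over $\F_{q^2}$, i.e.\ $R=P^{q^2}\in\F_{q^4}\setminus\F_{q^2}$. There your factor-by-factor normalization genuinely breaks down: for $p:=\lambda(P)$, the map $x\mapsto(x-p)/(x-p^{-q})$ sends $\mu_{q+1}$ into a single coset of $\mu_{q+1}$ if and only if $p\in\F_{q^2}$ (compare the two quadratics $(x-p)(1-p^qx)$ and $(x-p^{-q})(1-p^{-q^2}x)$: they are proportional exactly when $p^{q^2}=p$), so in this branch the restriction of $G$ is not of the form $c_0x^sM(x)^t$ with $M$ a degree-one permutation of $\mu_{q+1}$. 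The needed conclusion is that $h$ does not permute (here $s=t$ and $\gcd(t,q+1)=1$ already rule out $q+1\mid s$), and the paper gets it by grouping the conjugate pair: with $A(X)=(X-\delta)(X-\delta^{q^2})\in\F_{q^2}[X]$ one has $G=\epsilon\,g_2(X)^t$ where $g_2(X):=X^2A^{(q)}(1/X)/A(X)$ and $\epsilon\in\mu_{q+1}$, so bijectivity of $G$ on $\mu_{q+1}$ would yield a separable degree-$2$ permutation rational function over $\F_q$, which does not exist. You should finish this case by the same grouping trick rather than deferring it.
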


We first prove the following result about permutations of $\mu_{q+1}$.

\begin{lemma}\label{coprime}
Write $g(X) := X^s (\alpha^qX^t-1) / (X^t-\alpha)$ where $s$ is any integer, $t$ is a positive integer coprime to $q+1$, 
and $\alpha\in\F_{q^2}^*\setminus\mu_{q+1}$. Then $g(X)$ permutes $\mu_{q+1}$ if and only if $s \equiv 0 \pmod{q+1}$.
\end{lemma}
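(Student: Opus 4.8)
The plan is to treat $g$ as a self-map of $\mu_{q+1}$ and to settle both implications through the single necessary condition that a bijection of $\mu_{q+1}$ has image summing to $\sum_{Z\in\mu_{q+1}}Z=0$. First I would check that $g$ really does map $\mu_{q+1}$ into itself. For $X\in\mu_{q+1}$ we have $X^q=X^{-1}$ and $\alpha^{q^2}=\alpha$, so a one-line computation gives $g(X)^q=g(X)^{-1}$, i.e.\ $g(X)\in\mu_{q+1}$; the hypotheses $\alpha\neq 0$ and $\alpha\notin\mu_{q+1}$ ensure that neither $X^t-\alpha$ nor $\alpha^qX^t-1$ vanishes on $\mu_{q+1}$ (the latter because $\alpha^{-q}\in\mu_{q+1}$ would force $\alpha^{q+1}=1$). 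Writing $\rho(Y):=(\alpha^qY-1)/(Y-\alpha)$, we then have $g(X)=X^s\rho(X^t)$ on $\mu_{q+1}$.

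For the ``if'' direction, when $s\equiv 0\pmod{q+1}$ we have $X^s=1$ on $\mu_{q+1}$, so $g(X)=\rho(X^t)$ there. The map $X\mapsto X^t$ permutes $\mu_{q+1}$ because $\gcd(t,q+1)=1$, and $\rho$, being a degree-one rational function, is injective on all of $\bP^1(\mybar\F_q)$; since the same computation as above shows $\rho(Y)^q=\rho(Y)^{-1}$ for $Y\in\mu_{q+1}$, $\rho$ maps $\mu_{q+1}$ into itself and hence permutes it (alternatively cite Lemma~\ref{mutomu}). Being a composition of two permutations of $\mu_{q+1}$, $g$ permutes $\mu_{q+1}$.

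For the ``only if'' direction I would argue by contraposition using just the first power sum. Since $\gcd(t,q+1)=1$, the substitution $Y=X^t$ is a bijection of $\mu_{q+1}$ with inverse $X=Y^{t'}$ where $tt'\equiv 1\pmod{q+1}$, so reindexing gives
\[
\sum_{X\in\mu_{q+1}}g(X)=\sum_{Y\in\mu_{q+1}}Y^{m}\rho(Y),\qquad m\equiv st'\pmod{q+1},
\]
and $m\equiv 0$ precisely when $s\equiv 0$. If $g$ permuted $\mu_{q+1}$ this would equal $\sum_{Z\in\mu_{q+1}}Z=0$, so it suffices to prove the right-hand sum is nonzero when $1\le m\le q$. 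Setting $n:=q+1$ and expanding $Y^m\rho(Y)=(\alpha^qY^{m+1}-Y^m)/(Y-\alpha)$, everything reduces to the elementary identity
\[
\sum_{Y\in\mu_n}\frac{Y^k}{Y-\alpha}=\frac{n\alpha^{k-1}}{1-\alpha^n}\qquad(1\le k\le n),
\]
which I would prove by writing $Y^k/(Y-\alpha)=\sum_{i=0}^{k-1}\alpha^iY^{k-1-i}+\alpha^k/(Y-\alpha)$, using $\sum_{Y\in\mu_n}Y^j=0$ for $0<j<n$, and evaluating $\sum_{Y\in\mu_n}(Y-\alpha)^{-1}=n\alpha^{n-1}/(1-\alpha^n)$ as the logarithmic derivative of $\prod_{Y\in\mu_n}(X-Y)=X^n-1$ at $X=\alpha$.

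Applying this with $k=m+1$ and $k=m$, and using $\alpha^n=\alpha^{q+1}$, collapses the sum to
\[
\frac{n\alpha^{m-1}}{1-\alpha^n}\bigl(\alpha^{q+1}-1\bigr)=-(q+1)\alpha^{m-1}.
\]
Because $q+1\equiv 1\pmod p$ is nonzero in $\mybar\F_q$ and $\alpha\neq 0$, this is nonzero, which finishes the contrapositive. The only delicate ingredient is the summation identity, and even that is routine; the genuinely useful observation is that the first power sum already obstructs bijectivity, so no higher moments — and in particular no Weil-type estimates, which would be useless here since $\deg(g)$ can exceed $q$ — are required.
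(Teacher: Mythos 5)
Your proof is correct and follows essentially the same route as the paper's: the ``if'' direction via the decomposition $\rho\circ X^t$ on $\mu_{q+1}$, and the ``only if'' direction via the Hermite-type observation that a bijection of $\mu_{q+1}$ forces $\sum_{\beta\in\mu_{q+1}}g(\beta)=0$, followed by an explicit evaluation showing this sum is nonzero when $s\not\equiv 0\pmod{q+1}$. The only difference is cosmetic: the paper expands $g(\beta)$ as $-\sum_{i=0}^{q-1}\alpha^{q-1-i}\beta^{s+t(i+1)}$ and uses orthogonality to isolate the unique surviving exponent, whereas you substitute $Y=X^t$ and evaluate $\sum_{Y\in\mu_{q+1}}Y^k/(Y-\alpha)$ in closed form via the logarithmic derivative of $X^{q+1}-1$, arriving at the explicit value $-(q+1)\alpha^{m-1}$ --- both computations rest on the same vanishing of power sums over $\mu_{q+1}$.
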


\begin{proof}
If $s \equiv 0 \pmod{q+1}$ then $g(X)$ induces the same map on $\mu_{q+1}$ as $\rho(X) \circ X^t$, where
$\rho(X) := (\alpha^qX-1) / (X-\alpha)$. Thus $g(X)$ permutes $\mu_{q+1}$ since both $X^t$ and $\rho(X)$ permute $\mu_{q+1}$.

Conversely, suppose that $g(X)$ permutes $\mu_{q+1}$, so that 
\[
\sum_{\beta\in\mu_{q+1}} g(\beta)=\sum_{\beta\in\mu_{q+1}} \beta.
\]
The right side is fixed by multiplication by any nontrivial $(q+1)$-th root of unity, and hence equals $0$.
For any $\beta\in \mu_{q+1}$ we have $\beta^t-\alpha\ne 0$ and
\[
g(\beta) = \beta^s\cdot \frac{\alpha^q\beta^t-1}{\beta^t-\alpha} 
= -\beta^{s+t}\cdot \frac{\beta^{tq}-\alpha^q}{\beta^t-\alpha} = -\sum_{i=0}^{q-1} \alpha^{q-1-i} \beta^{s+t(i+1)},
\]
so that
\begin{equation}\label{hermite}
0 = \sum_{\beta\in\mu_{q+1}} g(\beta) = -\sum_{i=0}^{q-1}\alpha^{q-1-i}\sum_{\beta\in\mu_{q+1}} \beta^{s+t(i+1)}.
\end{equation}
For any $j\in\Z$, the summation $\sum_{\beta\in\mu_{q+1}} \beta^j$ is nonzero if and only if $j \equiv 0 \pmod{q+1}$. 
In particular, for any $i\in\Z$, $\epsilon_i:=\sum_{\beta\in\mu_{q+1}}\beta^{s+t(i+1)} \ne 0$ if and only if 
$s\equiv -t(i+1)\pmod{q+1}$. Since $\gcd(t,q+1)=1$, there is exactly one integer $i$ with $0\le i\le q$ 
for which $\epsilon_i\ne 0$. By \eqref{hermite} this distinguished integer $i$ must be $q$, so that 
$s\equiv -t(q+1)\equiv 0\pmod{q+1}$.
\end{proof}

The following consequence of Lemma~\ref{coprime} is not used in this paper, but is stated for its inherent interest.

\begin{cor}\label{corbin}
Let $r$ and $t$ be positive integers with $\gcd(t,q+1)=1$. Pick $\alpha \in \F_{q^2}^*$ and write 
$f(X) := X^r (X^{t(q-1)}-\alpha)$. Then $f(X)$ permutes\/ $\F_{q^2}$ if and only if $\gcd(r,q-1)=1$, 
$r \equiv t \pmod{q+1}$, and $\alpha \notin \mu_{q+1}$.
\end{cor}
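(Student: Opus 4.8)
The plan is to reduce Corollary~\ref{corbin} to Lemma~\ref{coprime} via the two-step dictionary established in the preliminaries, namely Lemma~\ref{old} and Lemma~\ref{rewrite}. First I would observe that $f(X)=X^r A(X^{q-1})$ with $A(X):=X^t-\alpha$. By Lemma~\ref{old}, $f(X)$ permutes $\F_{q^2}$ if and only if $\gcd(r,q-1)=1$ and $g_0(X):=X^r A(X)^{q-1}=X^r(X^t-\alpha)^{q-1}$ permutes $\mu_{q+1}$. So the whole problem becomes understanding when $g_0(X)$ permutes $\mu_{q+1}$, and it remains to show this happens precisely when $r\equiv t\pmod{q+1}$ and $\alpha\notin\mu_{q+1}$.

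Next I would apply Lemma~\ref{rewrite}. Note $A^{(q)}(X)=X^t-\alpha^q$, so $A^{(q)}(1/X)=X^{-t}-\alpha^q$ and hence $X^t A^{(q)}(1/X)=1-\alpha^q X^t$. Choosing $s$ with $r\equiv s\pmod{q+1}$, Lemma~\ref{rewrite} says that $g_0(X)$ permutes $\mu_{q+1}$ if and only if $A(X)=X^t-\alpha$ has no roots in $\mu_{q+1}$ and the function
\[
g(X):=\frac{X^s A^{(q)}(1/X)}{A(X)}=X^{s-t}\cdot\frac{1-\alpha^q X^t}{X^t-\alpha}=-X^{s-t}\cdot\frac{\alpha^q X^t-1}{X^t-\alpha}
\]
permutes $\mu_{q+1}$. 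The condition that $X^t-\alpha$ has no root in $\mu_{q+1}$ is exactly the condition $\alpha\notin\mu_{q+1}$, since $\gcd(t,q+1)=1$ forces $X\mapsto X^t$ to permute $\mu_{q+1}$ and hence $X^t=\alpha$ has a root in $\mu_{q+1}$ iff $\alpha\in\mu_{q+1}$. (The case $\alpha\in\mu_{q+1}$ must be separately dispatched: there $A(X)$ does have a root in $\mu_{q+1}$, so Lemma~\ref{old} combined with Lemma~\ref{rewrite} shows $f$ is not a permutation, matching the claim.)

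Assuming $\alpha\notin\mu_{q+1}$, the function $g(X)$ above is, up to the harmless overall sign $-1$ and writing $s-t$ for the exponent, exactly of the form treated in Lemma~\ref{coprime} with exponent $s-t$ in place of $s$ there. More precisely, $g(X)=-X^{s-t}(\alpha^q X^t-1)/(X^t-\alpha)$; since $-1\in\mu_{q+1}$, multiplication by $-1$ permutes $\mu_{q+1}$, so $g(X)$ permutes $\mu_{q+1}$ if and only if $X^{s-t}(\alpha^q X^t-1)/(X^t-\alpha)$ does. Lemma~\ref{coprime} (with its ``$s$'' taken to be $s-t$) then gives that this permutes $\mu_{q+1}$ if and only if $s-t\equiv 0\pmod{q+1}$, i.e. $s\equiv t\pmod{q+1}$, i.e. $r\equiv t\pmod{q+1}$. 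Combining with the conditions $\gcd(r,q-1)=1$ and $\alpha\notin\mu_{q+1}$ collected along the way yields the stated equivalence.

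I do not anticipate a serious obstacle here, since the corollary is essentially a repackaging of Lemma~\ref{coprime} through the standard reductions. The only points requiring genuine care are bookkeeping ones: verifying that $\gcd(t,q+1)=1$ indeed makes $X\mapsto X^t$ a bijection of $\mu_{q+1}$ so that the ``no root in $\mu_{q+1}$'' condition translates cleanly to $\alpha\notin\mu_{q+1}$, correctly tracking the exponent shift from $r$ (equivalently $s$) to $s-t$ when matching to Lemma~\ref{coprime}, and confirming that the excluded case $\alpha\in\mu_{q+1}$ genuinely fails the permutation property rather than slipping through. Each of these is routine, so the proof should be short.
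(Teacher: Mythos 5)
Your proposal is correct and is exactly the paper's proof: the paper disposes of this corollary in one line by citing Lemmas~\ref{old}, \ref{rewrite}, and \ref{coprime}, and your write-up simply fills in the same reduction (including the correct bookkeeping of the exponent shift $s\mapsto s-t$, the translation of ``no roots in $\mu_{q+1}$'' into $\alpha\notin\mu_{q+1}$ via $\gcd(t,q+1)=1$, and the harmless sign $-1\in\mu_{q+1}$). Nothing further is needed.
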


\begin{proof}
This follows immediately from Lemma~\ref{coprime}, in light of Lemmas~\ref{old} and \ref{rewrite}.
\end{proof}

\begin{rmk}
The special case $r=1$ of Corollary~\ref{corbin} is \cite[Thm.~2]{Houbin}; the special case that $t=1$ and 
$1\le r\le q+1$ is \cite[Thm.~3.1]{LQC}. The proofs in \cite{Houbin} and \cite{LQC} involve complicated computations.
\end{rmk}

We now prove Theorem~\ref{lpp}.

\begin{proof}[Proof of Theorem~\ref{lpp}]
Let $p$ be the characteristic of $\F_q$, and let $\ell$ be the largest nonnegative integer for which $h(X)\in\F_q(X^{p^\ell})$.  
Then $h=h_0\circ X^{p^\ell}$ where $h_0(X)\in\F_q(X)$ is separable. Here $h(X)$ permutes $\bP^1(\F_q)$ if and only if $h_0(X)$ 
does, and $E_{h_0}(\gamma)=[s_0,t_0]$ where $s_0:=s/p^\ell$ and $t_0:=t/p^\ell$, so Theorem~\ref{lpp} holds for $h(X)$ if and 
only if it holds for $h_0(X)$. Thus we may replace $h(X)$ by $h_0(X)$ in order to assume that $h(X)$ is separable.

Write $n:=\deg(h)$, so that $n = s+t$. Since there are no separable degree-$2$ permutation rational functions (e.g., by 
\cite[Lemma~1.2]{DZ1}), we may assume that $n\ge 3$. Let $\alpha$ and $\beta$ be the $h$-preimages of $\gamma$, where 
$\alpha$ and $\beta$ have ramification indices under $h(X)$ being $s$ and $t$, respectively. Every $\lambda\in\bP^1(\mybar\F_q)$ 
satisfies $e_h(\lambda)=e_{h^{(q)}}(\lambda^q)$, so since $h(X)\in\F_q(X)$ we have $e_h(\lambda)=e_h(\lambda^q)$. Moreover, 
$h(\lambda^q)=h(\lambda)^q$, so the $q$-th power map preserves the set of branch points of $h(X)$ which have any prescribed 
ramification multiset, and also the $q$-th power map permutes the set of $h$-preimages of any element of $\bP^1(\F_q)$.
%
%

First suppose $\gamma\in\bP^1(\F_q)$. Then the $q$-th power map preserves $\{\alpha,\beta\}$, so that this set contains 
either zero or two elements of $\bP^1(\F_q)$. In particular, $h^{-1}(\gamma)\cap\bP^1(\F_q)$ cannot have size $1$, so $h(X)$ 
does not permute $\bP^1(\F_q)$. 

Next suppose $\gamma \notin \bP^1(\F_{q^2})$. Then $\gamma$, $\gamma^q$, and $\gamma^{q^2}$ are pairwise distinct, 
and they each have $h$-ramification multiset $[s,t]$. By the Riemann--Hurwitz formula (Lemma~\ref{rh}), it follows 
that $2n-2\ge 3\bigl(s+t-2\bigr)=3(n-2)$, so that $n\le 4$, whence $n\in\{3,4\}$. If $n=3$ then $\{s,t\}=\{1,2\}$, 
so Corollary~\ref{deg3ram} implies that $h(X)$ does not permute $\bP^1(\F_q)$. Thus we must have $n=4$. Since 
$\gcd(t,q+1)=1$ by hypothesis, in particular we cannot have $s=t=2$ in case $q$ is odd, so that Corollary~\ref{deg4ram} 
implies that $h(X)$ does not permute $\bP^1(\F_q)$.

The remaining possibility is that $\gamma \in \F_{q^2}\setminus\F_q$. Then $\gamma^q\ne\gamma$ and we have 
$h^{-1}(\gamma^q)=\{\alpha^q,\beta^q\}$ where $e_h(\alpha^q)=s$ and $e_h(\beta^q)=t$, so that in particular $\alpha^q\ne\beta$. 
Likewise, $h^{-1}(\gamma)=h^{-1}(\gamma^{q^2})=\{\alpha^{q^2},\beta^{q^2}\}$ where $e_h(\alpha^{q^2})=s$ and $e_h(\beta^{q^2})=t$, 
so that either $\alpha,\beta\in\F_{q^2}\setminus\F_q$ or $\beta=\alpha^{q^2}\in\F_{q^4}\setminus\F_{q^2}$, where in the latter 
case $s=t$.

First suppose $\alpha,\beta\in \F_{q^2}\setminus\F_q$. Then the degree-one rational functions $\rho(X):=(X-\gamma^q)/(X-\gamma)$ 
and $\sigma(X):=(X-\alpha^q)/(X-\alpha)$ map $\bP^1(\F_q)$ bijectively onto $\mu_{q+1}$, so that $g:=\rho\circ h\circ\sigma^{-1}$ 
permutes $\mu_{q+1}$ if and only if $h(X)$ permutes $\bP^1(\F_q)$. Here the poles of $g(X)$ are $\infty$ and 
$\delta:=\sigma(\beta) \in\F_{q^2}^*\setminus\mu_{q+1}$, with ramification indices $s$ and $t$, respectively, 
and likewise the zeroes of $g(X)$ are $0$ and $\sigma(\beta^q)=1/\sigma^{(q)}(\beta^q)=1/\delta^q$. It follows that 
$g(X)=\epsilon X^s (\delta^qX-1)^t/(X-\delta)^t$ for some $\epsilon\in\mybar\F_q^*$. Since $g=\rho\circ h\circ\sigma^{-1}$ 
maps $\mu_{q+1}$ into $\mu_{q+1}$, and also $X^s (\delta^q X-1)^t/(X-\delta)^t$ maps $\mu_{q+1}$ into $\mu_{q+1}$, 
we must have $\epsilon\in\mu_{q+1}$. Since $X^t$ permutes $\mu_{q+1}$ and $X^t\circ g_1=g\circ X^t$ where 
$g_1(X):=\epsilon_1X^s (\delta^q X^t-1)/(X^t-\delta)$ and $\epsilon=\epsilon_1^t$ with $\epsilon_1\in \mu_{q+1}$, 
we see that $h(X)$ permutes $\bP^1(\F_q)$ if and only if $g_1(X)$ permutes $\mu_{q+1}$. Finally, Lemma~\ref{coprime} 
says that $g_1(X)$ permutes $\mu_{q+1}$ if and only if $s\equiv 0\pmod{q+1}$, which concludes the proof when 
$\alpha,\beta\in \F_{q^2}\setminus\F_q$.

Finally, suppose that $\beta=\alpha^{q^2}\in \F_{q^4}\setminus\F_{q^2}$ and $s=t$. As above, $\rho(X):=(X-\gamma^q)/(X-\gamma)$ 
maps $\bP^1(\F_q)$ bijectively onto $\mu_{q+1}$, so that $g:=\rho\circ h\circ\rho^{-1}$ permutes $\mu_{q+1}$ if and only if 
$h(X)$ permutes $\bP^1(\F_q)$. Here the poles of $g(X)$ are $\delta:=\rho(\alpha)$ and $\rho(\beta)=\rho(\alpha^{q^2})=\delta^{q^2}$, 
and likewise the zeroes are $1/\delta^q$ and $1/\delta^{q^3}$, where each zero and pole has ramification index $t$. Writing 
$A(X):=(X-\delta)(X-\delta^{q^2})$ and $g_2(X):=X^2 A^{(q)}(1/X)/A(X)$, it follows that $g(X)=\epsilon g_2(X)^t$ for some 
$\epsilon\in\mybar\F_q^*$. Since $\alpha\in\F_{q^4}\setminus\F_{q^2}$, we have $\delta\in\F_{q^4}\setminus\F_{q^2}$, so that 
$A(X)\in\F_{q^2}[X]$ and $A(X)$ has no roots in $\F_{q^2}$. By Lemma~\ref{rewrite} we have $g_2(\mu_{q+1})\subseteq\mu_{q+1}$; 
since also $g(\mu_{q+1})\subseteq\mu_{q+1}$, it follows that $\epsilon\in\mu_{q+1}$. Thus $h(X)$ permutes $\bP^1(\F_q)$ 
if and only if $g_2(X)$ permutes $\mu_{q+1}$, or equivalently $h_2(X):=\rho^{-1}\circ g_2\circ\rho$ permutes $\bP^1(\F_q)$. 
By Lemma~\ref{gtoh} we have $h_2(X)\in\F_q(X)$, so that $h_2(X)$ cannot permute $\bP^1(\F_q)$ since there do not exist 
separable degree-$2$ permutation rational functions (e.g., by \cite[Lemma~1.2]{DZ1}).
\end{proof}

We conclude this section with the following reformulation of Theorem~\ref{lpp} in term of permutations of $\mu_{q+1}$.

\begin{cor}\label{lpp2}
Assume $A(X)\in\F_{q^2}[X]$ has no roots in $\mu_{q+1}$, and $r,s,t$ are integers with $\gcd(t,q+1)=1$. Suppose  
$g(X):=X^r A^{(q)}(1/X)/A(X)$ has ramification multiset $[s,t]$ over some $\gamma\in\bP^1(\mathbb F_q)$. Then $g(X)$ 
permutes $\mu_{q+1}$ if and only if $s\equiv 0\pmod{q+1}$ and $\gamma\in\bP^1(\F_{q^2})\setminus\mu_{q+1}$.
\end{cor}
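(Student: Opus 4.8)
The plan is to transport the problem from $\mu_{q+1}$ to $\bP^1(\F_q)$ via Lemma~\ref{gtoh} and then invoke Theorem~\ref{lpp}. First I would choose degree-one $\rho,\sigma\in\F_{q^2}(X)$ which map $\mu_{q+1}$ to $\bP^1(\F_q)$ (such maps exist by Lemma~\ref{mutoF}), and set $h(X):=\rho\circ g\circ\sigma^{-1}$. Since $g(X)=X^r A^{(q)}(1/X)/A(X)$ has exactly the form required by Lemma~\ref{gtoh} (with the exponent $r$ in place of the ``$s$'' appearing there), and since $g(X)$ is nonconstant because its ramification multiset $[s,t]$ over $\gamma$ is nonempty, that lemma produces $h(X)\in\F_q(X)$ together with the equivalence that $h(X)$ permutes $\bP^1(\F_q)$ if and only if $g(X)$ permutes $\mu_{q+1}$. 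In particular $h(X)$ is nonconstant.

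Next I would transport the ramification data. Because $\rho$ and $\sigma$ have degree one and are therefore unramified at every point, composing with them neither creates nor destroys ramification: for each $\alpha\in g^{-1}(\gamma)$ the point $\sigma(\alpha)$ lies in $h^{-1}(\rho(\gamma))$ with $e_h(\sigma(\alpha))=e_g(\alpha)$, so $h(X)$ has ramification multiset $[s,t]$ over $\gamma':=\rho(\gamma)$. Since $\gcd(t,q+1)=1$ by hypothesis, Theorem~\ref{lpp} applies to $h(X)$ and shows that $h(X)$ permutes $\bP^1(\F_q)$ if and only if $s\equiv 0\pmod{q+1}$ and $\gamma'\in\F_{q^2}\setminus\F_q$.

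It then remains only to rewrite the condition $\gamma'\in\F_{q^2}\setminus\F_q$ in terms of $\gamma$. The map $\rho$ is a M\"obius transformation with coefficients in $\F_{q^2}$, hence a bijection of $\bP^1(\F_{q^2})$ which carries $\bP^1(\mybar\F_q)\setminus\bP^1(\F_{q^2})$ into itself; moreover it restricts to a bijection $\mu_{q+1}\to\bP^1(\F_q)$, being injective on the $(q+1)$-element set $\mu_{q+1}$ and mapping it into the $(q+1)$-element set $\bP^1(\F_q)$. Consequently $\rho$ maps $\bP^1(\F_{q^2})\setminus\mu_{q+1}$ bijectively onto $\bP^1(\F_{q^2})\setminus\bP^1(\F_q)=\F_{q^2}\setminus\F_q$, so that $\gamma'\in\F_{q^2}\setminus\F_q$ if and only if $\gamma\in\bP^1(\F_{q^2})\setminus\mu_{q+1}$. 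Combining this with the two equivalences above yields the claim.

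The only genuinely delicate point is this last step: one must follow the branch point $\gamma$ through $\rho$ with care, checking the boundary cases where $\gamma$ lies in $\bP^1(\F_q)$, in $\mu_{q+1}$, or outside $\bP^1(\F_{q^2})$ altogether, so as to confirm that in every case the dichotomy $\gamma'\in\F_{q^2}\setminus\F_q$ coincides with $\gamma\in\bP^1(\F_{q^2})\setminus\mu_{q+1}$. Everything else is a direct citation of Lemma~\ref{gtoh} and Theorem~\ref{lpp}.
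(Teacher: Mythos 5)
Your proof is correct and takes exactly the paper's route: the paper's entire proof of this corollary is the single line that it ``follows immediately from Lemma~\ref{gtoh} and Theorem~\ref{lpp},'' and your write-up just makes explicit the routine details the authors left implicit (that degree-one maps transport the ramification multiset $[s,t]$ to $\gamma':=\rho(\gamma)$, and that $\rho$ carries $\bP^1(\F_{q^2})\setminus\mu_{q+1}$ bijectively onto $\F_{q^2}\setminus\F_q$, so the two membership conditions match up).
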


\begin{proof}
This follows immediately from Lemma~\ref{gtoh} and Theorem~\ref{lpp}.
\end{proof}


\section{Geometrically cyclic bijections}
\label{sec:param}

In this section we prove Theorem~\ref{fir2} and show that items (1) and (3) in Theorem~\ref{main} are equivalent to 
one another. In light of Theorem~\ref{geom} and Corollary~\ref{lpp2}, we must consider rational functions which are 
compositions of $X^{Q+1}$ or $X^{Q-1}$ with certain degree-one rational functions. We first give a bijectivity criterion 
for such functions.

\begin{lemma}\label{cyclicperm}
Let $q$ be a power of a prime $p$, and assume that $g(X):=X^r A^{(q)}(1/X)/A(X) \in \F_{q^2}(X)$ has degree 
$n\ge 1$, where $r\in\Z$ and $A(X)\in\F_{q^2}[X]$ has no roots in $\mu_{q+1}$. Suppose there exist distinct 
$\beta_1,\beta_2\in\bP^1(\mybar\F_q)$ such that $\beta_i$ has a unique $g$-preimage $\alpha_i\in\bP^1(\mybar\F_q)$ 
for each $i$. Then the following are equivalent:
\renewcommand{\labelenumi}{\emph{(\arabic{enumi})}}
\begin{enumerate}
\item $g(X)$ permutes $\mu_{q+1}$.
\item At least one of the following holds:
\begin{itemize}
\item $\gcd(n,q-1) = 1$ and at least one $\alpha_i$ is in $\mu_{q+1}$;
\item $\gcd(n,q+1) = 1$ and at least one $\alpha_i$ is not in $\mu_{q+1}$.
\end{itemize}
\item At least one of the following holds:
\begin{itemize}
\item $\gcd(n,q-1) = 1$ and at least one $\beta_i$ is in $\mu_{q+1}$;
\item $\gcd(n,q+1) = 1$ and at least one $\beta_i$ is not in $\mu_{q+1}$.
\end{itemize}
\item At least one of the following holds:
\begin{itemize}
\item $\gcd(n,q-1)=1$ and $g(X)=\rho^{-1} \circ X^n \circ \sigma$ for some degree-one $\rho,\sigma\in \F_{q^2}(X)$ 
which map $\mu_{q+1}$ to\/ $\bP^1(\F_q)$;
\item $\gcd(n,q+1)=1$ and $g(X)=\rho^{-1} \circ X^n \circ \sigma$ for some degree-one $\rho,\sigma\in \F_{q^2}(X)$ 
which permute $\mu_{q+1}$.
\end{itemize}
\end{enumerate}
Moreover, if $g(X)=g_1(X^{p^\ell})$ where $\ell\ge 0$ and $g_1(X)\in\F_{q^2}(X)\setminus\F_{q^2}(X^p)$ has degree 
at least $2$ then in \emph{(4)} we may require in addition that $\sigma(\alpha_1)=\infty=\rho(\beta_1)$ and 
$\sigma(\alpha_2)=0=\rho(\beta_2)$.
\end{lemma}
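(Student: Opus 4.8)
The plan is to reduce all four conditions to the elementary facts that $X^n$ permutes $\bP^1(\F_q)$ if and only if $\gcd(n,q-1)=1$, and permutes $\mu_{q+1}$ if and only if $\gcd(n,q+1)=1$. The two ingredients that make this work are that $g$ is totally ramified over $\beta_1$ and $\beta_2$ (so $g$ is linearly equivalent to $X^n$), and that $g$ commutes with the conjugation involution, which forces the linearizing maps into one of two clean shapes over $\F_{q^2}$.

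\emph{Normal form and symmetry.} Since each $\beta_i$ has the single $g$-preimage $\alpha_i$ and $\deg(g)=n$, the map $g$ is totally ramified over each $\beta_i$. Choosing degree-one $\rho,\sigma\in\mybar\F_q(X)$ with $\sigma(\alpha_1)=\infty=\rho(\beta_1)$ and $\sigma(\alpha_2)=0=\rho(\beta_2)$, the function $\rho\circ g\circ\sigma^{-1}$ has $0$ as its only zero (of order $n$) and $\infty$ as its only pole (of order $n$), hence equals $cX^n$ for some constant $c$; thus $g=\rho^{-1}\circ(cX^n)\circ\sigma$. Next, because $g=X^rA^{(q)}(1/X)/A(X)$ we have $g^{(q)}(X)=g(1/X)^{-1}$ (as computed in the proof of Lemma~\ref{gtoh}), which unwinds to $g(\iota(z))=\iota(g(z))$ for the bijection $\iota\colon z\mapsto z^{-q}$ of $\bP^1(\mybar\F_q)$, whose fixed-point set is exactly $\mu_{q+1}$. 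Since $\iota$ commutes with $g$, we get $\iota(g^{-1}(\beta))=g^{-1}(\iota(\beta))$, so $\iota$ permutes the set of points with a unique $g$-preimage, and $g(\iota(\alpha_i))=\iota(\beta_i)$.

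\emph{Dichotomy.} Assume first that the separable part of $g$ has degree at least $2$; then the unique-preimage points are exactly $\beta_1,\beta_2$, so $\iota$ permutes $\{\beta_1,\beta_2\}$. Using $g(\iota(\alpha_i))=\iota(\beta_i)$ and uniqueness of preimages one checks that $\alpha_i\in\mu_{q+1}\iff\beta_i\in\mu_{q+1}$, which already yields $(2)\Leftrightarrow(3)$. Hence either \emph{(a)} $\iota$ fixes $\beta_1$ and $\beta_2$, forcing $\alpha_1,\alpha_2,\beta_1,\beta_2\in\mu_{q+1}$, or \emph{(b)} $\iota$ swaps $\beta_1,\beta_2$ (and $\alpha_1,\alpha_2$), so that $\iota^2(z)=z^{q^2}$ fixes these points and none lies in $\mu_{q+1}$, i.e.\ all four lie in $\bP^1(\F_{q^2})\setminus\mu_{q+1}$. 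In case (a) I would use Lemma~\ref{mutoF} (together with the sharp $3$-transitivity of $\PGL_2(\F_q)$ on $\bP^1(\F_q)$, or a direct solve) to choose degree-one $\rho,\sigma\in\F_{q^2}(X)$ mapping $\mu_{q+1}$ to $\bP^1(\F_q)$ with the normalization above; then $h:=\rho\circ g\circ\sigma^{-1}=cX^n$, and Lemma~\ref{gtoh} gives $h\in\F_q(X)$ (so $c\in\F_q^*$) and shows $g$ permutes $\mu_{q+1}\iff h$ permutes $\bP^1(\F_q)\iff\gcd(n,q-1)=1$. In case (b) I would instead use Lemma~\ref{mutomu} to choose $\rho,\sigma\in\F_{q^2}(X)$ permuting $\mu_{q+1}$ with the same normalization (the identity $\sigma(\iota(z))=\iota(\sigma(z))$ makes $\sigma(\alpha_2)=0$ automatic once $\sigma(\alpha_1)=\infty$); then $h=cX^n$ maps $\mu_{q+1}$ into itself by Lemma~\ref{rewrite}, so $c=h(1)\in\mu_{q+1}$, and $g$ permutes $\mu_{q+1}\iff\gcd(n,q+1)=1$. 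Absorbing $c$ into $\rho$ (legitimate since $c\in\F_q^*$ preserves $\bP^1(\F_q)$ in case (a), and $c\in\mu_{q+1}$ preserves $\mu_{q+1}$ in case (b)) puts $g$ in the form required by $(4)$ while keeping the normalization, which proves the ``moreover'' clause. In case (a) the second bullet of $(4)$ cannot hold, since a $\mu_{q+1}$-permuting $\sigma$ would send the $\alpha_i\in\mu_{q+1}$ into $\mu_{q+1}$ rather than to $\{0,\infty\}$; symmetrically the first bullet fails in case (b). Thus in case (a) each of $(1)$–$(4)$ is equivalent to $\gcd(n,q-1)=1$, and in case (b) each is equivalent to $\gcd(n,q+1)=1$, giving all four equivalences (the direction $(4)\Rightarrow(1)$ being in any case immediate from the composition of bijections).

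\emph{Inseparable case and main obstacle.} When the separable part of $g$ has degree $1$ we have $n=p^\ell$, so $\gcd(n,q-1)=\gcd(n,q+1)=1$; here every point has a unique preimage, $g$ is a bijection of $\bP^1(\mybar\F_q)$ carrying $\mu_{q+1}$ into itself (Lemma~\ref{rewrite}), hence permutes $\mu_{q+1}$, and writing $g=g_1\circ X^{p^\ell}$ with $g_1\in\F_{q^2}(X)$ degree one and $\mu_{q+1}$-permuting gives $(4)$ with $\sigma=X$ and $\rho=g_1^{-1}$; so all of $(1)$–$(4)$ hold and the equivalences are trivial (and the ``moreover'' is not asserted in this case). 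The main obstacle is the field-of-definition bookkeeping in the dichotomy step: one must produce the linearizing maps over $\F_{q^2}$ with exactly the right interaction with $\mu_{q+1}$ (mapping $\mu_{q+1}$ to $\bP^1(\F_q)$ in case (a), permuting $\mu_{q+1}$ in case (b)), rather than merely over $\mybar\F_q$. The conjugation symmetry $g\circ\iota=\iota\circ g$ is precisely what pins the four points into either $\mu_{q+1}$ or its complement in $\bP^1(\F_{q^2})$, and thereby lets Lemmas~\ref{mutomu}, \ref{mutoF}, and \ref{gtoh} supply the descent and identify the correct $\gcd$ condition.
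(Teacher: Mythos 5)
Your proposal is correct and takes essentially the same approach as the paper's proof: both exploit the symmetry $g^{(q)}=X^{-1}\circ g\circ X^{-1}$ together with Riemann--Hurwitz to force the dichotomy (all of $\alpha_1,\alpha_2,\beta_1,\beta_2$ in $\mu_{q+1}$, or all in $\bP^1(\F_{q^2})\setminus\mu_{q+1}$), then conjugate $g$ to a monomial $cX^n$ by degree-one maps supplied by Lemmas~\ref{mutoF} and \ref{mutomu}, absorb the constant $c$, and conclude via Lemma~\ref{gtoh} and the criteria for $X^n$ to permute $\bP^1(\F_q)$ or $\mu_{q+1}$. Your minor variations---splitting on the degree of the separable part rather than reducing to separable $g$ at the outset, invoking $\PGL_2(\F_q)$-transitivity instead of the paper's explicit formulas, and deriving the case-(b) normalization $\sigma(\alpha_2)=0$ from commutation with $z\mapsto z^{-q}$---do not change the substance of the argument.
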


\begin{proof}
We first reduce to the case that $g(X)$ is separable of degree at least $2$. Write $g(X) = g_0(X) \circ X^{p^\ell}$ 
where $g_0(X) \in \F_{q^2}(X)$ is separable of degree $n_0\ge 1$ and $n=n_0p^\ell$. Then $\alpha_i^{p^\ell}$ is the unique 
$g_0$-preimage of $\beta_i$ for $i\in\{1,2\}$, so that $g_0(X)$ satisfies the hypotheses of Lemma~\ref{cyclicperm}.  
We now show that $g(X)$ satisfies the conclusion of Lemma~\ref{cyclicperm} if and only if $g_0(X)$ does. Since 
$X^{p^\ell}$ induces a bijection on $\bP^1(\mybar\F_q)$ which restricts to a bijection of $\mu_{q+1}$, we see 
that $g(X)$ permutes $\mu_{q+1}$ if and only if $g_0(X)$ permutes $\mu_{q+1}$. Next, for $\epsilon\in\{1,-1\}$ 
we have $\gcd(n,q+\epsilon)=\gcd(n_0,q+\epsilon)$. Finally, for degree-one $\rho,\sigma\in\F_{q^2}(X)$, let 
$\widetilde{\sigma}(X)$ be the degree-one rational function obtained from $\sigma(X)$ by raising every coefficient 
to the $p^\ell$-th power, so that $X^{p^\ell}\circ\sigma=\widetilde{\sigma}\circ X^{p^\ell}$. Then $\sigma(X)$ permutes 
$\mu_{q+1}$ if and only if $\widetilde{\sigma}(X)$ does, and $\sigma(\mu_{q+1})=\bP^1(\F_q)$ if and only if 
$\widetilde{\sigma}(\mu_{q+1})=\bP^1(\F_q)$. Finally, $g(X)=\rho^{-1}\circ X^n\circ\sigma$ if and only if 
$g_0(X)=\rho^{-1}\circ X^{n_0}\circ\widetilde{\sigma}$. Thus each of conditions (1)--(4) holds for $g(X)$ if 
and only if the corresponding condition holds for $g_0(X)$. Hence in order to prove Lemma~\ref{cyclicperm} 
for $g(X)$, it suffices to prove the result for $g_0(X)$, so we may assume that $g(X)$ is separable. If $n=1$ 
then (2) and (3) are immediate, and (1) holds by Lemma~\ref{rewrite}, so that the second condition in (4) holds 
with $\rho(X)=X$ and $\sigma(X)=g(X)$. Thus the result holds when $n=1$, so we assume henceforth that $g(X)$ 
is separable of degree $n>1$. 

By Lemma~\ref{rewrite} we have $g(\mu_{q+1})\subseteq\mu_{q+1}$. The definition of $g(X)$ yields 
$g(X) = X^{-1} \circ g^{(q)}(X) \circ X^{-1}$, so for each $i\in\{1,2\}$ the unique $g$-preimage of 
$\beta_i^{-q}$ in $\bP^1(\mybar\F_q)$ is $\alpha_i^{-q}$. By Riemann--Hurwitz (Lemma~\ref{rh}), at most 
two elements of $\bP^1(\mybar\F_q)$ have a unique $g$-preimage, so that each $\alpha_i^{-q}$ is in 
$\{\alpha_1,\alpha_2\}$. Since $\alpha_1\ne\alpha_2$, we also have $\alpha_1^{-q}\ne\alpha_2^{-q}$. Thus 
if $\alpha_1^{-q}=\alpha_1$ then $\alpha_2^{-q}=\alpha_2$, so that $\alpha_1,\alpha_2\in\mu_{q+1}$, and thus 
also each $\beta_i=g(\alpha_i)$ is in $\mu_{q+1}$. The other possibility is that $\alpha_1^{-q}=\alpha_2$, 
so that $\alpha_2^{-q}=\alpha_1$, in which case $\alpha_1,\alpha_2\in\bP^1(\F_{q^2})\setminus\mu_{q+1}$, 
whence also $\beta_1,\beta_2\in\bP^1(\F_{q^2})\setminus\mu_{q+1}$. Moreover, as above each $\beta_i^{-q}$ 
is in $\{\beta_1,\beta_2\}$, so since $\beta_i\notin\mu_{q+1}$ we have $\beta_2=\beta_1^{-q}$.

First suppose $\alpha_1, \alpha_2, \beta_1, \beta_2$ are all in $\mu_{q+1}$. Pick $\gamma,\delta\in\F_{q^2}^*$ 
with $\gamma^{q-1}=\alpha_2/\alpha_1$ and $\delta^{q-1}=\beta_2/\beta_1$, so that $\gamma,\delta\notin\F_q$ 
since $\alpha_1\ne\alpha_2$ and $\beta_1\ne\beta_2$. Define $\sigma(X):=\gamma (X-\alpha_2)/(X-\alpha_1)$ 
and $\widetilde{\rho}(X):=\delta(X-\beta_2)/(X-\beta_1)$, so that $\sigma(X)$ and $\widetilde{\rho}(X)$ 
are degree-one rational functions in $\F_{q^2}(X)$. Then $\sigma(X)=(\gamma X-\alpha_1\gamma^q)/(X-\alpha_1)$, 
so that $\sigma(\mu_{q+1})=\bP^1(\F_q)$ by Lemma~\ref{mutoF}, and likewise $\widetilde{\rho}(\mu_{q+1})=\bP^1(\F_q)$.  
Thus $h(X) := \widetilde{\rho}(X) \circ g(X) \circ \sigma^{-1}(X)$ maps $\bP^1(\F_q)$ into $\bP^1(\F_q)$. Since 
$h^{-1}(\infty)=\{\infty\}$ and $h^{-1}(0)=\{0\}$, we have $h(X)=\epsilon X^n$ for some $\epsilon\in\mybar\F_q^*$, 
and since $h(\bP^1(\F_q))\subseteq\bP^1(\F_q)$ we must have $\epsilon\in\F_q^*$. Here $g(X)$ permutes 
$\mu_{q+1}$ if and only if $h(X)$ permutes $\bP^1(\F_q)$, or equivalently $\gcd(n,q-1)=1$. 
Moreover, $\rho(X):=\epsilon^{-1}\widetilde{\rho}(X)$ maps $\mu_{q+1}$ to $\bP^1(\F_q)$, 
and $g(X)=\rho^{-1}\circ X^n\circ \sigma$. Thus if each $\alpha_i$ and $\beta_i$ is in 
$\mu_{q+1}$ then (1) is equivalent to each of (2), (3), and (4).

Now suppose $\alpha_1, \beta_1$ are in $\bP^1(\F_{q^2})\setminus\mu_{q+1}$, and also $\alpha_2=\alpha_1^{-q}$ 
and $\beta_2=\beta_1^{-q}$. Define $\sigma(X):=(-\alpha_1^q X+1)/(X-\alpha_1)$ if $\alpha_1\ne\infty$, and 
$\sigma(X):=X$ otherwise, so that in any case $\sigma(X)$ permutes $\mu_{q+1}$ by Lemma~\ref{mutomu}, and 
also $\sigma(X)$ maps $\alpha_1$ and $\alpha_2$ to $\infty$ and $0$, respectively. Likewise define 
$\widetilde{\rho}(X):=(-\beta_1^q X+1)/(X-\beta_1)$ if $\beta_1\ne\infty$, and $\widetilde{\rho}(X):=X$ 
otherwise, so that $\widetilde{\rho}(X)$ permutes $\mu_{q+1}$ and maps $\beta_1$ and $\beta_2$ to $\infty$ 
and $0$, respectively. Thus $\widetilde g(X) := \widetilde{\rho}(X) \circ g(X) \circ \sigma^{-1}(X)$ is 
a degree-$n$ rational function in $\F_{q^2}(X)$ whose unique pole is $\infty$ and whose unique zero is $0$, 
whence $\widetilde g(X)=\epsilon X^n$ for some $\epsilon\in\F_{q^2}^*$. Since each of $\widetilde{\rho}(X)$, 
$g(X)$, and $\sigma^{-1}(X)$ maps $\mu_{q+1}$ into $\mu_{q+1}$, also $\widetilde g(\mu_{q+1})\subseteq\mu_{q+1}$, 
so that $\epsilon\in\mu_{q+1}$. Thus $g(X)$ permutes $\mu_{q+1}$ if and only if $\widetilde g(X)$ permutes 
$\mu_{q+1}$, or equivalently $\gcd(n,q+1)=1$, so that (1) is equivalent to each of (2) and (3). 
Finally, $\rho(X):=\epsilon^{-1}\widetilde{\rho}(X)$ permutes $\mu_{q+1}$, and we have 
$g(X)=\rho^{-1}\circ X^n\circ\sigma$, so that also (1) is equivalent to (4).
\end{proof}

We next recall an easy known fact about greatest common divisors, whose proof we include for the reader's convenience.

\begin{lemma}\label{gcd}
If $i$ and $j$ are positive integers then
\begin{itemize}
\item $\gcd(2^i+1,2^j+1)=1$ if and only if $\ord_2(i)\ne\ord_2(j)$;
\item $\gcd(2^i-1,2^j+1)=1$ if and only if $\ord_2(i)\le\ord_2(j)$.
\end{itemize}
\end{lemma}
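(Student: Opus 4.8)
The plan is to reduce both equivalences to one statement about $2$-adic valuations of multiplicative orders, and to treat the two directions of each ``if and only if'' separately: the ``$\gcd=1$'' directions by inspecting a hypothetical common odd prime factor, and the ``$\gcd>1$'' directions by exhibiting an explicit common divisor. I would first record the elementary fact that, for $x=2^d$, one has $x+1\mid x^m+1$ exactly when $m$ is odd, and $x+1\mid x^2-1$ always (both follow from $x+1\mid x^m-(-1)^m$). I would also use the standard order facts: for an odd prime $p$, writing $n:=\ord_p(2)$, we have $p\mid 2^a-1$ iff $n\mid a$, and $p\mid 2^a+1$ iff $n\mid 2a$ but $n\nmid a$.

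The crux is the sub-claim that \emph{if an odd prime $p$ divides $2^a+1$ and $n:=\ord_p(2)$, then $\ord_2(n)=\ord_2(a)+1$}. Indeed $2^a\equiv-1\pmod p$ forces $n\nmid a$ while $n\mid 2a$; writing $a=2^t u$ with $u$ odd, the relation $n\mid 2a$ gives $\ord_2(n)\le t+1$ and forces the odd part of $n$ to divide $u$, after which $n\nmid a$ becomes equivalent to $\ord_2(n)>t$, pinning down $\ord_2(n)=t+1$.

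Granting the sub-claim, the forward (``$\gcd=1$'') directions follow quickly. A common odd prime factor $p$ of $2^i+1$ and $2^j+1$ would give $\ord_2(i)+1=\ord_2(n)=\ord_2(j)+1$, hence $\ord_2(i)=\ord_2(j)$; since $2^i+1$ and $2^j+1$ are odd, the assumption $\ord_2(i)\ne\ord_2(j)$ therefore forces $\gcd=1$. Likewise a common odd prime factor $p$ of $2^i-1$ and $2^j+1$ would give $\ord_2(n)\le\ord_2(i)$ from $n\mid i$ together with $\ord_2(n)=\ord_2(j)+1$, whence $\ord_2(i)>\ord_2(j)$; contrapositively, $\ord_2(i)\le\ord_2(j)$ forces $\gcd=1$.

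For the reverse (``$\gcd>1$'') directions I would exhibit the single divisor $2^g+1$ with $g:=\gcd(i,j)$. When $\ord_2(i)=\ord_2(j)$, both $i/g$ and $j/g$ are odd, so $2^g+1$ divides both $2^i+1$ and $2^j+1$, giving $\gcd\ge 2^g+1\ge 3$. When $\ord_2(i)>\ord_2(j)$ one has $\ord_2(g)=\ord_2(j)$, so $j/g$ is odd and $2^g+1\mid 2^j+1$, while $2g\mid i$ yields $2^g+1\mid 2^{2g}-1\mid 2^i-1$; again $\gcd\ge 3$. I expect the main obstacle to be purely the $2$-adic bookkeeping in the sub-claim, where one must carefully separate the odd-part divisibility from the valuation inequality; everything else is a short deduction from it.
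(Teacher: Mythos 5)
Your proof is correct. The sub-claim checks out: writing $a=2^tu$ ($u$ odd) and $n:=\ord_p(2)=2^sv$ ($v$ odd), the condition $n\mid 2a$ forces $v\mid u$ and $s\le t+1$, and then $n\nmid a$ says exactly $s>t$, pinning $s=t+1$; the four deductions you draw from it, and the explicit common divisor $2^{\gcd(i,j)}+1$ in the converse directions (using $\ord_2(\gcd(i,j))=\min(\ord_2(i),\ord_2(j))$ and $x+1\mid x^m+1$ for odd $m$, $x+1\mid x^2-1$), are all sound. However, your route is genuinely different from the paper's. The paper gives a four-line computation from the classical identity $\gcd(2^m-1,2^n-1)=2^{\gcd(m,n)}-1$: since $\gcd(2^j-1,2^j+1)=1$,
\[
\gcd(2^i-1,2^j+1)=\frac{\gcd(2^i-1,2^{2j}-1)}{\gcd(2^i-1,2^j-1)}=\frac{2^{\gcd(i,2j)}-1}{2^{\gcd(i,j)}-1},
\]
which equals $1$ precisely when $\gcd(i,2j)=\gcd(i,j)$, i.e.\ $\ord_2(i)\le\ord_2(j)$, and the first bullet follows from one more quotient of the same shape. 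That argument treats both implications of each equivalence at once and in fact computes the gcd exactly (it is $2^{\gcd(i,j)}+1$ whenever it is not $1$). Your argument instead splits each equivalence and works prime-by-prime: the order-theoretic sub-claim that $p\mid 2^a+1$ forces $\ord_2(\ord_p(2))=\ord_2(a)+1$ is a reusable structural fact classifying which odd primes can divide any $2^a+1$, and your converse halves exhibit the common divisor explicitly, so you too recover the exact gcd implicitly. The price of your approach is length and $2$-adic bookkeeping; what it buys is finer information about the prime divisors, whereas the paper's approach buys brevity and uniformity from a single gcd identity.
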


\begin{proof}
It is well known that $\gcd(2^m-1,2^n-1)=2^{\gcd(m,n)}-1$ for any positive integers $m$ and $n$. Since 
$\gcd(2^j-1,2^j+1)=1$, it follows that
\[
\gcd(2^i-1,2^j+1)=\frac{\gcd(2^i-1,2^{2j}-1)}{\gcd(2^i-1,2^j-1)}=\frac{2^{\gcd(i,2j)}-1}{2^{\gcd(i,j)}-1},
\]
which equals $1$ precisely when $\gcd(i,2j)=\gcd(i,j)$, or equivalently $\ord_2(i)\le\ord_2(j)$. Likewise
\[
\gcd(2^i+1,2^j+1)=\frac{\gcd(2^{2i}-1,2^j+1)}{\gcd(2^i-1,2^j+1)}
=\frac{2^{\gcd(2i,2j)}-1}{2^{\gcd(2i,j)}-1}\cdot\frac{2^{\gcd(i,j)}-1}{2^{\gcd(i,2j)}-1},
\]
which equals $1$ precisely when $\ord_2(i)\ne\ord_2(j)$.
\end{proof}

We now apply the previous results to give a permutation criterion for the rational functions $g(X)$ 
considered in this paper.

\begin{prop}\label{key}
Write $q:=p^k$ and $Q:=p^\ell$ where $p$ is prime and $k$ and $\ell$ are positive integers. Let $a,b,c,d$ 
be elements of\/ $\F_{q^2}$ which are not all zero, and write $g(X):=B(X)/A(X)$ where $A(X):=aX^{Q+1}+bX^Q+cX+d$ 
and $B(X):=d^q X^{Q+1}+c^q X^Q+b^q X+a^q$. Then the following are equivalent:
\renewcommand{\labelenumi}{\emph{(\arabic{enumi})}}
\begin{enumerate}
\item $A(X)$ has no roots in $\mu_{q+1}$ and $g(X)$ permutes $\mu_{q+1}$.
\item $q$ is even, $g(X)=\rho^{-1}\circ X^n\circ\sigma$ for some degree-one $\rho,\sigma\in\F_{q^2}(X)$ 
such that $\rho(\mu_{q+1})=\sigma(\mu_{q+1})=\Omega$, and one of the following holds:
\renewcommand{\labelenumii}{\emph{(\alph{enumii})}}
\begin{enumerate}
\item $n=Q+1$, $\Omega=\bP^1(\F_q)$, and $\ord_2(k)\le\ord_2(\ell)$;
\item $n=Q+1$, $\Omega=\mu_{q+1}$, and $\ord_2(k)\ne\ord_2(\ell)$;
\item $n=Q-1$, $\Omega=\mu_{q+1}$, $\ord_2(k)\ge\ord_2(\ell)$, and there exists $\alpha\in\F_{q^2}$ such 
that $\sigma(\{\alpha,\alpha^{-q}\}) = \{0,\infty\}$ and $\gcd\bigl(A(X),B(X)\bigr)$ is a constant times 
$(\alpha^q X+1)(X+\alpha)$.
\end{enumerate}
\end{enumerate}
\end{prop}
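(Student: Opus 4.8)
The plan is to prove the two implications separately, building entirely on the geometric dichotomy of Theorem~\ref{geom}, the permutation criterion of Lemma~\ref{cyclicperm}, and the elementary computation of Lemma~\ref{gcd}. Throughout I will use that $B(X)=X^{Q+1}A^{(q)}(1/X)$, so that $g(X)=X^{Q+1}A^{(q)}(1/X)/A(X)$ has exactly the shape required to invoke Corollary~\ref{lpp2} and Lemma~\ref{cyclicperm}. For the implication \emph{(1)}$\Rightarrow$\emph{(2)}, assume $A(X)$ has no roots in $\mu_{q+1}$ and $g(X)$ permutes $\mu_{q+1}$; then $g(X)$ is nonconstant, so conditions \eqref{A3} and \eqref{A4} of Theorem~\ref{geom} fail and hence \eqref{A1} or \eqref{A2} holds. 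I would first rule out \eqref{A1}: if $g(X)$ had ramification multiset $[1,Q]=[s,t]$ with $t=1$ over some point, then since $\gcd(t,q+1)=1$, Corollary~\ref{lpp2} would require $s=Q\equiv 0\pmod{q+1}$; but $Q$ is a power of $p$ while $p\nmid q+1$, so $\gcd(Q,q+1)=1$ and $q+1\nmid Q$, a contradiction. Thus \eqref{A2} holds, giving $g=\rho_0\circ X^n\circ\sigma_0$ with $n\in\{Q-1,Q+1\}$ and $\deg(g)=n$, so that the two distinct points $\beta_1=\rho_0(\infty)$ and $\beta_2=\rho_0(0)$ each have a unique $g$-preimage and Lemma~\ref{cyclicperm} applies.

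The key bookkeeping step is to match the geometric data to cases \emph{(a)}--\emph{(c)}. The proof of Lemma~\ref{cyclicperm} shows the distinguished preimages $\alpha_1,\alpha_2$ are either both in $\mu_{q+1}$ (forcing $\Omega=\bP^1(\F_q)$) or both outside it with $\alpha_2=\alpha_1^{-q}$ (forcing $\Omega=\mu_{q+1}$), and its condition~\emph{(4)} yields $g=\rho^{-1}\circ X^n\circ\sigma$ with $\rho(\mu_{q+1})=\sigma(\mu_{q+1})=\Omega$, where the permutation property gives $\gcd(n,q-1)=1$ if $\Omega=\bP^1(\F_q)$ and $\gcd(n,q+1)=1$ if $\Omega=\mu_{q+1}$. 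Each relevant condition $\gcd(Q{+}1,q{-}1)=1$, $\gcd(Q{+}1,q{+}1)=1$, or $\gcd(Q{-}1,q{+}1)=1$ is impossible for odd $p$, since then both arguments are even; this proves $q$ is even, and Lemma~\ref{gcd} then converts each surviving condition into the stated $\ord_2$ inequality, giving \emph{(a)} for $(n,\Omega)=(Q{+}1,\bP^1(\F_q))$, \emph{(b)} for $(Q{+}1,\mu_{q+1})$, and \emph{(c)} for $(Q{-}1,\mu_{q+1})$. The fourth combination $(Q{-}1,\bP^1(\F_q))$ must be excluded: when $n=Q-1$ the analysis in Theorem~\ref{geom} identifies the distinguished preimages as the roots of $\gcd(A,B)$ (together with $\infty$ in the degenerate case $a=d=0$), and these are common roots of $A$ and $B$, hence not in $\mu_{q+1}$ since $A$ has none there; so $\Omega=\mu_{q+1}$ is forced and this combination cannot arise. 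In case \emph{(c)} I read off the extra data: writing $\alpha:=\alpha_1$ and $\alpha_2=\alpha^{-q}$, the gcd equals a constant times $(X+\alpha)(X+\alpha^{-q})$, which in characteristic $2$ is a constant times $(\alpha^qX+1)(X+\alpha)$, and the ``moreover'' clause of Lemma~\ref{cyclicperm} normalizes $\sigma$ so that $\sigma(\{\alpha,\alpha^{-q}\})=\{0,\infty\}$.

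For the converse \emph{(2)}$\Rightarrow$\emph{(1)}, the factorization $g=\rho^{-1}\circ X^n\circ\sigma$ makes $g$ permute $\mu_{q+1}$ as soon as $X^n$ permutes $\Omega$: when $\Omega=\bP^1(\F_q)$ this happens precisely when $\gcd(n,q-1)=1$, and when $\Omega=\mu_{q+1}$ precisely when $\gcd(n,q+1)=1$, each of which Lemma~\ref{gcd} identifies with the $\ord_2$ condition listed in \emph{(a)}, \emph{(b)}, or \emph{(c)}, and the outer degree-one maps transport the permutation back to $\mu_{q+1}$. It then remains to verify that $A(X)$, equivalently $\gcd(A,B)$, has no root in $\mu_{q+1}$. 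In cases \emph{(a)} and \emph{(b)} one has $\deg(g)=Q+1$, which forces $\deg(A)=Q+1$ and $\gcd(A,B)=1$, since a nontrivial common factor would drop $\deg(g)$ below $Q+1$; thus there is nothing to check. In case \emph{(c)} the prescribed form $\gcd(A,B)=\text{const}\cdot(\alpha^qX+1)(X+\alpha)$ has roots $\alpha,\alpha^{-q}$, and these are distinct because $\sigma$ is injective and sends them to $0$ and $\infty$; hence $\alpha\ne\alpha^{-q}$, i.e. $\alpha\notin\mu_{q+1}$, so neither root lies in $\mu_{q+1}$.

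I expect the main obstacle to be the forward-direction bookkeeping, and in particular confirming that the combination $n=Q-1$ with $\Omega=\bP^1(\F_q)$ genuinely cannot occur: this rests on correctly identifying the distinguished preimages as the roots of $\gcd(A,B)$ and then invoking the no-roots hypothesis to place them outside $\mu_{q+1}$. Once this and the uniform parity deduction are in hand, the reductions to $\gcd(n,q\pm1)=1$ and their translation through Lemma~\ref{gcd} are routine.
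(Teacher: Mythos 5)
Your proposal tracks the paper's own proof closely: the same skeleton of Theorem~\ref{geom}(A) plus Corollary~\ref{lpp2} to kill the $[1,Q]$ ramification case, Lemma~\ref{cyclicperm} plus the parity of $Q\pm 1$ and $q\pm 1$ to force $q$ even, Lemma~\ref{gcd} to convert the gcd conditions into the $\ord_2$ inequalities, and an essentially identical converse (trivial gcd in cases (a), (b) by degree count; roots $\alpha,\alpha^{-q}$ of the gcd outside $\mu_{q+1}$ in case (c)). The converse direction and the $n=Q+1$ bookkeeping are sound as written.

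The genuine gap is in your handling of $n=Q-1$, concentrated in the case $Q=2$, i.e.\ $n=1$, where $\deg(g)=1$. Both mechanisms you lean on there require the separable part of $g$ to have degree at least $2$: the dichotomy ``$\alpha_1,\alpha_2$ both in $\mu_{q+1}$, or both outside with $\alpha_2=\alpha_1^{-q}$'' is proved inside Lemma~\ref{cyclicperm} via Riemann--Hurwitz (``at most two points have a unique preimage''), which says nothing for a degree-one map, and the ``moreover'' normalization clause of Lemma~\ref{cyclicperm} explicitly hypothesizes degree at least $2$ (for $n=1$ the lemma's proof takes $\rho=X$, $\sigma=g$, giving no control on $\sigma(\{\alpha,\alpha^{-q}\})$). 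Since $Q=2$ is a nonvacuous case of (c) — it is exactly where the prior literature on degree-$3$ rational functions lives, corresponding to case (3) of Theorem~\ref{fir2} — your argument as written does not produce the required data $\sigma(\{\alpha,\alpha^{-q}\})=\{0,\infty\}$ and $\gcd(A,B)$ a constant times $(\alpha^qX+1)(X+\alpha)$ in that case. The paper avoids this by never invoking such a normalization for $n=Q-1$: it takes $\alpha$ a root of $U$ (nonconstant because $e\ne 0$ by part (B) of Theorem~\ref{geom}), uses (C2) to get $g^{-1}(g(\alpha))=\{\alpha\}$, conjugates to get $g^{-1}(g(\alpha)^{-q})=\{\alpha^{-q}\}$, and then writes down explicit degree-one maps $\widetilde\sigma=(\alpha^qX+1)/(X+\alpha)$ and $\widetilde\rho$ permuting $\mu_{q+1}$, which works uniformly for all $Q\ge 2$. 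Your proof is repaired by substituting this construction (or by treating $n=1$ separately, where any factorization through $X^1$ is available and one need only choose $\sigma$ carrying the two roots of $\gcd(A,B)$ to $\{0,\infty\}$). A secondary looseness, fixable the same way: your assertion that ``the distinguished preimages are the roots of $\gcd(A,B)$'' needs the chain the paper supplies — (C2) and (C4) give that the roots of $U$ are unique preimages with $U\mid A$, part (B) gives $U$ nonconstant, and the symmetry $B=X^{Q+1}A^{(q)}(1/X)$, $A=X^{Q+1}B^{(q)}(1/X)$ shows the roots of $\gcd(A,B)$ are closed under $x\mapsto x^{-q}$, forcing $\gcd(A,B)$ to be a constant times $(\alpha^qX+1)(X+\alpha)$.
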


\begin{proof}
First suppose that (2) holds. Since $\rho(X)$ and $\sigma(X)$ have degree $1$, they induce bijections 
on $\bP^1(\mybar\F_q)$. Thus $g(X)$ permutes $\mu_{q+1}$ if and only if $X^n$ permutes $\Omega$. Writing 
$\epsilon:=1$ if $\Omega=\mu_{q+1}$ and $\epsilon:=-1$ if $\Omega=\bP^1(\F_q)$, it follows that $g(X)$ permutes 
$\mu_{q+1}$ if and only if $\gcd(n,q+\epsilon)=1$, which holds in each of (a)--(c) by Lemma~\ref{gcd}. Write 
$C(X):= \gcd\bigl(A(X),B(X)\bigr)$, so that $A(X)/C(X)$ is a constant times the denominator of $g(X)$, which 
has no roots in $\mu_{q+1}$ since $g(X)$ permutes $\mu_{q+1}$. Thus $A(X)$ has no roots in $\mu_{q+1}$ if and 
only if $C(X)$ has no roots in $\mu_{q+1}$. In cases (a) and (b) we have $C(X) = 1$ since $\deg(g) = Q+1$ 
and $\max(\deg(A),\deg(B)) \le Q+1$. In case (c), by assumption $C(X)$ is a constant times 
$(\alpha^q X+1)(X+\alpha)$, which has no roots in $\mu_{q+1}$ since $\sigma(\{\alpha,\alpha^{-q}\}) = \{0,\infty\}$ 
and $\sigma$ permutes $\mu_{q+1}$. Thus $C(X)$ has no roots in $\mu_{q+1}$ in each of (a)--(c), so $A(X)$ has no 
roots in $\mu_{q+1}$, which implies (1).

Conversely, suppose henceforth that $A(X)$ has no roots in $\mu_{q+1}$ and $g(X)$ permutes $\mu_{q+1}$. 
Then in particular $g(X)$ is nonconstant, so part \eqref{A} of Theorem~\ref{geom} implies that either
\stepcounter{equation}
\renewcommand{\theenumi}{\theequation}
\renewcommand{\labelenumi}{(\theequation)}
\begin{enumerate}[leftmargin=\parindent,align=left,labelwidth=6pt,labelsep=6pt]
\item \label{A1repa}
$g(X)$ has ramification multiset $[1,Q]$ over some point in $\bP^1(\mybar\F_q)$
\end{enumerate}
or
\stepcounter{equation}
\begin{enumerate}[leftmargin=\parindent,align=left,labelwidth=6pt,labelsep=6pt]
\item \label{A2repa}
$g(X)=\rho^{-1}\circ X^n\circ\sigma$ for some $n\in\{Q-1,Q+1\}$ and some degree-one \\  
\phantom{x}\hspace{2ex}$\rho,\sigma\in\mybar\F_q(X)$.
\end{enumerate}
Corollary~\ref{lpp2} implies that \eqref{A1repa} cannot hold, so we must have \eqref{A2repa}. Since 
$B(X)=X^{Q+1}A^{(q)}(1/X)$, Lemma~\ref{cyclicperm} implies that $n$ is coprime to either $q-1$ or $q+1$, 
so since $n\in\{Q-1,Q+1\}$ it follows that $q$ is even. Thus all parts of (B) and (C) in Theorem~\ref{geom}
hold. By item (4) of Lemma~\ref{cyclicperm}, we may assume that $\rho,\sigma\in\F_{q^2}(X)$ and 
$\rho(\mu_{q+1})=\sigma(\mu_{q+1})=\Omega$ where either
\[
\gcd(n,q-1)=1 \text{ and } \Omega=\bP^1(\F_q)
\]
or
\[
\gcd(n,q+1)=1 \text{ and } \Omega=\mu_{q+1}.
\]
By Lemma~\ref{gcd}, if $n=Q+1$ then the former case yields (a) and the latter case yields (b). It remains only 
to show that if $n=Q-1$ then (c) holds.

Suppose $n=Q-1$. Then item \eqref{C4} of Theorem~\ref{geom} implies that the polynomial $U(X)$ defined in 
item \eqref{B2} of Theorem~\ref{geom} is a divisor of $A(X)$, so in particular $U(X)$ has no roots in $\mu_{q+1}$. 
Part \eqref{B} of Theorem~\ref{geom} implies that $U(X)$ has a degree-one term, and hence is nonconstant. Thus 
$U(X)$ has a root $\alpha\in \mybar\F_q$, which lies in $\F_{q^2}$ since either $U(X)$ is either SCR of degree 
$2$ or $U(X) = eX$ with $e \in \F_q^*$. Writing $\beta:= g(\alpha) \in \bP^1(\F_{q^2})$, part \eqref{C2} of 
Theorem~\ref{geom} yields $g^{-1}(\beta) = \{\alpha\}$, which implies $g^{-1}(\beta^{-q}) = \{\alpha^{-q}\}$ since 
$g^{(q)}(X) = X^{-1} \circ g(X) \circ X^{-1}$. Since $U(X)$ has no roots in $\mu_{q+1}$, we have $\alpha \notin \mu_{q+1}$, 
so that $\alpha^{-q} \ne \alpha$ and thus $\beta^{-q} \ne \beta$. Put $\widetilde{\sigma}(X):=(\alpha^q X+1)/(X+\alpha)$, 
and put $\widetilde\rho(X):=(\beta^q X+1)/(X+\beta)$ if $\beta\ne\infty$ and $\widetilde\rho(X):=X$ if $\beta=\infty$. 
Then $\widetilde{\sigma}(X)$ and $\widetilde\rho(X)$ permute $\mu_{q+1}$, and $\widetilde{\sigma}(X)$ maps $\alpha$ 
and $\alpha^{-q}$ to $\infty$ and $0$, respectively, while $\widetilde\rho(X)$ maps $\beta$ and $\beta^{-q}$ 
to $\infty$ and $0$, respectively. Thus $\widetilde g(X):=\widetilde\rho\circ g\circ\widetilde\sigma^{-1}$ 
is a degree-$(Q-1)$ rational function having $\infty$ and $0$ as its unique preimages of $\infty$ and $0$, 
respectively, so that $\widetilde g(X)=\gamma X^{Q-1}$ for some $\gamma\in\mybar\F_q^*$. Since both 
$\widetilde g(X)$ and $X^{Q-1}$ map $\mu_{q+1}$ into $\mu_{q+1}$, we must have $\gamma\in\mu_{q+1}$. Thus 
$g(X) = (\gamma^{-1} X \circ \widetilde{\rho})^{-1} \circ X^{Q-1}\circ \widetilde{\sigma}$. By replacing $\rho$ 
and $\sigma$ with $\gamma^{-1} X \circ \widetilde{\rho}$ and $\widetilde{\sigma}$ respectively, we get 
$g(X) = \rho^{-1} \circ X^{Q-1} \circ \sigma$ for some degree-one $\rho, \sigma \in \F_{q^2}(X)$ which permute 
$\mu_{q+1}$. Moreover, $\sigma(\{\alpha,\alpha^{-q}\}) = \{0,\infty\}$ by our construction, and Lemma~\ref{gcd} 
implies that $\ord_2(k)\ge\ord_2(\ell)$. Thus it remains only to show that $\gcd\bigl(A(X),B(X)\bigr)$ is 
a constant times $(\alpha^q X+1)(X+\alpha)$.

Item \eqref{C3} of Theorem~\ref{geom} implies that $C(X):= \gcd\bigl(A(X),B(X)\bigr)$ divides $U(X)$. Note that
the hypothesis $\deg(g)=Q-1$ implies $C(X)\ne 1$. If $\deg(U)=1$ then $U(X) = eX$ with $e\in \F_q^*$, so $C(X)=X$ 
and $\alpha=0$, whence $C(X)=(\alpha^q X+1)(X+\alpha)$. If $U(X)$ is a degree-$2$ SCR polynomial in $\F_{q^2}[X]$ 
then, since $\alpha\notin \mu_{q+1}$, we see that $\alpha^{-q}$ is another root of $U(X)$, so $U(X)$ is a constant 
times $(X+\alpha)(X+\alpha^{-q})$. Note that $B(X) = X^{Q+1} A^{(q)}(1/X)$ and $A(X) = X^{Q+1} B^{(q)}(1/X)$, 
so if one of $\alpha, \alpha^{-q}$ is a root of $C(X)$ then they both are roots of $C(X)$. Thus 
$C(X) = (X+\alpha)(X+\alpha^{-q})$. Therefore, regardless of whether $U(X)$ has degree $1$ or $2$, 
the polynomial $C(X)$ is a constant times $(\alpha^q X+1)(X+\alpha)$. This concludes the proof.
\end{proof}

\begin{prop}\label{mainpart}
Items \emph{(1)} and \emph{(3)} in Theorem~\emph{\ref{main}} are equivalent to one another.
\end{prop}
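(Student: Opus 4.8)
The plan is to combine Proposition~\ref{key} with an explicit coefficient computation. By Proposition~\ref{key}, item~(1) of Theorem~\ref{main} is equivalent to item~(2) of Proposition~\ref{key}, so it suffices to show that item~(2) of Proposition~\ref{key} is equivalent to the condition in item~(3) of Theorem~\ref{main}; here conditions (1)--(3) of Theorem~\ref{fir2} will correspond respectively to cases (a)--(c) of Proposition~\ref{key}, the $\ord_2$ constraints matching via Lemma~\ref{gcd} since $\gcd(n,q-1)=1$ and $\gcd(n,q+1)=1$ are precisely the conditions appearing there.

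First I would record the effect of the normalization $A(X)=\delta A_0(\gamma X)$. Setting $L_1(X):=\alpha X+\alpha^q$ and $L_2(X):=X+1$ (for case~1), and $M_1(X):=\alpha^q X+1$ and $M_2(X):=X+\alpha$ (for cases~2 and~3), a direct manipulation using $\gamma^q=\gamma^{-1}$ shows that when $\deg A=Q+1$ (so that $B=\widehat A$) the substitution $A\mapsto\delta A_0(\gamma X)$ sends $g=B/A$ to $\delta^{q-1}\gamma^{-(Q+1)}\,g_0(\gamma X)$, where $g_0:=\widehat{A_0}/A_0$ and $\delta^{q-1}\gamma^{-(Q+1)}\in\mu_{q+1}$. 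Since left multiplication by a constant in $\mu_{q+1}$ and precomposition with $X\mapsto\gamma X$ are degree-one maps permuting $\mu_{q+1}$, they preserve both the form $\rho^{-1}\circ X^n\circ\sigma$ and the set $\Omega$. Hence the equivalence reduces to the normalized case $A=A_0$, and throughout I would exploit that $\widehat{\,\cdot\,}$ is multiplicative with $\widehat{L_1}=L_1$, $\widehat{L_2}=L_2$, $\widehat{M_1}=M_2$, and $\widehat{M_2}=M_1$.

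For the implication (3)$\Rightarrow$(2 of Prop.~\ref{key}) I would first verify, by matching the displayed polynomials in Theorem~\ref{fir2} term by term, the factorizations
\[
A_0=L_1^{Q+1}+\beta L_2^{Q+1},\quad A_0=M_1^{Q+1}+\beta M_2^{Q+1},\quad A_0=M_1M_2\bigl(M_1^{Q-1}+\beta M_2^{Q-1}\bigr),
\]
and then apply multiplicativity of $\widehat{\,\cdot\,}$ to compute $\widehat{A_0}$ and hence $g_0=\widehat{A_0}/A_0=\mu\circ X^n\circ\sigma$, where $\sigma$ is $L_1/L_2$ or $M_2/M_1$ and $\mu$ is an explicit degree-one function. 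Lemmas~\ref{mutomu} and~\ref{mutoF} identify $\sigma$ and $\mu=\rho^{-1}$ as mapping $\mu_{q+1}$ to $\Omega=\bP^1(\F_q)$ in case~1 and as permuting $\mu_{q+1}$ in cases~2 and~3, with the hypotheses $\alpha,\beta\in\F_{q^2}\setminus\F_q$ (resp.\ $\notin\mu_{q+1}$) being exactly what makes these maps nondegenerate of the required type. The degenerate members $A_0=X^{Q+1}$ and $A_0=X^Q$ give $g_0=X^{-(Q+1)}$ and $g_0=X^{-(Q-1)}$, which I would treat directly. In case~3 the factorization also exhibits $\gcd(A_0,\widehat{A_0})$ as a constant times $M_1M_2=(\alpha^qX+1)(X+\alpha)$, as Proposition~\ref{key}(c) demands, and one checks $\sigma(\{\alpha,\alpha^{-q}\})=\{0,\infty\}$.

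The reverse implication (2 of Prop.~\ref{key})$\Rightarrow$(3) is the harder direction and is where I expect the main obstacle. Here I would parametrize the given $\rho,\sigma$ via Lemmas~\ref{mutomu} and~\ref{mutoF}, use the normalization freedom to bring $\sigma$ to the form $L_1/L_2$ or $M_2/M_1$, and reconstruct $A_0$. In cases~(a),(b) one has $\gcd(A,B)=1$, so $A_0$ is a scalar multiple of the denominator of $g_0$, which I would read off and match to the first two forms above. The delicate point is case~(c), where $\deg g_0=Q-1<Q+1=\deg A_0$: now $A_0$ is \emph{not} determined by $g_0$ alone, and one must multiply the denominator of $g_0$ by the common factor $\gcd(A,B)$, which Proposition~\ref{key}(c) pins down as a constant times $(\alpha^qX+1)(X+\alpha)$, to recover $A_0=M_1M_2(M_1^{Q-1}+\beta M_2^{Q-1})$. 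The remaining difficulties are the bookkeeping forced by this common factor and checking that all the nondegeneracy hypotheses match simultaneously, including that $A_0$ has no roots in $\mu_{q+1}$; the latter follows because on $\mu_{q+1}$ the relevant power of $\sigma$ lands in $\Omega$, which excludes the value $\beta$.
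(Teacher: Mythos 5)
Your proposal is correct and follows essentially the same route as the paper's proof: reduce via Proposition~\ref{key}, normalize away $\gamma$ and $\delta$ using that $\delta^{q-1}\gamma^{-(Q+1)}\in\mu_{q+1}$, parametrize $\rho,\sigma$ by Lemmas~\ref{mutomu} and \ref{mutoF}, and match the denominator of $\rho^{-1}\circ X^n\circ\sigma$ (times the common factor $(\alpha^q X+1)(X+\alpha)$ in the $n=Q-1$ case) against the quadrinomial forms of Theorem~\ref{fir2}. Your packaging of the computation as the factorizations $A_0=L_1^{Q+1}+\beta L_2^{Q+1}$, $A_0=M_1^{Q+1}+\beta M_2^{Q+1}$, $A_0=M_1M_2\bigl(M_1^{Q-1}+\beta M_2^{Q-1}\bigr)$ together with multiplicativity of $P\mapsto\widehat P$ is a tidier rewriting of the paper's direct expansion, not a genuinely different argument.
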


\begin{proof}
By Proposition~\ref{key}, item (1) of Theorem~\ref{main} holds if and only if item (2) of Proposition~\ref{key} holds.
It remains only to show that the polynomials $A(X)$ corresponding to cases (a), (b) and (c) of Proposition~\ref{key} 
are precisely the polynomials $A(X)$ satisfying items (1), (2), (3) of Theorem~\ref{fir2}, respectively.

First consider case (a). By Lemma~\ref{mutoF}, the degree-one $\rho,\sigma\in\F_{q^2}(X)$ which map $\mu_{q+1}$ 
to $\bP^1(\F_q)$ are
\[
\sigma(X):=\frac{\alpha X+\alpha ^q}{X+1}\circ \gamma X \quad\text{ and } \rho(X):=\frac{\beta X+\beta ^q}{X+1}\circ \lambda X
\]
with $\alpha,\beta\in\F_{q^2}\setminus\F_q$ and $\gamma,\lambda\in\mu_{q+1}$. Then
\[
\lambda X\circ \rho^{-1}\circ X^{Q+1}\circ\sigma\circ \gamma^{-1} X = 
\frac{(\alpha X+\alpha ^q)^{Q+1} + \beta ^q (X+1)^{Q+1}}{(\alpha X+\alpha ^q)^{Q+1} + \beta (X+1)^{Q+1}},
\]
whose denominator is an element of $\F_{q^2}^*$ times
\[
(\alpha^{Q+1}+\beta)X^{Q+1} + (\alpha^{q+Q}+\beta)X^Q + (\alpha^{qQ+1}+\beta)X + (\alpha^{qQ+q}+\beta).
\]
Thus case (a) of Proposition~\ref{key} yields precisely the polynomials $A(X)=\delta A_0(\gamma X)$ where 
$\gamma\in\mu_{q+1}$, $\delta\in\F_{q^2}^*$,
and $k,\ell,A_0(X)$ 
satisfy case (1) of Theorem~\ref{fir2}.

Next consider case (b). By Lemma~\ref{mutomu}, the degree-one rational functions in $\F_{q^2}(X)$ which 
permute $\mu_{q+1}$ are $\gamma X$ and $(\alpha^q X+1)/(X+\alpha)\circ \gamma X$ with $\gamma\in\mu_{q+1}$ and 
$\alpha\in\F_{q^2}\setminus\mu_{q+1}$.
%
%
Now put $g(X):=\rho^{-1}\circ X^{Q+1}\circ\sigma$ where $\rho,\sigma\in\F_{q^2}(X)$ are degree-one rational 
functions permuting $\mu_{q+1}$. By replacing $\rho(X)$ and $\sigma(X)$ by $1/\rho(X)$ and $1/\sigma(X)$ if 
necessary, we may assume that $\sigma(0)\ne 0$, and that if $\sigma(0)\ne\infty$ then $\rho(0)\ne 0$. Then 
the possibilities for $g(X)$, up to replacing $g(X)$ by $\lambda g(\gamma^{-1} X)$ with $\gamma,\lambda\in\mu_{q+1}$, 
are $1/X^{Q+1}$ and
\[
\frac{\beta^q(\alpha^q X+1)^{Q+1}+(X+\alpha)^{Q+1}}{(\alpha^q X+1)^{Q+1}+\beta (X+\alpha)^{Q+1}}
\]
with $\alpha,\beta\in\F_{q^2}\setminus\mu_{q+1}$. Thus case (b) of Proposition~\ref{key} yields precisely the 
polynomials $A(X)=\delta A_0(\gamma X)$ where $\gamma\in\mu_{q+1}$, $\delta\in\F_{q^2}^*$, and $k,\ell,A_0(X)$ 
satisfy case (2) of Theorem~\ref{fir2}.

Finally, consider case (c), so that $\ord_2(k)\ge\ord_2(\ell)$. The rational functions $g(X)$ in this case are 
$g(X)=\rho^{-1}\circ X^{Q-1}\circ\sigma$ for degree-one $\rho,\sigma\in\F_{q^2}(X)$ which permute $\mu_{q+1}$ and 
satisfy $\sigma(\{\widetilde{\alpha},\widetilde{\alpha}^{-q}\}) = \{0,\infty\}$ for some $\widetilde\alpha\in\F_{q^2}$.
By replacing $\rho(X)$ and $\sigma(X)$ by $1/\rho(X)$ and $1/\sigma(X)$ if necessary, we may assume that either 
$\rho(\infty),\sigma(\infty)\ne \infty$ or both $\rho(\infty)= \infty$ and $\sigma(\infty)=0$.

The degree-one $\rho,\sigma\in\F_{q^2}(X)$ which permute $\mu_{q+1}$ and satisfy $\rho(\infty)=\infty$ and 
$\sigma(\infty)=0$ are $\rho(X) = X/\lambda$ and $\sigma(X) = \gamma/X$ with $\gamma,\lambda\in \mu_{q+1}$. 
Here $\sigma^{-1}(\{\infty,0\}) = \{\infty,0\}$, which equals $\{\widetilde{\alpha},\widetilde{\alpha}^{-q}\}$ 
with $\widetilde\alpha\in\F_{q^2}$ if and only if $\widetilde\alpha=0$. Thus the possibilities for $g(X)$ when 
$\rho(\infty)=\infty$ and $\sigma(\infty)=0$ are
\[
\lambda X\circ\frac{1}{X^{Q-1}}\circ\frac{X}{\gamma} = \frac{\lambda\gamma^{Q-1}}{X^{Q-1}}.
\]
It follows that the possibilities for $A,B\in\F_{q^2}[X]$ where $g(X)=B(X)/A(X)$ and 
$\gcd(A(X),B(X))\in\F_{q^2}^*\cdot(\widetilde\alpha^q X+1)(X+\widetilde\alpha)$ are precisely $A(X)=\delta X^Q$ 
and $B(X)=\delta\lambda\gamma^{Q-1} X$ with $\delta\in\F_{q^2}^*$, so that $B(X)=X^{Q+1}A^{(q)}(1/X)$ if and only 
if $\delta^{q-1}=\lambda\gamma^{Q-1}$. Thus the possibilities for $A(X)$ when $B(X)=X^{Q+1}A^{(q)}(1/X)$ are 
$\delta X^Q$ with $\delta\in\F_{q^2}^*$.

The degree-one rational functions $\rho,\sigma\in\F_{q^2}(X)$ which permute $\mu_{q+1}$ and satisfy 
$\rho(\infty),\sigma(\infty)\ne \infty$ are $\rho(X) = (\beta X+1)/(X+\beta^q)\circ \lambda^{-1} X$ 
and $\sigma(X) = (\alpha^q X+1)/(X+\alpha)\circ \gamma X$ with $\gamma,\lambda\in\mu_{q+1}$ 
and $\alpha,\beta\in\F_{q^2}\setminus\mu_{q+1}$. Thus 
$\{\gamma^{-1}\alpha,\gamma^{-1}\alpha^{-q}\} = \sigma^{-1}(\{\infty,0\})$,
which equals $\{\widetilde\alpha,\widetilde\alpha^{-q}\}$ if and only if 
$\widetilde\alpha\in\{\gamma^{-1}\alpha,\gamma^{-1}\alpha^{-q}\}$. The possibilities for $g(X)$ in this case are
\[
\lambda X \circ 
\frac{\beta^q(\alpha^q X+1)^{Q-1}+(X+\alpha)^{Q-1}}{(\alpha^q X+1)^{Q-1}+\beta (X+\alpha)^{Q-1}} \circ \gamma X.
\]
It follows that the possibilities for $A,B\in\F_{q^2}[X]$ where $g(X)=B(X)/A(X)$ and 
$\gcd(A(X),B(X))\in\F_{q^2}^*\cdot(\widetilde\alpha^q X+1)(X+\widetilde\alpha)$ are
\begin{align*}
A(X)&=\delta(\gamma X+\alpha)(\alpha^q\gamma X+1)\cdot\Bigl( (\alpha^q\gamma X+1)^{Q-1}+\beta (\gamma X+\alpha)^{Q-1} \Bigr) \\
&=\delta\Bigl((\gamma X+\alpha)(\alpha^q\gamma X+1)^Q + \beta (\alpha^q\gamma X+1)(\gamma X+\alpha)^Q\Bigr)
\end{align*}
and
\[
B(X)=\delta\lambda\Bigl(\beta^q(\gamma X+\alpha)(\alpha^q \gamma X+1)^Q + (\alpha^q\gamma X+1)(\gamma X+\alpha)^Q\Bigr)
\]
with $\delta\in\F_{q^2}^*$. Here $B(X)=X^{Q+1}A^{(q)}(1/X)$ if and only if $\delta^{q-1}=\lambda\gamma^{Q+1}$, so the 
possibilities for $A(X)$ when $B(X)=X^{Q+1}A^{(q)}(1/X)$ are precisely the polynomials $\delta A_0(\gamma X)$ with 
$\delta\in\F_{q^2}^*$, $\gamma\in\mu_{q+1}$, and $A_0(X)$ occurring in case (3) of Theorem~\ref{fir2} with $A_0(X)\ne X^Q$.
\end{proof}

We conclude this section with a proof of Theorem~\ref{fir2}.

\begin{proof}[Proof of Theorem~\ref{fir2}]
By Lemma~\ref{old} and Lemma~\ref{rewrite}, $X^r A(X^{q-1})$ permutes $\F_{q^2}$ if and only if $\gcd(r,q-1)=1$, $A(X)$ 
has no roots in $\mu_{q+1}$, and $g(X):=X^{Q+1} A^{(q)}(1/X)/A(X)$ permutes $\mu_{q+1}$. Now the result follows from 
Proposition~\ref{mainpart}.
\end{proof}


\section{Restating conditions in terms of coefficients}
\label{sec:Q}

In this section we give conditions on the coefficients of $A(X)$ which are equivalent to the condition $U(X)\mid A(X)$ 
occurring in item \eqref{B2} of Theorem~\ref{geom}. We will use these conditions in the next section to prove Theorem~\ref{fir} 
and Theorem~\ref{main}. We use the following notation throughout this section:
\begin{itemize}
\item $q=2^k$ and $Q=2^\ell$ where $k$ and $\ell$ are positive integers,
\item $m:=2^{\ord_2(\gcd(k,\ell))}$,
\item $a,b,c,d$ are elements of $\F_{q^2}$ for which $e:=a^{q+1}+b^{q+1}+c^{q+1}+d^{q+1}$ is nonzero and
\begin{equation}\label{u/v constant}
(ab^q+cd^q)^Q=e^{Q-1}(ac^q+bd^q),
\end{equation}
\item $A(X):=aX^{Q+1}+bX^Q+cX+d$,
\item $U(X):=(ab^q+cd^q)X^2+eX+a^qb+c^qd$,
\item if $s$ is a power of $2$ and $t$ is a power of $s$ then we define $\Tr_{\F_t/\F_s}(X)$ to be the polynomial 
$X+X^s+X^{s^2}+X^{s^3}+\dots+X^{t/s}$, 
\vspace{0.12cm}
\item $\zeta:=\displaystyle{\frac{(ab^q+cd^q)^{q+1}}{e^2}}$,\, $\eta:=\displaystyle{\frac{b^{q+1}+c^{q+1}}e}$,\, 
and $\theta:=\eta+\Tr_{\F_Q/\F_2}(\zeta)$.
\end{itemize}


\begin{lemma}\label{u|A}
Suppose that $\{b,c,d\}\ne\{0\}$. Then $U(X)\mid A(X)$ if and only if $\theta = 1$.
%
%
\end{lemma}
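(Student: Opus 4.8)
The plan is to prove the lemma by explicitly reducing $A(X)$ modulo $U(X)$ and showing that the remainder vanishes exactly when $\theta=1$. I would first dispose of the degenerate case $u_2:=ab^q+cd^q=0$: here the hypothesis \eqref{u/v constant} forces $f:=ac^q+bd^q=0$ as well, so $U(X)=eX$ and $\zeta=0$, whence $U(X)\mid A(X)$ amounts to $d=0$ while $\theta$ reduces to $\eta=(b^{q+1}+c^{q+1})/e$. A short argument using $\{b,c,d\}\ne\{0\}$ and $e\ne 0$ then shows $d=0\iff a^{q+1}+d^{q+1}=0\iff\eta=1$, settling this case.

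For the main case $u_2\ne 0$, the polynomial $U(X)=u_2X^2+eX+u_2^q$ (here $a^qb+c^qd=u_2^q$, so $U$ is SCR) has degree $2$, so $A\equiv RX+T\pmod U$ for some $R,T\in\F_{q^2}$, and $U\mid A$ iff $R=T=0$. I would compute the remainder via $X^2\equiv PX+S\pmod U$ with $P:=e/u_2$ and $S:=u_2^q/u_2=u_2^{q-1}$, obtaining $X^Q\bmod U$ by iterating the squaring map. The resulting closed form is $X^Q\equiv P^{Q-1}X+P^Q\,\Tr_{\F_Q/\F_2}(S/P^2)\pmod U$; the key simplification is the identity $S/P^2=u_2^{q+1}/e^2=\zeta$, which is precisely what makes the prescribed trace $\Tr_{\F_Q/\F_2}(\zeta)$ appear. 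Writing $A(X)=X^Q(aX+b)+(cX+d)$ and substituting then yields explicit expressions for $R$ and $T$.

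The heart of the argument is to simplify $R$ and $T$ using \eqref{u/v constant}, which says exactly $u_2^Q=e^{Q-1}f$; since $u_2,e\ne 0$ this also gives $f\ne 0$, together with the clean relations $P^Q=e/f$ and $P^{Q-1}=u_2/f$. Substituting these and expanding, the cross terms cancel in characteristic $2$ and I expect the remarkable collapse
\[
fR=a\bigl(a^{q+1}+d^{q+1}+e\,\Tr_{\F_Q/\F_2}(\zeta)\bigr),\qquad fT=b\bigl(a^{q+1}+d^{q+1}+e\,\Tr_{\F_Q/\F_2}(\zeta)\bigr),
\]
so that $fR=a\,\Xi$ and $fT=b\,\Xi$ for the single quantity $\Xi:=a^{q+1}+d^{q+1}+e\,\Tr_{\F_Q/\F_2}(\zeta)$. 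Rewriting $e\,\Tr_{\F_Q/\F_2}(\zeta)=e\theta+b^{q+1}+c^{q+1}$ from $\theta=\eta+\Tr_{\F_Q/\F_2}(\zeta)$ then gives $\Xi=e(1+\theta)$.

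To conclude: since $f\ne 0$, we have $U\mid A\iff R=T=0\iff a\,\Xi=b\,\Xi=0$. If $\theta=1$ then $\Xi=e(1+\theta)=0$, so $U\mid A$; if $\theta\ne 1$ then $\Xi\ne 0$ (as $e\ne 0$), so $R=T=0$ would force $a=b=0$, making $f=ac^q+bd^q=0$, a contradiction, hence $U\nmid A$. I expect the main obstacle to be the second and third paragraphs: producing the closed form for $X^Q\bmod U$ in usable shape, spotting that $S/P^2=\zeta$, and verifying that both remainder coefficients are scalar multiples of the same $\Xi$. It is exactly this proportionality, forced by \eqref{u/v constant}, that reduces the two vanishing conditions $R=0$, $T=0$ to the single condition $\theta=1$.
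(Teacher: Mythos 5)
Your proposal is correct: I checked the degenerate case $ab^q+cd^q=0$ and all of your key identities in the main case — the closed form $X^Q\equiv P^{Q-1}X+P^Q\,\Tr_{\F_Q/\F_2}(S/P^2)\pmod{U}$, the simplification $S/P^2=\zeta$, the collapses $fR=a\,\Xi$ and $fT=b\,\Xi$, and $\Xi=e(1+\theta)$ — and they all hold. In substance your argument runs parallel to the paper's: both linearize the $Q$-th power modulo $U$ by means of the trace $\Tr_{\F_Q/\F_2}$, both invoke the hypothesis $(ab^q+cd^q)^Q=e^{Q-1}(ac^q+bd^q)$ exactly once to replace $(e/u_2)^Q$ by $e/f$, and both end in the same dichotomy: two conditions proportional to $a$ and to $b$ with the common factor $e(1+\theta)$, settled because $f=ac^q+bd^q\ne 0$ forces $(a,b)\ne(0,0)$. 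The difference is the implementation. The paper evaluates $A$ at a root $\alpha$ of $U$, normalizes by $\beta:=\alpha(ab^q+cd^q)/e$ so that $U(\alpha)=0$ becomes the Artin--Schreier relation $\beta^2+\beta=\zeta$, applies the trace to get $\beta^Q+\beta=\Tr_{\F_Q/\F_2}(\zeta)$, and then must argue that $U\mid A$ iff an auxiliary quadratic $P(X)$ is a scalar multiple of $U(X)$ — a step that relies on $U$ having two distinct roots (which follows from $e\ne 0$ in characteristic $2$). Your Euclidean reduction computes the degree-at-most-one remainder of $A$ modulo $U$ outright, so divisibility is literally the vanishing of two coefficients: no passage to roots, no squarefreeness of $U$, and no comparison of quadratics is needed. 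This makes your version slightly more robust and self-contained, at the cost of the routine induction establishing the closed form for $X^Q\bmod U$; the paper's root-level substitution makes the trace appear with less bookkeeping.
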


\begin{proof}
First suppose $\deg(U)\ne 2$. Then $U(X)=eX$ and $ab^q=cd^q$, so by \eqref{u/v constant} we have $ac^q=bd^q$. Here 
$\zeta=0$, so that $\eta+\Tr_{\F_Q/\F_2}(\zeta)$ equals $1$ if and only if $a^{q+1}=d^{q+1}$. If $a^{q+1}=d^{q+1}$ then 
since $e\ne 0$ we have $b^{q+1}\ne c^{q+1}$; since the $(q+1)$-th power of $ab^q=cd^q$ implies that $(ab)^{q+1}=(cd)^{q+1}$, 
it follows that $d=0$. Conversely, if $d=0$ then since $\{b,c\}\ne\{0\}$ and $ab^q=0=ac^q$ we conclude that $a=0$. 
Thus $a^{q+1}=d^{q+1}$ if and only if $d=0$. This yields the desired conclusion, since plainly $d=0$ if and only if 
$U(X)$ divides $A(X)$.

Henceforth suppose $\deg(U)=2$. Then $ab^q\ne cd^q$, which by \eqref{u/v constant} implies that $ac^q\ne bd^q$. The 
hypothesis $e\ne 0$ implies that $U(X)$ has two distinct roots in $\mybar\F_q$. Thus $U(X)\mid A(X)$ if and only if 
each root $\alpha$ of $U(X)$ satisfies $A(\alpha)=0$. Writing
\[
\beta:=\frac{ab^q+cd^q}e\alpha,
\]
the condition $U(\alpha)=0$ says that $\beta^2+\beta=\zeta$. Applying $\Tr_{\F_Q/\F_2}(X)$ to both sides yields
\[
\beta^Q + \beta = \Tr_{\F_Q/\F_2}(\zeta), 
\]
or equivalently
\[
\alpha^Q \frac{(ab^q+cd^q)^Q}{e^Q} + \alpha \frac{ab^q+cd^q}e = \Tr_{\F_Q/\F_2}(\zeta).
\]
By \eqref{u/v constant} we have
\[
\frac{(ab^q+cd^q)^Q}{e^Q}=\frac{ac^q+bd^q}e.
\]
Now
\[
A(\alpha) = (a\alpha+b) \alpha^Q + c\alpha+d
\]
so that
\[
A(\alpha) \frac{ac^q+bd^q}e = (a\alpha+b) \Bigl( \alpha \frac{ab^q+cd^q}e + \Tr_{\F_Q/\F_2}(\zeta) \Bigr) + (c\alpha+d) \frac{ac^q+bd^q}e.
\]
Since $ac^q\ne bd^q$, it follows that $A(\alpha)=0$ if and only if $P(\alpha)=0$, where
\begin{align*}
P(X) &:= X^2 a \frac{ab^q+cd^q}e + X \Bigl( b \frac{ab^q+cd^q}e+c\frac{ac^q+bd^q}e+a\Tr_{\F_Q/\F_2}(x)\Bigr) \\
&\qquad + d \frac{ac^q+bd^q}e + b \Tr_{\F_Q/\F_2}(\zeta).
\end{align*}
Since $U(X)$ has two distinct roots, and $\deg(U)\ge\deg(P)$, it follows that $U(X)\mid A(X)$ if and only if 
$P(X)=\gamma U(X)$ for some $\gamma\in\F_{q^2}$. By comparing degree-$2$ coefficients, we see that $\gamma=a/e$. 
Thus $U(X)\mid A(X)$ if and only if both
\[
a = b \frac{ab^q+cd^q}e + c \frac{ac^q+bd^q}e + a \Tr_{\F_Q/\F_2}(\zeta)
\]
and
\[
a\frac{a^q b + c^q d}e = d \frac{ac^q+bd^q}e + b \Tr_{\F_Q/\F_2}(\zeta),
\]
or equivalently both
\[
a(1+\eta)=a\Tr_{\F_Q/\F_2}(\zeta)
\]
%
%
and
\[
b(1+\eta)=b\Tr_{\F_Q/\F_2}(\zeta).
\]
%
%
At least one of $a$ or $b$ is nonzero since $ac^q\ne bd^q$, so the combination of the above two equations is equivalent to
\[
1+\eta=\Tr_{\F_Q/\F_2}(\zeta),
\]
%
%
as desired.
\end{proof}

In the rest of this section we prove further results about $\zeta$, $\eta$, and $\theta$. The first step is the following 
result which determines the possible values of $\theta$.

\begin{lemma}\label{m in F_2}
The element $\theta$
%
%
is in\/ $\F_2$.
\end{lemma}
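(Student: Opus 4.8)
The plan is to reduce the claim to the single identity $\theta^2=\theta$: since the only elements of a field of characteristic $2$ fixed by squaring are $0$ and $1$, this is equivalent to $\theta\in\F_2$. So everything comes down to verifying $\theta^2+\theta=0$.

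First I would dispose of the trace term by a telescoping computation. Writing $\Tr_{\F_Q/\F_2}(\zeta)=\sum_{i=0}^{\ell-1}\zeta^{2^i}$ (here $Q=2^\ell$), squaring shifts each exponent up by one, so
\[
\Tr_{\F_Q/\F_2}(\zeta)^2+\Tr_{\F_Q/\F_2}(\zeta)=\zeta^{2^\ell}+\zeta=\zeta^Q+\zeta.
\]
Since $\theta=\eta+\Tr_{\F_Q/\F_2}(\zeta)$, it follows that
\[
\theta^2+\theta=(\eta^2+\eta)+(\zeta^Q+\zeta),
\]
so it remains to prove $\eta^2+\eta=\zeta^Q+\zeta$.

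Next I would express both sides over the common denominator $e^2$. For the left side, $\eta=(b^{q+1}+c^{q+1})/e$ gives $\eta^2+\eta=(b^{q+1}+c^{q+1})\bigl((b^{q+1}+c^{q+1})+e\bigr)/e^2$, and since $(b^{q+1}+c^{q+1})+e=a^{q+1}+d^{q+1}$ we get
\[
\eta^2+\eta=\frac{(b^{q+1}+c^{q+1})(a^{q+1}+d^{q+1})}{e^2}.
\]
For the right side, the key input is the hypothesis \eqref{u/v constant} together with the fact that $e$, being a sum of norms $x^{q+1}=x\cdot x^q$, lies in $\F_q$ and hence satisfies $e^q=e$. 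Then
\[
\zeta^Q=\frac{(ab^q+cd^q)^{Q(q+1)}}{e^{2Q}}=\frac{\bigl(e^{Q-1}(ac^q+bd^q)\bigr)^{q+1}}{e^{2Q}}=\frac{(ac^q+bd^q)^{q+1}}{e^2},
\]
where the last equality uses $e^{(Q-1)(q+1)}=(e^{q+1})^{Q-1}=(e^2)^{Q-1}=e^{2Q-2}$.

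It then remains to verify the polynomial identity
\[
(b^{q+1}+c^{q+1})(a^{q+1}+d^{q+1})+(ab^q+cd^q)^{q+1}+(ac^q+bd^q)^{q+1}=0
\]
in characteristic $2$. Here the hard part, such as it is, is simply the bookkeeping: expanding the two norm terms $(ab^q+cd^q)^{q+1}$ and $(ac^q+bd^q)^{q+1}$, the mixed monomials $ab^qc^qd$ and $a^qbcd^q$ each appear once in each term and so cancel, leaving $a^{q+1}b^{q+1}+c^{q+1}d^{q+1}+a^{q+1}c^{q+1}+b^{q+1}d^{q+1}$, which is precisely the expansion of $(b^{q+1}+c^{q+1})(a^{q+1}+d^{q+1})$. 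Thus the displayed sum vanishes, which yields $\eta^2+\eta=\zeta^Q+\zeta$ and hence $\theta^2+\theta=0$, as required. The only genuine idea is the reduction to $\theta^2=\theta$; after that the hypothesis \eqref{u/v constant} enters exactly once (to rewrite $\zeta^Q$), and the remainder is a short symmetric cancellation.
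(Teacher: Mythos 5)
Your proposal is correct and follows essentially the same route as the paper: both reduce the claim to $\theta^2+\theta=0$, use the telescoping $\Tr_{\F_Q/\F_2}(\zeta)^2+\Tr_{\F_Q/\F_2}(\zeta)=\zeta^Q+\zeta$, invoke \eqref{u/v constant} together with $e\in\F_q$ to get $e^2\zeta^Q=(ac^q+bd^q)^{q+1}$, and conclude via the identity $(ab^q+cd^q)^{q+1}+(ac^q+bd^q)^{q+1}=(a^{q+1}+d^{q+1})(b^{q+1}+c^{q+1})$. The only difference is presentational: the paper clears denominators and factors $e^2(\theta^2+\theta)$ directly, while you prove $\eta^2+\eta=\zeta^Q+\zeta$ over the common denominator $e^2$ and expand the norm identity (which the paper leaves as ``easy to check'').
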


\begin{proof}
By \eqref{u/v constant} we have
\[
e^2 \zeta^Q = \frac{(ab^q+cd^q)^{Q(q+1)}}{e^{2Q-2}} = \frac{(ac^q+bd^q)^{q+1}}{e^{2Q-2-(Q-1)(q+1)}} = (ac^q+bd^q)^{q+1},
\]
where the last equality holds because $e\in\F_q$. It is easy to check that
\[
(ac^q+bd^q)^{q+1} + (ab^q+cd^q)^{q+1} = (a^{q+1}+d^{q+1})\cdot(b^{q+1}+c^{q+1}),
\]
so that
\begin{align*}
e^2(\theta^2&+\theta) = e^2 (\zeta^Q+\zeta+\eta^2+\eta) \\
&= (ac^q+bd^q)^{q+1}+(ab^q+cd^q)^{q+1}+(b^{q+1}+c^{q+1})^2+e(b^{q+1}+c^{q+1}) \\
&= (a^{q+1}+d^{q+1}+b^{q+1}+c^{q+1}+e)\cdot (b^{q+1}+c^{q+1}) \\
&= 0.
\end{align*}
It follows that $\theta^2+\theta=0$ and thus $\theta\in\F_2$.
\end{proof}

\begin{lemma}\label{compose traces}
Pick $\alpha \in \F_q$ and write $\beta:=\Tr_{\F_q/\F_{2^m}} ( \Tr_{\F_Q/\F_2}(\alpha) )$.
\renewcommand{\labelenumi}{\emph{(\arabic{enumi})}}
\begin{enumerate}
\item
If $\ord_2(k) < \ord_2(\ell)$ then $\beta=0$.
\item If $\ord_2(k) \ge \ord_2(\ell)$ then $\beta=\Tr_{\F_q/\F_2}(\alpha)$.
\end{enumerate}
\end{lemma}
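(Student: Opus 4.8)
The plan is to expand the composite trace into a double sum of Frobenius powers of $\alpha$, reduce the exponents modulo $k$, and track the parity with which each power $\alpha^{2^n}$ occurs. Since $m=2^{\ord_2(\gcd(k,\ell))}$ divides $\gcd(k,\ell)$, we have $m\mid k$ and $m\mid\ell$, so both $\F_{2^m}\subseteq\F_q$ and $\F_{2^m}\subseteq\F_Q$, the two traces in the statement are well defined on the relevant fields, and $\Tr_{\F_Q/\F_2}(\alpha)\in\F_q$ because $\alpha\in\F_q$. Using the explicit descriptions $\Tr_{\F_Q/\F_2}(\alpha)=\sum_{i=0}^{\ell-1}\alpha^{2^i}$ and $\Tr_{\F_q/\F_{2^m}}(Y)=\sum_{j=0}^{k/m-1}Y^{2^{mj}}$, together with the additivity of $Y\mapsto Y^{2^{mj}}$ in characteristic $2$, I would obtain
\[
\beta=\sum_{j=0}^{k/m-1}\sum_{i=0}^{\ell-1}\alpha^{2^{i+mj}}.
\]

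Because $\alpha\in\F_q$ satisfies $\alpha^{2^k}=\alpha$, each term depends only on the exponent $i+mj$ modulo $k$. The key step is therefore to compute, for each $n\in\{0,1,\dots,k-1\}$, the multiplicity
\[
c_n:=\#\{(i,j):0\le i<\ell,\ 0\le j<k/m,\ i+mj\equiv n\pmod{k}\},
\]
so that $\beta=\sum_{n=0}^{k-1}(c_n\bmod 2)\,\alpha^{2^n}$. For a fixed $i$, as $j$ runs over $\{0,\dots,k/m-1\}$ the residue $mj\bmod k$ runs exactly once over all multiples of $m$ in $[0,k)$ (using $m\mid k$); hence $i+mj\equiv n\pmod{k}$ has a unique solution $j$ precisely when $i\equiv n\pmod{m}$. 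Thus $c_n$ counts the $i\in\{0,\dots,\ell-1\}$ with $i\equiv n\pmod{m}$, and since $m\mid\ell$ this count equals $\ell/m$ independently of $n$. Consequently
\[
\beta=\Bigl(\tfrac{\ell}{m}\bmod 2\Bigr)\sum_{n=0}^{k-1}\alpha^{2^n}=\Bigl(\tfrac{\ell}{m}\bmod 2\Bigr)\Tr_{\F_q/\F_2}(\alpha).
\]

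It then remains to read off the parity of $\ell/m$. Writing $s:=\ord_2(\gcd(k,\ell))=\min(\ord_2(k),\ord_2(\ell))$, so that $m=2^s$, the quotient $\ell/m=\ell/2^s$ is odd exactly when $\ord_2(\ell)=s$, i.e.\ when $\ord_2(\ell)\le\ord_2(k)$. Hence in case (2) ($\ord_2(k)\ge\ord_2(\ell)$) we get $\beta=\Tr_{\F_q/\F_2}(\alpha)$, while in case (1) ($\ord_2(k)<\ord_2(\ell)$) the quotient $\ell/m$ is even and $\beta=0$, as claimed. I expect the only delicate point to be the multiplicity count $c_n=\ell/m$; the divisibilities $m\mid k$ and $m\mid\ell$ make the two reductions—first the solvability of $mj\equiv n-i\pmod{k}$, then the uniform count over residues modulo $m$—go through cleanly, so there should be no real obstacle beyond the bookkeeping.
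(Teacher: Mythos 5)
Your proof is correct, but it is organized differently from the paper's. The paper's argument is structural: it first swaps the two trace maps, using the fact that the linearized polynomials $\Tr_{\F_q/\F_{2^m}}(X)$ and $\Tr_{\F_Q/\F_2}(X)$ commute under composition, so that $\beta=\Tr_{\F_Q/\F_2}(\gamma)$ with $\gamma:=\Tr_{\F_q/\F_{2^m}}(\alpha)\in\F_{2^m}$; it then invokes transitivity of the trace, $\Tr_{\F_Q/\F_2}=\Tr_{\F_{2^m}/\F_2}\circ\Tr_{\F_Q/\F_{2^m}}$, and the one-line computation $\Tr_{\F_Q/\F_{2^m}}(\gamma)=\frac{\ell}{m}\gamma$ for $\gamma\in\F_{2^m}$, after which both cases follow from the parity of $\ell/m$. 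You instead expand everything into the double sum $\sum_{j}\sum_{i}\alpha^{2^{i+mj}}$ and count, for each residue $n$ modulo $k$, how many pairs $(i,j)$ contribute the term $\alpha^{2^n}$; your count $c_n=\ell/m$ (using $m\mid k$ and $m\mid\ell$) is the combinatorial content underlying both the commutation and the transitivity steps in the paper, and it yields the uniform identity $\beta=\bigl(\tfrac{\ell}{m}\bmod 2\bigr)\Tr_{\F_q/\F_2}(\alpha)$, from which both cases are read off exactly as in the paper. What your route buys is self-containedness: it needs nothing beyond additivity of Frobenius, and it treats the two cases in one formula; what it costs is the exponent bookkeeping, which the paper avoids by quoting two standard facts and keeping the whole proof to three lines. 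Both proofs pivot on the same point, namely that $\ell/m$ is odd exactly when $\ord_2(\ell)\le\ord_2(k)$, and your verification of that equivalence, as well as of the multiplicity count, is accurate.
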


\begin{proof}
Since the polynomials $\Tr_{\F_q/\F_{2^m}}(X)$ and $\Tr_{\F_Q/\F_2}(X)$ commute under composition, we have 
$\beta=\Tr_{\F_Q/\F_2}(\gamma)$ where $\gamma:=\Tr_{\F_q/\F_{2^m}}(\alpha)$. Since $\gamma\in\F_{2^m}$, we have
\[
\beta=\Tr_{\F_{2^m}/\F_2}\bigl(\Tr_{\F_Q/\F_{2^m}}(\gamma)\bigr)=\Tr_{\F_{2^m}/\F_2}\Bigl(\frac{\ell}{m}\gamma\Bigr).
\]
Thus if $\ord_2(k) < \ord_2(\ell)$ then $\beta=0$, and if $\ord_2(k) \ge \ord_2(\ell)$ then
\[
\beta = \Tr_{\F_{2^m}/\F_2}(\gamma) = \Tr_{\F_{2^m}/\F_2}\bigl(\Tr_{\F_q/\F_{2^m}}(\alpha)\bigr)=\Tr_{\F_q/\F_2}(\alpha). \qedhere
\]
\end{proof}

The final result in this section gives a connection between traces in different field extensions, which will be useful 
when we apply Lemma~\ref{u|A}.

\begin{lemma}\label{traces}
We have
\[
\Tr_{\F_q/\F_{2^m}}(\eta) =
\begin{cases}
\theta & \text{if $\ord_2(k) < \ord_2(\ell)$}, \\
\frac{k}m \theta + \Tr_{\F_q/\F_2}(\zeta) & \text{if $\ord_2(k) \ge \ord_2(\ell)$.}
\end{cases}
\]
%
%
\end{lemma}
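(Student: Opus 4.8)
The plan is to rewrite $\eta$ using the definition of $\theta$ and then apply the $\F_2$-linear map $\Tr_{\F_q/\F_{2^m}}$ termwise, reducing everything to the two auxiliary lemmas already established. First I would record that both $\eta$ and $\zeta$ lie in $\F_q$: each of $b^{q+1}$, $c^{q+1}$, $e$, and $(ab^q+cd^q)^{q+1}$ is a sum of norms of the form $x^{q+1}=x\cdot x^q$ from $\F_{q^2}$ to $\F_q$, hence lies in $\F_q$, and therefore so do the quotients $\eta$ and $\zeta$. In particular $\Tr_{\F_q/\F_{2^m}}(\eta)$ makes sense and Lemma~\ref{compose traces} applies with $\alpha=\zeta$.

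From the definition $\theta=\eta+\Tr_{\F_Q/\F_2}(\zeta)$ we get, in characteristic $2$, that $\eta=\theta+\Tr_{\F_Q/\F_2}(\zeta)$. Applying the additive map $\Tr_{\F_q/\F_{2^m}}$ then yields
\[
\Tr_{\F_q/\F_{2^m}}(\eta)=\Tr_{\F_q/\F_{2^m}}(\theta)+\Tr_{\F_q/\F_{2^m}}\bigl(\Tr_{\F_Q/\F_2}(\zeta)\bigr).
\]
For the first summand, Lemma~\ref{m in F_2} gives $\theta\in\F_2\subseteq\F_{2^m}$, so $\Tr_{\F_q/\F_{2^m}}(\theta)$ is the sum of $[\F_q:\F_{2^m}]=k/m$ copies of $\theta$, i.e.\ it equals $\frac{k}{m}\theta$. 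For the second summand I would apply Lemma~\ref{compose traces} directly with $\alpha=\zeta$, splitting on the two cases of that lemma.

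It then remains to reconcile the term $\frac{k}{m}\theta$ with the stated answer $\theta$ in the case $\ord_2(k)<\ord_2(\ell)$. Here the key observation is that $m=2^{\ord_2(\gcd(k,\ell))}=2^{\min(\ord_2(k),\ord_2(\ell))}=2^{\ord_2(k)}$, so $k/m$ is odd; since $\theta\in\F_2$, this forces $\frac{k}{m}\theta=\theta$, while the second summand vanishes by Lemma~\ref{compose traces}(1), giving $\Tr_{\F_q/\F_{2^m}}(\eta)=\theta$. In the case $\ord_2(k)\ge\ord_2(\ell)$ the second summand equals $\Tr_{\F_q/\F_2}(\zeta)$ by Lemma~\ref{compose traces}(2), and the first summand $\frac{k}{m}\theta$ is already in the stated form, so no further simplification is needed.

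The argument is almost entirely formal, being a bookkeeping of additivity plus two invocations of the prior lemmas; the only point requiring care is the parity observation $2\nmid k/m$ in the first case, which is precisely what converts $\frac{k}{m}\theta$ into $\theta$. I expect this to be the one place where a reader could stumble, so I would state the identity $m=2^{\ord_2(k)}$ explicitly in that case rather than leaving it implicit.
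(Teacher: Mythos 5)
Your proof is correct and follows essentially the same route as the paper's: both rearrange the definition $\theta=\eta+\Tr_{\F_Q/\F_2}(\zeta)$, apply $\Tr_{\F_q/\F_{2^m}}$, use Lemma~\ref{m in F_2} to evaluate the trace of $\theta$ as $\frac{k}{m}\theta$, and invoke Lemma~\ref{compose traces} (noting $\zeta\in\F_q$) for the nested trace. The only difference is that you spell out the parity step $m=2^{\ord_2(k)}$, hence $k/m$ odd, in the case $\ord_2(k)<\ord_2(\ell)$ — a point the paper leaves implicit in ``the result follows'' — which is a reasonable expository improvement rather than a different argument.
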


\begin{proof}
The definition of $\theta$ says that $\Tr_{\F_Q/\F_2}(\zeta)=\theta+\eta$.
By applying $\Tr_{\F_q/\F_{2^m}}(X)$ to both sides, and noting that $\theta \in\F_2$ by Lemma~\ref{m in F_2}, we obtain
\[
\Tr_{\F_q/\F_{2^m}}\bigl(\Tr_{\F_Q/\F_2}(\zeta)\bigr)=\frac{k}m \theta+\Tr_{\F_q/\F_{2^m}}(\eta).
\]
Note that $\zeta$ is in $\F_q$, so Lemma~\ref{compose traces} implies that if $\ord_2(k) < \ord_2(\ell)$ then
\[
\Tr_{\F_q/\F_{2^m}} ( \Tr_{\F_Q/\F_2}(\zeta) ) = 0,
\] 
and if $\ord_2(k) \ge \ord_2(\ell)$ then 
\[ 
\Tr_{\F_q/\F_{2^m}} \bigl( \Tr_{\F_Q/\F_2}(\zeta) \bigr) = \Tr_{\F_q/\F_2}(\zeta).
\]
The result follows.
\end{proof}


\section{Proof of Theorems~\ref{fir} and \ref{main}}
\label{sec:main}

In this section we prove Theorem~\ref{main}, which implies Theorem~\ref{fir} in light of Lemmas~\ref{old} and \ref{rewrite}.

\begin{proof}[Proof of Theorem~\ref{main}]
By Proposition~\ref{mainpart}, items (1) and (3) of Theorem~\ref{main} are equivalent, so we need only show that items 
(1) and (2) are equivalent. We use the following notation:
\begin{itemize}
\item $q:=p^k$ and $Q:=p^\ell$ where $p$ is prime and $k,\ell>0$,
\item $m:=p^{\ord_2(\gcd(k,\ell))}$,
\item $a,b,c,d\in\F_{q^2}$ are not all zero,
\item $A(X):=aX^{Q+1}+bX^Q+cX+d$,
\item $B(X):=d^q X^{Q+1}+c^q X^Q+b^q X+a^q$,
\item $g(X):=B(X)/A(X)$.
\end{itemize}

First suppose that
\stepcounter{equation}
\renewcommand{\theenumi}{\theequation}
\renewcommand{\labelenumi}{(\theequation)}
\begin{enumerate}[leftmargin=\parindent,align=left,labelwidth=6pt,labelsep=6pt]
\item \label{nortsrep} $A(X)$ has no roots in $\mu_{q+1}$
\end{enumerate}
and
\stepcounter{equation}
\begin{enumerate}[leftmargin=\parindent,align=left,labelwidth=6pt,labelsep=6pt]
\item \label{permrep} $g(X)$ permutes $\mu_{q+1}$.
\end{enumerate}
By Proposition~\ref{key} it follows that $q$ is even and
\stepcounter{equation}
\begin{enumerate}[leftmargin=\parindent,align=left,labelwidth=6pt,labelsep=6pt]
\item \label{A2rep}
$g(X)=\rho\circ X^n\circ\sigma$ for some $n\in\{Q-1,Q+1\}$ and some degree-one \\  \phantom{x}\hspace{2ex}$\rho,\sigma\in\mybar\F_q(X)$.
\end{enumerate}
Then part \eqref{B} of Theorem~\ref{geom} implies that both of the following hold:
\begin{gather}
e:=a^{q+1}+b^{q+1}+c^{q+1}+d^{q+1} \text{ is nonzero}\label{e nonzero}, \\
(ab^q+cd^q)^Q=e^{Q-1}(ac^q+bd^q).\label{u/v constant rep}
\end{gather}

Conversely, we assume henceforth that $q$ is even and both \eqref{e nonzero} and \eqref{u/v constant rep}  hold. 
It remains only to show that the combination of \eqref{nortsrep}, \eqref{permrep}, and \eqref{A2rep} is equivalent to
\begin{equation} \label{trrep}
\Tr_{\F_q/\F_{2^m}}\Bigl(\frac{b^{q+1}+c^{q+1}}e\Bigr)=\frac{\lcm(k,\ell)}m.
\end{equation}
By part \eqref{B} of Theorem~\ref{geom}, the combination of \eqref{nortsrep} and \eqref{A2rep} is equivalent to
\stepcounter{equation}
\renewcommand{\theenumi}{\theequation}
\renewcommand{\labelenumi}{(\theequation)}
\begin{enumerate}[leftmargin=\parindent,align=left,labelwidth=6pt,labelsep=6pt]
\item \label{urep} either $U(X)\nmid A(X)$ or $U(X)$ has no roots in $\mu_{q+1}$, where 
\[ U(X):=(ab^q+cd^q)X^2+eX+(ab^q+cd^q)^q.\]
\end{enumerate}
Write
\begin{align*}
\zeta&:=\frac{(ab^q+cd^q)^{q+1}}{e^2}, \\
\eta&:=\frac{b^{q+1}+c^{q+1}}e, \\
\theta&:=\eta+\Tr_{\F_Q/\F_2}(\zeta), \\
W(X)&:=(bc+ad)X^2+eX+(bc+ad)^q,
\end{align*}
where we maintain the convention from the previous section that $\Tr_{\F_Q/\F_2}(X)$ denotes the polynomial $X^{Q/2}+X^{Q/4}+\dots+X$, 
and hence can be evaluated at elements of $\mybar\F_q$. Let $\Lambda$ be the union of the set of roots of $W(X)$ in $\mybar\F_q$ 
and the set consisting of $(2-\deg(W))$ copies of $\infty$. Since $e\ne 0$, we know that $W(X)$ has $\deg(W)$ roots, so that 
$\abs{\Lambda}=2$.

Suppose $\{b,c,d\}=\{0\}$. Then \eqref{urep} holds since $U(X)=eX$ has no roots in $\mu_{q+1}$, so we must show that \eqref{trrep} 
is equivalent to \eqref{permrep}. Here \eqref{trrep} holds if and only if $\ord_2(k)\ne\ord_2(\ell)$, which by Lemma~\ref{gcd} 
says that $\gcd(Q+1,q+1)=1$, or equivalently $g(X)=a^{q-1}/X^{Q+1}$ permutes $\mu_{q+1}$.

Now suppose that $\{b,c,d\}\ne\{0\}$ and \eqref{urep} does not hold. Then $U(X)\mid A(X)$ and $U(X)$ has roots in $\mu_{q+1}$, 
which by Lemmas~\ref{u|A} and \ref{quadscr} implies that $\theta=1$ and $\Tr_{\F_q/\F_2}(\zeta)=1$. Now Lemma~\ref{traces} shows 
that if $\ord_2(k) < \ord_2(\ell)$ then $\Tr_{\F_q/\F_{2^m}}(\eta)=1\ne\lcm(k,\ell)/m$, and if $\ord_2(k) \ge \ord_2(\ell)$ then 
$\Tr_{\F_q/\F_{2^m}}(\eta)=1+k/m\ne\lcm(k,\ell)/m$. In either case, \eqref{trrep} does not hold, so that both \eqref{trrep} 
and \eqref{urep} do not hold.

Henceforth suppose that $\{b,c,d\}\ne\{0\}$ and \eqref{urep} holds. Thus also \eqref{nortsrep} and \eqref{A2rep} hold, so it remains 
to show that \eqref{trrep} is equivalent to \eqref{permrep}. Here either $U(X)\nmid A(X)$ or $U(X)$ has no roots in $\mu_{q+1}$, and 
part \eqref{C} of Theorem~\ref{geom} shows that
\stepcounter{equation}
\begin{enumerate}[leftmargin=\parindent,align=left,labelwidth=6pt,labelsep=6pt,itemsep=4pt]
\item \label{Rprerep} each element of $\Lambda$ has a unique $g$-preimage in $\bP^1(\mybar\F_q)$,
\stepcounter{equation}
\item\label{Ru} each root of $U(X)$ in $\bP^1(\mybar\F_q)$ is the unique $g$-preimage of some $\beta\in\Lambda$,
\stepcounter{equation}
\item\label{degrep} $n=Q-1$ if and only if $U(X)\mid A(X)$.
\end{enumerate}
Lemmas~\ref{u|A} and \ref{m in F_2} show that $\theta=1$ if $U(X)\mid A(X)$ and $\theta=0$ otherwise, so by \eqref{degrep} we have 
$n=Q-1$ if $\theta=1$ and $n=Q+1$ if $\theta=0$.

Suppose that $n=Q-1$, so that $\theta=1$ and $U(X)\mid A(X)$. Then \eqref{urep} implies that $U(X)$ has no roots in $\mu_{q+1}$, which 
by Lemma~\ref{quadscr} yields $\Tr_{\F_q/\F_2}(\zeta)=0$. By Lemma~\ref{traces}, if $\ord_2(k) < \ord_2(\ell)$ then $\Tr_{\F_q/\F_{2^m}}(\eta)=1$, 
and if $\ord_2(k) \ge \ord_2(\ell)$ then $\Tr_{\F_q/\F_{2^m}}(\eta)=k/m$. Thus \eqref{trrep} holds if and only if $\ord_2(k) \ge \ord_2(\ell)$, 
which by Lemma~\ref{gcd} is equivalent to $\gcd(Q-1,q+1)=1$. Since $U(X)$ has no roots in $\mu_{q+1}$, by \eqref{Ru} and 
Lemma~\ref{cyclicperm} we see that \eqref{trrep} and \eqref{permrep} are equivalent.

The remaining possibility is $n=Q+1$, so that $\theta=0$ and $U(X)\nmid A(X)$. By Lemma~\ref{traces}, if $\ord_2(k) < \ord_2(\ell)$ 
then $\Tr_{\F_q/\F_{2^m}}(\eta)=0$, and if $\ord_2(k) \ge \ord_2(\ell)$ then $\Tr_{\F_q/\F_{2^m}}(\eta)=\Tr_{\F_q/\F_2}(\zeta)$.

If $U(X)$ has roots in $\mu_{q+1}$ then $\Tr_{\F_q/\F_2}(\zeta)=1$, and Lemma~\ref{cyclicperm} implies that \eqref{permrep} holds if and only 
if $\gcd(Q+1,q-1)=1$. By Lemma~\ref{gcd} this is equivalent to $\ord_2(k) \le \ord_2(\ell)$, which by the previous paragraph is equivalent 
to \eqref{trrep}.

Finally, if $U(X)$ has no roots in $\mu_{q+1}$ then $\Tr_{\F_q/\F_2}(\zeta)=0$, and Lemma~\ref{cyclicperm} implies that \eqref{permrep} holds 
if and only if $\gcd(Q+1,q+1)=1$. By Lemma~\ref{gcd} this is equivalent to $\ord_2(k) \ne \ord_2(\ell)$, which as above is equivalent 
to \eqref{trrep}.
\end{proof}


\section{A general polynomial identity}
\label{sec:biv}

In this section we prove an identity involving bivariate polynomials. This identity seems interesting for its own sake, 
and in addition we use it in the next section to prove the conjecture from \cite[Rem.~2]{LHXZ} about permutations of
$\F_q\times\F_q$. Unexpectedly, this identity provides a new proof of a result of Cusick and M\"uller \cite{CM} about 
the functions $\F_q\to\F_q$ induced by certain polynomials that are \emph{not} permutation polynomials.

Throughout this section we use the following notation:

\begin{itemize}
\item $q$ is a prime power,
\item $n$ is a positive integer,
\item $\Delta$ is the set of all elements $(w^q-w)^{q-1}$ with $w \in \F_{q^n}\setminus\F_q$.
\end{itemize}

The main result in this section is as follows.

\begin{thm}\label{identity}
The following identity holds in\/ $\F_{q^n}[X,Y]$:
\begin{equation}\label{apeq}
\begin{multlined}
  X^{q^n-1}-1 + \prod_{z \in \F_{q^n}^*} ( 1 + z X - z^q Y )  \\
=  -Y \cdot  \biggl( \frac{ X^{q^n-1} - Y^{q^n-1} } { X^{\frac{q^n-1}{q-1}} - Y^{\frac{q^n-1}{q-1}} } \biggr) 
 \cdot \Big( Y^{\frac{q^n-q}{q-1}} + \sum_{i=1}^{n-1} X^{1+\frac{q^n-q^{i+1}}{q-1}} Y^{\frac{q^i-q}{q-1}} \Big) .
\end{multlined}
\end{equation}
\end{thm}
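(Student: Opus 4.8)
The plan is to compute the product on the left-hand side in closed form via the theory of linearized polynomials, and then reduce the claimed identity to an elementary telescoping computation. Write $N:=q^n-1$ and $M:=\frac{q^n-1}{q-1}$, regard $X,Y$ as indeterminates, and work inside $\overline{\F_{q^n}(X,Y)}$. The first observation is that for $z\in\F_{q^n}$ the expression $\mathcal L(z):=Yz^q-Xz$ is an $\F_q$-linearized (additive) polynomial in $z$, and $1+zX-z^qY=1-\mathcal L(z)$, so the full product (now including the trivial factor at $z=0$) equals $\Phi(1)$, where
\[
\Phi(S):=\prod_{z\in\F_{q^n}}\bigl(S-\mathcal L(z)\bigr)=\prod_{v\in V}(S-v),\qquad V:=\mathcal L(\F_{q^n}).
\]
Since $X,Y$ are transcendental, $\mathcal L$ is injective on $\F_{q^n}$, so $V$ is an $n$-dimensional $\F_q$-subspace and $\Phi$ is additive, say $\Phi(S)=\sum_{i=0}^n c_iS^{q^i}$ with $c_n=1$. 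As $\Phi(1)=\sum_{i=0}^n c_i$, the left-hand side of \eqref{apeq} equals $X^{N}+\sum_{i=0}^{n-1}c_i$, so the entire problem reduces to determining the $c_i$.

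To find the $c_i$ I would exploit the functional equation governing $\Phi\circ\mathcal L$. This composition is additive, has leading coefficient $Y^{q^n}$, and vanishes exactly on $W:=\mathcal L^{-1}(V)$; since $\mathcal L$ has a size-$q$ kernel over the algebraic closure and $W=\F_{q^n}\oplus\ker\mathcal L$ (a direct sum because $X,Y$ are transcendental), $W$ is an $(n+1)$-dimensional $\F_q$-space, whence $\Phi(\mathcal L(S))=Y^{q^n}s_W(S)$ for the monic subspace polynomial $s_W$ of $W$. Using the standard factorization $s_W=s_{E(\ker\mathcal L)}\circ E$ with $E(S)=S^{q^n}-S$, and computing $E$ on a root $t_0$ of $\mathcal L$ (where $t_0^{q-1}=X/Y$), one obtains $s_W(S)=S^{q^{n+1}}-W_0^{q-1}S^{q^n}-S^q+W_0^{q-1}S$ with $W_0^{q-1}=(X/Y)\bigl((X/Y)^M-1\bigr)^{q-1}$. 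Equating the coefficient of each $S^{q^j}$ in $\Phi(\mathcal L(S))=\sum_i c_i(YS^q-XS)^{q^i}$ with the corresponding coefficient of $Y^{q^n}s_W(S)$ then yields $c_0=-Y^{q^n}W_0^{q-1}/X=-(X^M-Y^M)^{q-1}$, the recursion $c_{j-1}Y^{q^{j-1}}=c_jX^{q^j}$ for $2\le j\le n-1$, and the exceptional relation $c_0Y-c_1X^q=-Y^{q^n}$ at $j=1$. Solving gives, for $1\le j\le n-1$,
\[
c_j=-\,\frac{X^{N}-Y^{N}}{X^{M}-Y^{M}}\;X^{\,1+\frac{q^n-q^{j+1}}{q-1}}\,Y^{\frac{q^j-1}{q-1}}.
\]

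Finally I would assemble the answer. After distributing the factor $-Y$ on the right-hand side of \eqref{apeq}, the terms of the sum $\sum_{i=1}^{n-1}$ are exactly the $c_j$ above (matching the $X$- and $Y$-exponents with $i=j$), so it remains only to identify $X^{N}+c_0$ with the contribution of the $Y^{M-1}$ term, i.e.\ to check that $X^{N}-(X^M-Y^M)^{q-1}=-Y^{M}\frac{X^{N}-Y^{N}}{X^{M}-Y^{M}}$. Setting $A:=X^M$ and $B:=Y^M$ (so $X^N=A^{q-1}$ and $Y^N=B^{q-1}$), this is the elementary identity $A^{q-1}-(A-B)^{q-1}=-B\frac{A^{q-1}-B^{q-1}}{A-B}$, which follows at once from $(A-B)^{q-1}=\sum_{s=0}^{q-1}A^sB^{q-1-s}$ after cancelling the top term. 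The main obstacle is the middle step: setting up $\Phi\circ\mathcal L=Y^{q^n}s_W$ correctly and, above all, not overlooking the $-S^q$ summand in $s_W$, which makes the $j=1$ relation deviate from the generic recursion and is precisely what produces the distinguished $Y^{M-1}$ term on the right-hand side. I would first verify the base cases $n=1$ and $n=2$ by hand to pin down all signs and normalizations before committing to the general computation.
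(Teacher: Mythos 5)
Your proposal is correct, and it takes a genuinely different route from the paper's proof. The paper reduces \eqref{apeq} to the $q^n-1$ specializations $X=u$ with $u\in\F_{q^n}^*$ (both sides have $X$-degree less than $q^n-1$), and after the substitution $Z=u^{-q}Y$ it handles the resulting one-variable product by root-multiplicity counting: Lemma~\ref{1st} factors $\prod_{z\in\F_{q^n}^*}(1+z-z^qY)$ by counting, for each $u\in\F_{q^n}$, the solutions of $uz^q-z=1$, and Lemma~\ref{2nd} supplies the companion identity $\prod_{u\in\Delta}(Y-u)=\sum_{i=1}^n Y^{(q^{i-1}-1)/(q-1)}$. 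You avoid specialization altogether: writing $M:=(q^n-1)/(q-1)$, you recognize the two-variable product as $\Phi(1)$ for the subspace polynomial $\Phi$ of $V=\mathcal{L}(\F_{q^n})$ with $\mathcal{L}(z)=Yz^q-Xz$, and you extract its coefficients $c_i$ from the functional equation $\Phi\circ\mathcal{L}=Y^{q^n}s_W$, $W=\F_{q^n}\oplus\ker\mathcal{L}$, via the factorization $s_W=s_{E(\ker\mathcal{L})}\circ E$, $E(S)=S^{q^n}-S$. I checked the details: $W_0^{q-1}=(X/Y)\bigl((X/Y)^{M}-1\bigr)^{q-1}$ does give $c_0=-(X^M-Y^M)^{q-1}$; coefficient comparison yields exactly your recursion for $2\le j\le n-1$ together with the exceptional relation $c_0Y-c_1X^q=-Y^{q^n}$ coming from the $-S^q$ term of $s_W$; your closed form for $c_j$ satisfies these relations; after distributing $-Y$ on the right of \eqref{apeq} the summands match your $c_j$ term by term (using $1+\tfrac{q^i-q}{q-1}=\tfrac{q^i-1}{q-1}$); and the closing identity $A^{q-1}-(A-B)^{q-1}=-B\,\frac{A^{q-1}-B^{q-1}}{A-B}$ is a correct characteristic-$p$ computation. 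As for what each approach buys: yours is uniform (no interpolation over $\F_{q^n}^*$, no case distinction on whether $u$ lies in $\mu_{M}$, no preimage counting) and it explicitly determines the subspace polynomial of $\mathcal{L}(\F_{q^n})$, which has independent interest in the theory of linearized polynomials; the paper's more pedestrian route produces the set $\Delta$, its cardinality, and the factorization of Lemma~\ref{1st} as byproducts, and these are precisely what the paper uses to rederive the Cusick--M\"uller theorem on value sets (Corollary~\ref{RM}) --- a stated goal of that section which your argument would not recover without additional work.
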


We first factor the left side
of \eqref{apeq} in case $X=1$.

\begin{lemma} \label{1st}
We have $\abs{\Delta}=(q^{n-1}-1)/(q-1)$ and
\[
\prod_{z \in \F_{q^n}^*} ( 1 + z - z^q Y ) = - \biggl( \frac{ Y^{q^n} - Y } { Y^{\frac{q^n-1}{q-1}} - 1 } \biggr) \cdot \prod_{u \in \Delta} (Y - u)^q.
\]
\end{lemma}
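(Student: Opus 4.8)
The plan is to prove both assertions by reducing everything to $\F_q$-linear algebra inside $\F_{q^n}$. For the product formula I would regard each side as a polynomial in $Y$, verify that both have degree $q^n-1$ and leading coefficient $-1$, and then show that they have the same multiset of roots in $\mybar\F_q$ (all of which lie in $\F_{q^n}$); since a polynomial is determined by its leading coefficient together with its roots counted with multiplicity, this forces equality. I would establish the count $\abs{\Delta}$ first, since it is exactly what makes the degrees match.

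To compute $\abs{\Delta}$, note that $\phi\colon w\mapsto w^q-w$ is $\F_q$-linear on $\F_{q^n}$ with kernel $\F_q$, so its image $V$ is an $\F_q$-subspace of dimension $n-1$ (indeed $V=\ker\Tr_{\F_{q^n}/\F_q}$), and $\phi(\F_{q^n}\setminus\F_q)=V\setminus\{0\}$ has $q^{n-1}-1$ elements. Applying the power map $x\mapsto x^{q-1}$, whose kernel on $\F_{q^n}^*$ is precisely $\F_q^*$, I observe that $V\setminus\{0\}$ is a union of free $\F_q^*$-orbits (as $V$ is an $\F_q$-subspace), each of size $q-1$, that the power map is constant on each orbit, and that it separates distinct orbits because $x^{q-1}=y^{q-1}$ forces $x/y\in\F_q^*$. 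Hence $\Delta$ is in bijection with the set of orbits, giving $\abs{\Delta}=(q^{n-1}-1)/(q-1)$.

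For the product formula, write $\nu:=(q^n-1)/(q-1)$ and record that the norm $z\mapsto z^{\nu}$ maps $\F_{q^n}^*$ onto $\F_q^*$ with kernel of order $\nu$, which coincides with the subgroup of $(q-1)$-th powers (both have order $\nu$, and every $(q-1)$-th power has norm $1$). Using $Y^{q^n}-Y=Y\prod_{z\in\F_{q^n}^*}(Y-z)$ and $Y^{\nu}-1=\prod_{z^{\nu}=1}(Y-z)$, the right-hand side becomes $-Y\cdot\prod_{z\in\F_{q^n}^*,\,z^{\nu}\ne 1}(Y-z)\cdot\prod_{u\in\Delta}(Y-u)^q$, a polynomial of degree $q^n-1$ (using $\abs{\Delta}=(q^{n-1}-1)/(q-1)$) and leading coefficient $-1$. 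The left-hand side is a product of $q^n-1$ linear factors with leading coefficient $\prod_{z}(-z^q)=-1$, by the finite-field Wilson identity $\prod_{z\in\F_{q^n}^*}z=-1$. It then remains to match multiplicities. The factor $1+z-z^qY$ contributes the root $(1+z)/z^q$, so for $y\in\F_{q^n}$ the multiplicity of $y$ on the left is $m(y):=\abs{\{z\in\F_{q^n}^*: yz^q-z=1\}}$. Fixing $y$, the map $\psi_y(z):=yz^q-z$ is $\F_q$-linear with $\ker\psi_y=\{0\}\cup\{z: z^{q-1}=1/y\}$, trivial when $y$ is not a $(q-1)$-th power (equivalently $y^{\nu}\ne 1$) and $1$-dimensional otherwise. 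In the first case $\psi_y$ is bijective, so $m(y)=1$, matching the right-hand side, where such $y$ contribute multiplicity $1$ and lie outside $\Delta$. In the second case $m(y)\in\{0,q\}$, with $m(y)=q$ exactly when $1\in\operatorname{im}\psi_y$.

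The main obstacle is this last determination for norm-one $y$. The decisive maneuver is to write $y=t^{q-1}$ and substitute $w=tz$, which converts the multiplicative-looking equation $yz^q-z=1$ into the additive Artin--Schreier equation $w^q-w=t$; the latter is solvable precisely when $\Tr_{\F_{q^n}/\F_q}(t)=0$, i.e.\ when $t\in V$, and any solution then has $w\notin\F_q$, so that $y=(w^q-w)^{q-1}\in\Delta$, while conversely every element of $\Delta$ arises this way. One must also check that the trace condition is independent of the choice of $t$ (immediate from $\Tr_{\F_{q^n}/\F_q}(\lambda t)=\lambda\Tr_{\F_{q^n}/\F_q}(t)$ for $\lambda\in\F_q^*$) and note that $\Delta$ is a proper subset of the norm-one subgroup, of size $(q^{n-1}-1)/(q-1)$ rather than $\nu$, so that many norm-one values $y$ have no preimage at all; these are exactly the $y\notin\Delta$, for which $m(y)=0$ as required. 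Combining this with the separately handled root $y=0$ (multiplicity $1$ on both sides) completes the matching and hence the proof.
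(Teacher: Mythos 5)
Your proof is correct and takes essentially the same approach as the paper's: both compute the multiplicity of each $y\in\F_{q^n}$ as a root of the left-hand side by analyzing the $\F_q$-linear map $z\mapsto yz^q-z$ (bijective off the norm-one subgroup, kernel of size $q$ on it), and both use the substitution $w=tz$ to convert the norm-one case into the Artin--Schreier equation $w^q-w=t$, whose solvability characterizes membership in $\Delta$. The only notable difference is bookkeeping: you establish $\abs{\Delta}=(q^{n-1}-1)/(q-1)$ up front by a direct count (the image of $w\mapsto w^q-w$ is the trace-zero hyperplane, then quotient by the $\F_q^*$-action), whereas the paper obtains this count at the end by equating degrees of the two sides.
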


\begin{proof}
For any $u\in \F_{q^n}$, let $\phi_u \colon \F_{q^n} \to \F_{q^n}$ be the function $z\mapsto u z^q-z$, and note that 
$\phi_u$ is a linear map of $\F_q$-vector spaces. For any $z\in\F_{q^n}^*$, the unique root of $1+z-z^q Y$ is the 
unique $u\in\F_{q^n}$ for which $\phi_u(z)=1$. Thus the multiplicity of any prescribed $u\in\F_{q^n}$ as a root of 
$\prod_{z\in\F_{q^n}^*} (1+z-z^q Y)$ equals $e_u:=\abs{\phi_u^{-1}(1)}$. We now compute $e_u$ for each $u\in\F_{q^n}$.
This is easy when $u$ is not in $\Omega:=\mu_{\frac{q^n-1}{q-1}}$, since then $\phi_u$ has trivial kernel and hence is 
bijective, so that $e_u=1$. Now suppose that $u\in \Omega$, so that $u=v^{q-1}$ for some $v\in\F_{q^n}^*$. Then the 
kernel of $\phi_u$ is $\{\alpha/v\colon\alpha\in\F_q\}$, whence $e_u$ is in $\{0,q\}$, with $e_u=q$ if and only 
if $uz^q-z=1$ for some $z\in\F_{q^n}$. Write $w:=vz$ to restate the last condition as $w^q-w=v$ for some $w\in\F_{q^n}$. 
Since $v\ne 0$, the last condition occurs if and only if $u\in \Delta$. Combining the above conclusions with the 
equality $\prod_{z\in \F_{q^n}^*} z = -1$ yields
\begin{align*}
\prod_{z \in \F_{q^n}^*} ( 1 + z - z^q Y )
&= - \prod_{u \in \F_{q^n}} ( Y - u )^{e_u} \\
&= - \prod_{u\in \F_{q^n}\setminus \Omega} ( Y - u ) \cdot \prod_{u\in \Delta} ( Y - u )^q \\ 
&= - \biggl( \frac{ Y^{q^n} - Y } { Y^{\frac{q^n-1}{q-1}} - 1 } \biggr) \cdot \prod_{u \in \Delta} (Y - u)^q.
\end{align*}
The expression for $\abs{\Delta}$ follows upon equating the degrees of the two sides.
\end{proof}

Lemma~\ref{1st} yields the following reformulation of the main result of \cite{CM}.

\begin{cor}\label{RM}
The following identity holds in\/ $\F_{q^n}[Y]$:
\[
\prod_{z\in \F_{q^n}} \bigl( Y - (z+1)z^{q-1} \bigr) = Y^2 \cdot \biggl( \frac{Y^{q^n-1}-1}{Y^{\frac{q^n-1}{q-1}}-1} \biggr) \cdot \prod_{u\in \Delta} (Y-u)^q.
\]
\end{cor}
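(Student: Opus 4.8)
The plan is to derive Corollary~\ref{RM} directly from Lemma~\ref{1st} by applying the involution $z\mapsto 1/z$ to the product on the left side of Lemma~\ref{1st}: this substitution is the crux of the argument, since it converts the factors $1+z-z^qY$ into scalar multiples of the factors $Y-(z+1)z^{q-1}$ that appear in the corollary. Concretely, I would substitute $z=1/w$ in $\prod_{z\in\F_{q^n}^*}(1+z-z^qY)$; since inversion permutes $\F_{q^n}^*$, after relabeling this equals $\prod_{z\in\F_{q^n}^*}(1+z^{-1}-z^{-q}Y)$. Factoring $z^{-q}$ out of each term gives
\[
1+z^{-1}-z^{-q}Y=z^{-q}\bigl((z+1)z^{q-1}-Y\bigr)=-z^{-q}\bigl(Y-(z+1)z^{q-1}\bigr),
\]
using the identity $(z+1)z^{q-1}=z^q+z^{q-1}$.

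Next I would collect the resulting scalar $\prod_{z\in\F_{q^n}^*}(-z^{-q})$. Using $\prod_{z\in\F_{q^n}^*}z=-1$, this scalar equals $(-1)^{q^n-1}(-1)^{-q}=(-1)^{q+q^n-1}$, which I claim is $-1$ in every characteristic: in odd characteristic $q^n-1$ is even while $q$ is odd, so the exponent is odd, whereas in characteristic $2$ we have $-1=1$ so the value is trivially $-1$. Hence the left side of Lemma~\ref{1st} equals $-\prod_{z\in\F_{q^n}^*}\bigl(Y-(z+1)z^{q-1}\bigr)$, and comparing with the right side of Lemma~\ref{1st} yields
\[
\prod_{z\in\F_{q^n}^*}\bigl(Y-(z+1)z^{q-1}\bigr)=\biggl(\frac{Y^{q^n}-Y}{Y^{\frac{q^n-1}{q-1}}-1}\biggr)\prod_{u\in\Delta}(Y-u)^q.
\]

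Finally I would restore the one factor omitted above: the corollary's product runs over all of $\F_{q^n}$, and the $z=0$ term is $Y-(0+1)\cdot 0^{q-1}=Y$. Multiplying the displayed identity by this factor and writing $Y^{q^n}-Y=Y(Y^{q^n-1}-1)$ produces
\[
Y^2\cdot\frac{Y^{q^n-1}-1}{Y^{\frac{q^n-1}{q-1}}-1}\cdot\prod_{u\in\Delta}(Y-u)^q,
\]
which is exactly the right side of Corollary~\ref{RM}. The argument is short, and the only place where genuine care is needed is the evaluation of the scalar $\prod_{z\in\F_{q^n}^*}(-z^{-q})$ — in particular verifying that it is uniformly $-1$ rather than characteristic-dependent; everything else is a direct substitution together with the elementary fact $\prod_{z\in\F_{q^n}^*}z=-1$.
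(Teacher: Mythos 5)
Your proposal is correct and is essentially the paper's own proof: the paper likewise relates the two products via the inversion $z\mapsto w^{-1}$, factors $w^{-q}$ out of each term, uses $\prod_{z\in\F_{q^n}^*}z=-1$ to evaluate the resulting scalar, and then restores the $z=0$ factor $Y$. The only difference is cosmetic (you transform the lemma's product into the corollary's, while the paper goes in the opposite direction), and your explicit verification that the sign works out uniformly in both characteristics is if anything slightly more careful than the paper's treatment.
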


\begin{proof}
This follows from Lemma~\ref{1st} since
\begin{align*}
\prod_{z\in \F_{q^n}^*} \bigl( Y - (z+1)z^{q-1} \bigr)&=
\prod_{z\in\F_{q^n}^*} \bigl( Y - z^{q-1} - z^q \bigr) \\
&=\prod_{w\in\F_{q^n}^*} \bigl( Y - w^{1-q} - w^{-q} \bigr) \\ &= \prod_{w\in\F_{q^n}^*}\bigl(w^{-q}\cdot( w^q Y - w - 1 )\bigr) \\
&= -\prod_{w\in\F_{q^n}^*}\bigl(w^q Y - w - 1\bigr).\qedhere
\end{align*}
\end{proof}

\begin{lemma}\label{2nd}
The following identity holds in\/ $\F_{q^n}[Y]$:
\[
\prod_{u\in \Delta} (Y-u) = \sum_{i=1}^n Y^{\frac{q^{i-1}-1}{q-1}}.
\]
\end{lemma}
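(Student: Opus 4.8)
The plan is to identify $\Delta$ explicitly as a set of $(q-1)$-th powers and then, after the substitution $Y=Z^{q-1}$, to recognize $\prod_{u\in\Delta}(Y-u)$ as coming from a trace polynomial. First I would analyze the Artin--Schreier map $\wp\colon\F_{q^n}\to\F_{q^n}$, $w\mapsto w^q-w$, which is $\F_q$-linear with kernel $\F_q$. Its image lies in $H:=\ker\Tr_{\F_{q^n}/\F_q}$ by Frobenius-invariance of the trace, and since $\wp(\F_{q^n})$ and $H$ both have cardinality $q^{n-1}$, the image equals $H$. Hence as $w$ ranges over $\F_{q^n}\setminus\F_q$ the quantity $w^q-w$ ranges over $H\setminus\{0\}$, so that
\[
\Delta=\{\,v^{q-1}:v\in H\setminus\{0\}\,\}.
\]

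The key algebraic input is the factorization
\[
\prod_{v\in H}(Z-v)=Z+Z^q+Z^{q^2}+\dots+Z^{q^{n-1}},
\]
valid because the right-hand side is the monic, separable trace polynomial of degree $q^{n-1}=\abs{H}$ whose roots are precisely the trace-zero elements, i.e.\ $H$, each with multiplicity one. Dividing by $Z$ then gives $\prod_{v\in H\setminus\{0\}}(Z-v)=1+Z^{q-1}+Z^{q^2-1}+\dots+Z^{q^{n-1}-1}$.

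Next I would partition $H\setminus\{0\}$ into orbits under scalar multiplication by $\F_q^*$. Since $H$ is an $\F_q$-subspace, each orbit $\F_q^*\cdot v$ has size $q-1$; the map $v\mapsto v^{q-1}$ is constant on orbits (as $c^{q-1}=1$ for $c\in\F_q^*$) and separates distinct orbits, so it induces a bijection from the set of orbits onto $\Delta$, recovering $\abs{\Delta}=(q^{n-1}-1)/(q-1)$ from Lemma~\ref{1st}. Using the orbit product $\prod_{c\in\F_q^*}(Z-cv)=Z^{q-1}-v^{q-1}$, the displayed product over $H\setminus\{0\}$ factors as $\prod_{u\in\Delta}(Z^{q-1}-u)$. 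Writing $Y=Z^{q-1}$ and invoking $q^i-1=(q-1)(q^{i-1}+\dots+1)$, so that $Z^{q^i-1}=Y^{(q^i-1)/(q-1)}$, I obtain the equality of $\prod_{u\in\Delta}(Y-u)$ and $\sum_{i=1}^n Y^{(q^{i-1}-1)/(q-1)}$ after substituting $Y=Z^{q-1}$; since the substitution $Y\mapsto Z^{q-1}$ is injective on $\F_{q^n}[Y]$ (distinct powers of $Y$ map to distinct powers of $Z$), the asserted identity in $\F_{q^n}[Y]$ follows.

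The only genuinely substantive steps are the identification of $\Delta$ via $\wp$ and the trace-polynomial factorization; once these are in hand, the orbit bookkeeping and the exponent arithmetic are routine. I expect the main obstacle to be purely expository, namely organizing the two reindexings—from $w$ to $v\in H$, and from $\F_q^*$-orbits to $u\in\Delta$—so that the monic factorizations line up cleanly without off-by-one errors in the exponents.
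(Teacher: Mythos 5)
Your proof is correct, but it takes a genuinely different route from the one in the paper. The paper's argument is a short divisibility-and-degree count: each $u=(w^q-w)^{q-1}\in\Delta$ is shown to be a root of $H(Y):=\sum_{i=1}^n Y^{(q^{i-1}-1)/(q-1)}$ via the telescoping identity $(w^q-w)H(u)=\sum_{i=1}^n\bigl(w^{q^i}-w^{q^{i-1}}\bigr)=w^{q^n}-w=0$; since the left-hand side of the lemma is monic with distinct roots and, by Lemma~\ref{1st}, has the same degree $(q^{n-1}-1)/(q-1)$ as the monic polynomial $H(Y)$, equality follows. You instead make $\Delta$ explicit: the Artin--Schreier map $w\mapsto w^q-w$ is $\F_q$-linear with kernel $\F_q$ and image the trace-zero hyperplane $H$, so $\Delta$ is the image of $H\setminus\{0\}$ under the Kummer map $v\mapsto v^{q-1}$; you then factor the (separable) trace polynomial as $\prod_{v\in H}(Z-v)=\sum_{i=0}^{n-1}Z^{q^i}$, collapse the linear factors along $\F_q^*$-orbits using $\prod_{c\in\F_q^*}(Z-cv)=Z^{q-1}-v^{q-1}$, and descend through the injective substitution $Y\mapsto Z^{q-1}$. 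The paper's proof buys brevity, at the cost of invoking Lemma~\ref{1st} for $\abs{\Delta}$ and of reading as a verification rather than an explanation. Yours is longer but conceptual: it exhibits the right-hand side as the trace polynomial divided by $Z$, rewritten in the variable $Y=Z^{q-1}$, and as a byproduct re-derives $\abs{\Delta}=(q^{n-1}-1)/(q-1)$ independently of Lemma~\ref{1st}. All the steps you flag as routine (the orbit bookkeeping, the injectivity of $Y\mapsto Z^{q-1}$, and the exponent arithmetic) are indeed correct as stated.
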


\begin{proof}
Since both sides are monic polynomials in $\F_{q^n}[Y]$ of the same degree, it suffices to show that the left side divides 
the right side, by showing that each $u\in \Delta$ is a root of $H(Y):=\sum_{i=1}^n Y^{\frac{q^{i-1}-1}{q-1}}$. Pick $u\in \Delta$, 
so that $u = (w^q-w)^{q-1}$ with $w\in \F_{q^n}\setminus\F_q$, and thus $H(u)=\sum_{i=1}^n (w^q-w)^{q^{i-1}-1}$. Then
\[
(w^q-w)H(u)=\sum_{i=1}^n (w^q-w)^{q^{i-1}}
=\sum_{i=1}^n \bigl(w^{q^i}-w^{q^{i-1}}\bigr) = w^{q^n}-w=0,
\]
so since $w\notin\F_q$ we conclude that $H(u)=0$.
\end{proof}

We now prove Theorem~\ref{identity}.

\begin{proof} [Proof of Theorem~\ref{identity}]
Since each side of \eqref{apeq} is a polynomial in $\F_{q^n}[X,Y]$ having $X$-degree less than $q^n-1$, it suffices to 
show that, for each of the $q^n-1$ values $u\in\F_{q^n}^*$, we obtain an equality in $\F_{q^n}[Y]$ when we substitute 
$u$ for $X$ in \eqref{apeq}. Pick any $u\in\F_{q^n}^*$, and put $Z:=u^{-q}Y$. Then the left side of \eqref{apeq} 
becomes $\prod_{z\in\F_{q^n}^*}(1+zu-z^q u^q Z)$; upon substituting $w:=zu$, it follows from Lemma~\ref{1st} and 
\ref{2nd} that this equals
\begin{equation}\label{fu}
- \biggl( \frac{ Z^{q^n} - Z } { Z^{\frac{q^n-1}{q-1}} - 1 } \biggr) \cdot \sum_{i=1}^n Z^{\frac{q^i-q}{q-1}}.
\end{equation}
After our substitutions, the right side of \eqref{apeq} becomes
\[
-u^q Z\cdot \biggl( \frac{ 1 - Z^{q^n-1} } { u^{\frac{q^n-1}{q-1}} - (u^q Z)^{\frac{q^n-1}{q-1}} } \biggr) \cdot 
\Bigl((u^q Z)^{\frac{q^n-q}{q-1}}+\sum_{i=1}^{n-1} u^{1+\frac{q^n-q^{i+1}}{q-1}}(u^q Z)^{\frac{q^i-q}{q-1}}\Bigr).
\]
Since $u^{(q^n-1)/(q-1)}$ is in $\F_q^*$, it equals its $q$-th power, so the above expression becomes
\[
-u^q Z\cdot \biggl( \frac{ 1 - Z^{q^n-1} } { u^{\frac{q^n-1}{q-1}} - (u Z)^{\frac{q^n-1}{q-1}} } \biggr) \cdot 
\Bigl(u^{\frac{q^n-1}{q-1}-q} Z^{\frac{q^n-q}{q-1}}+\sum_{i=1}^{n-1} u^{1+\frac{q^n-q^2}{q-1}} Z^{\frac{q^i-q}{q-1}}\Bigr),
\]
which equals \eqref{fu}.
\end{proof}


\section{Resolution of eight conjectures and open problems}
\label{sec:conj}

In this section we resolve eight conjectures and open problems from the literature. The following result generalizes 
the combination of \cite[Open Question~1]{LXZ} and \cite[Thm.~1(1)]{LXZ}:

\begin{cor}\label{pre1}
Put $q:=2^k$ and $Q:=2^\ell$ where $k$ and $\ell$ are odd positive integers, and write $A(X) := aX^{Q+1}+bX^Q+cX+d$ with 
$a,b,c,d \in \F_{q^2}$. Then $f(X) := X^{Q+1} A(X^{q-1})$ permutes\/ $\F_{q^2}$ if and only if all of the following hold:
\renewcommand{\theenumi}{\ref{pre1}.\arabic{enumi}}
\renewcommand{\labelenumi}{\emph{(\thethm.\arabic{enumi})}}
\begin{enumerate}
\item\label{11} $e := a^{q+1}+b^{q+1}+c^{q+1}+d^{q+1}$ is nonzero;
\item\label{12} $(a^q b + c^q d)^Q = e^{Q-1} (a^q c + b^q d)$;
\item\label{13} $\Tr_{\F_q/\F_2}\Bigl(\frac{b^{q+1}+c^{q+1}}e\Bigr) = 1$.
\end{enumerate}
\end{cor}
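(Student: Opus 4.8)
The plan is to derive Corollary~\ref{pre1} as a direct specialization of Theorem~\ref{fir} with $r := Q+1$. Since $r = Q+1 \equiv Q+1 \pmod{q+1}$ holds trivially, Theorem~\ref{fir} applies, and it remains only to check that, under the standing hypotheses $p=2$ and $k,\ell$ both odd, conditions (1)--(5) of Theorem~\ref{fir} reduce precisely to \ref{11}--\ref{13}. Two of these are immediate: condition (2) of Theorem~\ref{fir} asserts $p=2$, which is given, and condition (3) asserts $e\ne 0$, which is exactly \ref{11}. I would dispose of both at once.

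For condition (1) of Theorem~\ref{fir}, namely $\gcd(r,q-1)=\gcd(Q+1,q-1)=1$, I would invoke Lemma~\ref{gcd}. Writing $q-1 = 2^k-1$ and $Q+1 = 2^\ell+1$, that lemma gives $\gcd(2^k-1,2^\ell+1)=1$ if and only if $\ord_2(k)\le\ord_2(\ell)$. Since $k$ and $\ell$ are odd we have $\ord_2(k)=\ord_2(\ell)=0$, so this holds and condition (1) is automatic, imposing no constraint.

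The one genuinely substantive point is reconciling condition (4) of Theorem~\ref{fir}, which reads $(ab^q+cd^q)^Q = e^{Q-1}(ac^q+bd^q)$, with the stated condition \ref{12}, namely $(a^qb+c^qd)^Q = e^{Q-1}(a^qc+b^qd)$. I would observe that since $b,d\in\F_{q^2}$ we have $b^{q^2}=b$ and $d^{q^2}=d$, whence $(ab^q+cd^q)^q = a^qb+c^qd$ and likewise $(ac^q+bd^q)^q = a^qc+b^qd$; moreover $e=a^{q+1}+b^{q+1}+c^{q+1}+d^{q+1}$ lies in $\F_q$ (each norm $a^{q+1}$ is fixed by the $q$-power map), so $e^{(Q-1)q}=e^{Q-1}$. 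Applying the Frobenius $x\mapsto x^q$, an automorphism (hence a bijection) of $\F_{q^2}$, to both sides of condition (4) then yields exactly \ref{12}, and conversely. As both sides of each identity lie in $\F_{q^2}$, it is the injectivity of Frobenius that licenses the biconditional, so condition (4) is equivalent to \ref{12}. I expect this Frobenius reconciliation to be the only step requiring any care.

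Finally, for condition (5) of Theorem~\ref{fir} I would compute $m = 2^{\ord_2(\gcd(k,\ell))}$: since $k$ and $\ell$ are odd, $\gcd(k,\ell)$ is odd, so $\ord_2(\gcd(k,\ell))=0$ and $m=1$. Thus $\F_{2^m}=\F_2$ and the left-hand side of condition (5) is $\Tr_{\F_q/\F_2}\bigl((b^{q+1}+c^{q+1})/e\bigr)$, while the right-hand side $\lcm(k,\ell)/m = \lcm(k,\ell)$ is read in $\F_2$. Because $k$ and $\ell$ are odd, $\lcm(k,\ell)$ is odd and hence equals $1$ in $\F_2$, so condition (5) becomes $\Tr_{\F_q/\F_2}\bigl((b^{q+1}+c^{q+1})/e\bigr)=1$, which is precisely \ref{13}. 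Assembling the four checks, the permutation criterion of Theorem~\ref{fir} for $f(X)=X^{Q+1}A(X^{q-1})$ is equivalent to the conjunction of \ref{11}, \ref{12}, and \ref{13}, proving the corollary.
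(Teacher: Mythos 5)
Your proposal is correct and follows essentially the same route as the paper's proof: specialize Theorem~\ref{fir} at $r=Q+1$, use Lemma~\ref{gcd} (with $k$ odd) to see that $\gcd(Q+1,q-1)=1$ is automatic, identify condition (4) of Theorem~\ref{fir} with the $q$-th power of \ref{12} via Frobenius, and note that $m=1$ and $\lcm(k,\ell)$ odd turn condition (5) into \ref{13}. The paper's proof is just a compressed version of the same argument.
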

\renewcommand{\theenumi}{\arabic{enumi}}
\renewcommand{\labelenumi}{(\arabic{enumi})}

\begin{proof}
Since $k$ is odd, Lemma~\ref{gcd} implies that $\gcd(Q+1,q-1)=1$. By Theorem~\ref{fir}, $f(X)$ permutes $\F_{q^2}$ 
if and only if \eqref{11} and \eqref{13} hold and the $q$-th power of \eqref{12} holds. Plainly \eqref{12} holds 
if and only if its $q$-th power holds, so the result follows.
\end{proof}

Next we show that the sufficient conditions in \cite[Thm.~1.1]{ZLKPT} are both necessary and sufficient, as was 
conjectured in \cite[pp.~5 and 20]{ZLKPT}:

\begin{cor}\label{pre2}
Let $k$ and $\ell$ be positive integers and write $m:=2^{\ord_2(\gcd(k,\ell))}$. Suppose that $q:=2^k$ and $Q:=2^\ell$ 
satisfy $\gcd(Q-1,q+1)=1$. Write $A(X):=1+bX+aX^v+dX^u$ where $a,b,d\in\F_{q^2}$ and $u,v$ are positive integers 
such that $u(Q-1)\equiv q\pmod{q+1}$ and $v(Q-1)\equiv Q\pmod{q+1}$. Then $f(X):=X A(X^{q-1})$ permutes\/ $\F_{q^2}$ 
if and only if all of the following hold:
\renewcommand{\theenumi}{\ref{pre2}.\arabic{enumi}}
\renewcommand{\labelenumi}{\emph{(\thethm.\arabic{enumi})}}
\begin{enumerate}
\item\label{21} $e:=1+a^{q+1}+b^{q+1}+d^{q+1}$ is nonzero;
\item\label{22} $(ab^q+d^q)^Q=e^{Q-1}(a+bd^q)$;
\item\label{23} $\Tr_{\F_q/\F_{2^m}}\Bigl(\frac{a^{q+1}+d^{q+1}}e\Bigr)=0$.
\end{enumerate}
\end{cor}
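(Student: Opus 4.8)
The plan is to deduce Corollary~\ref{pre2} from Theorem~\ref{fir} by matching the hypotheses and identifying which of conditions (1)--(5) of that theorem survive under the present assumptions. First I would observe that the polynomial $f(X):=XA(X^{q-1})$ is an instance of the polynomial in Theorem~\ref{fir} with $r=1$, provided we rewrite $A(X):=1+bX+aX^v+dX^u$ in the normalized form $\widetilde a X^{Q+1}+\widetilde b X^Q+\widetilde c X+\widetilde d$. The subtlety is that the exponents $u,v$ here are not literally $1,Q,Q+1,0$; rather, the congruences $u(Q-1)\equiv q$ and $v(Q-1)\equiv Q \pmod{q+1}$ are exactly the conditions that make the four terms of $X A(X^{q-1})$ land in the same residue classes modulo $q+1$ (equivalently, produce the same rational function on $\mu_{q+1}$ via Lemma~\ref{rewrite}) as a quadrinomial of the prescribed shape with $r\equiv Q+1$. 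So the first real step is to check, using Lemma~\ref{old} and Lemma~\ref{rewrite}, that $f$ permutes $\F_{q^2}$ if and only if the associated $g(X)=X^{Q+1}A^{(q)}(1/X)/A(X)$ (for a suitable representative exponent) permutes $\mu_{q+1}$, and that this $g(X)$ agrees with the one built from a genuine quadrinomial $\widetilde A$ whose coefficients are a permutation/relabeling of $1,b,a,d$.

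The cleanest route is probably to not literally substitute but instead to apply Theorem~\ref{main} directly: conditions (2)--(5) of Theorem~\ref{fir} are precisely conditions (2)--(5) stated there, and Theorem~\ref{main} tells us that $g$ permuting $\mu_{q+1}$ (with $A$ having no roots in $\mu_{q+1}$) is equivalent to those conditions. Thus I would identify the coefficients of the normalized quadrinomial: tracking which of $1,b,a,d$ plays the role of each of $a,b,c,d$ in Theorem~\ref{fir}. Matching the constant term $1=d$ (of the theorem's $A$), the coefficient $b$ of $X$ with the theorem's $c$, and $a,d$ with the theorem's $a,b$ in the appropriate order, I would then translate conditions (3)--(5) of Theorem~\ref{fir} into the present notation. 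Under this dictionary, condition (3) ($e\neq0$) becomes \eqref{21}, condition (4) becomes \eqref{22}, and the trace condition (5) becomes the statement matching \eqref{23} up to the value of $\lcm(k,\ell)/m$ on the right-hand side.

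The key simplification is the hypothesis $\gcd(Q-1,q+1)=1$, which by Lemma~\ref{gcd} is equivalent to $\ord_2(k)\ge\ord_2(\ell)$. I would use this to pin down the right-hand side of the trace condition (5) of Theorem~\ref{fir}. Since $\ord_2(k)\ge\ord_2(\ell)$ forces $\ord_2(\gcd(k,\ell))=\ord_2(\ell)$, hence $m=Q/\!\gcd$-type relations give $\lcm(k,\ell)/m$ a specific $2$-adic valuation; combined with the way the trace argument $\frac{b^{q+1}+c^{q+1}}e$ relabels to $\frac{a^{q+1}+d^{q+1}}e$ under the coefficient dictionary, condition (5) collapses to \eqref{23}, i.e. the trace equals $0$ rather than a nonzero value. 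Concretely I expect $\lcm(k,\ell)/m$ to be even under $\ord_2(k)\ge\ord_2(\ell)$ (so $\equiv 0$ in $\F_2$), which is what turns the ``$=\lcm(k,\ell)/m$'' of Theorem~\ref{fir} into ``$=0$'' here.

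The main obstacle will be the bookkeeping in the coefficient dictionary: getting the correspondence between $\{1,b,a,d\}$ and the theorem's $\{a,b,c,d\}$ exactly right so that \eqref{22} matches condition (4) including the precise placement of $q$-th powers, and so that the trace argument in \eqref{23} is genuinely $\frac{a^{q+1}+d^{q+1}}e$ and not $\frac{b^{q+1}+d^{q+1}}e$ or another variant. A secondary but essential check is verifying that the reduction of $f$ to a prescribed quadrinomial via the exponent congruences $u(Q-1)\equiv q$, $v(Q-1)\equiv Q\pmod{q+1}$ does not introduce spurious roots in $\mu_{q+1}$ and correctly accounts for $\gcd(r,q-1)=1$ (here $r=1$, so $\gcd(1,q-1)=1$ is automatic, removing condition (1) of Theorem~\ref{fir} from consideration). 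Once the dictionary is fixed and the value of $\lcm(k,\ell)/m\bmod 2$ is computed under the standing hypothesis, the corollary should follow immediately from Theorem~\ref{fir}.
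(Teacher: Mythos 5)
Your overall strategy (reduce to Theorem~\ref{fir} by a multiplicative substitution, then translate its conditions) is the same as the paper's, but the two ``bookkeeping'' steps you defer are where all the content of this corollary lies, and your proposed resolutions of both are wrong. First, the dictionary. The substitution that works is $X\mapsto X^s$ with $s\equiv Q-1\pmod{q+1}$ and $\gcd(s,q-1)=1$: then $A(X^s)\equiv 1+bX^{Q-1}+aX^Q+dX^q\pmod{X^{q+1}+1}$, and $f(X^s)$ induces the same map on $\F_{q^2}$ as $f_1(X):=X^rA_1(X^{q-1})$, where $r:=s+q^2-q\equiv Q+1\pmod{q+1}$, $\gcd(r,q-1)=1$, and $A_1(X)=aX^{Q+1}+bX^Q+X+d$. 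So in Theorem~\ref{fir}'s notation the coefficients are $(a,b,c,d)=(a,b,1,d)$: the coefficient $1$ sits in the $c$-slot, and then condition (4) of Theorem~\ref{fir} is literally \eqref{22}. Your dictionary --- theorem's $d$ equal to $1$, theorem's $c$ equal to $b$, theorem's $\{a,b\}$ equal to $\{a,d\}$ --- is not realizable by any multiplicative equivalence: forcing the exponent-$0$ and exponent-$1$ terms of $A$ into the slots $0$ and $1$ forces the identity substitution, and then one would need $v\equiv Q(Q-1)^{-1}$ and $u\equiv -(Q-1)^{-1}$ to equal $Q$ and $Q+1$ in some order modulo $q+1$, which forces $Q^2\equiv 0$ or $2\equiv 0\pmod{q+1}$, both impossible. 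It is also inconsistent with your later assertion that the trace argument ``relabels to'' $(a^{q+1}+d^{q+1})/e$: with theorem's $c$ equal to $b$, the argument $(b^{q+1}+c^{q+1})/e$ would involve $b$.

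Second, the trace condition. You expect $\lcm(k,\ell)/m$ to be even whenever $\ord_2(k)\ge\ord_2(\ell)$; in fact its $2$-adic valuation is $\ord_2(k)-\ord_2(\ell)$, so it is odd exactly when $\ord_2(k)=\ord_2(\ell)$ (e.g.\ $k=\ell=1$), a case permitted by the hypothesis $\gcd(Q-1,q+1)=1$. Hence condition (5) of Theorem~\ref{fir} does not become ``$=0$'' by evenness of its right-hand side, and your argument as written would yield a wrong criterion whenever $\ord_2(k)=\ord_2(\ell)$. The correct mechanism, with the correct dictionary, is: condition (5) reads $\Tr_{\F_q/\F_{2^m}}\bigl((b^{q+1}+1)/e\bigr)=\lcm(k,\ell)/m$; since $e=1+a^{q+1}+b^{q+1}+d^{q+1}$ we have $(b^{q+1}+1)/e=1+(a^{q+1}+d^{q+1})/e$, and $\Tr_{\F_q/\F_{2^m}}(1)=k/m$ with $k/m\equiv\lcm(k,\ell)/m\pmod 2$. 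So the constant $1$ inside the trace exactly absorbs the right-hand side, and this cancellation --- not evenness --- is what converts condition (5) into \eqref{23}.
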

\renewcommand{\theenumi}{\arabic{enumi}}
\renewcommand{\labelenumi}{(\arabic{enumi})}

\begin{proof}
We first restate \eqref{23}. The hypothesis $\gcd(Q-1,q+1)=1$ says $\ord_2(k) \ge \ord_2(\ell)$ by 
Lemma~\ref{gcd}, so that $k/m\equiv \lcm(k,\ell)/m\pmod{2}$. Since $\Tr_{\F_q/\F_{2^m}}(1)=k/m=\lcm(k,\ell)/m$ 
and $1+(a^{q+1}+d^{q+1})/e=(b^{q+1}+1)/e$, it follows that \eqref{23} holds if and only if 
$\Tr_{\F_q/\F_{2^m}}((b^{q+1}+1)/e)=\lcm(k,\ell)/m$. Let $s$ be a positive integer such that 
$s\equiv Q-1\pmod{q+1}$ and $\gcd(s,q-1)=1$, so that $\gcd(s,q^2-1)=1$. Then $f(X)$ permutes $\F_{q^2}$ 
if and only if $f(X^s)$ permutes $\F_{q^2}$. Since
\[
A(X^s)\equiv 1+bX^{Q-1}+aX^Q+dX^q\pmod{X^{q+1}+1},
\]
we see that $f(X^s)$ induces the same function on $\F_{q^2}$ as does $f_1(X):=X^r A_1(X^{q-1})$, where 
$r:=s+q^2-q$ and $A_1(X):=aX^{Q+1}+bX^Q+X+d$. Note that $r\equiv s\pmod{q-1}$ so that $\gcd(r,q-1)=1$, 
and also $r\equiv Q+1\pmod{q+1}$. By Theorem~\ref{fir}, $f_1(X)$ permutes $\F_{q^2}$ if and only if 
\eqref{21} holds, \eqref{22} holds, and our restatement of \eqref{23} holds.
\end{proof}

Next we show that the sufficient conditions in \cite[Thm.~1.3]{ZLKPT} are both necessary and sufficient, 
as was conjectured in \cite[pp.~5 and 20]{ZLKPT}:

\begin{cor}\label{pre3}
Let $k$ and $\ell$ be positive integers and write $m:=2^{\ord_2(\gcd(k,\ell))}$. Suppose that $q:=2^k$ and 
$Q:=2^\ell$ satisfy $\gcd(Q+1,q+1)=1$. Write $A(X):=1+aX+bX^v+cX^u$ where $a,b,c\in\F_{q^2}$ and $u,v$ 
are positive integers such that $u(Q+1)\equiv 1\pmod{q+1}$ and $v(Q+1)\equiv Q\pmod{q+1}$. Then 
$f(X):=X A(X^{q-1})$ permutes\/ $\F_{q^2}$ if and only if all of the following hold:
\renewcommand{\theenumi}{\ref{pre3}.\arabic{enumi}}
\renewcommand{\labelenumi}{\emph{(\thethm.\arabic{enumi})}}
\begin{enumerate}
\item\label{31} $e:=1+a^{q+1}+b^{q+1}+c^{q+1}$ is nonzero;
\item\label{32} $(a^qb+c^q)^Q=e^{Q-1} (a^qc + b^q)$;
\item\label{33} $\Tr_{\F_q/\F_{2^m}}\Bigl(\frac{b^{q+1}+c^{q+1}}e\Bigr)=0$.
\end{enumerate}
\end{cor}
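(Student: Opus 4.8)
The plan is to mirror the proof of Corollary~\ref{pre2}: apply a monomial substitution $X\mapsto X^s$ to turn $f(X)$ into a polynomial of the shape required by Theorem~\ref{fir}, and then match the three resulting conditions to \eqref{31}--\eqref{33}. First, since $q$ is even we have $\gcd(q-1,q+1)=1$, so by the Chinese Remainder Theorem I can choose a positive integer $s$ with $s\equiv Q+1\pmod{q+1}$ and $s\equiv 1\pmod{q-1}$. Then $\gcd(s,q-1)=1$, while $\gcd(s,q+1)=\gcd(Q+1,q+1)=1$ by hypothesis; hence $\gcd(s,q^2-1)=1$, so $X\mapsto X^s$ permutes $\F_{q^2}$ and $f(X)$ permutes $\F_{q^2}$ if and only if $f(X^s)$ does. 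Note that the hypothesis $\gcd(Q+1,q+1)=1$ is used here in an essential way to make $X^s$ a bijection.

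Next I would compute the effect of this substitution on the argument of $A$. For $\alpha\in\F_{q^2}^*$ we have $\alpha^{q-1}\in\mu_{q+1}$, so $(\alpha^{q-1})^n$ depends only on $n\bmod(q+1)$. Using $us\equiv u(Q+1)\equiv 1$, $vs\equiv v(Q+1)\equiv Q$, and $s\equiv Q+1\pmod{q+1}$, the four terms $1,\,a(\alpha^{q-1})^s,\,b(\alpha^{q-1})^{vs},\,c(\alpha^{q-1})^{us}$ of $A(\alpha^{s(q-1)})$ become $1,\,a(\alpha^{q-1})^{Q+1},\,b(\alpha^{q-1})^{Q},\,c(\alpha^{q-1})$. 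Hence $f(X^s)$ induces the same function on $\F_{q^2}$ as $f_1(X):=X^s A_1(X^{q-1})$, where $A_1(X):=aX^{Q+1}+bX^Q+cX+1$ (both maps vanish at $0$). Unlike in Corollary~\ref{pre2}, here the four exponents land \emph{exactly} on $\{0,1,Q,Q+1\}$, so no auxiliary factor of $X^{q-1}$ is needed and $r:=s\equiv Q+1\pmod{q+1}$ already has the form demanded by Theorem~\ref{fir}.

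Finally I would apply Theorem~\ref{fir} to $f_1(X)$, reading off the coefficients $(a,b,c,d)=(a,b,c,1)$. Condition~(1) holds since $\gcd(s,q-1)=1$, and condition~(2) holds since $q$ is even. With $d=1$, condition~(3) is literally \eqref{31}, while condition~(4) reads $(ab^q+c)^Q=e^{Q-1}(ac^q+b)$; raising this to the $q$-th power and using that $e\in\F_q$ (each summand $x^{q+1}$ is a norm) together with $x^{q^2}=x$ on $\F_{q^2}$ converts it into $(a^qb+c^q)^Q=e^{Q-1}(a^qc+b^q)$, so since $x\mapsto x^q$ is a bijection it is equivalent to \eqref{32}.

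The one genuinely substantive point is condition~(5), whose right-hand side is $\lcm(k,\ell)/m$: I must show this integer is even, so that (5) collapses to \eqref{33}. This is again where the hypothesis enters, now through Lemma~\ref{gcd}: $\gcd(Q+1,q+1)=1$ gives $\ord_2(k)\ne\ord_2(\ell)$, whence $\ord_2(\lcm(k,\ell))=\max(\ord_2(k),\ord_2(\ell))>\min(\ord_2(k),\ord_2(\ell))=\ord_2(m)$, so $\ord_2(\lcm(k,\ell)/m)\ge 1$ and the image of $\lcm(k,\ell)/m$ in $\F_2$ is $0$. Thus condition~(5) becomes exactly \eqref{33}, completing the equivalence. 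The main things to get right are this $\ord_2$ bookkeeping and the $q$-th-power manipulation of condition~(4); everything else is a direct transcription of the argument for Corollary~\ref{pre2}.
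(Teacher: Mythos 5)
Your proposal is correct and follows essentially the same route as the paper's proof: choose an exponent $s\equiv Q+1\pmod{q+1}$ coprime to $q^2-1$ (using the hypothesis $\gcd(Q+1,q+1)=1$), reduce $f(X^s)$ to $X^sA_1(X^{q-1})$ with $A_1(X)=aX^{Q+1}+bX^Q+cX+1$, apply Theorem~\ref{fir}, identify condition (4) with \eqref{32} via the $q$-th power, and use Lemma~\ref{gcd} to see that $\ord_2(k)\ne\ord_2(\ell)$ forces $\lcm(k,\ell)/m$ to be even so that condition (5) collapses to \eqref{33}. The only cosmetic difference is that you fix $s\equiv 1\pmod{q-1}$ by CRT, whereas the paper merely takes any $r\equiv Q+1\pmod{q+1}$ with $\gcd(r,q-1)=1$.
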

\renewcommand{\theenumi}{\arabic{enumi}}
\renewcommand{\labelenumi}{(\arabic{enumi})}

\begin{proof}
The hypothesis $\gcd(Q+1,q+1)=1$ says $\ord_2(k) \ne \ord_2(\ell)$ by Lemma~\ref{gcd}, so that 
$\lcm(k,\ell)/m\equiv 0\pmod{2}$. Let $r$ be a positive integer such that $r\equiv Q+1\pmod{q+1}$ and 
$\gcd(r,q-1)=1$, so that $\gcd(r,q^2-1)=1$. Thus $f(X)$ permutes $\F_{q^2}$ if and only if $f(X^r)$ 
permutes $\F_{q^2}$. Since
\[
A(X^r)\equiv aX^{Q+1}+bX^Q+cX+1\pmod{X^{q+1}+1},
\]
we see that $f(X^r)$ induces the same function on $\F_{q^2}$ as does $f_1(X):=X^r A_1(X^{q-1})$, 
where $A_1(X):=aX^{Q+1}+bX^Q+cX+1$. By Theorem~\ref{fir}, $f_1(X)$ permutes $\F_{q^2}$ if and only if 
\eqref{31}, \eqref{33}, and the $q$-th power of \eqref{32} all hold (and of course \eqref{32} is 
equivalent to its $q$-th power).
\end{proof}

The next result generalizes the combination of \cite[Open problem 2]{ZKPT} and \cite[Thm.~3.3]{ZKPT}:

\begin{cor}\label{pre4}
Let $k$ and $\ell$ be positive integers and write $m:=2^{\ord_2(\gcd(k,\ell))}$. Suppose that $q:=2^k$ and 
$Q:=2^\ell$ satisfy $\gcd(Q+1,q+1)=1$. Write $A(X):=1+bX^v+cX^u$ where $b,c\in\F_{q^2}$ and $u,v$ are positive 
integers such that $u(Q+1)\equiv 1\pmod{q+1}$ and $v(Q+1)\equiv Q\pmod{q+1}$. Then $f(X):=X A(X^{q-1})$ 
permutes $\F_{q^2}$ if and only if all of the following hold:
\renewcommand{\theenumi}{\ref{pre4}.\arabic{enumi}}
\renewcommand{\labelenumi}{\emph{(\thethm.\arabic{enumi})}}
\begin{enumerate}
\item\label{41} $e:=1+b^{q+1}+c^{q+1}$ is nonzero;
\item\label{42} $c^Q=e^{Q-1} b$;
\item\label{43} $\Tr_{\F_q/\F_{2^m}}\bigl(\frac1e\bigr)=\frac{k}{m}$.
\end{enumerate}
\end{cor}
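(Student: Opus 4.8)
The plan is to derive Corollary~\ref{pre4} directly from Theorem~\ref{fir}, via the same substitution used to prove Corollary~\ref{pre3}; in effect Corollary~\ref{pre4} is the special case $a=0$ of Corollary~\ref{pre3}, and the only real work is to rewrite the resulting conditions in the simplified form stated here. First I would fix a positive integer $r$ with $r\equiv Q+1\pmod{q+1}$ and $\gcd(r,q-1)=1$. The hypothesis $\gcd(Q+1,q+1)=1$ gives $\gcd(r,q+1)=\gcd(Q+1,q+1)=1$, so $\gcd(r,q^2-1)=1$ and hence $X\mapsto X^r$ permutes $\F_{q^2}$; thus $f(X)$ permutes $\F_{q^2}$ if and only if $f(X^r)$ does. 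Using $r\equiv Q+1$, $vr\equiv v(Q+1)\equiv Q$, and $ur\equiv u(Q+1)\equiv 1\pmod{q+1}$, I would check that
\[
A(X^r)\equiv bX^Q+cX+1\pmod{X^{q+1}+1},
\]
so that $f(X^r)$ induces the same function on $\F_{q^2}$ as $f_1(X):=X^r A_1(X^{q-1})$ with $A_1(X):=bX^Q+cX+1$, exactly as in the proof of Corollary~\ref{pre3}.

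Next I would apply Theorem~\ref{fir} to $f_1$, which corresponds to the coefficient tuple $(a,b,c,d)=(0,b,c,1)$ in the notation of that theorem. Conditions (1) and (2) of Theorem~\ref{fir} hold by the choice of $r$ and since $p=2$. Substituting $(0,b,c,1)$ into condition (3) yields precisely \eqref{41}. For condition (4), note that $ab^q+cd^q=c$ and $ac^q+bd^q=b$ when $(a,b,c,d)=(0,b,c,1)$, so condition (4) collapses to $c^Q=e^{Q-1}b$, which is \eqref{42}; here the choice $d=1$ removes the need for the $q$-th-power bookkeeping that appeared in Corollary~\ref{pre3}. It then remains only to convert condition (5) of Theorem~\ref{fir} into \eqref{43}.

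For the trace condition I would first invoke Lemma~\ref{gcd}: the hypothesis $\gcd(Q+1,q+1)=1$ is equivalent to $\ord_2(k)\ne\ord_2(\ell)$. Since $\ord_2(m)=\min(\ord_2(k),\ord_2(\ell))$ while $\ord_2(\lcm(k,\ell))=\max(\ord_2(k),\ord_2(\ell))$, we get $\ord_2(\lcm(k,\ell)/m)=\abs{\ord_2(k)-\ord_2(\ell)}\ge 1$, so $\lcm(k,\ell)/m\equiv 0\pmod 2$ and condition (5) reads $\Tr_{\F_q/\F_{2^m}}\bigl((b^{q+1}+c^{q+1})/e\bigr)=0$. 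Because $e=1+b^{q+1}+c^{q+1}$ and $p=2$, we have $(b^{q+1}+c^{q+1})/e=1+1/e$, while $\Tr_{\F_q/\F_{2^m}}(1)=k/m$; additivity of the trace then turns condition (5) into $\Tr_{\F_q/\F_{2^m}}(1/e)=k/m$, which is \eqref{43}. Combining these equivalences proves the corollary.

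I do not expect any genuine obstacle here: the argument is a bookkeeping reduction to Theorem~\ref{fir}. The only points that require a little care are the $2$-adic computation showing $\lcm(k,\ell)/m$ is even and the evaluation $\Tr_{\F_q/\F_{2^m}}(1)=k/m$, since these together are exactly what cause the prescribed trace value to shift from $0$ to $k/m$ when the constant term $1$ is absorbed into the trace.
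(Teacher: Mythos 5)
Your proof is correct and takes essentially the same route as the paper: the paper simply invokes the case $a=0$ of Corollary~\ref{pre3} (whose proof is exactly your substitution $X\mapsto X^r$ followed by an application of Theorem~\ref{fir}) and then performs the same trace rewriting via $\Tr_{\F_q/\F_{2^m}}(1)=k/m$ and $1+1/e=(b^{q+1}+c^{q+1})/e$, using that $\ord_2(k)\ne\ord_2(\ell)$ forces $\lcm(k,\ell)/m$ to be even. The only difference is that you inline the reduction to Theorem~\ref{fir} rather than citing Corollary~\ref{pre3} as a black box, which changes nothing mathematically.
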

\renewcommand{\theenumi}{\arabic{enumi}}
\renewcommand{\labelenumi}{(\arabic{enumi})}

\begin{proof}
We first rewrite \eqref{43}. Since $\Tr_{\F_q/\F_{2^m}}(1)=k/m$ and $1+1/e=(b^{q+1}+c^{q+1})/e$, we see that \eqref{43} 
holds if and only if $\Tr_{\F_q/\F_{2^m}}((b^{q+1}+c^{q+1})/e)=0$. Now the case $a=0$ of Corollary~\ref{pre3} says that 
$f(X)$ permutes $\F_{q^2}$ if and only if \eqref{41} and \eqref{43} hold and the $q$-th power of \eqref{42} holds 
(or equivalently \eqref{42} holds).
\end{proof}

The next result shows that the sufficient conditions in \cite[Thm.~1]{LHXZ} are both necessary and sufficient, 
as was conjectured in \cite[Rem.~2]{LHXZ}:

\begin{cor}\label{pre5}
Let $k$ and $\ell$ be coprime positive integers with $k$ odd, and write $q:=2^k$ and $Q:=2^\ell$. 
Write $R(X,Y):=(X+\alpha Y)^{Q+1}+(\beta Y)^{Q+1}$ with $\alpha,\beta\in\F_q^*$. Then the function 
$\psi\colon (x,y)\mapsto \bigl(R(x,y),R(y,x)\bigr)$ induces a permutation of\/ $\F_q\times\F_q$ 
if and only if $\alpha^2+\alpha\beta+\beta^2=1$.
\end{cor}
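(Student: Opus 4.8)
The plan is to convert the bivariate map $\psi$ into a univariate map on $\F_{q^2}$ of the form covered by Theorem~\ref{fir}. I identify $\F_q\times\F_q$ with $\F_{q^2}$ by choosing $\theta\in\mu_{q+1}$ with $\theta\ne 1$ (so that $\theta^q=\theta^{-1}$) and sending $(x,y)\mapsto z:=x+\theta y$; using the same identification on the target, $\psi$ becomes the univariate function $\Psi(z):=R(x,y)+\theta R(y,x)$, and $\psi$ permutes $\F_q\times\F_q$ if and only if $\Psi$ permutes $\F_{q^2}$. Since $x,y\in\F_q$ we have $y=(z+z^q)/t$ and $x=(\theta^{-1}z+\theta z^q)/t$ with $t:=\theta+\theta^{-1}\in\F_q^*$, and because $p=2$ each factor $(x+\alpha y)^{Q+1}=(x^Q+\alpha^Q y^Q)(x+\alpha y)$ expands into monomials $z^e$ with $e\in\{Q+1,\,Q+q,\,qQ+1,\,qQ+q\}$. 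As these four exponents are exactly $(Q+1)+(q-1)j$ for $j\in\{0,1,Q,Q+1\}$, I obtain $\Psi(z)=z^{Q+1}A(z^{q-1})$ for an explicit $A(X)=aX^{Q+1}+bX^Q+cX+d$ whose coefficients are the $z$-coefficients of $R(x,y)+\theta R(y,x)$, i.e.\ rational expressions in $\alpha,\beta,\theta$.

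With $r:=Q+1$ we have $r\equiv Q+1\pmod{q+1}$, so Theorem~\ref{fir} applies to $X^r A(X^{q-1})$. Its condition (2) holds since $p=2$, and its condition (1) holds because $\gcd(Q+1,q-1)=1$: this follows from Lemma~\ref{gcd}, since $k$ odd forces $\ord_2(k)=0\le\ord_2(\ell)$. Hence $\psi$ permutes $\F_q\times\F_q$ if and only if conditions (3), (4), and (5) of Theorem~\ref{fir} hold for $a,b,c,d$. Because $k$ and $\ell$ are coprime we have $m=2^{\ord_2(\gcd(k,\ell))}=1$ and $\lcm(k,\ell)/m=k\ell\equiv\ell\pmod 2$, so condition (5) reads $\Tr_{\F_q/\F_2}\bigl((b^{q+1}+c^{q+1})/e\bigr)=\ell\bmod 2$, where $e:=a^{q+1}+b^{q+1}+c^{q+1}+d^{q+1}$.

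It then remains to compute $a,b,c,d$ (hence $e$) and to show that the conjunction of (3), (4), (5) is equivalent to $\alpha^2+\alpha\beta+\beta^2=1$. Since $R$ is built from the Gold monomial $X^{Q+1}$, I expect $\Psi$ to be geometrically cyclic of type $X^{Q+1}$ (case~(2) of Theorem~\ref{fir2}) for every $\alpha,\beta\in\F_q^*$; correspondingly I expect condition (4) to hold as a formal identity in $\alpha,\beta,\theta$ and condition (3) to reduce to the nonvanishing $\alpha^2+\alpha\beta+\beta^2\ne 0$, while the trace condition (5) should pin the value down to exactly $1$. I note that the sufficiency direction is already known (\cite[Thm.~1]{LHXZ}), so the genuinely new content is necessity; but the route through Theorem~\ref{fir} delivers both directions at once once the coefficient computation is carried out.

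The main obstacle is condition (5): evaluating $\Tr_{\F_q/\F_2}\bigl((b^{q+1}+c^{q+1})/e\bigr)$ as a function of $\alpha,\beta$ and showing it equals $\ell\bmod 2$ precisely when $\alpha^2+\alpha\beta+\beta^2=1$. Because this trace is insensitive to the auxiliary parameter $\theta$ and amounts to a statement about a value distribution over $\F_{q^2}$, I expect the polynomial identity of Theorem~\ref{identity} (equivalently the Cusick--M\"uller image-size formula recorded in Corollary~\ref{RM}) to be the natural tool for computing it, rather than a direct trace manipulation. Completing this step, together with the routine verification that (3) and (4) behave as predicted above, finishes the proof.
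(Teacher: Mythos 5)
Your reduction is sound as far as it goes, and it is in fact the paper's own first step: the paper identifies $\F_q\times\F_q$ with $\F_{q^2}$ via an order-$3$ element $\omega\in\mu_{q+1}$ (after first replacing $\ell$ by $k+\ell$ to assume $\ell$ odd, which leaves $\psi$ unchanged and makes $\omega^Q=\omega^2$, so that the coefficients $a,b,c,d$ come out as explicit polynomials in $\alpha,\beta$ over $\F_2$), and then invokes Theorem~\ref{fir} with $r=Q+1$ exactly as you do. The gap is everything after that: the step you defer as ``routine verification that (3) and (4) behave as predicted'' is the entire content of the proof, and your prediction for how conditions (3)--(5) of Theorem~\ref{fir} distribute the work cannot be correct. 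Condition (4) is not a formal identity in $\alpha,\beta$: if it were, the permutation locus would be cut out by (3) and (5) alone, i.e.\ by removing a curve and imposing a single trace condition, which yields a set of size on the order of $q^2/2$, whereas the locus is the conic $\alpha^2+\alpha\beta+\beta^2=1$, of size on the order of $q$. In the paper, condition (4) is precisely the nontrivial constraint: it reads $H(\alpha,\beta)=0$ for an explicit polynomial $H$, and Theorem~\ref{identity} is applied to $H$ (with $(q,n)$ replaced by $(Q,2)$, i.e.\ to $\prod_{z\in\F_{Q^2}^*}(Z+zX+z^QY)$) to factor this condition; together with the observation that $\F_{Q^2}\cap\F_q=\F_2$ (which rules out contributions from $z\in\F_{Q^2}\setminus\F_2$), this shows that (3) and (4) jointly are equivalent to $\alpha\beta+(\alpha+1)^2+\beta^2=0$, i.e.\ to $\alpha^2+\alpha\beta+\beta^2=1$.

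Your readings of (3) and (5) are likewise off. Condition (3) does not reduce to $\alpha^2+\alpha\beta+\beta^2\ne 0$; that inequality is vacuous, since $X^2+X+1$ has no root in $\F_q$ when $k$ is odd. In the paper's normalization, (3) becomes $e=((\alpha+1)^{Q+1}+\beta^{Q+1})^2\ne 0$, i.e.\ $\beta\ne\alpha+1$, a genuine restriction. And condition (5) does not ``pin the value down to exactly $1$'': the paper shows that (3) and (4) force $(b^2+c^2)/e=\lambda^Q+\lambda+1$ for an explicit $\lambda\in\F_q$, so the trace automatically equals $\Tr_{\F_q/\F_2}(1)=1$; thus (5) is a consequence of the other two conditions and does no cutting at all --- in particular Theorem~\ref{identity} is needed for condition (4), not for the trace in (5) as you propose. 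Finally, you skip the reduction to $\ell$ odd, which is not cosmetic: the right-hand side of (5) is $\lcm(k,\ell)/m\bmod 2=\ell\bmod 2$, so for even $\ell$ the target is $0$, and with your general $\theta\in\mu_{q+1}$ the coefficients involve $\theta^{\pm Q}$, so the whole computation would have to be carried out with this dependence on $\ell$ tracked throughout; the paper's reduction makes the trace target $1$ and the coefficients clean in one stroke. So the proposal sets up the right framework but leaves the actual proof undone, while forecasting a structure for it that a counting argument already rules out.
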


The first part of our proof of Corollary~\ref{pre5} also shows that the sufficient conditions in \cite[Thm.~1]{LLHQ} 
are both necessary and sufficient. This implies the following refinement of \cite[Conj.~19]{LLHQ}, in which the 
hypothesis on boomerang uniformity in this conjecture has been removed:

\begin{cor}\label{pre5b}
Under the hypotheses and notation of Corollary~\emph{\ref{pre5}}, the function $\psi$ permutes\/ $\F_q\times\F_q$ 
if and only if $\beta\ne\alpha+1$ and $u^Q=e^{Q-1}v$, where
\begin{align*}
e &:= (\alpha+1)^{2Q+2}+\beta^{2Q+2}, \\
u &:= (\alpha+1)^{2Q+2}+\alpha^{2Q+1}+\alpha+\alpha^Q\beta^{Q+1}+\beta^{2Q+2}, \\
v &:= (\alpha+1)^{2Q+2}+\alpha^{Q+2}+\alpha^Q+\alpha\beta^{Q+1}+\beta^{2Q+2}.
\end{align*}
\end{cor}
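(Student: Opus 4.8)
The plan is to convert $\psi$ into a univariate permutation problem over $\F_{q^2}$ and then invoke Theorem~\ref{fir}. First I would fix $\omega\in\F_{q^2}\setminus\F_q$ with $\omega^q=\omega+1$ (such $\omega$ exists since the trace $\F_{q^2}\to\F_q$ is surjective, and $2\omega=0\ne1$ rules out $\omega\in\F_q$), and identify $\F_q\times\F_q$ with $\F_{q^2}$ via $(x,y)\mapsto z:=x+\omega y$; then $y=z+z^q$ and $x=z+\omega(z+z^q)$. Substituting these into $R(x,y)$ and $R(y,x)$ and using $\alpha,\beta\in\F_q$ together with the identity $(s+t)^Q=s^Q+t^Q$, I would expand $\Psi(z):=R(x,y)+\omega R(y,x)$ as an $\F_{q^2}$-combination of the four monomials $z^{Q+1},z^{Q+q},z^{qQ+1},z^{qQ+q}$. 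Because these exponents equal $(Q+1)+j(q-1)$ for $j=0,1,Q,Q+1$, this rewrites $\Psi(z)=z^{Q+1}A(z^{q-1})$ for an explicit $A(X)=aX^{Q+1}+bX^Q+cX+d\in\F_{q^2}[X]$. Since $(x,y)\mapsto z$ and $(s,t)\mapsto s+\omega t$ are bijections, $\psi$ permutes $\F_q\times\F_q$ if and only if $f(X):=X^{Q+1}A(X^{q-1})$ permutes $\F_{q^2}$; this is exactly the reduction carried out in the first part of the proof of Corollary~\ref{pre5}.

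Next I would apply Theorem~\ref{fir} with $r=Q+1$, so that $r\equiv Q+1\pmod{q+1}$ holds trivially. Its condition~(1), $\gcd(Q+1,q-1)=1$, is automatic: since $k$ is odd, $\ord_2(k)=0\le\ord_2(\ell)$, so Lemma~\ref{gcd} gives $\gcd(2^\ell+1,2^k-1)=1$. Condition~(2) holds because $p=2$. Hence $\psi$ permutes $\F_q\times\F_q$ if and only if conditions~(3)--(5) of Theorem~\ref{fir} hold, and the remaining task is to match these with the two clauses in the statement. I would then perform the (routine but lengthy) computation of $e$, $ab^q+cd^q$, and $ac^q+bd^q$ from the explicit $a,b,c,d$. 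The aim is to verify $e=a^{q+1}+b^{q+1}+c^{q+1}+d^{q+1}=\bigl((\alpha+1)^{Q+1}+\beta^{Q+1}\bigr)^2=(\alpha+1)^{2Q+2}+\beta^{2Q+2}$, and that $ab^q+cd^q$ and $ac^q+bd^q$ coincide with the stated $u$ and $v$ (which, notably, turn out to lie in $\F_q$). Granting this, condition~(3) ($e\ne0$) becomes $(\alpha+1)^{Q+1}\ne\beta^{Q+1}$, equivalent to $\alpha+1\ne\beta$ because $t\mapsto t^{Q+1}$ is injective on $\F_q$ (again as $\gcd(Q+1,q-1)=1$); and condition~(4) becomes precisely $u^Q=e^{Q-1}v$.

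The hard part will be showing that, once conditions~(3) and~(4) are assumed, condition~(5) of Theorem~\ref{fir} holds automatically; this is what lets the final criterion have only two clauses. Here $m=2^{\ord_2(\gcd(k,\ell))}=1$ since $\gcd(k,\ell)=1$, so condition~(5) reads $\Tr_{\F_q/\F_2}(\eta)=\lcm(k,\ell)\bmod2=\ell\bmod2$, where $\eta:=(b^{q+1}+c^{q+1})/e$. Writing $\zeta:=(ab^q+cd^q)^{q+1}/e^2$, $\theta:=\eta+\Tr_{\F_Q/\F_2}(\zeta)$, and $U(X):=(ab^q+cd^q)X^2+eX+a^qb+c^qd$ as in Section~\ref{sec:Q}, I would note that $\zeta=(u/e)^2\in\F_q$ since $u,e\in\F_q$, use Lemma~\ref{u|A} to determine $\theta$ from whether $U(X)\mid A(X)$, use Lemma~\ref{quadscr} to read $\Tr_{\F_q/\F_2}(\zeta)$ off from whether $U(X)$ has a root in $\mu_{q+1}$, and then feed this data into Lemma~\ref{traces} (with $\ord_2(k)=0$). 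The delicate bookkeeping is the parity split: if $\ell$ is even then $\ord_2(k)<\ord_2(\ell)$ and I must show $\theta=0$, while if $\ell$ is odd then $\ord_2(k)\ge\ord_2(\ell)$ and I must show $\theta+\Tr_{\F_q/\F_2}(\zeta)=1$; in each case the required identity has to be extracted from the explicit shapes of $a,b,c,d$, and this is where the bulk of the effort lies.

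As a consistency check I would compare with Corollary~\ref{pre5}: for $\alpha,\beta\in\F_q^*$ one can verify directly that the pair of conditions $\{\beta\ne\alpha+1,\ u^Q=e^{Q-1}v\}$ is equivalent to $\alpha^2+\alpha\beta+\beta^2=1$, which both re-derives the clean criterion of Corollary~\ref{pre5} and confirms that the two-clause form above is correct. This cross-check also furnishes an alternative route to the result: granting Corollary~\ref{pre5}, it suffices to prove this elementary equivalence of conditions on $\alpha,\beta$, bypassing the trace analysis of condition~(5) entirely.
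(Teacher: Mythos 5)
Your skeleton agrees with the paper's (identify $\F_q\times\F_q$ with $\F_{q^2}$, rewrite $\psi$ as a quadrinomial $X^{Q+1}A(X^{q-1})$, apply Theorem~\ref{fir} with $r=Q+1$, show that conditions (3)--(4) become the two stated clauses while (5) is automatic), but there is a genuine gap precisely at the step you defer as ``where the bulk of the effort lies,'' and your plan for it misses the idea that makes the paper's proof work. The paper's key move comes \emph{before} any expansion: since $R(X,Y)$ and $\widetilde R(X,Y):=(X+\alpha Y)^{\widetilde Q+1}+(\beta Y)^{\widetilde Q+1}$ with $\widetilde Q:=2^{k+\ell}$ induce the same function on $\F_q\times\F_q$, and since $e$, $u$, $v$ and the condition $u^Q=e^{Q-1}v$ are unchanged when $\ell$ is replaced by $k+\ell$, one may assume $\ell$ is odd. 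This kills your troublesome even-$\ell$ branch outright, and it also makes the coefficient computation clean: taking $\omega$ of order $3$, oddness of $\ell$ gives $\omega^Q=\omega^2$, so $a,b,c,d$ land in $\F_q$ and one checks directly that $a^2+b^2+c^2+d^2=e$, $ab+cd=u$, $ac+bd=v$. By contrast, your $\omega$ is only required to satisfy $\omega^q=\omega+1$; this does not determine $\omega^Q$ (the solutions form a coset $\omega_0+\F_q$, and $\omega^Q$ varies with the choice and with $\ell$), so your ``routine but lengthy'' computation is underdetermined, and your assertion that $ab^q+cd^q$ and $ac^q+bd^q$ come out \emph{equal} to the stated $u,v$ is an unverified guess: equivalence of the resulting permutation criteria is guaranteed, equality of the intermediate quantities is not. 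Finally, even in the odd-$\ell$ case you have no argument that (5) is automatic; the paper closes it with a single identity, namely that (3) and (4) force $(b^2+c^2)/e=\lambda^Q+\lambda+1$ with $\lambda=\alpha(u+e)/e\in\F_q$, whence $\Tr_{\F_q/\F_2}\bigl((b^2+c^2)/e\bigr)=\Tr_{\F_q/\F_2}(1)=k\equiv 1\pmod 2$. Your proposed detour through Lemmas~\ref{u|A} and \ref{quadscr} (deciding whether $U(X)\mid A(X)$ and whether $U(X)$ has roots in $\mu_{q+1}$) merely restates what must be proved and is never carried out.

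Your fallback in the last paragraph does not repair this. Corollary~\ref{pre5} is itself one of the conjectures being established (only the sufficiency direction was previously known), so ``granting Corollary~\ref{pre5}'' assumes the harder half of what is at stake; and the equivalence between $\{\beta\ne\alpha+1,\ u^Q=e^{Q-1}v\}$ and $\alpha^2+\alpha\beta+\beta^2=1$ is not an elementary verification. In the paper, the condition $u^Q=e^{Q-1}v$ is a polynomial equation $H(\alpha,\beta)=0$ of degree on the order of $Q^2$, and extracting $\alpha^2+\alpha\beta+\beta^2=1$ from it requires the bivariate identity of Theorem~\ref{identity} (to factor $H$ via $\prod_{z\in\F_{Q^2}^*}(\alpha\beta+z(\alpha+1)^2+z^Q\beta^2)$) together with a separate argument eliminating the factors with $z\in\F_{Q^2}\setminus\F_2$. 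So both of your routes leave the essential difficulty unresolved.
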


\begin{rmk}
In Corollary~\ref{pre5b} we write $\beta^{Q+1}$ for the quantity which is called $\beta$ in \cite{LLHQ}. This does 
not cause any loss of generality, since the hypothesis that $k$ is odd implies that $\gcd(q-1,Q+1)=1$, so that the 
$(Q+1)$-th power map permutes $\F_q$.
\end{rmk}

We will prove Corollaries~\ref{pre5} and \ref{pre5b} simultaneously. Our proof yields an alternate proof 
of \cite[Thm.~2(1)]{LHXZ2}. Conversely, our proof of Corollary~\ref{pre5} could be shortened a bit by using 
\cite[Thm.~2(1)]{LHXZ2}. We chose to keep our current proof because it involves the unexpected connection with 
value sets of non-permutation polynomials from the previous section.

\begin{proof}[Proof of Corollaries~\ref{pre5} and \ref{pre5b}]
We first reduce to the case that $\ell$ is odd. Writing $\tilde{\ell}:=k+\ell$ and $\widetilde{Q}:=2^{\tilde{\ell}}$, 
if we replace $\ell$ by $\tilde{\ell}$ in the corollaries then we replace $R(X,Y)$ by 
$\widetilde{R}(X,Y):=(X+\alpha Y)^{\widetilde{Q}+1}+(\beta Y)^{\widetilde{Q}+1}$. Since $R(X,Y)$ and $\widetilde{R}(X,Y)$ 
induce the same function on $\F_q\times\F_q$, it follows that $\psi$ permutes $\F_q\times\F_q$ if and only if 
$\widetilde{\psi}\colon (x,y)\mapsto \bigl(R(x,y)^{\widetilde{Q}},R(y,x)^{\widetilde{Q}}\bigr)$ does. Thus the corollaries 
are true for $\ell$ if and only if they are true when we replace $\ell$ by $\tilde{\ell}$. If $\ell$ is even then 
$\tilde{\ell}$ is odd, so we may assume in what follows that $\ell$ is odd.

Pick an order-$3$ element $\omega\in\F_{q^2}$. Since $k$ is odd, the elements $1$ and $\omega$ form a basis 
for $\F_{q^2}$ as an $\F_q$-vector base. Thus the map $\varphi\colon (x,y)\mapsto x+\omega y$ is a bijection 
$\varphi\colon\F_q\times\F_q\to\F_{q^2}$, so that $\psi$ permutes $\F_q\times\F_q$ if and only if 
$\widehat{\psi}:=\varphi\circ\psi\circ\varphi^{-1}$ permutes $\F_{q^2}$. Writing any $z\in\F_{q^2}$ as $x+\omega y$ 
with $x,y\in\F_q$, we have $z^q=x+\omega^2 y$ so that $z+z^q=y$ and $\omega^2 z+\omega z^q=x$. Thus we have
\begin{align*}
\widehat{\psi}(z)&= (x+\alpha y)^{Q+1}+(\beta y)^{Q+1}+\omega(y+\alpha x)^{Q+1}+\omega(\beta x)^{Q+1} \\
&= (1+\omega \alpha^{Q+1}+\omega \beta^{Q+1})x^{Q+1} + (\alpha+\omega \alpha^Q)x^Q y + (\alpha^Q+\omega \alpha)x y^Q \\
&\qquad + (\alpha^{Q+1}+\beta^{Q+1}+\omega) y^{Q+1},
\end{align*}
which equals $\omega^2 az^{qQ+q}+\omega bz^{qQ+1}+cz^{q+Q}+\omega^2 dz^{Q+1}$ where 
\begin{align*}
a & :=\alpha^{Q+1}+\beta^{Q+1}+\alpha^Q+1, \\
b & :=\alpha^{Q+1}+\beta^{Q+1}+\alpha^Q+\alpha, \\
c & :=\alpha^Q+\alpha+1, \\
d & :=\alpha^{Q+1}+\beta^{Q+1}+\alpha+1.
\end{align*}
%
%
For $z\in\F_{q^2}$ we have $\omega\widehat{\psi}(\omega^2 z)=f(z)$ where 
\[
f(X):=aX^{qQ+q}+bX^{qQ+1}+cX^{q+Q}+dX^{Q+1}.
\] 
Since $\omega X$ and $\omega^2 X$ permute $\F_{q^2}$, it follows that $\widehat{\psi}$ permutes $\F_{q^2}$ if 
and only if $f(X)$ permutes $\F_{q^2}$. Since $k$ is odd, $r:=Q+1$ is coprime to $q-1$. Now the case $r=Q+1$ 
of Theorem~\ref{fir} says $f(X)$ permutes $\F_{q^2}$ if and only if all of the following hold:
\stepcounter{equation}
\renewcommand{\theenumi}{\thethm.\arabic{enumi}}
\renewcommand{\labelenumi}{(\thethm.\arabic{enumi})}
\begin{enumerate}
\item\label{51} $\widetilde{e}:=a^2+b^2+c^2+d^2$ is nonzero;
\item\label{52} $(ab+cd)^Q=\widetilde{e}^{Q-1}(ac+bd)$;
\item\label{53} $\Tr_{\F_q/\F_2}\bigl( (b^2+c^2)/\widetilde{e} \bigr)=1$.
\end{enumerate}
\renewcommand{\theenumi}{\arabic{enumi}}
\renewcommand{\labelenumi}{(\arabic{enumi})}
We compute $\widetilde{e} = ((\alpha+1)^{Q+1}+\beta^{Q+1})^2 = e$,
%
%
so that \eqref{51} says $(\alpha+1)^{Q+1}\ne \beta^{Q+1}$. Since $\gcd(q-1,Q+1)=1$, this is equivalent 
to $\alpha+1\ne \beta$. One can check that \eqref{52} is equivalent to the condition $u^Q=e^{Q-1}v$ 
in Corollary~\ref{pre5b}. Next, if \eqref{51} and \eqref{52} hold then a routine computation yields
\[
\frac{b^2+c^2}e = \lambda^Q + \lambda + 1
\]
where
\[
\lambda := \frac{u+e}e\alpha.
\]
%
%
Since $\Tr_{\F_q/\F_2}(\lambda^Q+\lambda+1)=1$, it follows that \eqref{51} and \eqref{52} imply \eqref{53}, 
which concludes the proof of Corollary~\ref{pre5b}.

Next, \eqref{52} says $H(\alpha,\beta)=0$ where
\begin{align*}
H(X,Y):=&X^{2Q^2+2Q+1}Y + X^{2Q^2+Q}Y^{Q+2} + X^{2Q^2+1}Y + X^{Q^2+2Q+2}Y^{Q^2} \\ &+ X^{Q^2+2Q}Y^{Q^2}
+
    X^{Q^2+2}Y^{Q^2} + X^{Q^2}Y^{Q^2+2Q+2} + X^{Q^2}Y^{Q^2} \\ &+ X^{2Q+1}Y + X^{Q+2}Y^{2Q^2+Q} + 
    X^QY^{2Q^2+Q} + X^QY^{Q+2} \\ &+ XY^{2Q^2+2Q+1} + XY.
\end{align*}

The special case of \eqref{apeq} in which $q$ and $n$ are replaced by $Q$ and $2$ says
\begin{equation*}
 \prod_{z \in \F_{Q^2}^*} ( 1 + z X + z^Q Y )  =  1 + X^{Q^2-1} +
 Y  \Big( \frac{ X^{Q^2-1} + Y^{Q^2-1} } { X^{Q+1} + Y^{Q+1} } \Big) 
  \Big( Y^Q + X \Big) .
\end{equation*}
Homogenize this by substituting $X/Z$ and $Y/Z$ for $X$ and $Y$, and then multiplying by $Z^{Q^2-1}$, to get
\begin{align*}
 \prod_{z \in \F_{Q^2}^*} ( Z + z X + z^Q Y ) &=  Z^{Q^2-1} + X^{Q^2-1} \\[-0.6cm]
 &\quad + Y  \Big( \frac{ X^{Q^2-1} + Y^{Q^2-1} } { X^{Q+1} + Y^{Q+1} } \Big) 
  \Big( Y^Q + XZ^{Q-1} \Big) .
\end{align*}
Multiplying both sides by $Z(X^{Q+1}+Y^{Q+1})$ yields
\begin{align*}
Z&\cdot(X^{Q+1}+Y^{Q+1})\cdot\prod_{z \in \F_{Q^2}^*} ( Z + z X + z^Q Y )\\ &\qquad\qquad=  (Z^{Q^2} + X^{Q^2-1}Z)\cdot (X^{Q+1}+Y^{Q+1})\\
 &\qquad\qquad\qquad + (X^{Q^2-1}Y + Y^{Q^2}) \cdot
 ( Y^Q Z + XZ^Q ) .
\end{align*}
Now substitute $\alpha^2+1$ for $X$, $\beta^2$ for $Y$, and $\alpha\beta$ for $Z$, in order to obtain
\begin{align*}
\alpha\beta&\Bigl((\alpha+1)^{Q+1}+\beta^{Q+1}\Bigr)^2\prod_{z \in \F_{Q^2}^*} \Bigl( \alpha\beta + z (\alpha+1)^2 + z^Q \beta^2 \Bigr)\\ 
&= \Bigl((\alpha\beta)^{Q^2} + (\alpha+1)^{2Q^2-2}\alpha\beta\Bigr)\cdot \Bigl((\alpha+1)^{Q+1}+\beta^{Q+1}\Bigr)^2 \\
 &\qquad + \Bigl((\alpha+1)^{Q^2-1}\beta + \beta^{Q^2}\Bigr)^2 \cdot \Bigl( \alpha\beta^{2Q+1} + (\alpha+1)^2 (\alpha\beta)^Q \Bigr).
\end{align*}
The right side of this last equation equals $H(\alpha,\beta)$.
%
%
Since \eqref{52} says $H(\alpha,\beta)=0$, and $\alpha,\beta\ne 0$ by hypothesis, it follows that \eqref{51} and \eqref{52} 
are both true if and only if $(\alpha+1)^{Q+1}\ne \beta^{Q+1}$ and $\alpha\beta+z(\alpha+1)^2+z^Q\beta^2=0$ for some $z\in\F_{Q^2}^*$.

We now show that if $\alpha+1\ne \beta$ and $z\in\F_{Q^2}\setminus\F_2$ then $\alpha\beta+z(\alpha+1)^2+z^Q\beta^2\ne 0$. 
Suppose otherwise. Since $\alpha\beta\in\F_q$, we have $\alpha\beta=(\alpha\beta)^q$, so that
\[
z(\alpha+1)^2+z^Q\beta^2 = z^q (\alpha+1)^2 + z^{qQ} \beta^2,
\]
or equivalently
\[
(z+z^q)(\alpha+1)^2 = (z+z^q)^Q \beta^2.
\]
Since $z\in\F_{Q^2}\setminus\F_2$ and $\F_{Q^2}\cap\F_q=\F_2$, we have $z\notin\F_q$, so that $z+z^q\ne 0$ and thus
\[
\Bigl(\frac{\alpha+1}\beta\Bigr)^2=(z+z^q)^{Q-1}.
\]
Here the left side is in $\F_q$ and the right side is in $\F_{Q^2}^*$, so since $\F_q\cap\F_{Q^2}=\F_2$ 
it follows that both sides equal $1$. But then $\alpha+1=\beta$, contradicting our hypothesis.

Thus \eqref{51} and \eqref{52} both hold if and only if $(\alpha+1)^{Q+1}\ne \beta^{Q+1}$ and $\alpha\beta+(\alpha+1)^2+\beta^2=0$. 
In fact the latter condition implies the former: if $\alpha\beta+(\alpha+1)^2+\beta^2=0$ then $\alpha+1\ne \beta$ since 
$\alpha\beta\ne 0$, which implies $(\alpha+1)^{Q+1}\ne \beta^{Q+1}$ since $X^{Q+1}$ permutes $\F_q$. Since we showed above 
that \eqref{51} and \eqref{52} imply \eqref{53}, this concludes the proof of Corollary~\ref{pre5}.
%
%
%
%
\end{proof}

Our next result refines the sufficient condition in \cite[Thm.~3.2]{ZKP} to obtain a necessary and sufficient 
condition, as discussed in \cite[Rem.~2]{ZKP}:

\begin{cor}\label{pre6}
Let $k$ and $\ell$ be positive integers and write $m:=2^{\ord_2(\gcd(k,\ell))}$. Suppose that $q:=2^k$ and 
$Q:=2^\ell$ satisfy $\gcd(Q-1,q+1)=1$. Write $A(X):=1+aX^u+dX^v$ where $a,d\in\F_q$ and $u,v$ are positive 
integers such that $u(Q-1)\equiv Q\pmod{q+1}$ and $v(Q-1)\equiv q\pmod{q+1}$. Then $f(X):=X A(X^{q-1})$ 
permutes\/ $\F_{q^2}$ if and only if all of the following hold:
\renewcommand{\theenumi}{\ref{pre6}.\arabic{enumi}}
\renewcommand{\labelenumi}{\emph{(\thethm.\arabic{enumi})}}
\begin{enumerate}
\item\label{61} $e:=1+a^2+d^2$ is nonzero;
\item\label{62} $d^Q=e^{Q-1}a$;
\item\label{63} $\Tr_{\F_q/\F_{2^m}}\Bigl(\frac{a^2+d^2}e\Bigr)=0$.
\end{enumerate}
\end{cor}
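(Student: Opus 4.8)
The plan is to deduce Corollary~\ref{pre6} as a special case of Corollary~\ref{pre2}, in the same spirit that Corollary~\ref{pre4} was deduced from Corollary~\ref{pre3}. The key observation is that both results share the hypothesis $\gcd(Q-1,q+1)=1$, and that the data of Corollary~\ref{pre6} is exactly the data of Corollary~\ref{pre2} under the specialization $b=0$ together with the restriction $a,d\in\F_q$. So first I would check that the polynomials agree: Corollary~\ref{pre2} records the monomial congruences as $u(Q-1)\equiv q$ and $v(Q-1)\equiv Q$, whereas Corollary~\ref{pre6} records them with $u,v$ interchanged, but this is merely a relabeling of the two exponents. In both results the coefficient $a$ is attached to the monomial whose exponent is $\equiv Q\pmod{q+1}$ and $d$ to the one $\equiv q\pmod{q+1}$, so setting $b=0$ in Corollary~\ref{pre2} produces precisely the trinomial $A(X)=1+aX^u+dX^v$ of Corollary~\ref{pre6}.

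Next I would specialize the three equivalent conditions. Since $a,d\in\F_q$ we have $a^q=a$ and $d^q=d$, hence $a^{q+1}=a^2$ and $d^{q+1}=d^2$. With $b=0$ this turns $e=1+a^{q+1}+b^{q+1}+d^{q+1}$ into $e=1+a^2+d^2$, so condition \eqref{21} becomes \eqref{61}, and likewise condition \eqref{23} becomes \eqref{63} verbatim. For the algebraic condition, setting $b=0$ collapses \eqref{22}, namely $(ab^q+d^q)^Q=e^{Q-1}(a+bd^q)$, to $(d^q)^Q=e^{Q-1}a$; using $d^q=d$ this is $d^Q=e^{Q-1}a$, which is \eqref{62}. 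Thus the three conditions of Corollary~\ref{pre2} specialize exactly to \eqref{61}--\eqref{63}, completing the proof.

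Since Corollary~\ref{pre2} is already established and the reduction is purely a matter of substitution and simplification, I do not anticipate any genuine obstacle. The only point requiring care is the bookkeeping in matching the two statements---ensuring the $u\leftrightarrow v$ relabeling and the $\F_q$-simplifications are applied consistently so that each condition lands on its intended counterpart. One could alternatively reprove the result from scratch by the substitution $X\mapsto X^s$ with $s\equiv Q-1\pmod{q+1}$ and $\gcd(s,q-1)=1$, reducing directly to Theorem~\ref{fir} exactly as in the proof of Corollary~\ref{pre2}; but invoking Corollary~\ref{pre2} directly is cleaner.
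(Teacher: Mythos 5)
Your proof is correct, but it follows a genuinely different route from the paper's. The paper proves Corollary~\ref{pre6} by a self-contained reduction to Theorem~\ref{fir}: it replaces $f(X)$ by $f(X^s)$ for $s\equiv Q-1\pmod{q+1}$ with $\gcd(s,q-1)=1$, computes the exponents modulo $q^2-1$ to identify $f(X^s)$, as a function on $\F_{q^2}$, with the quadrinomial $X^{r+q-1}+aX^{r+(Q+1)(q-1)}+dX^r$ (the case $(a,b,c,d)=(a,0,1,d)$ of Theorem~\ref{fir}), and then converts the resulting trace condition $\Tr_{\F_q/\F_{2^m}}(1/e)=\lcm(k,\ell)/m$ into \eqref{63} using $\Tr_{\F_q/\F_{2^m}}(1)=k/m$ and the congruence $\lcm(k,\ell)/m\equiv k/m\pmod{2}$, which holds because $\gcd(Q-1,q+1)=1$ forces $\ord_2(k)\ge\ord_2(\ell)$. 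You bypass all of this by observing that, after interchanging the names of $u$ and $v$, the trinomial of Corollary~\ref{pre6} is precisely the quadrinomial of Corollary~\ref{pre2} with $b=0$ and with $a,d$ restricted to $\F_q$, and that conditions \eqref{21}--\eqref{23} then specialize verbatim to \eqref{61}--\eqref{63} (using $a^{q+1}=a^2$, $d^{q+1}=d^2$, $d^q=d$); since Corollary~\ref{pre2} is an equivalence under identical hypotheses on $k,\ell,q,Q$ and allows arbitrary $a,b,d\in\F_{q^2}$, the specialization is legitimate, and there is no circularity since Corollary~\ref{pre2} is proved independently. This mirrors exactly how the paper itself deduces Corollary~\ref{pre4} from the case $a=0$ of Corollary~\ref{pre3}. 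Your route is shorter, since the exponent bookkeeping and the trace conversion were already carried out once in the proof of Corollary~\ref{pre2}; what the paper's direct argument buys is only independence from Corollary~\ref{pre2} together with an explicit display of the multiplicative equivalence between $f(X)$ and a quadrinomial of Theorem~\ref{fir}, and even that equivalence follows from your reduction, since it is exhibited in the proof of Corollary~\ref{pre2}.
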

\renewcommand{\theenumi}{\arabic{enumi}}
\renewcommand{\labelenumi}{(\arabic{enumi})}

\begin{proof}
Let $s$ be a positive integer such that $\gcd(s,q-1)=1$ and $s\equiv Q-1\pmod{q+1}$. Then $\gcd(s,q^2-1)=1$, 
so that $X^s$ permutes $\F_{q^2}$, and thus $f(X)$ permutes $\F_{q^2}$ if and only if $f(X^s)$ permutes $\F_{q^2}$. 
We now determine the reductions mod $q^2-1$ of the degrees of the terms of 
\[
f(X^s)=X^s+aX^{su(q-1)+s}+dX^{sv(q-1)+s}.
\]
Here $r:=sv(q-1)+s$ is congruent to $s\pmod{q-1}$, and hence is coprime to $q-1$, while also 
\[
r\equiv (Q-1)v(q-1)+Q-1\equiv q(q-1)+Q-1\equiv Q+1\pmod{q+1}.
\]
Next, 
\[
su(q-1)-sv(q-1)\equiv (Q-q)(q-1)\equiv (Q+1)(q-1)\pmod{q+1},
\]
so that $su(q-1)-sv(q-1)\equiv (Q+1)(q-1)\pmod{q^2-1}$ since both sides are divisible by $q-1$. Likewise, 
we have $-sv(q-1)\equiv q-1\pmod{q^2-1}$. Thus $f(X^s)$ induces the same function on $\F_{q^2}$ as does
\[
f_1(X):=X^{r+q-1}+aX^{r+(Q+1)(q-1)}+dX^r.
\]
By Theorem~\ref{fir}, $f_1(X)$ permutes $\F_{q^2}$ if and only if \eqref{61}, \eqref{62}, and the following 
equation \eqref{63b} all hold:
\begin{equation}\label{63b}
\Tr_{\F_q/\F_{2^m}}\Bigl(\frac1e\Bigr)=\frac{\lcm(k,\ell)}m.
\end{equation}
It remains to show that \eqref{63b} is equivalent to \eqref{63}. We compute
\[
\Tr_{\F_q/\F_{2^m}}\Bigl(\frac1e\Bigr) =\Tr_{\F_q/\F_{2^m}}\Bigl(1+\frac{a^2+d^2}e\Bigr)=\frac{k}{m}+\Tr_{\F_q/\F_{2^m}}\Bigl(\frac{a^2+d^2}e\Bigr).
\]
The hypothesis $\gcd(Q-1,q+1)=1$ implies that $\ord_2(k) \ge \ord_2(\ell)$, so that $\lcm(k,\ell)/m \equiv k/m \pmod{2}$.  
Thus indeed \eqref{63b} holds if and only if \eqref{63} holds.
\end{proof}

\begin{rmk} 
Theorem~3.2 of \cite{ZKP} asserts that the polynomial $f(X)$ in Corollary~\ref{pre6} permutes $\F_{q^2}$ 
if certain conditions hold. These sufficient conditions include \eqref{61}--\eqref{63} in addition to 
other conditions, the most notable of which is $(a+d)\Tr_{\F_q/\F_2}(d/e)=0$. Corollary~\ref{pre6} shows 
that this last condition is superfluous. In fact, it can be deduced from the combination of Lemma~\ref{u|A} 
and Theorem~\ref{main} that if $\ord_2(k)\ne\ord_2(\ell)$ then \eqref{61}--\eqref{63} imply that 
$\Tr_{\F_q/\F_2}(d/e)=0$, but if $\ord_2(k)=\ord_2(\ell)$ then the permutation polynomials in 
Corollary~\ref{pre6} for which $(a+d)\Tr_{\F_q/\F_2}(d/e)\ne 0$ are
\[
X + \frac{\lambda^{Q-2}+\lambda^Q}{1+\lambda^{2Q-2}}X^{u(q-1)+1} + \frac{1+\lambda^{2Q}}{\lambda+\lambda^{2Q-1}}X^{v(q-1)+1}
\]
where $\lambda\in\mu_{q+1}\setminus\mu_{Q+1}$. In particular, such examples exist whenever $Q$ is not a power 
of $q$. Moreover, one can show that distinct $\lambda,\lambda'\in\mu_{q+1}\setminus\mu_{Q+1}$ yield the same 
polynomial if and only if $\lambda'=1/\lambda$.
\end{rmk}

Our final result resolves \cite[Open problem]{ZKP} by showing that the sufficient conditions in \cite[Thm.~3.1]{ZKP} 
are also necessary:

\begin{cor}\label{pre7}
Let $k$ and $\ell$ be positive integers with $\gcd(2k,\ell)=1$, and write $q:=2^k$ and $Q:=2^\ell$ and $u:=(Q^{k+1}-1)/(Q-1)$. 
Pick $a\in\F_{q^2}^*$ and $b\in\F_q^*$ with $a+b\ne 1$. Then $f(X):=X+bX^q+aX^u$ permutes\/ $\F_{q^2}$ if and only if there 
exist $\lambda\in\F_q$ and $\epsilon\in\F_2$ such that all of the following hold:
\renewcommand{\theenumi}{\ref{pre7}.\arabic{enumi}}
\renewcommand{\labelenumi}{\emph{(\thethm.\arabic{enumi})}}
\begin{enumerate}[leftmargin=\parindent,align=left,labelwidth=6pt,labelsep=6pt,itemsep=2pt]
\item\label{71} $\lambda^{Q-1}=b$;
\item\label{72} $a=\epsilon\lambda^Q+\sum_{i=1}^{\ell} \lambda^{Q-2^i}$;
\item\label{73} $\Tr_{\F_q/\F_2}(\frac{a}{a+b+1})=0$.
\end{enumerate}
\end{cor}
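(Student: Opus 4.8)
The plan is to follow the template of Corollaries~\ref{pre2}--\ref{pre6}: replace $f$ by a multiplicatively equivalent polynomial of the shape handled by Theorem~\ref{fir}, read off conditions (3)--(5) there, and translate them into \eqref{71}--\eqref{73}. First I would record the arithmetic consequences of $\gcd(2k,\ell)=1$. Here $\ell$ is odd and $\gcd(k,\ell)=1$, so $m=2^{\ord_2(\gcd(k,\ell))}=1$ and $\lcm(k,\ell)/m=k\ell\equiv k\pmod 2$; moreover $\gcd(2^\ell-1,2^k-1)=2^{\gcd(\ell,k)}-1=1$ and $\gcd(2^\ell-1,2^{2k}-1)=2^{\gcd(\ell,2k)}-1=1$, so $x\mapsto x^{Q-1}$ is a bijection of both $\F_q^*$ and $\F_{q^2}^*$, and $\gcd(Q-1,q+1)=1$ by Lemma~\ref{gcd}. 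Finally, since $\ell$ is odd, $Q^k=2^{k\ell}=(2^k)^\ell\equiv(-1)^\ell=-1\pmod{q+1}$, whence $u(Q-1)=Q^{k+1}-1\equiv-(Q+1)\pmod{q+1}$, while $u(Q-1)\equiv Q-1\pmod{q-1}$ forces $u\equiv1\pmod{q-1}$.

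Next I would choose a positive integer $s$ with $s\equiv1\pmod{q-1}$ and $s\equiv Q-1\pmod{q+1}$; then $\gcd(s,q^2-1)=1$, so $X^s$ permutes $\F_{q^2}$ and $f(X)$ permutes $\F_{q^2}$ if and only if $f(X^s)=X^s+bX^{sq}+aX^{su}$ does. Reducing the exponents modulo $q^2-1$ using the residues above, I expect $\{s,sq,su\}\equiv\{r+(q-1),\,r+Q(q-1),\,r+(Q+1)(q-1)\}\pmod{q^2-1}$, where $r:=s-(q-1)$ satisfies $r\equiv Q+1\pmod{q+1}$ and $\gcd(r,q-1)=1$. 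Hence $f(X^s)$ induces the same function on $\F_{q^2}$ as $f_1(X):=X^rA_1(X^{q-1})$ with $A_1(X):=aX^{Q+1}+bX^Q+X$. By Theorem~\ref{fir} (with $p=2$ and $\gcd(r,q-1)=1$ automatic), $f$ permutes $\F_{q^2}$ if and only if, writing $e:=a^{q+1}+b^{q+1}+1=a^{q+1}+b^2+1$, the three conditions
\[
e\ne0,\qquad (ab)^Q=e^{Q-1}a,\qquad \Tr_{\F_q/\F_2}\Bigl(\frac{b^2+1}{e}\Bigr)=\frac{\lcm(k,\ell)}{m}
\]
all hold (using $b\in\F_q$, so $b^q=b$).

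The crux is the second condition. I would first use \eqref{71}: since $x\mapsto x^{Q-1}$ permutes $\F_q^*$, there is a unique $\lambda\in\F_q^*$ with $\lambda^{Q-1}=b$. Dividing $(ab)^Q=e^{Q-1}a$ by $a\ne0$ gives $a^{Q-1}b^Q=e^{Q-1}$, i.e.\ $a^{Q-1}=(e\lambda^{-Q})^{Q-1}$, and injectivity of $x\mapsto x^{Q-1}$ on $\F_{q^2}^*$ forces $a=e\lambda^{-Q}\in\F_q$. Thus the second condition already implies $a\in\F_q$, whence $e=a^2+b^2+1=(a+b+1)^2$ and $a\lambda^Q=(a+b+1)^2$. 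Setting $w:=a/\lambda^Q$ and using $b+1=\lambda^{Q-1}+1$ turns this into the Artin--Schreier equation
\[
w^2+w=\Bigl(\frac{b+1}{\lambda^Q}\Bigr)^2=(\lambda^{-1}+\lambda^{-Q})^2=\lambda^{-2}+\lambda^{-2Q}.
\]
The key observation, in the spirit of Lemma~\ref{2nd}, is that $w_0:=\sum_{i=1}^\ell\lambda^{-2^i}$ telescopes: $w_0^2+w_0=\lambda^{-2^{\ell+1}}+\lambda^{-2}=\lambda^{-2Q}+\lambda^{-2}$. Hence the solutions are $w=w_0+\epsilon$ with $\epsilon\in\F_2$, and multiplying by $\lambda^Q$ yields exactly $a=\epsilon\lambda^Q+\sum_{i=1}^\ell\lambda^{Q-2^i}$, which is \eqref{72}; conversely \eqref{72} gives $a\in\F_q$ with $a\lambda^Q=(a+b+1)^2$, hence the second condition. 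This paragraph is the main obstacle, though it reduces to recognizing the telescoping solution once the bijectivity facts are in hand; note also that the second condition forces $e\ne0$ (otherwise $(ab)^Q=0$ with $a,b\ne0$), so the first condition is automatic and matches the hypothesis $a+b\ne1$.

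Finally I would match the trace conditions. Once $a\in\F_q$, we have $e=(a+b+1)^2$, so $(b^2+1)/e=((b+1)/(a+b+1))^2$; since $\Tr_{\F_q/\F_2}(x^2)=\Tr_{\F_q/\F_2}(x)$ and $(b+1)/(a+b+1)=1+a/(a+b+1)$,
\[
\Tr_{\F_q/\F_2}\Bigl(\frac{b^2+1}{e}\Bigr)=\Tr_{\F_q/\F_2}(1)+\Tr_{\F_q/\F_2}\Bigl(\frac{a}{a+b+1}\Bigr)=k+\Tr_{\F_q/\F_2}\Bigl(\frac{a}{a+b+1}\Bigr).
\]
As $\lcm(k,\ell)/m=k\ell\equiv k\pmod2$, the third condition is equivalent to $\Tr_{\F_q/\F_2}(a/(a+b+1))=0$, i.e.\ \eqref{73}. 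Combining the three equivalences (and using that $a+b\ne1$ makes condition one automatic) proves the corollary.
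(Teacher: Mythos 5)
Your proposal is correct and follows essentially the same route as the paper's proof: reduce $f$ via a monomial substitution to $X^r A_1(X^{q-1})$ with $A_1(X)=aX^{Q+1}+bX^Q+X$, apply Theorem~\ref{fir}, deduce $a\in\F_q$ from the norm condition using injectivity of $x\mapsto x^{Q-1}$, solve the resulting Artin--Schreier relation via the telescoping sum $\sum_{i=1}^{\ell}\lambda^{Q-2^i}$, and match the trace conditions using $e=(a+b+1)^2$. The only differences are cosmetic: you choose the substitution exponent $s$ by CRT rather than taking $s=Q-1$ as the paper does, and you package the $\epsilon\in\F_2$ computation as finding a particular solution of $w^2+w=\lambda^{-2}+\lambda^{-2Q}$ rather than verifying $\epsilon^2+\epsilon=0$ directly.
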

\renewcommand{\theenumi}{\arabic{enumi}}
\renewcommand{\labelenumi}{(\arabic{enumi})}

\begin{proof}
The hypothesis $\gcd(2k,\ell)=1$ implies that $\gcd(q^2-1,Q-1)=1$, so that $X^{Q-1}$ permutes $\F_{q^2}$.  
Thus $f(X)$ permutes $\F_{q^2}$ if and only if $f(X^{Q-1})$ permutes $\F_{q^2}$. Note that
\[
u(Q-1)-(Q-q)=(Q^{k+1}-1)-(Q-q)=Q(Q^k-1)+(q-1)=Q(q^\ell-1)+(q-1)
\]
is divisible by $q-1$, and
\[
Q(q^\ell-1)+(q-1)\equiv -2Q-2\pmod{q+1},
\]
so that
\[
u(Q-1)-(Q-q)\equiv (Q+1)(q-1)\pmod{q^2-1}.
\]
Thus if $r$ is a positive integer satisfying $r\equiv Q-q\pmod{q^2-1}$ then $f(X^{Q-1})$ induces the same function 
on $\F_{q^2}$ as does 
\[
f_1(X):=X^{r+q-1}+bX^{r+Q(q-1)}+aX^{r+(Q+1)(q-1)}.
\] 
Note that $r\equiv Q+1\pmod{q+1}$ and $r\equiv Q-1\pmod{q-1}$, so that $\gcd(r,q-1)=1$. Thus Theorem~\ref{fir} 
implies that $f_1(X)$ permutes $\F_{q^2}$ if and only if all of the following hold:
\stepcounter{equation}
\renewcommand{\theenumi}{\theequation.\arabic{enumi}}
\renewcommand{\labelenumi}{(\thethm.\arabic{enumi})}
\begin{enumerate}[leftmargin=\parindent,align=left,labelwidth=6pt,labelsep=6pt,itemsep=2pt]
\item\label{71b} $e:=a^{q+1}+b^2+1$ is nonzero;
\item\label{72b} $(ab)^Q=e^{Q-1}a$;
\item\label{73b} $\Tr_{\F_q/\F_2}\bigl(\frac{b^2+1}e\bigr)=k$.
\end{enumerate}
\renewcommand{\theenumi}{\arabic{enumi}}
\renewcommand{\labelenumi}{(\arabic{enumi})}
It remains to show that \eqref{71b}--\eqref{73b} are equivalent to \eqref{71}--\eqref{73}.

First suppose that \eqref{71b}--\eqref{73b} hold. Since $a\ne 0$ by hypothesis, \eqref{72b} says $a^{Q-1}b^Q=e^{Q-1}$.  
Since $b$ and $e$ are in $\F_q^*$, it follows that $a^{Q-1}\in\F_q^*$, which since $\gcd(Q-1,q^2-1)=1$ implies that 
$a\in\F_q^*$. Thus $e=a^2+b^2+1=(a+b+1)^2$, so that
\[
\frac{b^2+1}e=1+\frac{a^2}e=1+\Bigl(\frac{a}{a+b+1}\Bigr)^2.
\]
Since $\eta:=a/(a+b+1)$ is in $\F_q$, we have $\Tr_{\F_q/\F_2}(\eta^2)=\Tr_{\F_q/\F_2}(\eta)$, so since 
$\Tr_{\F_q/\F_2}(1)=k$ it follows that
\[
\Tr_{\F_q/\F_2}\Bigl(\frac{b^2+1}e\Bigr) = k + \Tr_{\F_q/\F_2}\Bigl(\frac{a}{a+b+1}\Bigr).
\]
Thus \eqref{73b} implies that $\Tr_{\F_q/\F_2}(a/(a+b+1))=0$, which is \eqref{73}. Writing $\lambda$ for the unique $(Q-1)$-th 
root of $b$ in $\F_q$ (so that \eqref{71} holds), the identity $a^{Q-1}b^Q=e^{Q-1}$ becomes $(a\lambda^Q)^{Q-1}=e^{Q-1}$. Since 
$X^{Q-1}$ permutes $\F_q$, it follows that $a\lambda^Q=e$. Thus
\[
\lambda^{2Q-2} = b^2 = e+a^2+1 = a\lambda^Q+a^2+1,
\]
and dividing by $\lambda^{2Q}$ yields
\[
\frac1{\gamma^2} = \frac{a}{\lambda^Q} + \frac{a^2}{\lambda^{2Q}}+\frac1{\lambda^{2Q}}.
\]
This says $\epsilon^2+\epsilon=0$ where
\[
\epsilon:=\frac{a}{\lambda^Q} + \sum_{i=1}^\ell\frac1{\lambda^{2^i}},
\]
so that $\epsilon$ is in $\F_2$. Multiplying by $\lambda^Q$ yields
\[
a=\epsilon\lambda^Q+\sum_{i=1}^\ell \lambda^{Q-2^i},
\]
which is \eqref{72}. We have shown that if \eqref{71b}--\eqref{73b} hold then \eqref{71}--\eqref{73} hold.

Conversely, now suppose that \eqref{71}--\eqref{73} hold. Condition \eqref{72} implies $a\in\F_q$. Dividing \eqref{72} by 
$\lambda^Q$ yields
\[
\frac{a}{\lambda^Q} = \epsilon + \sum_{i=1}^{\ell} \frac1{\lambda^{2^i}}. 
\]
Add each side of this equation to its square to get
\[
\frac{a}{\lambda^Q} + \frac{a^2}{\lambda^{2Q}} = \frac{1}{\lambda^2} + \frac{1}{\lambda^{2Q}}.
\]
Now multiply by $\lambda^{2Q}$ to conclude that
\[
a\lambda^Q + a^2 = \lambda^{2Q-2} + 1,
\]
or equivalently
\begin{equation}\label{redgreen}
ab\lambda=a^2+b^2+1.
\end{equation}
Since $ab\lambda \ne 0$ and $a\in \F_q$, it follows that \eqref{71b} holds. Raising both sides of \eqref{redgreen} 
to the $(Q-1)$-th power yields $a^{Q-1}b^Q=e^{Q-1}$, so that \eqref{72b} holds. Finally, we have $(b^2+1)/e = 1 + a^2/e$. 
Since $\Tr_{\F_q/\F_2}(1)=k$ and $\Tr_{\F_q/\F_2}(a^2/e)=\Tr_{\F_q/\F_2}(a/(a+b+1))=0$ by \eqref{73}, we conclude that
\[
\Tr_{\F_q/\F_2}\Bigl(\frac{b^2+1}e\Bigr) = \Tr_{\F_q/\F_2}\Bigl(1+\frac{a^2}e\Bigr) = k,
\]
so that \eqref{73b} holds.
\end{proof}

\begin{rmk}
Our proofs of the above corollaries show that all permutation polynomials in the corollaries are multiplicatively 
equivalent to the permutation polynomials in Theorem~\ref{fir}. Some of the above corollaries can be generalized 
to larger classes of permutation polynomials which are also multiplicatively equivalent to the polynomials in 
Theorem~\ref{fir}. For instance, Corollary~\ref{pre7} can be generalized as follows: if $q=2^k$ and $Q=2^\ell$ 
where $\gcd(2k,\ell)=1$, and $f(X):=aX^u+bX^q+cX+dX^v$ where $a,b,c,d\in\F_{q^2}$ and $u:=(Q^{k+1}-1)/(Q-1)$ and 
$v:=1 + q(Q^k-1)/(Q-1)$, then $f(X)$ permutes $\F_{q^2}$ if and only if $a,b,c,d$ satisfy the conditions in 
Theorem~\ref{fir}. We have not stated this as a separate result, or listed any further results along these lines, 
since this result provides no new understanding and instead is merely an immediate consequence of Theorem~\ref{fir}. 
\end{rmk}



\end{document}